\numberwithin{equation}{section}
\newtheorem{theorem}{Theorem}[section]
\newtheorem{corollary}[theorem]{Corollary}
\newtheorem{lem}[theorem]{Lemma}
\newtheorem{proposition}[theorem]{Proposition}
\theoremstyle{definition}
\newtheorem{definition}[theorem]{Definition}
\newtheorem{remark}[theorem]{Remark}
\newcommand{\tz}{\tilde{z}}
\newcommand{\tg}{\widetilde{\gamma}}
\newcommand{\e}{\varepsilon}
\newcommand{\dom}{B_{\delta}(\ell^{\infty})\times \mathbb{T}^d}
\newcommand{\T}{\mathbb{T}}
\newcommand{\Z}{\mathbb{Z}}
\newcommand{\PP}{\mathcal{P}}
\newcommand{\R}{\mathbb{R}}
\newcommand{\ZZ}{\mathbb{Z}}
\newcommand{\NN}{\mathbb{N}}
\newcommand{\TT}{\mathbb{T}}
\newcommand{\gr}{\mbox{graph}}
\newcommand{\HH}{\mathcal{H}}
\newcommand{\lip}{\mathrm{Lip}}
\newcommand{\infd}{\ell^{\infty}\times\mathbb{R}^d}
\newcommand{\infn}{\ell^{\infty}\times\mathbb{T}^d}
\newcommand{\tM}{\widetilde{M}}
\newcommand{\eps}{\varepsilon}
\newcommand{\cO}{\mathcal{O}}
\newcommand{\cA}{\mathcal{A}}
\newcommand{\cB}{\mathcal{B}}
\newcommand{\cF}{\mathcal{F}}
\newcommand{\cM}{\mathcal{M}}
\newcommand{\cN}{\mathcal{N}}
\newcommand{\cV}{\mathcal{V}}
\newcommand{\cL}{\mathcal{L}}
\newcommand{\tC}{\mathtt{C}}
\newcommand{\wkappa}{\widetilde{\kappa}}
\newcommand{\wrho}{\widetilde{\rho}}
\newcommand{\tv}{\mathtt{v}}
\newcommand{\tu}{\mathtt{u}}
\newcommand{\pa}{\partial}
\newcommand{\rr}{\rho}
\newcommand{\La}{\Lambda}
\newcommand{\de}{\delta}
\newcommand{\wt}{\widetilde}
\newcommand{\Ker}{\mathrm{Ker}}
\newcommand{\tB}{\mathtt{B}}
\newcommand{\id}{\mathrm{Id}}
\newcommand{\K}{K}
\newcommand{\C}{\mathcal{C}}
\newcommand{\XX}{\mathcal{X}}
\newcommand{\cX}{\mathcal{X}}
\newcommand{\cI}{\mathcal{I}}
\newcommand{\cE}{\mathcal{E}}
\newcommand{\cH}{\mathcal{H}}
\title{Arnold diffusion in Hamiltonian systems on infinite lattices}
\author{Filippo Giuliani$^{1}$, Marcel Guardia$^{2, 3, 4}$ \thanks{The authors are supported by the European Research Council (ERC) 
under the European Union's Horizon 2020
research and innovation programme under grant agreement 
No 757802.}  \\
\small${}^{1}$ Dipartimento di Matematica, Politecnico di Milano, Milano, Italy.\\
\small${}^{2}$ Departament de Matem\`atiques, Universitat Polit\`ecnica de Catalunya (UPC), Barcelona, Spain.\\
\small${}^{3}$ IMTECH, Universitat Polit\`ecnica de Catalunya (UPC), Barcelona, Spain.\\
\small${}^{4}$ Centre de Recerca Matem\`atica, Barcelona, Spain.
}
\begin{document}

\maketitle

\begin{abstract}
We consider a system of infinitely many penduli on an $m$-dimensional lattice with a  weak coupling. For any prescribed path in the lattice, for suitable couplings, we construct orbits for this Hamiltonian system of infinite degrees of freedom which transfer energy between nearby penduli along the path. We allow the weak coupling to be next-to-nearest neighbor or long range as long as it is strongly decaying.

The transfer of energy is given by an Arnold diffusion mechanism  which relies on the original V. I Arnold approach: to construct a sequence of hyperbolic invariant quasiperiodic tori with transverse heteroclinic orbits. We implement this approach in an infinite dimensional setting, both in the space of bounded $\mathbb{Z}^m$-sequences and in spaces of decaying $\mathbb{Z}^m$-sequences. Key steps in the proof  are  an  invariant manifold theory for hyperbolic tori  and a Lambda Lemma for infinite dimensional coupled map  lattices with decaying interaction.
\end{abstract}

\tableofcontents

\section{Introduction}\label{sec:intro}

Transport and transfer of energy are one of the fundamental behaviors that arise in Hamiltonian dynamics both of finite and infinite dimensions. In finite dimensional nearly integrable Hamiltonian systems one of the main mechanisms to achieve such behavior is Arnold diffusion \cite{Arnold64} which leads to large drift in actions in phase space. Arnold diffusion is known to be one of the main sources of unstable motions in many physical models such as 
the Solar system and outstanding progress has been achieved in the last decades. 
In Hamiltonian PDEs (which can be seen as infinite dimensional Hamiltonian systems) the phenomenon of transfer of energy was considered by Bourgain one of the fundamental problems to study in Hamiltonian PDEs in the XXIst century (see Bourgain \cite{Bourgain00b}) and has drawn a lot of attention in the last decades. Even if the dynamics underlying such behavior presents substantial differences from the classical finite dimensional Arnold diffusion, some of the works  also rely on analyzing invariant objects and heteroclinic connections (see \cite{CKSTT, GuardiaK12, GuardiaHP16}).

The purpose of this paper is to construct transfer of energy solutions in a quite different context which has strong connections with both settings presented above: Hamiltonian systems with \emph{infinitely many} degrees of freedom defined on lattices, that is infinite dimensional Hamiltonian systems  with \emph{spatial structure}. 

The study of transfer of energy phenomenon in Hamiltonian systems on lattices  goes back to the seminal numerical study by Fermi, Pasta and Ulam \cite{FPU}, on the nowadays called Fermi-Pasta-Ulam model, and the discovery of the so-called FPU paradox. Since then, there has been a lot of effort on understanding  both the  phenomenon of energy localization and energy transfer both in periodic lattices (that is, finite dimensional phase space) or on infinite lattices. 

On energy localization, there are several papers that apply KAM Theory techniques to prove the existence of invariant tori  \cite{FrohlichSW86,Cherchia95,Yuan02,Geng07, Geng14, Wu21} which have strong decay in space and therefore have localized energy. There are also several results providing time estimates for energy localization (see for instance \cite{Wayne86a,Wayne86b,Bambusi93,Bambusi06, GaPa,Roeck15}).


Arnold diffusion results on Hamiltonian systems with spatial structure (either of finite or infinite dimensions) are rather scarce. In particular there are no results for the classical Fermi-Pasta-Ulam model (however, see \cite{KappelerH08,KappelerH09} for the analysis of hyperbolic objects in its normal form). 

In \cite{KaloshinLS14, Huang17} the authors consider a periodic  lattice model which consists on  penduli with weak coupling and prove the existence of transfer energy orbit by means of variational methods.
Inspired by these works, the goal of the present paper is to construct Arnold diffusion orbits for models in infinite lattices.
The mechanism considered in the seminal work by Arnold (and many of the most recent ones) relies on the analysis of invariant objects (typically invariant tori) and their heteroclinic connections. We also rely on this very same approach but in an \emph{infinite dimensional setting both in $\ell^\infty$ and in spaces with decay}.
To this end we consider geometric techniques which are currently widely used in finite dimensional Hamiltonian systems (invariant manifold theory for hyperbolic tori, Lambda lemma) and we develop them in a rather wide generality for Hamiltonian systems on lattices with spatial structure.

Then we apply them  to formal Hamiltonians of the form
\begin{equation}\label{def:Hamiltonian}
H(p, q)=\sum_{j\in\mathbb{Z}^m} E_j+\varepsilon H_1(p,q), 
\end{equation}
where 
\begin{equation}\label{def:energies}
E_j:=\frac{p_j^2}{2}+V(q_j), \qquad V(q_j):=\cos q_j-1,
\end{equation}
 together with the formal symplectic structure $\Omega=\sum_{j\in\Z^m} d p_j\wedge d q_j$.

The perturbation $H_1$ is assumed to have certain spatial structure that will be specified later. Roughly speaking, we either assume that only interaction with nearest and next-to-nearest neighbors is allowed or long range interaction is admitted provided it has strong decay. 
Under such assumptions, even if the Hamiltonian is just formal (the sum in \eqref{def:Hamiltonian} is not convergent),  the equations of motion
\begin{equation}\label{eq:motions}
\begin{cases}
\dot{q}_j=p_j+\varepsilon \partial_{p_j} H_1(p,q)\\[1.5mm]
\dot{p}_j=\sin q_j -\varepsilon \partial_{q_j} H_1(p,q),
\end{cases}  \qquad \qquad \qquad  j\in\mathbb{Z}^m
\end{equation}
define a well-behaved system of differential equations.

Even if the developed techniques are applied to Hamiltonian systems of the form   \eqref{def:Hamiltonian}, they are valid for a much wider class of Hamiltonian systems and, thus, we expect that they can be used in future results on Arnold diffusion in more general lattice models. Before stating the main results, let us review the literature in Arnold diffusion to put our result in context.

%
%

Since the seminal work by Arnold \cite{Arnold64} and specially since the 90s there has been a huge progress in understanding the phenomenon of Arnold diffusion in finite dimensional nearly integrable Hamiltonian systems. Such models are usually classified as a \emph{priori stable} (when the first order satisfies the Liouville Arnold Theorem) or \emph{a priori unstable} (when the first order is integrable but presents hyperbolicity). The model \eqref{def:Hamiltonian} belongs to the second setting.

The first results in the a priori unstable setting date back to the early 2000s \cite{DelshamsLS06a, ChengY04,Treschev04,Bernard08} for 2 and half degrees of freedom. The results in arbitrary dimension are more scarce \cite{DelshamsLS16,Treschev12}. 

A priori stable settings are much harder to analyze since the hyperbolicity which should lead to unstable motions must arise thanks to the perturbation. The results in this setting are much more recent \cite{Bernard16,KaloshinZ20, Marco16, GideaM17}. Many fundamental lattice models fit the a priori stable setting (for instance a weakly coupled sequence of rotators, such as the Fermi Pasta Ulam model in the low energy regime). Constructing Arnold diffusion orbits in such models is an outstanding open problem.

%


\subsection{Main results}
We devote this section to present transfer of energy results for the Hamiltonian system \eqref{eq:motions} and suitable perturbation $H_1$. A more complete statement would  require to set up first a functional setting to define the class of perturbations $H_1$ for which transfer of energy is possible. This more precise result  is deferred to Section \ref{sec:DescriptionProof} after establishing a functional setting considered first in \cite{JiangL00} and developed by Fontich, Mart\'in and de la Llave in \cite{FontMartin1} (see Section \ref{sec:functional} below). For now, we just present a simplified version.

When $\eps=0$, the Hamiltonian \eqref{def:Hamiltonian} is just a countable number of decoupled penduli. Therefore, the dynamics is integrable and transfer of energy among sites is not possible in the sense that the energies $E_j$ (see \eqref{def:energies}) are constants of motion. The goal of this paper is to construct, for suitable perturbations $H_1$, solutions $(q(t),p(t))$ such that its energy is transfered among modes as time evolves. Note that when we talk about the energy we refer to the values of $\{E_i(q_i, p_i)\}_{i\in\mathbb{Z}^m}$ without assuming that its sum is finite.

The transfer of energy solutions that we construct are such that its energy is supported essentially in one or two modes and it is transfered, as time evolves,  to neighboring sites. Therefore, to describe it we consider paths in $\Z^m$ formed by neighboring sites, that is sequences
\[
  \{\sigma_i\}_{i\geq 0}\subset \mathbb{Z}^m,\qquad |\sigma_{i+1}-\sigma_i|=1,
 \]
(see Figure \ref{fig:path}) where $|\cdot|$ denotes the usual $1$-norm for  $k\in \Z^m$, $|k|=\sum_{i=1}^m|k_i|$.

\begin{figure}[h]
\begin{center}
\includegraphics[height=5cm]{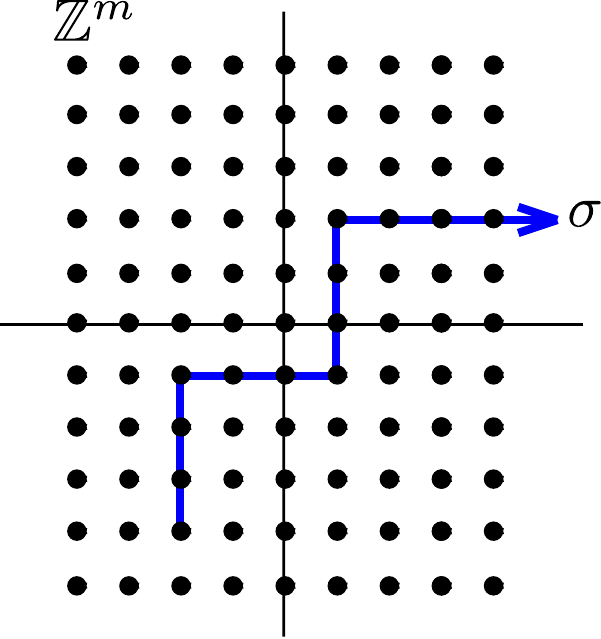}
\end{center}
\caption{Example of path in $\Z^m$.}\label{fig:path}
\end{figure}

To state the  results on transfer of energy orbits for Hamiltonians of the form \eqref{eq:motions}, we consider two different phase spaces.
For the first theorem we consider  the space of bounded sequences $\ell^{\infty}$. We consider the $\ell^{\infty}$--topology, rather than working with the topology based on pointwise convergence of the coordinates. This has the advantage that we can use Banach space techniques rather than relying just on metric spaces (which do not allow the standard tools of differential calculus).

We consider then as phase space 
\[
\ell^{\infty}(\Z^m; M)=\left\{ z=(z_i)_{i\in\mathbb{Z}^m}\in M^{\Z^m} : \sup_{i\in\Z^m} |z_i|<\infty \right\},
\]
where
\[
 M:=\T\times\R, \qquad M^{\Z^m}:=\prod_{j\in\Z^m} M,
\]
which is a Banach manifold modeled on $\ell^{\infty}(\Z^m; \R^2)$. 


Note that in this phase space the total energy may  not be finite. That is, one has to consider $H$ in \eqref{def:Hamiltonian} as a formal Hamiltonian. 
The second main result below, constructs  transfer of energy solutions which belong to a ``smaller'' phase space of strongly decaying sequences which makes $H$ well defined. 

\begin{theorem}\label{thm:MainFormal}
Fix $m\in\NN$, $h>0$ and consider the formal Hamiltonian $H$  in \eqref{def:Hamiltonian}. Then, there exist formal Hamiltonians $H_1$ of the form 
\begin{equation}\label{def:hamthm1}
 H_1(p,q)=\sum_{j_1,j_2,j_3\in\ZZ^m,|j_1-j_2|=1,|j_2-j_3|=1}\HH_1(p_{j_1},q_{j_1},p_{j_2},q_{j_2},p_{j_3},q_{j_3}),
\end{equation}
where $\mathcal{H}_1$ is a function of class $C^4$ and $\eps_0>0$,
such that for any $\eps\in (0,\eps_0)$ small enough, any $\eta>0$ small enough
  and any sequence 
 \[
  \{\sigma_i\}_{i\geq 0}\subset \mathbb{Z}^m,\qquad |\sigma_{i+1}-\sigma_i|=1,
 \]
there exist trajectories $(q(t),p(t))\in \ell^{\infty}(\Z^m; M)$ of \eqref{def:Hamiltonian} and an increasing sequence of times $\{t_i\}_{i\geq 0}$ such that
 \[
|  E_{\sigma_i}(q(t_i),p(t_i))-h|\leq\eta\qquad\text{and}\qquad |  E_{k}(q(t_i),p(t_i)|\leq\eta \quad\text{for}\quad k\neq \sigma_i.
 \]
\end{theorem}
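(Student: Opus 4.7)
The plan is to adapt Arnold's classical mechanism --- a chain of hyperbolic whiskered tori joined by transverse heteroclinic orbits, then shadowed via a Lambda Lemma --- to the infinite-dimensional Banach manifold $\ell^{\infty}(\Z^m;M)$. Along the prescribed path $\{\sigma_i\}$ I would construct a sequence of normally hyperbolic invariant tori $T_i(\varepsilon)$, each carrying almost all of the energy $h$ on its corresponding pendulum $\sigma_i$; transverse intersections $W^u(T_i(\varepsilon))\pitchfork W^s(T_{i+1}(\varepsilon))$; and a true orbit shadowing the pseudo-orbit $T_0\rightsquigarrow T_1\rightsquigarrow\cdots$. The times $t_i$ in the statement are those at which this shadowing trajectory sits in a small $\ell^{\infty}$--neighborhood of $T_i(\varepsilon)$, where automatically $E_{\sigma_i}\approx h$ and $E_k\approx 0$ for every $k\neq\sigma_i$.

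For the unperturbed tori, at $\varepsilon=0$ every pendulum is uncoupled and $(q_j,p_j)=(0,0)$ is a hyperbolic equilibrium of energy zero. For each $i$ I take
\[
 T_i^{0}:=\bgl (q,p)\in\ell^\infty(\Z^m;M):\ E_{\sigma_i}(q_{\sigma_i},p_{\sigma_i})=h,\ (q_k,p_k)=(0,0)\ \forall k\neq\sigma_i\bgr,
\]
a periodic orbit of the $\sigma_i$-th pendulum (a $1$-torus in $\ell^\infty(\Z^m;M)$) that is normally hyperbolic in the Banach sense, with a uniform spectral gap provided by the saddles at every other site. Applying the invariant-manifold theory for hyperbolic tori in the functional setting of Jiang--de la Llave and Fontich--Mart\'in--de la Llave to be developed later in the paper, one obtains, for all $\varepsilon\in(0,\varepsilon_0)$, persistent tori $T_i(\varepsilon)$ close to $T_i^{0}$ together with local $C^{1}$ stable and unstable manifolds $W^{s/u}(T_i(\varepsilon))$ in $\ell^\infty$.

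For the heteroclinics, at $\varepsilon=0$ the unstable manifold of $T_i^{0}$ consists of orbits in which pendulum $\sigma_i$ still rotates at energy $h$, pendulum $\sigma_{i+1}$ lies on its unstable separatrix, and all other pendula sit at $(0,0)$, and symmetrically for $W^s(T_{i+1}^{0})$. These do not intersect at $\varepsilon=0$, since the energy $h$ sits at the ``wrong'' site on each side, so the coupling must create the intersection from scratch. The leading-order obstruction is a Melnikov integral of $\{E_{\sigma_i},H_1\}$ (resp.\ $\{E_{\sigma_{i+1}},H_1\}$) along the unperturbed separatrix, with the angle of the rotating pendulum as a free phase. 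Since $|\sigma_{i+1}-\sigma_i|=1$, some triple $(j_1,j_2,j_3)$ in \eqref{def:hamthm1} contains both $\sigma_i$ and $\sigma_{i+1}$, making the integrand depend genuinely on the separatrix variable. Choosing $\mathcal H_1$ symmetric under permutations of $(j_1,j_2,j_3)$ and otherwise generic in $C^4$ guarantees non-degenerate zeros of this Melnikov function for every admissible step direction $\sigma_{i+1}-\sigma_i$ and every $i$, hence a transverse heteroclinic connection $W^u(T_i(\varepsilon))\pitchfork W^s(T_{i+1}(\varepsilon))$.

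Finally, to chain these heteroclinics into a genuine trajectory I would iterate the Lambda Lemma for infinite-dimensional coupled map lattices with decaying interactions (the second main tool announced in the abstract) to produce, for each $N$, an orbit visiting small neighborhoods of $T_0,\dots,T_N$ in order; a diagonal/limit argument in the initial data then yields the required global trajectory of \eqref{def:Hamiltonian} and the sequence of times $t_i$. The hard part is precisely this last step: the classical Lambda Lemma relies on Ascoli--Arzel\`a type compactness that is unavailable in $\ell^{\infty}$, so one must work with purely uniform norms via a graph-transform argument, crucially exploiting the spatial decay of $H_1$ so that disturbances generated near the active pair $(\sigma_i,\sigma_{i+1})$ remain uniformly small at all other sites and the normal-hyperbolicity constants of the $T_i(\varepsilon)$ stay uniform in $i$. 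The persistence step for the $T_i(\varepsilon)$ demands the same functional framework but is a milder version of the same difficulty.
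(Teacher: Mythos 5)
Your overall skeleton — whiskered tori, transverse heteroclinics built by Melnikov theory, and an infinite-dimensional Lambda Lemma for shadowing — is the same strategy as the paper, and your remarks about the invariant-manifold theory in $\ell^\infty$ and about the loss of Ascoli--Arzel\`a compactness in the Lambda Lemma are accurate and on point. But the heteroclinic step, as you set it up, cannot work.

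You propose a transition chain consisting only of the one-dimensional periodic orbits $T_i^0=\{E_{\sigma_i}=h,\ E_k=0\ \forall k\neq\sigma_i\}$, joined by a single transverse heteroclinic $W^u(T_i(\eps))\pitchfork W^s(T_{i+1}(\eps))$. At $\eps=0$, however, $W^u(T_i^0)$ lies entirely in $\{E_{\sigma_i}=h,\ E_k=0\ \forall k\neq\sigma_i\}$ (the separatrix of each uncoupled pendulum has energy $0$), while $W^s(T_{i+1}^0)$ lies entirely in $\{E_{\sigma_{i+1}}=h,\ E_k=0\ \forall k\neq\sigma_{i+1}\}$. These two sets are $\cO(1)$ apart in phase space, not $\cO(\eps)$ apart, so an $\cO(\eps)$ coupling cannot ``create the intersection from scratch.'' Melnikov theory can measure and split an intersection of manifolds that are already $\cO(\eps)$ close (or coincident) at $\eps=0$; it cannot bridge an $\cO(1)$ gap. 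Your sentence acknowledging that the manifolds do not intersect at $\eps=0$ is precisely the red flag: that is not a situation Melnikov theory resolves.

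The paper avoids this by inserting, between $\PP_{\sigma_i}$ and $\PP_{\sigma_{i+1}}$, a long (length $N=N(\eps)$) ladder of \emph{two}-dimensional tori $\TT_{\sigma_i,\sigma_{i+1},h_1,h-h_1}$ with the intermediate energy $h_1$ descending from near $h$ to near $0$ in $\cO(\eps)$ steps. Consecutive tori are $\cO(\eps)$ close, so Melnikov nondegeneracy (\textbf{H3}) gives transverse heteroclinics; these tori foliate a normally hyperbolic cylinder $\Lambda_{\delta,\sigma_i,\sigma_{i+1}}$ and the connections are built via the scattering map (the ``Arnold regime''). A separate argument (\textbf{H4}, the ``Jumping regime'') connects the endpoints of the ladder to the periodic orbits $\PP_{\sigma_i}$ and $\PP_{\sigma_{i+1}}$, where the cylinder loses normal hyperbolicity. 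This two-regime structure, together with the invariance hypothesis \textbf{H2} (which keeps the finite-dimensional subspaces $\cV_i$ invariant so that the Melnikov analysis reduces to a $6$-dimensional problem), is the content you would need to supply; symmetry plus genericity of $\HH_1$ does not by itself guarantee it. Once the chain is assembled this way, your Lambda-Lemma/shadowing conclusion goes through exactly as you describe.
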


Theorem \ref{thm:MainFormal} obtains Arnold diffusion orbits in $\ell^\infty$. In particular, the solutions do not have any particular decay and therefore the Hamiltonian $H$ may be unbounded. 

The next theorem, which is proven independently from Theorem \ref {thm:MainFormal} deals with sequences with a prescribed decay which makes the Hamiltonian $H$ well defined. To state it we define a different functional setting. 

Following \cite{FontMartin1}, we define a \emph{decay function} $\Gamma\colon\mathbb{Z}^m\to [0, \infty)$ such that
\begin{equation}\label{def:GammaDecay}
\sum_{i\in \Z^m} \Gamma(i)\le 1,\qquad \text{and}\qquad \sum_{j\in \Z^m} \Gamma(i-j)\,\Gamma(j-k)\le \Gamma(i-k), \quad i, k\in \Z^m.
\end{equation}
For instance given $\alpha>m$ and $\beta\geq 0$, there exists $a>0$ such that
 \begin{equation}\label{def:GammaExample}
 \Gamma(i)=\begin{cases}
 a |i|^{-\alpha}\,e^{-\beta |i|}, \quad i\neq0,\\
 a, \qquad \quad \qquad \,\,\,\, i=0
 \end{cases}
 \end{equation}
 is a decay function. 
Then, for a given decay function $\Gamma$ and $j\in \Z^m$, we define the space of sequences
\[
\Sigma_{j, \Gamma}:=\left\{ v\in \ell^{\infty}(\Z^m; M) : \sup_{k\in\Z^m} |v_k| \,\Gamma(k-j)^{-1}<\infty \right\}.
\]
Next theorem proves transfer of energy orbits in this phase space. 

\begin{theorem}\label{thm:MainEnergy}
Fix $m\in\NN$, $j\in \Z^m$, $h>0$, a decay function $\Gamma$ satisfying \eqref{def:GammaDecay} and the  Hamiltonian $H$ in \eqref{def:Hamiltonian}.  Then, there exist a $C^4$ Hamiltonians $H_1$ of the form \eqref{def:hamthm1} and $\eps_0>0$ such that for any $\eps\in (0,\eps_0)$, any $\eta>0$ small enough   and any sequence 
 \[
  \{\sigma_i\}_{i\geq 0}\subset \mathbb{Z}^m,\qquad |\sigma_{i+1}-\sigma_i|=1,
 \]
there exist trajectories $(q(t),p(t))$ of \eqref{def:Hamiltonian} such that, for $t\geq0$ satisfy $(q(t),p(t))\in \Sigma_{j, \Gamma}$, which  in particular implies that
 $ H(q(t),p(t))$ is finite, 
and an increasing sequence of times $\{t_i\}_{i\geq 0}$ such that
 \[
|  E_{\sigma_i}(q(t_i),p(t_i))-h|\leq\eta\qquad\text{and}\qquad |  E_{k}(q(t_i),p(t_i))|\leq\eta \quad\text{for}\quad k\neq \sigma_i.
 \]
\end{theorem}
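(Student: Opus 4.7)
The plan is to run essentially the same Arnold diffusion mechanism as for Theorem \ref{thm:MainFormal}, but to control in addition the spatial decay of the orbit so that at each positive time it lies in $\Sigma_{j,\Gamma}$. The key observation is that the perturbation \eqref{def:hamthm1} couples only (next-to-)nearest neighbors, hence the flow of \eqref{eq:motions} satisfies a finite-range locality: any pendulum whose neighbors have never been excited stays exactly at the hyperbolic fixed point $(0,0)$. This locality is what produces the decay, and is compatible with the multiplicative property of the decay function $\Gamma$ in \eqref{def:GammaDecay}.

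First I would invoke the invariant manifold theory and the $\Lambda$-lemma for coupled map lattices with decaying interaction that the abstract announces and that are presumably developed in the body of the paper. Then for each finite $A\subset\Z^m$ and each choice of energies $\{h_k\}_{k\in A}$ close to $h$, the unperturbed system has an invariant torus $\TT^A$ given by $\{p_k^2/2+V(q_k)=h_k \text{ for } k\in A,\ (q_k,p_k)=(0,0) \text{ for } k\notin A\}$. For $\varepsilon$ small, these persist as hyperbolic tori $\TT^A_\varepsilon$ with local stable and unstable manifolds inheriting the decay structure of $\Gamma$ from the perturbation. Along the prescribed path, I would concentrate on the one-mode tori $\TT^{\{\sigma_i\}}_\varepsilon$ and construct, choosing $\cH_1$ so that the pertinent Melnikov-type condition holds on every edge of the lattice, transverse heteroclinic orbits from $\TT^{\{\sigma_i\}}_\varepsilon$ to $\TT^{\{\sigma_{i+1}\}}_\varepsilon$. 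Because the path can be arbitrary and $\cH_1$ is the same translation-invariant function of triples of consecutive sites, a single generic choice of $\cH_1$ produces heteroclinics along every edge simultaneously.

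Next I would shadow the resulting heteroclinic chain. Pick an initial datum in $\Sigma_{j,\Gamma}$ close to $\TT^{\{\sigma_0\}}_\varepsilon$ with the inactive coordinates on the local stable manifold of the hyperbolic fixed point. Applying the decaying $\Lambda$-lemma to the window around $\TT^{\{\sigma_1\}}_\varepsilon$, the orbit is captured by its local unstable manifold, passes close to it, and is then brought near the next heteroclinic; iterating gives an orbit which at the prescribed times $t_i$ lies within $\eta$ of $\TT^{\{\sigma_i\}}_\varepsilon$, yielding the claimed energy estimates. Crucially, at any finite time $t\leq t_N$ only the sites in $\bigcup_{i\leq N} B_r(\sigma_i)$ (with $r$ the coupling range) have ever interacted nontrivially, and the remaining coordinates stay identically at $(0,0)$; the excited coordinates are uniformly bounded independently of $i$. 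Hence $\sup_k|(q_k(t),p_k(t))|\,\Gamma(k-j)^{-1}<\infty$ for every $t\geq 0$, i.e.\ the orbit lies in $\Sigma_{j,\Gamma}$, and the locality immediately ensures that $H(q(t),p(t))=\sum_j E_j$ is a finite sum of bounded quantities.

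I expect the main obstacle to be not the chain-construction itself (which copies the finite-dimensional scheme) but the reformulation of the invariant manifold theorem and the $\Lambda$-lemma in the weighted norm $\|\cdot\|_{\Sigma_{j,\Gamma}}$: one must check that the graph transform producing the local stable and unstable manifolds of $\TT^A_\varepsilon$, and the subsequent $\Lambda$-lemma estimates on the time spent near these tori, preserve the $\Gamma$-decay with constants independent of the base site. Once the invariant manifold theory and $\Lambda$-lemma for decaying lattices (the same tools used for Theorem \ref{thm:MainFormal}, but now phrased with $\Gamma$-weights rather than $\ell^\infty$ weights) are in place, the shadowing and the energy-transfer estimates proceed exactly as in the proof of Theorem \ref{thm:MainFormal}, with $\ell^\infty$ replaced by $\Sigma_{j,\Gamma}$ and the locality of $H_1$ guaranteeing the finiteness of $H$.
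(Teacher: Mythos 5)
Your \emph{finite-range locality} argument---presented as the key observation---does not hold, and the conclusion you draw from it is false. You claim that at each finite time $t\leq t_N$ only the sites in $\bigcup_{i\leq N}B_r(\sigma_i)$ have ever interacted, with all other coordinates remaining exactly at $(0,0)$. This picture presupposes that the support of the orbit grows over time, and that cannot happen here. The invariance conditions in $\mathbf{H2}$ (which the perturbations \eqref{def:hamthm1} are chosen to satisfy) make each set $\{q_k=p_k=0\}$ flow-invariant, so if the initial datum has $(q_k(0),p_k(0))=(0,0)$ for all $k$ outside a finite set $A$, then $(q_k(t),p_k(t))\equiv(0,0)$ for all $t$ and all $k\notin A$. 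A finitely supported orbit therefore stays in the finite-dimensional invariant subspace $\mathcal V_A$ forever and can never raise $E_{\sigma_N}$ up to $h$ for $\sigma_N\notin A$. Since the diffusing orbit must eventually visit every torus $\TT_{\sigma_i,\sigma_{i+1},\cdot,\cdot}$ along the (unbounded) path, it cannot be finitely supported at any time---not even at $t=0$. The coordinates of the shadowing orbit far from the active sites are small, of order $\Gamma(k-j)$, but nonzero; ``small and decaying'' is what $\Sigma_{j,\Gamma}$ encodes, and no causality/locality cone replaces it. Your statement that ``any pendulum whose neighbors have never been excited stays exactly at $(0,0)$'' is also a non sequitur: the rest condition for a site $k$ under $\mathbf{H2}$ depends on $(q_k,p_k)=(0,0)$ alone, independently of its neighbors, and the support can only shrink, not grow.

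The mechanism that actually delivers the theorem is the one you mention only in your last paragraph and misclassify as a secondary ``technical obstacle'': the whole construction is carried out \emph{directly} in the weighted space $\Sigma_{j,\Gamma}$. Concretely, Lemma~\ref{lem:welldefflow} shows $\Phi^t_H$ preserves $\Sigma_{j,\Gamma}$ (using $X_{H_1}\in C^r_\Gamma$, not a propagation cone); Theorems~\ref{thm:lipcaseSigma}, \ref{thm:C1case}, \ref{thm:C2case} and \ref{thm:invmanflows} produce $C^2_\Gamma$ local stable/unstable graphs sending $B_\delta(\Sigma_{j,\Gamma})\times\T^d$ into $\Sigma_{j,\Gamma}\times\R^d$; item~$(iv)$ of Theorem~\ref{thm:LambdaLemmaMaps} (and hence Theorem~\ref{thm:Lambda}) gives the $\Lambda$-lemma in the $\mathcal M_{j,\Gamma}$ topology; and the Cantor-intersection shadowing of Lemma~\ref{lemma:shadowing} is run with nested closed balls of $\Sigma_{j,\Gamma}$, producing a point $p\in\Sigma_{j,\Gamma}$. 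Flow-invariance of $\Sigma_{j,\Gamma}$ then keeps the trajectory there for all $t$, and finiteness of $H$ follows because $|E_k|\lesssim\Gamma(k-j)^2\|\cdot\|_{j,\Gamma}^2$ and $\sum_k\Gamma(k)^2\le\sum_k\Gamma(k)\le 1$. If you drop the locality claim and promote your final paragraph to the main argument, you recover the correct proof.
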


Figure \ref{fig:sequence} shows schematically the evolution of transfer of energy orbits obtained in Theorems \ref{thm:MainFormal} and \ref{thm:MainEnergy}. The statements of these theorems only ensure the existence of ``one'' perturbation $H_1$ for which they apply. Certainly, they apply to families of perturbations. As mentioned above, in Section \ref{sec:DescriptionProof}, once the functional setting we work with is established, we give explicit conditions for $H_1$ which lead to transfer of energy. These conditions are essentially of two types. Some of them impose the invariance of certain finite dimensional subspaces. The others are of Melnikov-type and allow to ensure that certain invariant manifolds intersect transversally.
These conditions are not only satisfied by perturbations $H_1$ of next-to-nearest neighbor interaction type but they are also satisfied by  $H_1$ which have strongly decaying long-range interactions and they are \emph{explicit} and thus checkable in concrete examples (see \ref{sec:DescriptionProof}).


\begin{figure}[h]
\begin{center}
\includegraphics[height=3.4cm]{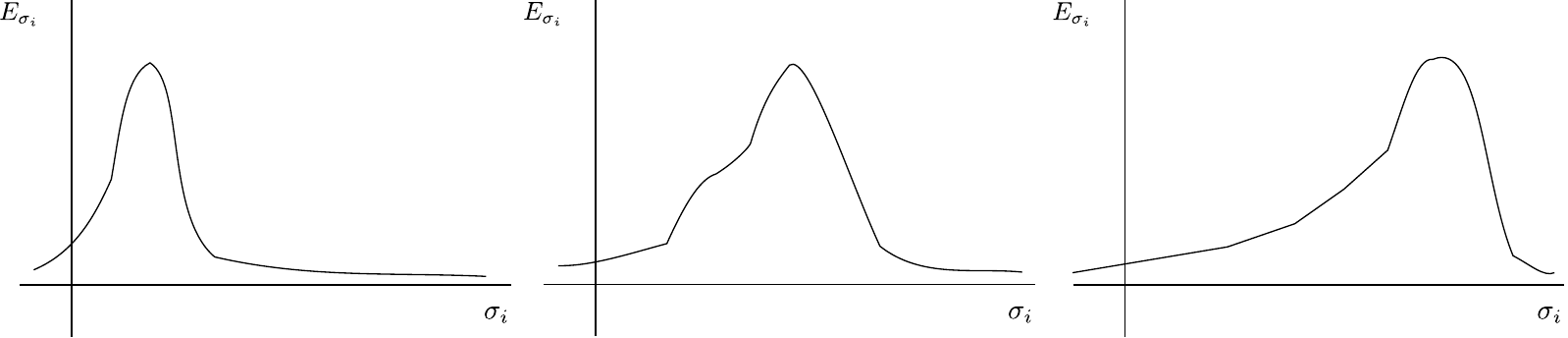}
\end{center}
\caption{Example of the evolution of transfer of energy. In the first picture the energy is essentially localized at one site $\sigma_i$, in the second picture is transferred to the next site in the path, $\sigma_{i+1}$, and in the final one is localized in  $\sigma_{i+1}$. Note that the tails in energy can be either decaying (as in Theorem \ref{thm:MainEnergy}) or just small and bounded (as in Theorem \ref{thm:MainFormal}). In this second case the total energy may be unbounded.}\label{fig:sequence}
\end{figure}

\subsection{Comments on Theorems \ref{thm:MainFormal} and \ref{thm:MainEnergy}}

\begin{enumerate}
\item Even if Theorem  \ref{thm:MainFormal} can be seen as a consequence of Theorem \ref{thm:MainEnergy}, their proofs are independent (although they follow the same scheme). That is, all our techniques are independent of the fact whether the Hamiltonian $H$ is convergent or not and the techniques we use are flexible enough so that can be applied in different functional settings.
 
 \item Note that the choice of the function $\Gamma$ is rather flexible. If one considers $\Gamma$ as in  \eqref{def:GammaExample} one can impose either polynomial decay or exponential decay. Moreover the exponentail decay can be as strong as desired ($\beta$ as large as desired) although certainly the smallness of $\eps$ depends on the choice of $\alpha$ and $\beta$.
 
 

\item The proof of Theorems \ref{thm:MainFormal} and \ref{thm:MainEnergy} is achieved through geometric methods. This implies that the convexity in actions of the Hamiltonian \eqref{def:Hamiltonian} does not play any role. Indeed, one can obtain the same result for a Hamiltonian of the form 
\[H(p, q)=\sum_{j\in\mathbb{Z}} \rho_j E_j+\varepsilon H_1(p,q), \]
where $\rho_j$ is either $\rho_j=1$ or $\rho_j=-1$. Then, however, one has to take $h\in (0,2)$. Indeed, even if the energy may be unbounded, it is not in the invariant objects and their associated invariant manifolds which are used to construct the diffusing orbit. Since the energy of the pendulum is bounded by below by $-2$ one has to impose that $h\in (0,2)$ if some of the $\rho_j$'s are negative.

\item The results in \cite{KaloshinLS14, Huang17}  and also in the present paper rely on models whose first order presents hyperbolicity (penduli) and whose perturbations  are carefully chosen so that preserve certain invariant subspaces. However Arnold diffusion should appear for generic perturbations with spatial structure (for instance generic nearest neighbor interaction) and in particular also for physical models such as discrete Klein-Gordon equations. It would also be interesting to involve more general invariant objects in the construction of the Arnold diffusion orbits (see for instance \cite{FontichLS15}, where the authors construct infinite dimensional hyperbolic tori).

\item Note that there are other mechanisms which lead to transfer of energy in Hamiltonian systems on lattices such as traveling waves (see \cite{Friesecke99}). They are of rather different nature compared to Arnold diffusion. 

\end{enumerate}

We devote the next sections to put our result in context. First in Section \ref{sec:Vadim} we compare our result with those of \cite{KaloshinLS14} and \cite{Huang17} where the same pendulum lattice model is considered but in a periodic setting. 
Section \ref{sec:PDEs} is devoted to make a connection between our main results with  the transfer of energy phenomenon in Hamiltonian PDEs, which is usually measured by the growth of Sobolev norms. Finally Sections \ref{sec:geometricmethods} and \ref{sec:heuristics} are devoted to explain the fundamental geometric tools that we develop to construct the transfer orbits and to explain the heuristics behind the transfer mechanism respectively.

\subsection{Comparison with \cite{KaloshinLS14} and \cite{Huang17}}\label{sec:Vadim}

The model \eqref{def:Hamiltonian} was considered in \cite{KaloshinLS14} and \cite{Huang17} in a periodic one dimensional lattice and therefore in a finite dimensional phase space. 
In \cite{KaloshinLS14}, Kaloshin, Levi and Saprykina  prove the existence of transfer of energy orbits for suitable next-to-nearest-neighbor perturbations by means of variational methods (in the spirit of \cite{Bessi96, BeBo,BeBo02,BertiBB03}, see also \cite{KaloshinL08a,KaloshinL08b}) and provide time estimates. The perturbations they consider are $C^\infty$ and  localized. Thanks to this localization one could expect that their techniques could be implemented in an infinite dimensional setting. 
The paper \cite{Huang17} considers the very same model but with analytic perturbations. 
Both paper follow the same diffusion mechanism  developed in the original paper \cite{KaloshinLS14}. 
We also rely on the same \emph{heuristic mechanism}, which is explained in Section \ref{sec:heuristics} below.

%
%

In the present paper, we consider an infinite dimensional setting in a rather wide generality, both in $\ell^\infty$ or in spaces with decay. In particular, we do not impose finite energy. Moreover, we consider a completely different approach. Instead of considering variational methods as in \cite{KaloshinLS14,Huang17}, we consider geometric methods following the original Arnold approach. 

The choice of perturbations $H_1$ present similar features in all three works. Indeed, a very important property is that they leave invariant certain finite dimensional subspaces. In \cite{KaloshinLS14, Huang17} the perturbations are constructed so  that certain barrier function is non-degenerate. These conditions are rather similar (in fact slightly weaker) compared to the Melnikov-type conditions that we impose to ensure that the invariant manifolds of certain invariant tori intersect transversally (see Section \ref{sec:DescriptionProof} below). The conditions that we impose are explicit and can be checkable in concrete examples.
 
The advantage of the choice of both the perturbation and the variational methods  in  \cite{KaloshinLS14,Huang17} allows the author to obtain time estimates on how fast is the transfer of energy. The tool used to perform shadowing in the present work, a Lambda lemma, is quite flexible but unfortunately does not lead to time estimates. To obtain time estimates one would need to develop a more  quantitative Lambda lemma or implement the 
 variational methods of \cite{KaloshinLS14,Huang17} in the infinite dimensional setting.
%
%
%
%


\subsection{Transfers of energy and growth of Sobolev norms: PDEs vs Lattices}\label{sec:PDEs}

For $s\geq 0$ let us define the Sobolev spaces
\[
H^s:=\left\{  u \colon \Z^m\to \mathbb{C} : \| u \|_{H^s}:=\left( \sum_{k\in \Z^m} |u_k|^2\,\langle k \rangle^{2s}\right)^{1/2}<\infty \right\},
\]
where $\langle k \rangle:=\max\{ 1, |k| \}$.
Observe also that the space of sequences $\Sigma_{j, \Gamma}$ with $j=0, \beta=0$ coincides with the H\"older space 
\[
W^{\alpha, \infty}:=\left\{  u \colon \Z^m\to \mathbb{C} : \| u \|_{W^{\alpha, \infty}}:= \sup_{k\in \Z^m} |u_k|\,\langle k \rangle^{\alpha}<\infty \right\}.
\]
As it is well known $W^{\alpha+s_0, \infty}(\T^m)\subset H^{\alpha}(\T^m)$ with $s_0>m/2$.
Then it is easy to see that Theorem \ref{thm:MainEnergy} provides the existence of solutions whose Sobolev norms explode as time goes to infinity.  In particular it provides a result of ``strong'' Lyapunov instability (see \cite{GuardiaHHMP19}) for some finite dimensional invariant tori in the topology of Sobolev spaces. Indeed, the perturbations  $H_1$ considered in Theorems \ref{thm:MainFormal} and 
\ref{thm:MainEnergy} are such that the tori 
\[
\T_{\sigma_1, \sigma_2, h_1,h_2}=\{ E_{\sigma_1}=h_1, E_{\sigma_2}=h_2, \,E_{k}=0\,\,\,\mbox{for}\,\,k\neq \sigma_{1}, \sigma_2 \}
\]
are invariant. The next corollary, direct consequence of Theorem \ref{thm:MainEnergy}, implies that these tori possess a strong form of Lyapunov instability.

\begin{corollary}
Fix $m\in \NN$, $s>m$, $\sigma_1, \sigma_2\in \Z^m$ with $|\sigma_1-\sigma_2|=1$, $h_1, h_2>0$.
Assume the assumptions of Theorem
\ref{thm:MainEnergy} and consider
the invariant torus $\T_{\sigma_1, \sigma_2, h_1,h_2}$.
Then, there exist $\e_0>0$ small enough such that for all $0\le \e\le \e_0$ and any $\eta>0$ small enough,  there is a trajectory $u(t)=(q(t), p(t))$ of \eqref{def:Hamiltonian} such that
\[
\mathrm{dist}_{H^s}(u(0), \T_{\sigma_1, \sigma_2, h_1,h_2}):=\sup_{z\in \T_{\sigma_1, \sigma_2, h_1,h_2}} \| u(0)-z \|_{H^s}<\eta
\]
and
\[
 \lim_{t\to +\infty} \| u(t) \|_{H^s}=+\infty.
\]
\end{corollary}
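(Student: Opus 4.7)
I propose to construct $u(t)$ as a diffusing orbit furnished by Theorem~\ref{thm:MainEnergy} along a carefully chosen path, which I denote $\{\tau_i\}_{i\ge 0}\subset\Z^m$ to avoid a clash with the corollary's fixed sites $\sigma_1,\sigma_2$. Set $\tau_0=\sigma_1$ and $\tau_1=\sigma_2$, and extend $\{\tau_i\}$ along a monotonically escaping ray, so that $\langle\tau_i\rangle\to\infty$ as $i\to\infty$. For the parameter in Theorem~\ref{thm:MainEnergy} I take $h=h_1+h_2$. The Arnold mechanism underlying the theorem constructs the diffusing orbit by shadowing a chain of two-mode hyperbolic tori $\T_{\tau_i,\tau_{i+1},h-s_i,s_i}$ connected by transverse heteroclinic orbits; using the freedom to choose the internal energy splits $s_i\in(0,h)$, I set $s_0=h_2$ so that the first torus of the chain is precisely $\T_{\sigma_1,\sigma_2,h_1,h_2}$, and then time-translate so that the shadowing passage near this torus occurs at $t=0$.

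\textbf{Initial closeness in $H^s$.} The torus $\T_{\sigma_1,\sigma_2,h_1,h_2}$ is spatially localized: every $z$ on it satisfies $(q_k,p_k)=(0,0)$ for $k\notin\{\sigma_1,\sigma_2\}$. Thus
\[
\mathrm{dist}_{H^s}(u(0),\T_{\sigma_1,\sigma_2,h_1,h_2})^{2} \,=\, \inf_{z\in\T_{\sigma_1,\sigma_2,h_1,h_2}}\sum_{k\in\Z^m}|u_k(0)-z_k|^2\langle k\rangle^{2s}.
\]
At the two active sites $k=\sigma_r$, the gradient $\nabla E_{\sigma_r}=(p_{\sigma_r},-\sin q_{\sigma_r})$ is bounded below on the rotational level $\{E_{\sigma_r}=h_r\}$, so $|E_{\sigma_r}(u(0))-h_r|\le\eta$ translates into $|u_{\sigma_r}(0)-z_{\sigma_r}|=O(\eta)$ for a suitable choice of phases of $z$, contributing $O(\eta^2)(\langle\sigma_1\rangle^{2s}+\langle\sigma_2\rangle^{2s})$. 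For $k\notin\{\sigma_1,\sigma_2\}$ we use $u(0)\in\Sigma_{j,\Gamma}$, which gives $|u_k(0)|\le C\,\Gamma(k-j)$; choosing the decay function as in \eqref{def:GammaExample} with $\alpha > s+m/2$, the tail sum $\sum_k\Gamma(k-j)^2\langle k\rangle^{2s}$ is finite, and the overall $\ell^\infty$-size of the tail can be made arbitrarily small by shrinking $\eps$ and the shadowing tolerance. This yields $\mathrm{dist}_{H^s}(u(0),\T_{\sigma_1,\sigma_2,h_1,h_2})<\eta$ up to an innocuous rescaling of $\eta$.

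\textbf{Growth of the Sobolev norm.} At each of the times $t_i$ of Theorem~\ref{thm:MainEnergy} one has $|E_{\tau_i}(u(t_i))-h|\le\eta$; from the identity $p_j^2=2E_j+2(1-\cos q_j)\ge 2E_j$ we extract $|p_{\tau_i}(t_i)|^2\ge 2(h-\eta)$. Consequently
\[
\|u(t_i)\|_{H^s}^2 \,\ge\, |p_{\tau_i}(t_i)|^2\langle\tau_i\rangle^{2s} \,\ge\, 2(h-\eta)\,\langle\tau_i\rangle^{2s}\,\longrightarrow\,\infty.
\]
To promote the subsequential blow-up to $\lim_{t\to+\infty}\|u(t)\|_{H^s}=+\infty$, note that for $t\in[t_i,t_{i+1}]$ the orbit lies in a small neighborhood of the heteroclinic connection between the two-mode tori at $\tau_i$ and $\tau_{i+1}$, so the excited sites remain confined to $\{\tau_i,\tau_{i+1}\}$ up to a $\Gamma$-decaying tail. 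Using total-energy conservation up to $O(\eps)$, the Sobolev norm throughout $[t_i,t_{i+1}]$ is bounded below by $\sim(h-\eta)\min(\langle\tau_i\rangle,\langle\tau_{i+1}\rangle)^{2s}$, and with the path chosen monotonically escaping this tends to $+\infty$, yielding the pointwise limit.

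\textbf{Main obstacle.} The main difficulty is the initial closeness claim: the bare statement of Theorem~\ref{thm:MainEnergy} only controls the orbit at the discrete sampling times $t_i$, where the energy is concentrated at a single site, whereas the corollary requires passage near a torus on which two distinct sites simultaneously carry prescribed energies $h_1,h_2$. Extracting this requires unpacking the internal architecture of the construction (shadowing of two-mode tori along a transition chain, cf.\ Section~\ref{sec:DescriptionProof}) and verifying that the chain can be initialized precisely at $\T_{\sigma_1,\sigma_2,h_1,h_2}$; this should be routine within that framework but is not immediate from the theorem as stated.
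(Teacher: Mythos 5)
Your proposal is correct and follows the route that the paper implicitly intends: choose an escaping path with $\tau_0=\sigma_1$, $\tau_1=\sigma_2$, $h=h_1+h_2$, initialize the orbit near $\T_{\sigma_1,\sigma_2,h_1,h_2}$, bound the $H^s$ distance using the $\Sigma_{j,\Gamma}$ decay of the tail, and get growth of $\|u(t)\|_{H^s}$ from the weighted contribution of the escaping excited site. Two remarks.

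First, the ``main obstacle'' you flag is not actually a gap: the statement of Theorem~\ref{thm:MainEnergy} only reports passage near the one-dimensional tori $\PP_{\sigma_i}$, but the shadowing Lemma~\ref{lemma:shadowing} is applied to the full transition chain of Theorem~\ref{thm:transitiochainXj}, which explicitly includes the two-dimensional tori $\T_{\sigma_i,\sigma_{i+1},h_k,h-h_k}$ (the paper renames \emph{both} the one and two dimensional tori of the chain as $\T_j$ before applying Lemma~\ref{lemma:shadowing}). So the shadowing orbit visits an $\e$-neighborhood of $\T_{\sigma_1,\sigma_2,h_1,h_2}$ at some time, and time-translating that visit to $t=0$ gives the initial closeness directly. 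You should invoke Lemma~\ref{lemma:shadowing} rather than appealing to the ``internal architecture'' as a black box. Also worth noting: for the tail estimate you need the decay exponent $\alpha$ in \eqref{def:GammaExample} to satisfy $\alpha > s+m/2$, so $\Sigma_{j,\Gamma}\hookrightarrow H^s$; since the assumptions of Theorem~\ref{thm:MainEnergy} allow an arbitrary decay function, this is a choice, not a hypothesis. (Incidentally, the $\sup$ in the corollary's definition of $\mathrm{dist}_{H^s}$ is a typo for $\inf$, as you implicitly assume --- the supremum over the torus cannot be made small since the torus has fixed positive $H^s$-diameter.)

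Second, the upgrade from $\limsup$ to $\lim$ is the most delicate point, and your argument as written slightly overclaims. Lemma~\ref{lemma:shadowing} only controls the orbit at the sampling times $T_j$; the claim that ``for $t\in[t_i,t_{i+1}]$ the orbit lies in a small neighborhood of the heteroclinic connection'' and that ``the excited sites remain confined to $\{\tau_i,\tau_{i+1}\}$'' is not a direct consequence of the stated shadowing lemma, and ``total-energy conservation up to $O(\e)$'' only constrains the formal sum $\sum_k E_k$, not its spatial distribution. The intermediate-time confinement does follow, but it requires unwinding the nested-ball construction in the proof of Lemma~\ref{lemma:shadowing} (the ball $B_j$ is centered on $W^s(\T_{j+1})$, whose forward orbit until time $T_{j+1}$ tracks a heteroclinic lying in the finite-dimensional invariant subspace $\cV_i$, and the $\Gamma$-decay of the invariant manifolds from Theorem~\ref{thm:toriinvman} keeps the off-chain tail small). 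Since the paper itself treats this as ``easy to see'' and gives no proof, your level of rigor is commensurate, but the energy-conservation phrasing is the wrong justification; the correct one is shadowing-plus-decay of the invariant manifolds.
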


Actually the same result holds in any space of sequences with decay $\Gamma$ (for instance H\"older and analytic spaces).  Indeed the energy is transferred to arbitrarily far (with respect to the initial conditions) regions of the lattice and the decay norms give a weight to the sites.

\medskip

One of the most important issue in the modern analysis of Hamiltonian PDEs concerns the transfers of energy between modes of solutions of nonlinear equations on compact manifolds. In particular, when this transfer occurs between modes of characteristically different scale, this phenomenon is named \emph{energy cascade} (in the weak wave turbulence theory) and it can be measured by analyzing growth of Sobolev norms of the solutions as time evolves. In the last decade several papers have been dedicated to prove existence of solutions undergoing an arbitrarily large growth in their high order Sobolev norms (\cite{CKSTT}, \cite{GuardiaK12}, \cite{HPquintic},\cite{GuardiaHP16}, \cite{GuardiaHHMP19}). Such results are very interesting from the point of view of the study of dynamics of PDEs, since they can be read as Lyapunov instability phenomena in the topology of Sobolev spaces of some invariant objects (fixed points, periodic orbits, quasi-periodic tori \dots).

It remains an interesting open problem \cite{Bourgain00b} whether there are solutions of the cubic NLS on $\T^d$, $d\geq 2$ exhibiting an \emph{unbounded} growth, i.e.
\[
\limsup_{t\to \infty} \| u(t) \|_{H^s(\T^2)}=+\infty.
\] 
We mention that solutions displaying an unbounded growth have been found by Hani \cite{Hani12} and Hani-Pausader-Tzvetkov-Visciglia \cite{HaniPTV15} respectively in the case of NLS with cubic nonlinearities which are "almost polynomial" and for the NLS on the cross product $\T^2\times \R$.

\smallskip

As it is well known, partial differential equations $\dot{u}(t, x)=X(u(t, x))$ under periodic boundary conditions, $x\in \T^m$, can be seen as infinite dimensional systems of ODEs for the Fourier coefficients
\[
u_k:=(2\pi)^{-m}\int_{\T^m} u(x)\,e^{-\mathrm{i} k x}\,dx.
\]
 In many important models this takes the following form 
\begin{equation}\label{odeFourier}
\dot{u}_k=\mathrm{i} \omega(k) u_k+f_k (u),  \qquad k\in \Z^m,
\end{equation}
where $\omega(k)$ are complex numbers.
The modes are uncoupled at the linear level, while the nonlinearity couples all of them. If the nonlinear terms have a zero of order at least two at the origin, in a sufficiently small neighborhood of the origin these systems can be seen as nearly-integrable, where the linear part plays the role of the unperturbed equation. Then, one can make a comparison with lattice models of the form $H_0+\e H_1$ (see \eqref{def:Hamiltonian}).
We notice some fundamental differences between \eqref{odeFourier} and our lattice model which play a significant role in the study of unstable orbits:
\begin{itemize}

\item In many dispersive PDEs the linear frequencies $\omega(k)$ are real numbers, except for finitely many $k$'s (for instance Klein-Gordon with negative mass). Then the linear dynamics is stable, more precisely all the linear motions are oscillations.
Hyperbolicity should arise from nonlinear terms.
 In our model the unperturbed system $H_0$ presents strong hyperbolicity properties, in the sense that there exists an equilibrium which is hyperbolic in all the \emph{infinitely many} directions. In other words, to deal with dispersive PDEs we should be able to treat a priori stable infinite dimensional problems. 

\item The nonlinear coupling in PDEs is not just long range, but the interaction between very distant modes is as strong as between nearest neighbor modes. This is a fundamental difference with our model where the interaction is nearest-neighbor or long range but with strong decay. 

\item In our model the subspaces obtained by keeping at rest any set of modes are invariant. In PDEs in general this is true at the linear level, when the modes are all uncoupled, but not considering the nonlinear effects. 

\end{itemize}

Filling these gaps would provide a significant step forward to the extension of Arnold diffusion to PDEs.

Besides the interest per se, it would be interesting to understand whether this kind of phenomena may provide results of existence of solutions displaying unbounded growth in Sobolev norms.

%
%
%
%
%
%

\subsection{Main tools of the proofs of Theorems \ref{thm:MainFormal} and \ref{thm:MainEnergy}}\label{sec:geometricmethods}

The proofs of both Theorems \ref{thm:MainFormal} and \ref{thm:MainEnergy} rely on the same techniques which are usually referred to as \emph{geometric methods for Arnold diffusion}, which go back to the seminal paper by Arnold \cite{Arnold64}. These techniques have been shown to be extremely powerful in the analysis of unstable motions in nearly integrable systems \cite{DelshamsLS06a,DelshamsLS16,Marco16,GideaM17}.
In the last decades they have also been shown to be extremely powerful in combination with  Variational Methods (Mather Theory, Weak KAM).

The geometric methods  tools that are involved in the proof of Theorems \ref{thm:MainFormal} and \ref{thm:MainEnergy} are the following:
\begin{enumerate}
 \item Construct a sequence of invariant tori $\{\TT_k\}_{k\geq 1}$, which are (partially) hyperbolic. Usually KAM theory is needed in this step (see \cite{FontichLS15}). However, in the present paper we choose  the perturbation $H_1$  such that many of the tori of $H_0$ are preserved (see Section \ref{sec:heuristics}). Note that these tori do not need to have the same dimension. 
 \item Prove that these invariant tori have stable and unstable invariant manifolds (often called whiskers) and that they are regular with respect to parameters. There are several papers dealing with invariant manifolds of whiskered tori in lattice models\footnote{Note that \cite{Blazevski} deals with the invariant manifolds of both finite and infinite dimensional quasiperiodic invariant tori. } \cite{Blazevski,Berenguel,Berenguel19} (see also \cite{FontichLM11} for invariant manifolds of hyperbolic sets).
In Section \ref{sec:manifold} we develop an invariant manifold theory which can be applied to the Hamiltonian \eqref{def:Hamiltonian}. The results we develop are applicable both to maps and flows, require low regularity assumptions and do not require that the maps/flows preserve a symplectic structure.
 \item Prove that the unstable manifold of $\TT_k$ and the stable manifold of $\TT_{k+1}$ intersect transversally. This is usually done by means of (a suitable version of) Melnikov Theory. In the so--called Arnold regime (see Section \ref{sec:heuristics} below), one can also use the scattering map to understand the homoclinic connections to certain normally hyperbolic cylinders (see \cite{DelshamsLS08}). This analysis is done in Section \ref{sec:MelnikovTheory}.
 \item A sequence of invariant tori $\{\TT_k\}_{k\geq 1}$ whose consecutive tori are connected by transverse heteroclinics is usually called a \emph{transition chain}. The last step is to prove that there is an orbit which ``shadows'' (follows closely) this transition chain. To this end, one needs an (infinite dimensional) Lambda Lemma. As far as the authors know the Lambda lemma proved in the present paper is the first one in an infinite dimensional setting. It is proven in Section \ref{sec:lambdalemma}.
\end{enumerate}

The implementation of these steps in the pendulum lattice \eqref{def:Hamiltonian} is explained in Section \eqref{sec:heuristics} at an ``informal'' level and in full detail in  Section \ref{sec:DescriptionProof}. However, we believe that the techniques that we develop in this paper for the Steps 2, 3 and 4 above have wide applicability beyond pendulum lattices. For this reason, they are stated in a general form in Sections \ref{sec:manifold}--\ref{sec:lambdalemma}.

\subsection{Heuristics on the instability mechanism}\label{sec:heuristics}

The instability mechanism that leads to the transfer of energy trajectories of Theorems \ref{thm:MainFormal} and \ref{thm:MainEnergy} relies on the ideas of Arnold \cite{Arnold64} of building a sequence of invariant tori connected by transverse heteroclinic orbits, that is a transition chain of whiskered tori, as mentioned in Section \ref{sec:geometricmethods}. Let us give a rough idea of how this transition chain is constructed. When $\eps=0$, $H$ is just an infinite number of uncoupled penduli. Therefore the phase space possesses plenty of  invariant tori which may be of ``maximal'' (infinite) dimension or can be of (finite or infinite) ``lower dimension'' partially elliptic and hyperbolic.

We consider perturbations $H_1$ such that certain finite dimensional hyperbolic tori of $H_0$ are persistent. 
Fix an instability path
\[
  \{\sigma_i\}_{i\geq 0}\subset \mathbb{Z}^m,\qquad |\sigma_{i+1}-\sigma_i|=1.
\]
 Then we assume the following
\[
 \left.\partial_{q_{k}} H_1(q,p)\right|_{(q_k,p_k)=(0,0)}=\left.\partial_{p_k} H_1(q,p)\right|_{(q_k,p_k)=(0,0)}=0, \qquad \forall k\notin \{\sigma_i\}_{i\geq 0}.
\]
This condition implies that, if we set $S:=\{ \sigma_i \}_{i\geq 0}\subset\ZZ^m$, the subspace
\begin{equation}\label{def:subsp}
\mathcal{V}_S=\left\{q_k=p_k=0 \quad \text{for all}\quad k\not\in S\right\}
\end{equation}
is left invariant by the vector field of $H_1$. Let us define
\[
S_i:=\left\{\sigma_i,\sigma_{i+1}\right\}\subset\ZZ^m.
\]
We assume the following extra hypothesis so that the dynamics on $\mathcal{V}_{S_i}$ is integrable and given by two uncoupled penduli,
\[
\left.
\partial_{q_k} H_1(q,p)
 \right|_{\mathcal{V}_{S_i}}
 =\left.\partial_{p_k} H_1(q,p)\right|_{\mathcal{V}_{S_i}}=0\qquad \text{for}\qquad k=\sigma_i,\sigma_{i+1}.
\]
Indeed, it  implies that 
\[
 H|_{\mathcal{V}_{S_i}}= E_{\sigma_{i}}+E_{\sigma_{i+1}},
\]
which is an integral Hamiltonian given by two uncoupled penduli. 

The transfer of energy mechanism behind Theorems \ref{thm:MainFormal} and \ref{thm:MainEnergy} rely on a transition chain of whiskered invariant tori which are supported on the sequence of invariant subspaces $\cV_{S_i}$.

Fix $h>0$. Then, we consider a transition chain of invariant tori which belong to the energy level $H=h$. Note that Theorem \ref{thm:MainEnergy} constructs a shadowing orbit with finite energy 
whereas Theorem \ref{thm:MainFormal} is obtained through a shadowing argument performed in $\ell^{\infty}$ and thus the energy is just a formal object. However, to construct these orbit in both cases we rely on invariant objects which belong to the $h$ energy level.

The tori in the transition chain  are of two types, first, for $h_1,h_2>0$ such that $h_1+h_2=h$, we consider the two dimensional tori  defined by 
\[
\TT_{\sigma_i,\sigma_{i+1},h_1,h_2}=\{  E_{\sigma_{i}} =h_1, E_{\sigma_{i+1}} =h_2\,\text{ and }\, E_k=0\, \text{ for }\, k\neq \sigma_{i},\sigma_{i+1}\}.
\]
These are  hyperbolic invariant tori for $H$ in the full phase space with an infinite number of hyperbolic stable and unstable directions. In the two first rows of Figure \ref{fig:chain} we show two examples of these tori in a subspace of three penduli: two penduli are at a periodic orbit and the third one at the saddle. 

Second, we consider the one-dimensional invariant tori defined as 
\[
\PP_{\sigma_i} =\{   E_{\sigma_{i}}=h\,\text{ and }\, E_k=0\, \text{ for }\, k\neq \sigma_{i}\}.
\]
Note that $\PP_{\sigma_i}\subset \mathcal{V}_{S_{i-1}}$ and $\PP_{\sigma_i}\subset \mathcal{V}_{S_{i}}$. This can be seen at the last row of Figure \ref{fig:chain}: two penduli at the saddle and one at a periodic orbit.

\begin{figure}[h]
\begin{center}
\includegraphics[height=14cm, width=11cm]{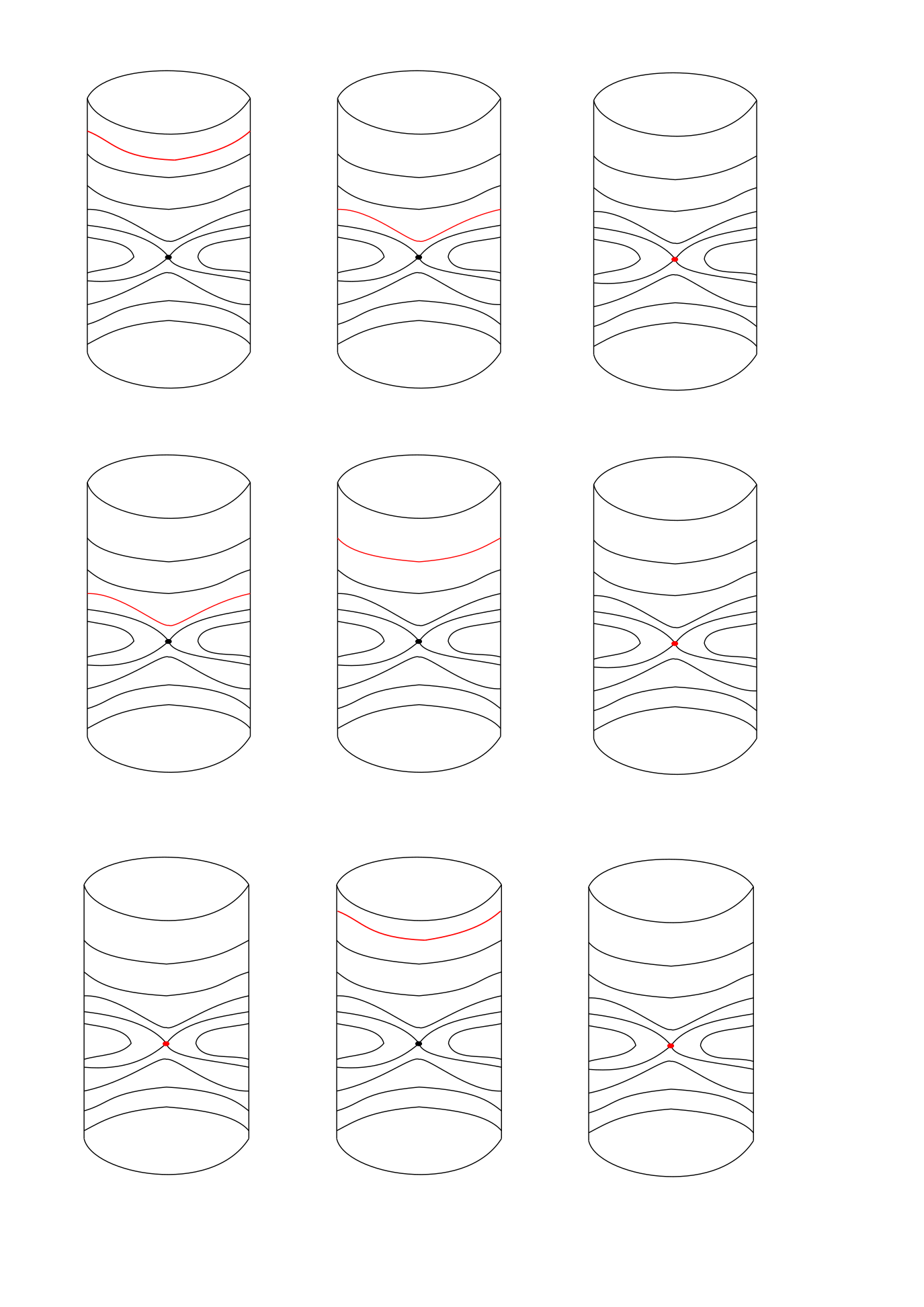}
\end{center}
\caption{The transition chain for the restricted system to the $6$-dimensional invariant subspace of three coupled penduli. First, in the Arnold regime, we connect the $2$-dimensional tori represented in the first two pictures (using the standard machinery of the scattering map). Then, in the Jumping regime, we connect the second torus with the periodic orbit, represented in the last picture. }\label{fig:chain}
\end{figure}

Then, to prove Theorems \ref{thm:MainFormal} and \ref{thm:MainEnergy}, we construct a transition chain of the form 
\[
\PP_{\sigma_0}\cup \bigcup_{k=1}^N \left\{\TT_{\sigma_0,\sigma_{1},h_k,h-h_k}\right\}\cup\PP_{\sigma_1}\cup \bigcup_{k'=1}^{N'} \left\{\TT_{\sigma_1,\sigma_{2},h_{k'},h-h_{k'}}\right\}\cup\PP_{\sigma_2}\cup \ldots
\]
for some sequences of energies $h_k$, $h_{k'}$. Note that the transition chain has tori of both  dimension one and two.  

To prove that such sequence of tori connected by heteroclinic connections exists, one can distinguish two regimes:

\begin{itemize}
 \item Arnold regime: Fix $\delta>0$. Then 
 \[ \La_{\de,\sigma_i,\sigma_{i+1}}=\bigcup_{\wt h\in[\de,h-\de]}\TT_{\sigma_i,\sigma_{i+1},\wt h,h-\wt h}\]
is a normally hyperbolic cylinder foliated by invariant tori as in the classical Arnold example \cite{Arnold64}. Note however that this cylinder has infinite dimensional invariant manifolds. To construct a transition chain we need to impose certain non-degeneracy conditions to an associated Melnikov potential (which only depends on a finite number of sites). This allows to define scattering maps \cite{DelshamsLS08} and by it a transition chain of two dimensional tori ``from top to bottom'' of the cylinder, i.e. with increasing $E_{\sigma_{i+1}}$ and decreasing $E_{\sigma_{i}}$ (the sum must be constant since we construct the transition chain in the energy level $H=h$), see first two rows of Figure \ref{fig:chain}. Thus, through this transition chain energy is only being transferred between the site $\sigma_i$ to the site $\sigma_{i+1}$. 
%
 \item Jumping regime: In the second regime we want to connect tori from $  \La_{\de, \sigma_i,\sigma_{i+1}}$ to the periodic orbit $\PP_{\sigma_{i+1}}$ (where the energy is all supported in the site $\sigma_{i+1}$) and then to the ``new'' cylinder $\La_{\de, \sigma_{i+1},\sigma_{i+2}}$, see the last two rows of Figure \ref{fig:chain}. In this second regime we must construct heteroclinic orbits between invariant tori of different dimension. To construct them we also rely on the non-degeneracy of certain Melnikov Potential. Note that in this regime one cannot rely on normally hyperbolic cylinders since when $\de\to 0$, the hyperbolicity inside the cylinder is as strong as the normal one.
 \end{itemize}

The last step is to construct an orbit shadowing this transition chain. This is a consequence of the Lambda lemma which implies that the unstable manifold of a given torus belongs to the closure of the unstable invariant manifold of the previous torus in the sequence.

\subsection{Structure of the paper}
We end Section \ref{sec:intro} by explaining the structure of the rest of the paper. First in Section \ref{sec:functional} we explain the functional setting that we consider in this paper, which was developed by de la Llave, Fontich and Mart\'in in \cite{FontMartin1}. In Section \ref{sec:DescriptionProof} we state a more detailed theorem which implies Theorems \ref{thm:MainFormal} and \ref{thm:MainEnergy}. Then, we explain the main steps to prove this theorem. Those main steps are an invariant manifold theory for hyperbolic tori, analysis of the transverse intersections of the invariant manifolds and a Lambda Lemma. 

The rest of sections, that is  Sections \ref{sec:manifold}, \ref{sec:MelnikovTheory} and \ref{sec:lambdalemma} are devoted to prove these three main steps. However, in this sections we do not just develop such theories for the model \eqref{def:Hamiltonian} but in a rather general setting.

First in Section \ref{sec:manifold} we develop an (infinite dimension) invariant manifold theory for finite dimensional hyperbolic tori for both coupled maps lattices and vector fields on lattices. We also prove regularity of the invariant manifolds with respect to parameters. In Section  \ref{sec:MelnikovTheory} we analyze the transversality of the invariant manifolds of the invariant tori by a Melnikov-type theory. Finally, in Section \ref{sec:lambdalemma} we prove a Lambda lemma for the invariant manifolds of hyperbolic tori both for flows and maps.

We want to emphasize that specially Sections \ref{sec:manifold} and \ref{sec:lambdalemma} apply for a rather wide class of infinite dimensional dynamical systems (both discrete and continuous) with spatial structure. We believe that the results obtained in these sections have a much wider applicability in analyzing unstable motions in infinite dimensions.

\subsection*{Acknowledgements}
We warmfully thanks Amadeu Delshams, Ernest Fontich and Pau Mart\'in for useful discussions. The authors are supported by the European Research Council (ERC) 
under the European Union's Horizon 2020
research and innovation programme (grant agreement 
No. 757802). M. Guardia is also supported by the Catalan Institution for Research and Advanced
Studies via an ICREA Academia Prize 2019. This work is also supported by the Spanish State Research Agency, through the Severo Ochoa and Mar\'ia de Maeztu Program for Centers and Units of Excellence in R\&D (CEX2020-001084-M).

\section{Functional setting}\label{sec:functional}
We devote this section to introduce the functional setting needed to prove Theorems \ref{thm:MainFormal} and \ref{thm:MainEnergy}. We use the functional setting developed in  \cite{FontMartin1}. Most of the results stated in this section are proven in that paper.

 Let $(\mathcal{X}_i, |\cdot|_{\mathcal{X}_i})$, $i\in \Z^m$, $m\geq 1$, be a sequence of Banach spaces and let us denote by $\ell^{\infty}(\mathcal{X}_i)$ the Banach space 
\[
\ell^{\infty}(\mathcal{X}_i):=\left\{z\in \prod_{i\in\mathbb{Z}^m} \mathcal{X}_i : \,\,\| z \|_{\ell^{\infty}(\mathcal{X}_i)}:=\sup_{i\in\mathbb{Z}^m} |z_i|_{\mathcal{X}_i}<\infty \right\}.
\] 
To lighten the notation we use $\ell^{\infty}$ instead of $\ell^{\infty}(\mathcal{X}_i)$ when this does not create confusion.

We want to introduce a class of maps that preserve these spaces of bounded sequences. The key point is to consider maps with some decay property.  Following \cite{FontMartin1}, we consider a \emph{decay function} $\Gamma\colon\mathbb{Z}^m\to [0, \infty)$ such that
 \begin{enumerate}
 \item $\sum_{i\in \Z^m} \Gamma(i)\le 1,$
 \item $\sum_{j\in \Z^m} \Gamma(i-j)\,\Gamma(j-k)\le \Gamma(i-k), \quad i, k\in \Z^m.$
 \end{enumerate}
For instance given $\alpha>m$ and $\beta\geq 0$, there exists $a>0$ such that
 \[
 \Gamma(i)=\begin{cases}
 a |i|^{-\alpha}\,e^{-\beta |i|}, \quad i\neq0,\\
 a, \qquad \quad \qquad \,\, i=0
 \end{cases}
 \]
 is a decay function. From now on when we refer to $\mathcal{X}_i$ as a sequence of Banach spaces we mean $(\mathcal{X}_i)_{i\in \Z^m}$.

 First in Section \ref{sec:lineardecay} we define linear operators from $\ell^{\infty}(\mathcal{X}_i)$ to itself with decay. This is the only class of linear operators that we consider in this paper. Later in Section \ref{sec:multilineardecay} we define accordingly the multilinear operators and nonlinear maps. 
 In Section \ref{sec:firstint} we give the definition of formal first integrals for both flows and maps.

 For Theorem \ref{thm:MainEnergy} we consider subspaces of  $\ell^{\infty}(\mathcal{X}_i)$ of sequences with decay. In Section \ref{sec:decayspaces} we analyze these spaces and state properties of the operators and maps introduced in Sections \ref{sec:lineardecay} and \ref{sec:multilineardecay} when restricted to these subspaces. 
 Finally, in Section \ref{sec:functsettorus} we consider certain coordinate transformations in  $\ell^{\infty}(\mathcal{X}_i)$ that are well adapted to analyze the dynamics close to certain invariant tori. Then, we see how the analysis performed in the previous sections is adapted to this new set of coordinates.

 \subsection{Linear operators with decay}\label{sec:lineardecay}
 
  Given two sequences of Banach spaces $\mathcal{X}_i, \mathcal{Y}_i$ we define the Banach space of linear maps with decay $\Gamma$ by
\[
\cL_{\Gamma}:=\cL_{\Gamma}(\ell^{\infty}(\mathcal{X}_i); \ell^{\infty}(\mathcal{Y}_i)):=\{ A\in \cL(\ell^{\infty}(\mathcal{X}_i); \ell^{\infty}(\mathcal{Y}_i)) : \| A \|_{\mathcal{L}_{\Gamma}}<\infty \}
\]
where
\begin{equation}\label{def:decaynorm}
\| A \|_{\mathcal{L}_{\Gamma}}:=\max\{ \| A \|_{\mathcal{L}}, \gamma(A) \} \qquad \text{with}\qquad
\gamma(A):=\sup_{i, j\in\mathbb{Z}^m} \,\,\, \sup_{\substack{\|u\|_{\ell^{\infty}}\le 1,\\ \pi_{\ell} u=0, \,\ell\neq j}} |(A u)_i|_{\mathcal{Y}_i}\,\Gamma(i-j)^{-1}.
\end{equation}

Now we state several properties of $\mathcal{L}_\Gamma$ and the operators with decay. 
The first key property is that the  space $\mathcal{L}_{\Gamma}$ is a Banach algebra with respect to the composition. 

\begin{lem}[Proposition $2.8$ in \cite{FontMartin1}]
Let $\mathcal{X}_i, \mathcal{Y}_i, \mathcal{Z}_i$ be sequences of Banach spaces. If $A\in \mathcal{L}_{\Gamma}(\ell^{\infty}(\mathcal{X}_i); \ell^{\infty}(\mathcal{Y}_i))$ and $B\in \mathcal{L}_{\Gamma}(\ell^{\infty}(\mathcal{Y}_i); \ell^{\infty}(\mathcal{Z}_i))$ then
\begin{itemize}
\item $BA\in\mathcal{L}_{\Gamma}(\ell^{\infty}(\mathcal{X}_i); \ell^{\infty}(\mathcal{Z}_i) )$;
\item $\gamma(BA)\le \gamma(B) \gamma(A)$;
\item $\| B A\|_{\mathcal{L}_{\Gamma}}\le \| B \|_{\mathcal{L}_{\Gamma}} \| A \|_{\mathcal{L}_{\Gamma}}$.
\end{itemize}
\end{lem}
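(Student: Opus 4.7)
The plan is to reduce all three bullet points to the key decay estimate $\gamma(BA)\le \gamma(B)\gamma(A)$, since the operator norm bound $\|BA\|_{\mathcal{L}}\le\|B\|_{\mathcal{L}}\|A\|_{\mathcal{L}}$ is standard and the bound on $\|BA\|_{\mathcal{L}_\Gamma}$ follows by combining the two via $\|\cdot\|_{\mathcal{L}_\Gamma}=\max\{\|\cdot\|_{\mathcal{L}},\gamma(\cdot)\}$. So the content is the single-site propagation estimate, and the proof will just be a direct computation that (i)~extracts the decay of $Au$ from the definition of $\gamma(A)$, (ii)~expresses $Au$ as a convergent sum of single-site vectors so that $B$ can be applied componentwise, and (iii)~collapses the resulting double sum using the subconvolution condition~(2) on $\Gamma$.

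Concretely, fix $i,k\in\mathbb{Z}^m$ and $u\in\ell^{\infty}(\mathcal{X}_i)$ with $\|u\|_{\ell^{\infty}}\le 1$ and $\pi_\ell u=0$ for $\ell\neq k$. Set $v:=Au\in\ell^{\infty}(\mathcal{Y}_i)$. From the definition of $\gamma(A)$, applied at the pair of indices $(j,k)$, we get $|v_j|_{\mathcal{Y}_j}\le\gamma(A)\,\Gamma(j-k)$ for every $j\in\mathbb{Z}^m$. Since $\sum_j\Gamma(j-k)\le 1$ by property~(1), the sequence $v$ lies in $c_0$, which is what will let me decompose it. For each $j$ let $v^{(j)}\in\ell^{\infty}(\mathcal{Y}_i)$ be the vector supported only at site $j$ with value $v_j$; then the partial sums $v^{(\le N)}:=\sum_{|j|\le N} v^{(j)}$ converge to $v$ in the $\ell^{\infty}$ norm (this is where the $c_0$ property is used). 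By continuity of $B$ on $\ell^{\infty}$ this gives $(Bv)_i=\sum_{j\in\mathbb{Z}^m}(Bv^{(j)})_i$, with the series converging in $\mathcal{Z}_i$.

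Now for each $j$ I apply $\gamma(B)$ to the rescaled unit vector $v^{(j)}/|v_j|_{\mathcal{Y}_j}$ (or note the bound is trivial if $v_j=0$) to obtain $|(Bv^{(j)})_i|_{\mathcal{Z}_i}\le \gamma(B)\,\Gamma(i-j)\,|v_j|_{\mathcal{Y}_j}\le \gamma(B)\gamma(A)\,\Gamma(i-j)\Gamma(j-k)$. Summing in $j$ and invoking property~(2) of the decay function,
\[
|(BAu)_i|_{\mathcal{Z}_i}\le \gamma(B)\gamma(A)\sum_{j\in\mathbb{Z}^m}\Gamma(i-j)\Gamma(j-k)\le \gamma(B)\gamma(A)\,\Gamma(i-k).
\]
Taking the supremum over admissible $u$ and over $i,k$ yields $\gamma(BA)\le \gamma(B)\gamma(A)$, and in particular $\gamma(BA)<\infty$, so $BA\in\mathcal{L}_\Gamma$. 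Combined with the standard composition bound on $\|\cdot\|_{\mathcal{L}}$, one concludes $\|BA\|_{\mathcal{L}_\Gamma}\le\|B\|_{\mathcal{L}_\Gamma}\|A\|_{\mathcal{L}_\Gamma}$.

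The only subtle point I expect is the justification of the componentwise formula $(Bv)_i=\sum_j(Bv^{(j)})_i$: $B$ is only assumed bounded on $\ell^{\infty}$, so one cannot in general split a bounded sequence into single-site pieces. The decay hypothesis on $A$ rescues this by forcing $v=Au$ to be in $c_0$, making the partial-sum approximation $\ell^{\infty}$-convergent and hence compatible with $B$. Once that step is in place, everything else is a one-line application of the subconvolution inequality.
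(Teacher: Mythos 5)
Your proof is correct, and it takes essentially the same route as the cited reference: the key step — decomposing $v=Au$ into single-site pieces and pushing $B$ through the sum — is precisely the content of Lemma~2.6 in \cite{FontMartin1} (quoted in the paper immediately after this lemma), and your justification via the $c_0$ property of $v$, which follows from the summability of $\Gamma(\cdot-k)$, is the right one. The remaining steps (rescaling to apply $\gamma(B)$ site by site, then collapsing the double decay via the subconvolution inequality) match the standard argument.
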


The second property of operators with decay is that on certain subspaces they have a ``matrix representation''. Indeed, consider a sequence of Banach spaces $(\mathcal{X}_i)_{i\in \Z^m}$ and fix $j\in \Z^m$. We define the immersion map
\[\mathtt{I}_j\colon \mathcal{X}_j \to \ell^{\infty}(\mathcal{X}_i), \qquad \big(\mathtt{I}_j(v)\big)_j=v, \quad \big(\mathtt{I}_j(v)\big)_i=0\,\,\,\,i\neq j\]
and the projection
\[
\pi_j \colon \ell^{\infty}(\mathcal{X}_i)\to \mathcal{X}_j, \qquad \pi_j(z)=z_j.
\]
Given $A\in\mathcal{L} (\ell^{\infty}(\mathcal{X}_i); \ell^{\infty}(\mathcal{Y}_i))$ we define
\[A^i_j v:=\pi_i (A \mathtt{I}_j(v)) \qquad \forall v\in \mathcal{X}_j, \quad \forall i, j\in \Z^m.\]
In finite dimension this would coincide with the representation of $A$ as a matrix with entry $(i, j)$ given by $A^i_j$. We remark that linear operators acting on $\ell^{\infty}$ spaces cannot be always represented as matrices. However the following lemma shows that if they act on decaying sequences this is the case.

\begin{lem}[Lemma $2.6$ in \cite{FontMartin1}]
Let $A\in \mathcal{L} (\ell^{\infty}(\mathcal{X}_i); \ell^{\infty}(\mathcal{Y}_i))$ and $v\in \ell^{\infty}(\mathcal{X}_i)$ be such that $\lim_{|j| \to \infty} |v_j|=0$. Then
\[
(A v)_i=\sum_{j\in \Z^m} A_j^i v_j.
\]
\end{lem}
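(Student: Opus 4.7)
The plan is to prove this by reducing from the infinite sequence $v$ to its finite truncations, for which the matrix representation is trivial by linearity, and then passing to the limit using continuity of $A$. The crucial role of the hypothesis $\lim_{|j|\to\infty}|v_j|=0$ is that it upgrades coordinatewise approximation of $v$ by finitely supported sequences to convergence in the $\ell^{\infty}$ norm, which is what allows one to push $A$ through the limit.

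First, for each integer $N\geq 0$ I would define the finitely supported truncation
\[
v^{(N)}:=\sum_{|j|\leq N}\mathtt{I}_j(\pi_j v).
\]
By the definition of the $\ell^{\infty}$ norm,
\[
\|v-v^{(N)}\|_{\ell^{\infty}}=\sup_{|j|>N}|v_j|_{\mathcal{X}_j},
\]
and the hypothesis $|v_j|\to 0$ as $|j|\to\infty$ guarantees that this supremum tends to $0$. Hence $v^{(N)}\to v$ in $\ell^{\infty}(\mathcal{X}_i)$.

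Next, since $A$ is a bounded linear operator on $\ell^{\infty}$, it is continuous, so $Av^{(N)}\to Av$ in $\ell^{\infty}(\mathcal{Y}_i)$, and in particular the $i$-th component converges in $\mathcal{Y}_i$:
\[
\pi_i(Av^{(N)})\longrightarrow \pi_i(Av)=(Av)_i.
\]
On the other hand, by linearity of $A$ and $\pi_i$ and the definition $A^i_j w=\pi_i(A\mathtt{I}_j(w))$, we have the finite sum identity
\[
\pi_i(Av^{(N)})=\sum_{|j|\leq N}\pi_i\bigl(A\,\mathtt{I}_j(v_j)\bigr)=\sum_{|j|\leq N}A^i_j v_j.
\]
Letting $N\to\infty$, the left-hand side converges to $(Av)_i$, so the right-hand side converges (in $\mathcal{Y}_i$) to the same limit, giving
\[
(Av)_i=\sum_{j\in\Z^m}A^i_j v_j,
\]
as claimed.

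There is no genuine obstacle: the only point requiring care is the step $v^{(N)}\to v$ in $\ell^{\infty}$, which would fail for a general bounded sequence (truncation only gives pointwise convergence), and this is precisely where the decay hypothesis on $v$ enters. No decay assumption on $A$ itself is needed for this lemma; the decay algebra $\mathcal{L}_{\Gamma}$ enters only later, when one wants the series to make sense for arbitrary $v\in\ell^{\infty}$ (not just decaying ones) and to control its norm.
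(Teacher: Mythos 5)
Your argument is correct and is the standard proof: you truncate $v$ to its finitely supported approximants, observe that the decay hypothesis $|v_j|\to0$ is exactly what upgrades coordinatewise truncation to convergence in the $\ell^{\infty}$ norm, and then pass the bounded operator $A$ through the limit and read off the $i$-th component. The paper itself does not reproduce a proof (it cites \cite{FontMartin1}), but your reasoning is complete and is the argument one would expect there.
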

The set of linear invertible operators with decay is not a subalgebra of $\mathrm{Gl}(\ell^{\infty}(\mathcal{X}_i))$. However for small perturbations of invertible operator with decay and whose inverse also has decay we have the following classical result.

\begin{lem}[Neumann series]\label{lem:neumann}
Let $A\in\mathcal{L}_{\Gamma}(\ell^{\infty}(\mathcal{X}_i))$ be invertible and such that
\[
A^{-1}\in\mathcal{L}_{\Gamma}(\ell^{\infty}(\mathcal{X}_i)).
\]
Let $B\in\mathcal{L}_{\Gamma}(\ell^{\infty}(\mathcal{X}_i))$ such that $\| A^{-1} \|_{\mathcal{L}_{\Gamma}} \| B \|_{\mathcal{L}_{\Gamma}}<1$. Then $M:=A+B\in \mathcal{L}_{\Gamma}(\ell^{\infty}(\mathcal{X}_i))$ is invertible, $M^{-1}\in\mathcal{L}_{\Gamma}(\ell^{\infty}(\mathcal{X}_i))$ and
\[
|  \| M^{-1} \|_{\mathcal{L}_{\Gamma}}-\| A^{-1} \|_{\mathcal{L}_{\Gamma}}  |\le \| M^{-1}-A^{-1} \|_{\mathcal{L}_{\Gamma}}=\mathcal{O}(\| B \|_{\mathcal{L}_{\Gamma}}).
\]
\end{lem}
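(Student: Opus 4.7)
The plan is to adapt the classical Neumann series argument to the Banach algebra $\mathcal{L}_\Gamma$, using crucially the submultiplicativity $\|BA\|_{\mathcal{L}_\Gamma}\le\|B\|_{\mathcal{L}_\Gamma}\|A\|_{\mathcal{L}_\Gamma}$ established in the Banach algebra lemma right above the statement. The essential point is that all estimates must be carried out with respect to the decay norm $\|\cdot\|_{\mathcal{L}_\Gamma}$, not just the operator norm; this is where the Banach algebra structure does all the work.

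First I would rewrite $M=A(\mathrm{Id}+A^{-1}B)$. By the submultiplicativity property, $A^{-1}B\in\mathcal{L}_\Gamma$ with
\[
\|A^{-1}B\|_{\mathcal{L}_\Gamma}\le\|A^{-1}\|_{\mathcal{L}_\Gamma}\|B\|_{\mathcal{L}_\Gamma}=:q<1.
\]
Then I would define the formal series
\[
S:=\sum_{n=0}^{\infty}(-A^{-1}B)^n,
\]
and show it converges absolutely in $\mathcal{L}_\Gamma$. This needs two ingredients: (i) that $\mathcal{L}_\Gamma$ is complete with respect to $\|\cdot\|_{\mathcal{L}_\Gamma}$, which follows because both $\|\cdot\|_{\mathcal{L}}$ and $\gamma(\cdot)$ are genuine norms and the defining supremum in $\gamma$ is preserved under uniform Cauchy limits; and (ii) iterating submultiplicativity, $\|(A^{-1}B)^n\|_{\mathcal{L}_\Gamma}\le q^n$, so the series is dominated by the geometric series $\sum q^n=1/(1-q)$. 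Thus $S\in\mathcal{L}_\Gamma$ with $\|S\|_{\mathcal{L}_\Gamma}\le 1/(1-q)$.

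Next, standard telescoping shows $S(\mathrm{Id}+A^{-1}B)=(\mathrm{Id}+A^{-1}B)S=\mathrm{Id}$ inside $\mathcal{L}_\Gamma$ (the identities already hold in $\mathcal{L}$, and by the previous step both sides belong to $\mathcal{L}_\Gamma$). Therefore $M$ is invertible with
\[
M^{-1}=SA^{-1}\in\mathcal{L}_\Gamma,
\]
again by the Banach algebra property. For the quantitative estimate, I would write
\[
M^{-1}-A^{-1}=(S-\mathrm{Id})A^{-1}=\Bigl(\sum_{n=1}^{\infty}(-A^{-1}B)^n\Bigr)A^{-1},
\]
so that
\[
\|M^{-1}-A^{-1}\|_{\mathcal{L}_\Gamma}\le\|A^{-1}\|_{\mathcal{L}_\Gamma}\sum_{n=1}^{\infty}q^n=\frac{q\,\|A^{-1}\|_{\mathcal{L}_\Gamma}}{1-q}=\mathcal{O}(\|B\|_{\mathcal{L}_\Gamma}).
\]
The bound $\bigl|\|M^{-1}\|_{\mathcal{L}_\Gamma}-\|A^{-1}\|_{\mathcal{L}_\Gamma}\bigr|\le\|M^{-1}-A^{-1}\|_{\mathcal{L}_\Gamma}$ is then a direct consequence of the reverse triangle inequality.

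The only genuinely non-routine point is checking completeness of $(\mathcal{L}_\Gamma,\|\cdot\|_{\mathcal{L}_\Gamma})$; this is where one has to verify that if $\{A_n\}$ is Cauchy in $\|\cdot\|_{\mathcal{L}_\Gamma}$, the limit (which exists in $\mathcal{L}$ by completeness of the operator norm) still satisfies $\gamma(A)<\infty$ and is realized as a $\gamma$-limit. This follows by passing to the limit inside the supremum defining $\gamma$ in \eqref{def:decaynorm}, exploiting that for each fixed pair $(i,j)$ the quantity $|(A_n u)_i|_{\mathcal{Y}_i}\Gamma(i-j)^{-1}$ converges, and that the $\Gamma$-weighted bounds are preserved in the limit. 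Everything else is a transcription of the Neumann series construction into the Banach algebra $\mathcal{L}_\Gamma$.
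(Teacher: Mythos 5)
Your proof is correct and follows precisely the route the paper intends: the paper's own proof is the one-liner ``direct consequence of classical Neumann series for bounded operators and the algebra property of $\mathcal{L}_\Gamma$,'' and you have simply written out that argument in full, including the completeness check for $(\mathcal{L}_\Gamma,\|\cdot\|_{\mathcal{L}_\Gamma})$ which the paper leaves implicit in calling it a Banach space.
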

\begin{proof}
It is a direct consequence of classical Neumann series for bounded operators and the algebra property of $\mathcal{L}_{\Gamma}$.
\end{proof}

\subsection{Multilinear maps and $C^r$ functions}\label{sec:multilineardecay}
Let $k\geq 1$ and $\mathcal{X}_i^{(j)}$, $j=1, \dots, k$, be $k$-sequences of Banach spaces. We introduce the space of $k$-linear maps with decay $\Gamma$

\begin{align}
&\mathcal{L}_{\Gamma}^k (\ell^{\infty}(\mathcal{X}^{(1)}_i)\times \dots \times \ell^{\infty}(\mathcal{X}^{(k)}_i); \ell^{\infty}(\mathcal{Y}_i)):=\Big\{  A\in \mathcal{L}^k (\ell^{\infty}(\mathcal{X}^{(1)}_i)\times \dots \times \ell^{\infty}(\mathcal{X}^{(k)}_i); \ell^{\infty}(\mathcal{Y}_i)) :  \label{def:klinearmaps}  \\ \nonumber
& \imath_m(A)\in \mathcal{L}_{\Gamma} (\ell^{\infty}(\mathcal{X}^{(m)}_i); \ell^{\infty}(\mathcal{L}^{k-1} (\ell^{\infty}(\mathcal{X}^{(1)}_i)\times \dots \times \widehat{\ell^{\infty}(\mathcal{X}_i^{(m)})} \times  \dots \times \ell^{\infty}(\mathcal{X}^{(k)}_i); \mathcal{Y}_i))),\,\,m=1, \dots, k \Big\}
\end{align}
where the symbol $\widehat{(\cdot)}$ denotes that the term $(\cdot)$ is missing in the product and $\imath_m$ is defined by
\[
\imath_m (A) (v) (u_1, \dots, u_{m-1}, u_{m+1}, \dots, u_k) :=A(u_1, \dots, u_{m-1}, v , u_{m+1}, \dots, u_k).
\]
The space $\mathcal{L}_{\Gamma}^k (\ell^{\infty}(\mathcal{X}_i); \ell^{\infty}(\mathcal{Y}_i))$ is a Banach space with the norm
\[
\| A \|_{\mathcal{L}^k_{\Gamma}}=\max\{ \| A \|, \gamma(A) \}
\]
where
\[
\gamma(A)=\max_{1\le m\le k} \{ \gamma(\imath_m(A)) \}.
\]
This definition allows us to introduce also the set of (nonlinear)  $C^r$ maps with decay between $\ell^{\infty}$ spaces.

Given an open subset $\mathcal{U}$ of $\ell^{\infty}(\mathcal{X}_i)$ let
\begin{align*}
C^1_{\Gamma}:=C^1_{\Gamma}(\mathcal{U}; \ell^{\infty}(\mathcal{Y}_i)):=\{ &F\in C^1(\mathcal{U}; \ell^{\infty}(\mathcal{Y}_i)) : DF(x)\in \cL_{\Gamma}, \,\,\forall x\in\mathcal{U},\,\,\| F \|_{C^1_{\Gamma}}<\infty \}
\end{align*}
with 
\[
\| F \|_{C^1_{\Gamma}}:=\max \left\{  \| F \|_{C^0}, \sup_{x\in\mathcal{U}} \| DF (x) \|_{\mathcal{L}_{\Gamma}} \right\}.
\]
We point out that, by definition, the derivatives of a map $F\in C^1_{\Gamma}$ are uniformly bounded on $\mathcal{U}$.

For $r>1$ we define
\[
C^r_{\Gamma}:=C^r_{\Gamma}(\mathcal{U}; \ell^{\infty}(\mathcal{Y}_i)):=\left\{F\in C^r(\mathcal{U}; \ell^{\infty}(\mathcal{Y}_i)) : D^j F\in C^1_{\Gamma}, \,\,0\le j \le r-1 \right\}
\]
with the norm
\begin{equation}\label{def:Crnorm}
\| F \|_{C^r_{\Gamma}}:=\max\left\{  \| F \|_{C^0}, \max_{0\le j\le r-1}\,\, \sup_{x\in\mathcal{U}} \| D\, D^j\,F (x) \|_{\mathcal{L}_{\Gamma}}\right\}=\max_{0\le j\le r-1} \| D^j F\|_{C^1_{\Gamma}}.
\end{equation}
The next two lemmas analyze the behavior of  $C^r_{\Gamma}$ maps under composition and the limit of certain sequences in $C^r_{\Gamma}$.

\begin{lem}[Proposition $2.17$ in \cite{FontMartin1}]
Let $\mathcal{X}_i, \mathcal{Y}_i, \mathcal{Z}_i$ be sequences of Banach spaces.
Let $\mathcal{U}\subset \ell^{\infty}(\mathcal{X}_i)$ and $\mathcal{V}\subset \ell^{\infty}(\mathcal{V}_i)$ be open sets. Then, if $F\in C^r_{\Gamma}(\mathcal{U}; \ell^{\infty}(\mathcal{Y}_i))$, $G\in C^r_{\Gamma}(\mathcal{V}; \ell^{\infty}(\mathcal{Z}_i))$ and $F(\mathcal{U})\subset \mathcal{V}$ then $G\circ F\in C^r_{\Gamma}(\mathcal{U}; \ell^{\infty}(\mathcal{Z}_i))$. Moreover
\[
\| G\circ F \|_{C^r_{\Gamma}}\le K_r (1+\| F \|^r_{C^r_{\Gamma}})\,\| G \|_{C^r_{\Gamma}}
\]
for some constant $K_r>0$ independent of $F$ and $G$.
\end{lem}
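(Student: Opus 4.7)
The plan is to proceed by induction on $r$, using the chain rule to handle $r=1$ and Faà di Bruno's formula for the inductive step. For $r=1$, write $D(G\circ F)(x) = DG(F(x))\circ DF(x)$. Since the previous lemma shows that $\mathcal{L}_\Gamma$ is a Banach algebra with $\gamma(BA)\le \gamma(B)\gamma(A)$, the composition lies in $\mathcal{L}_\Gamma$ with $\|D(G\circ F)(x)\|_{\mathcal{L}_\Gamma}\le \|DG\|_{C^0}\cdot \|DF\|_{C^0_\Gamma}$, which, after taking the sup over $\mathcal{U}$, produces a bound of the required form.

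For the inductive step, I would apply Faà di Bruno's formula
\begin{equation*}
D^k (G\circ F)(x) = \sum_{\pi \in \mathcal{P}(k)} D^{|\pi|} G(F(x)) \circ \bigl( D^{|B_1|} F(x) \otimes \cdots \otimes D^{|B_{|\pi|}|} F(x)\bigr),
\end{equation*}
where $\pi=\{B_1,\dots,B_{|\pi|}\}$ runs over set partitions of $\{1,\dots,k\}$. Each summand is a $k$-linear map from $\ell^\infty(\mathcal{X}_i)^k$ to $\ell^\infty(\mathcal{Z}_i)$, so the task reduces to showing that each such summand lives in $\mathcal{L}^k_\Gamma$ with the right norm. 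The key observation is that verifying membership in $\mathcal{L}^k_\Gamma$ amounts, via the immersion maps $\imath_m$ from \eqref{def:klinearmaps}, to checking that fixing all but one argument yields a linear operator with decay $\Gamma$. Fixing all but the $m$-th slot in a summand $D^{|\pi|}G\circ(D^{|B_1|}F\otimes\cdots\otimes D^{|B_{|\pi|}|}F)$ produces an operator of the form $A_1 \circ A_2$ where $A_1$ comes from plugging fixed arguments into $D^{|\pi|}G(F(x))$ (hence has decay, by the decay of $D^{|\pi|}G$ and the fact that $\imath_j$ preserves $\mathcal{L}_\Gamma$) and $A_2$ is the relevant slot of $D^{|B|}F(x)$ (which has decay by the inductive hypothesis applied to $F$). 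The Banach algebra property of $\mathcal{L}_\Gamma$ then closes the argument and gives the quantitative bound $\gamma(A_1\circ A_2)\le \gamma(A_1)\gamma(A_2)$.

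For the norm estimate, each Faà di Bruno summand indexed by a partition $\pi$ of size $|\pi|=s$ and block sizes $|B_1|,\dots,|B_s|$ (with $\sum_j |B_j|=k\le r$) is bounded in $\mathcal{L}^k_\Gamma$ by a combinatorial constant times
\begin{equation*}
\|D^{s} G\|_{C^0_\Gamma}\,\prod_{j=1}^{s}\|D^{|B_j|} F\|_{C^0_\Gamma} \;\le\; \|G\|_{C^r_\Gamma}\,\|F\|^{s}_{C^r_\Gamma}.
\end{equation*}
Since $1\le s\le k\le r$, each such term is dominated by $\|G\|_{C^r_\Gamma}(1+\|F\|^r_{C^r_\Gamma})$; summing the finitely many partitions and adding the trivial $C^0$-estimate $\|G\circ F\|_{C^0}\le \|G\|_{C^0}$ produces the stated inequality with $K_r$ equal to the total combinatorial weight (the $r$-th Bell number up to a constant), which is independent of $F$ and $G$.

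The main obstacle I anticipate is the bookkeeping for the multilinear decay: one must verify rigorously that tensoring and composing operators while respecting the $\imath_m$-based definition of $\mathcal{L}^k_\Gamma$ does not break the decay bound, especially because each slot must be checked separately. The clean way to do this is to prove an auxiliary statement asserting that if $A\in\mathcal{L}^s_\Gamma$ and $B_1,\dots,B_s$ are (multi)linear with decay, then $A\circ(B_1\otimes\cdots\otimes B_s)\in\mathcal{L}^k_\Gamma$, with $\gamma$ controlled by the product of the individual $\gamma$'s; this reduces the Faà di Bruno step to a routine invocation of the Banach algebra property from the preceding lemma.
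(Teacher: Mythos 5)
The paper does not prove this lemma; it cites it as Proposition 2.17 of \cite{FontMartin1}. Your proposal is a correct route to a proof, and it is the expected one for a statement of this type: induction on $r$, the chain rule for $r=1$, Fa\`a di Bruno for the inductive step, and the Banach-algebra property of $\mathcal{L}_\Gamma$ to close the loop slot-by-slot via the $\imath_m$ structure of $\mathcal{L}^k_\Gamma$.

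A few remarks worth tightening. In the $r=1$ case, the bound should read $\|D(G\circ F)(x)\|_{\mathcal{L}_\Gamma}\le \sup_y\|DG(y)\|_{\mathcal{L}_\Gamma}\,\|DF(x)\|_{\mathcal{L}_\Gamma}$; writing $\|DG\|_{C^0}$ is ambiguous since you need the $\mathcal{L}_\Gamma$ norm of $DG$ on both factors for the algebra property to apply. The heart of the matter is the auxiliary lemma you correctly flag at the end: if $A\in\mathcal{L}^s_\Gamma$ and $B_1,\dots,B_s$ are multilinear maps with decay, then $A\circ(B_1\otimes\cdots\otimes B_s)\in\mathcal{L}^k_\Gamma$ with $\gamma$ controlled by the product of the individual $\gamma$'s. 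The verification requires, in the spirit of the calculation you sketch, observing that partial evaluation preserves the $\gamma$-seminorm uniformly in the fixed arguments: if $T\in\mathcal{L}_\Gamma(\ell^\infty(\mathcal{Y}_i);\ell^\infty(\mathcal{L}^{s-1}(\cdots;\mathcal{Z}_i)))$ and one evaluates $T(v)$ componentwise at bounded fixed arguments $u_1,\dots,u_{s-1}$, the resulting operator $\tilde T$ satisfies $\gamma(\tilde T)\le\gamma(T)\prod_\ell\|u_\ell\|$. With this in hand, the slot being checked feeds into exactly one block of the partition, and the remaining slot-linear operator is the composition of (a partial evaluation of) $D^{|\pi|}G(F(x))$ with (a partial evaluation of) the corresponding $D^{|B_j|}F(x)$, so the algebra property gives $\gamma(A_1\circ A_2)\le\gamma(A_1)\gamma(A_2)$, uniformly over the fixed arguments. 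Your norm bookkeeping is correct: each summand with $|\pi|=s\le r$ contributes at most $\|G\|_{C^r_\Gamma}\|F\|^s_{C^r_\Gamma}\le\|G\|_{C^r_\Gamma}(1+\|F\|^r_{C^r_\Gamma})$, and the number of partitions is a combinatorial constant depending only on $r$.
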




\begin{lem}[Lemma $2.14$ in \cite{FontMartin1}]\label{lem:limitdecay}
Let $\mathcal{U}$ be an open subset of $\ell^{\infty} (\mathcal{X}_i)$ and let $B_{\rho}$ be the closed ball of radius $\rho$ in $C^r_{\Gamma} (\mathcal{U}; \ell^{\infty}(\mathcal{Y}_i))$. Assume that $(F_n)_{n\geq 0}$ is a sequence in $B_{\rho}$ and for all $0\le k\le r$, $x\in \mathcal{U}$, $D^k F_n(x)$ converges in the sense of $k$-linear maps to $D^k F(x)$, where $F$ is a $C^r$ function defined on $\mathcal{U}$. Then $F\in B_{\rho}$.  
\end{lem}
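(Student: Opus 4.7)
Since $F$ is already assumed to be $C^r$ on $\mathcal{U}$, the only thing to check is that the bound $\|F_n\|_{C^r_\Gamma} \le \rho$ survives in the limit. I would handle each constituent of the norm in \eqref{def:Crnorm} separately, the core input being lower semicontinuity of the decay seminorm under pointwise convergence.

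First, from $F_n(x) \to F(x)$ in $\ell^\infty(\mathcal{Y}_i)$ at every $x \in \mathcal{U}$, continuity of the norm gives $|F(x)|_{\ell^\infty} \le \liminf_n |F_n(x)|_{\ell^\infty} \le \rho$, so $\|F\|_{C^0} \le \rho$.

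The main step is the following lower semicontinuity statement. Suppose $A_n \to A$ in the sense of $k$-linear maps on $\ell^\infty$, by which I mean pointwise in the arguments: for every $(u_1,\dots,u_k)$, $A_n(u_1,\dots,u_k) \to A(u_1,\dots,u_k)$ in $\ell^\infty(\mathcal{Y}_i)$. I claim $\|A\|_{\mathcal{L}^k_\Gamma} \le \liminf_n \|A_n\|_{\mathcal{L}^k_\Gamma}$. In the linear case $k=1$, fix $i, j \in \Z^m$ and $u$ with $\|u\|_{\ell^\infty}\le 1$ and $\pi_\ell u = 0$ for $\ell \neq j$; coordinatewise convergence $(A_n u)_i \to (Au)_i$ in $\mathcal{Y}_i$ yields
\[
|(Au)_i|_{\mathcal{Y}_i}\,\Gamma(i-j)^{-1} \;=\; \lim_n |(A_n u)_i|_{\mathcal{Y}_i}\,\Gamma(i-j)^{-1} \;\le\; \liminf_n \gamma(A_n),
\]
and taking the sup over $i, j, u$ in the definition \eqref{def:decaynorm} gives $\gamma(A) \le \liminf_n \gamma(A_n)$. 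The operator-norm part is lower semicontinuous for trivial reasons, so $\|A\|_{\mathcal{L}_\Gamma} \le \liminf_n \|A_n\|_{\mathcal{L}_\Gamma}$. For $k \ge 2$ the same argument applies after composing with the identification $\imath_m$ from \eqref{def:klinearmaps}, which reduces $k$-linear decay to linear decay in the $m$-th slot; since pointwise convergence in the arguments is preserved by $\imath_m$, each $\gamma(\imath_m(A))$ is bounded by $\liminf_n \gamma(\imath_m(A_n))$ and hence by $\rho$.

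Applying the lower semicontinuity to $A_n = D(D^j F_n)(x) \to D(D^j F)(x)$ at each fixed $x \in \mathcal{U}$ and each $0 \le j \le r-1$ gives $\|D(D^j F)(x)\|_{\mathcal{L}_\Gamma} \le \rho$; taking the sup over $x$ and combining with the $C^0$ bound yields $\|F\|_{C^r_\Gamma} \le \rho$, so $F \in B_\rho$. The only subtle point I expect is pinning down precisely what convergence "in the sense of $k$-linear maps" is assumed (pointwise in the arguments versus in the $k$-linear operator norm); in either reading the coordinatewise statement $(A_n u)_i \to (Au)_i$ in $\mathcal{Y}_i$, which is all my argument actually uses, continues to hold, so the proof is robust against that interpretation.
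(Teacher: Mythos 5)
The paper does not itself prove this lemma; it is quoted as Lemma~2.14 of \cite{FontMartin1}, so there is no in-text proof to compare against. Your argument is nevertheless correct and is the natural one: since $F$ is assumed $C^r$ from the outset, the only content is that the closed ball $B_\rho$ is closed under pointwise convergence of the function and its first $r$ derivatives, and this follows because the decay seminorm $\gamma(\cdot)$ in \eqref{def:decaynorm} is a supremum of the coordinate functionals $A\mapsto |(Au)_i|_{\mathcal{Y}_i}\Gamma(i-j)^{-1}$, each of which passes to the limit under coordinatewise convergence, so $\gamma$ (and likewise the operator norm, and their maximum $\|\cdot\|_{\mathcal{L}_\Gamma}$) is lower semicontinuous; the reduction to the linear case via the slot maps $\imath_m$ for $k\ge 2$ is exactly how $\gamma$ is defined for multilinear maps. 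You are also right that the conclusion is insensitive to whether ``convergence in the sense of $k$-linear maps'' means argumentwise or in the uniform $k$-linear norm, since both imply the coordinatewise convergence $(A_n u)_i\to(Au)_i$ that the estimate actually uses. One small remark: it is worth noting explicitly that the finiteness $\|D(D^jF)(x)\|_{\mathcal{L}_\Gamma}\le\rho<\infty$ for each $x$ and $j$ is precisely what certifies $D^jF\in C^1_\Gamma$, i.e.\ membership in $C^r_\Gamma$ as a set, not only the norm bound; your argument delivers this, but it is the point that makes the statement ``$F\in B_\rho$'' rather than merely ``$\|F\|_{C^r_\Gamma}\le\rho$ whenever $F\in C^r_\Gamma$.''
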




%

\subsection{First integrals}\label{sec:firstint}
In this paper we consider maps and vector fields acting on Banach spaces $\ell^{\infty}(\mathcal{X}_i)$. Some of these maps and vector fields will have first integrals. However, these first integrals may only be formal in the sense that they are unbounded. This happens for the Hamiltonian introduced in  \eqref{def:Hamiltonian}: even if it defines a Hamiltonian vector field, it is unbounded in  $\ell^{\infty}(\mathcal{X}_i)$.

We devote this section to properly define the notion of formal first integral both for maps and flows.

\begin{definition}\label{def:firstintegralmaps}
Consider a $C_\Gamma^1$ map $F\colon \mathcal{U}\subset\ell^{\infty}(\mathcal{X}_i)\to \ell^{\infty}(\mathcal{X}_i)$, where $\mathcal{U}$ is an open subset. Then, 
\[
 G:\mathcal{U}\to \R\cup\{\infty\}
\]
is a formal first integral $G$ for the map $F$ if it has the following properties.
\begin{itemize}
\item For all $z\in \mathcal{U}$, $i\in\mathbb{Z}^m$ and  $h\in\mathcal{X}_i$ with $\|h\|_{\mathcal{X}_i}$ small enough, the function 
\[
M(z,h)=G(z+\mathtt{I}_i(h))-G(z)
\]
is well defined, $|M(z,h)|<\infty$ and it is $C^1$ in all its variables.

\item As a consequence, for all $z\in \mathcal{U}$, $dG(z)$ is well defined, is continuous and  satisfies $dG(z)\in T_z^*\ell^{\infty}(\mathcal{X}_i)$ and
\item For all $z\in \mathcal{U}$, it satisfies
\begin{equation}\label{cond:FI:maps}
 DG(F(z))DF(z)=DG(z).
\end{equation}
\end{itemize}
\end{definition}
One can state an analogous definition of formal first integral for vector fields.

\begin{definition}\label{def:firstintegralflows}
Consider a $C_\Gamma^1$ vector field $X\colon \mathcal{U}\subset\ell^{\infty}(\mathcal{X}_i)\to \ell^{\infty}(\mathcal{X}_i)$. Then, 
\[
 G:\mathcal{U}\to \R\cup\{\infty\}
\]
is a formal first integral $G$ for the vector field $X$ if it has the following properties.
\begin{itemize}
\item For all $z\in \mathcal{U}$, $i\in\mathbb{Z}^m$ and  $h\in\mathcal{X}_i$ with $\|h\|_{\mathcal{X}_i}$ small enough, the function 
\[
M(z,h)=G(z+\mathtt{I}_i(h))-G(z)
\]
is well defined, $|M(z,h)|<\infty$ and it is $C^1$ in all its variables.

\item As a consequence, for all $z\in \mathcal{U}$, $dG(z)$ is well defined and  satisfies $dG(z)\in T_z^*\ell^{\infty}(\mathcal{X}_i)$ and
\item For all $z\in \mathcal{U}$, it satisfies
\[ DG(z)X(z)=0.\]
\end{itemize}
\end{definition}

\begin{remark}\label{rmk:Frobenius}
Note that the even if these definitions admit that the first integrals are just formal, they still define a codimension 1 foliation on the open sets $\mathcal U\subset\ell^{\infty}(\mathcal{X}_i)$ where $dG\neq 0$. This is a consequence of the classical Frobenius Theorem which also applies to Banach spaces (see for instance Chapter VI of \cite{Lang95}). Indeed, it is easy to check that the distribution $\Ker(dG(z))\subset T_z \mathcal U$ is integrable.
\end{remark}

\subsection{Sequences with decay: the subspace $\Sigma_{j, \Gamma}$}\label{sec:decayspaces}

Fix $j\in\mathbb{Z}^m$. We introduce the subspace of $\ell^{\infty}(\mathcal{X}_i)$ of vectors centered around the $j$-th component
\begin{equation}
\Sigma_{j, \Gamma}:=\{ v\in \ell^{\infty}(\mathcal{X}_i) : \| v \|_{j, \Gamma}<\infty \}
\end{equation}
where
\[
\| v \|_{j, \Gamma}:=\sup_{k\in\Z^m} |v_k| \,\Gamma(k-j)^{-1}.
\]
Note that for any $i,j\in\Z^m$, $\Sigma_{i, \Gamma}=\Sigma_{j, \Gamma}$ and their norms are equivalent as 
\[
 \| v \|_{i, \Gamma}\leq \| v \|_{j, \Gamma}\Gamma(i-j)^{-1},
\]
although the equivalence ``blows up'' as $|i-j|\to\infty$.

%

\begin{lem}[Proposition $2.7$ in \cite{FontMartin1}]\label{lem:fm1}
Let $A\in\mathcal{L} (\ell^{\infty}(\mathcal{X}_i); \ell^{\infty}(\mathcal{Y}_i))$.
\begin{enumerate}
\item If $A\in\mathcal{L}_{\Gamma} (\ell^{\infty}(\mathcal{X}_i); \ell^{\infty}(\mathcal{Y}_i))$, then for any $j\in \Z^m$ and for any $v\in \Sigma_{j, \Gamma}$, $A v\in \Sigma_{j, \Gamma}$ and $\| A v \|_{j, \Gamma}\le \gamma(A) \| v \|_{j, \Gamma}$.
\item If  there exists $C>0$ such that for any $j\in \Z^m$ and for any $v\in \Sigma_{j, \Gamma}$, $A v\in \Sigma_{j, \Gamma}$ and $\| A v \|_{j, \Gamma}\le C \| v \|_{j, \Gamma}$, then $A\in \mathcal{L}_{\Gamma}(\ell^{\infty}(\mathcal{X}_i); \ell^{\infty}(\mathcal{Y}_i))$ and $\gamma(A)\le C \Gamma(0)^{-1}$.
\end{enumerate}
Moreover, if $F\in C^1_{\Gamma}$ and $F(0)=0$ then $F(v)\in \Sigma_{j, \Gamma}$ for all $v\in \Sigma_{j, \Gamma}$. 
\end{lem}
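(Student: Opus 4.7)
The plan is to exploit the matrix-type representation of operators with decay (the preceding Lemma $2.6$ in the excerpt) together with the subconvolution property $\sum_{k}\Gamma(i-k)\Gamma(k-j)\le\Gamma(i-j)$ of the decay function. Item 1 is the direct implication; item 2 is extracted by testing on vectors supported at a single site; and the moreover part follows by integrating the derivative.

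For item 1, first I observe that any $v\in\Sigma_{j,\Gamma}$ satisfies $|v_k|\le \|v\|_{j,\Gamma}\,\Gamma(k-j)$, and since $\sum\Gamma\le 1$ one has $\Gamma(k-j)\to 0$ as $|k|\to\infty$; therefore $v_k\to 0$ and the matrix representation $(Av)_i=\sum_{k\in\Z^m}A^i_k v_k$ applies. By the very definition of $\gamma(A)$, applied to the unit vectors $\mathtt{I}_k(u)/|u|$, one gets $|A^i_k w|_{\mathcal{Y}_i}\le \gamma(A)\,\Gamma(i-k)\,|w|_{\mathcal{X}_k}$ for every $w\in\mathcal{X}_k$. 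Combining these estimates and using the subconvolution property yields
\[
|(Av)_i|_{\mathcal{Y}_i}\le \gamma(A)\sum_{k\in\Z^m}\Gamma(i-k)\,|v_k|_{\mathcal{X}_k}\le \gamma(A)\,\|v\|_{j,\Gamma}\sum_{k\in\Z^m}\Gamma(i-k)\,\Gamma(k-j)\le \gamma(A)\,\|v\|_{j,\Gamma}\,\Gamma(i-j),
\]
which, after dividing by $\Gamma(i-j)$ and taking supremum in $i$, gives exactly $\|Av\|_{j,\Gamma}\le \gamma(A)\,\|v\|_{j,\Gamma}$.

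For item 2, I would plug in the extremal test vectors used in the very definition of $\gamma(A)$. Fix $j\in\Z^m$ and $u=\mathtt{I}_j(w)$ with $|w|_{\mathcal{X}_j}\le 1$; then $u\in\Sigma_{j,\Gamma}$ with $\|u\|_{j,\Gamma}=|w|_{\mathcal{X}_j}\,\Gamma(0)^{-1}\le \Gamma(0)^{-1}$. By assumption $\|Au\|_{j,\Gamma}\le C\,\Gamma(0)^{-1}$, i.e.\ $|(Au)_i|_{\mathcal{Y}_i}\le C\,\Gamma(0)^{-1}\,\Gamma(i-j)$ for every $i$. Taking the supremum over such $u$ and over $i,j\in\Z^m$ in \eqref{def:decaynorm} gives $\gamma(A)\le C\,\Gamma(0)^{-1}$, so $A\in\mathcal{L}_\Gamma$.

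For the final claim on $F\in C^1_\Gamma$ with $F(0)=0$, I would write $F(v)=\int_0^1 DF(tv)\,v\,dt$ (valid on any open set containing the segment from $0$ to $v$, which is where we work). For each $t\in[0,1]$, $DF(tv)\in\mathcal{L}_\Gamma$ with $\gamma(DF(tv))\le \|F\|_{C^1_\Gamma}$, so by item 1 already proven, $DF(tv)v\in\Sigma_{j,\Gamma}$ with $\|DF(tv)v\|_{j,\Gamma}\le \|F\|_{C^1_\Gamma}\|v\|_{j,\Gamma}$. Because $\Sigma_{j,\Gamma}$ is a Banach space and the integrand is continuous with uniformly bounded $j,\Gamma$-norm, the integral lies in $\Sigma_{j,\Gamma}$ as well, giving $F(v)\in\Sigma_{j,\Gamma}$ with $\|F(v)\|_{j,\Gamma}\le \|F\|_{C^1_\Gamma}\|v\|_{j,\Gamma}$. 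The only mild subtlety — and the single place I would be careful — is justifying that the Bochner-type integral over $[0,1]$ taken in $\ell^{\infty}(\mathcal{Y}_i)$ actually remains in the (generally non-closed in $\ell^\infty$) subspace $\Sigma_{j,\Gamma}$; this is handled by noting that the coordinatewise identity $(F(v))_i=\int_0^1 (DF(tv)v)_i\,dt$ coupled with the uniform pointwise bound $|(DF(tv)v)_i|\le \|F\|_{C^1_\Gamma}\|v\|_{j,\Gamma}\Gamma(i-j)$ immediately transfers to $F(v)$ by dominated convergence on each fibre.
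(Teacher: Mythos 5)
Your proof is correct. The paper does not supply an argument for this lemma (it is imported verbatim as Proposition~2.7 of \cite{FontMartin1}), so there is no in-paper proof to compare against; the ingredients you use — the matrix representation from the preceding Lemma~2.6 once you have noted that $v\in\Sigma_{j,\Gamma}$ decays, the unit-vector reading of the definition of $\gamma(A)$ in \eqref{def:decaynorm}, the subconvolution property of $\Gamma$, and the integral mean-value step for the $C^1_\Gamma$ claim — are exactly those the definitions make available, and indeed the same FTC/mean-value device is what the paper itself uses in the proof of Lemma~\ref{lemma:SigmajGammazero}. Your remark on the Bochner integral is the right concern and the coordinatewise resolution is the correct fix, since $\Sigma_{j,\Gamma}$ need not be $\ell^\infty$-closed; one should also keep in mind, as you implicitly do, that the identity $F(v)=\int_0^1 DF(tv)\,v\,dt$ requires the segment $\{tv:t\in[0,1]\}$ to lie in the domain of $F$, which holds in the intended applications where $\mathcal U$ is a ball around the origin.
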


\subsection{Partial action-angle variables and the adapted functional setting}\label{sec:functsettorus}

In this section we develop a functional setting adapted to a set of coordinates that we shall use to study the dynamics close to the invariant tori of the transition chain.

\smallskip

Let $S$ be a subset of $\Z^m$ with cardinality $d$.
We use the following notation
\begin{equation}\label{def:linftysc}
\begin{split}
&\ell^{\infty}_{S^c}:=\ell^{\infty}(\Z^m\setminus S; \R), \qquad \Sigma_{j, \Gamma, S^c}:=\Sigma_{j, \Gamma} (\Z^m\setminus S; \R),\\
& \R_S^d:=\ell^{\infty}(S; \R), \qquad \qquad \qquad \, \T^d_S:=\ell^{\infty}(S; \T).
\end{split}
\end{equation}

Let us consider the complete metric space
\[
\mathcal{M}:=\ell^{\infty}_{S^c}\times \ell^{\infty}_{S^c}\times \T_S^d\times \R_S^d
\]
which is a Banach manifold modeled on 
\[
\ell^{\infty}_{S^c}\times \ell^{\infty}_{S^c}\times \R_S^d\times \R_S^d.
\]
We consider the following coordinates $(x, y, \theta, r)$ on $\mathcal{M}$
\[
 x=(x_j)_{j\in \Z^m\setminus S}, \quad y=(y_j)_{j\in \Z^m\setminus S}, \quad \theta=(\theta_j)_{j\in S}\in \T_S^d, \quad r=(r_j)_{j\in S}\in \R_S^d.
\]
Such coordinates are useful to study the dynamics close to the invariant tori contained in the subspace $\mathcal{V}_S$ in \eqref{def:subsp}.

 We denote by $\pi_{x}, \pi_y$ the projections 
\[
\pi_{x}(x, y, \theta, r)=x, \quad \pi_y (x, y, \theta, r)=y.
\]
Consider linear operators 
\begin{itemize}
\item[(i)]
$
A \colon \ell^{\infty}_{S^c}\to \ell^{\infty}_{S^c};
$
\item[(ii)]
$
A \colon \ell^{\infty}_{S^c}\to \R^d_S;
$
\item[(iii)]
$
A \colon \R^d_S\to \R^d_S;
$
\end{itemize}
then we define respectively
\begin{itemize}
\item[(i)]
$
\gamma(A):=\sup_{i, j\in\mathbb{Z}^m\setminus S} \,\,\, \sup_{\|u\|_{\ell^{\infty}}\le 1} |(A \mathtt{I}_j (u))_i|\,\Gamma(i-j)^{-1};
$
\item[(ii)]
$
\gamma(A):=\sup_{i\in S, j\in\mathbb{Z}^m\setminus S} \,\,\, \sup_{\|u\|_{\ell^{\infty}}\le 1} |(A \mathtt{I}_j (u))_i|\,\Gamma(i-j)^{-1};
$
\item[(iii)]
$
\gamma(A):=\sup_{i, j\in S} \,\,\, \sup_{\|u\|_{\ell^{\infty}}\le 1} |(A \mathtt{I}_j (u))_i|\,\Gamma(i-j)^{-1}.
$
\end{itemize}
Hence we have a definition of operators with decay for linear maps of the form (i), (ii), (iii) and, similarly to (ii), for maps from $\R^d_S$ to $\ell^{\infty}_{S^c}$, by considering the norm \eqref{def:decaynorm} with the semi-norm $\gamma(A)$ introduced above.

Now consider a linear operator 
\[
A\colon  \left(\ell^{\infty}_{S^c} \right)^{\mathtt{a}} \times ( \R_S^d )^{\mathtt{b}}\to \ell^{\infty}(\mathcal{Y}_i):=\ell^{\infty}(\Z^m; \mathcal{Y}_i),\qquad \mathtt{a}, \mathtt{b}\in \{ 1, 2\}, 
\]
 where $\mathcal{Y}_i$ is a sequence of Banach spaces.  
We define
\[
T_{S, \mathtt{a}, \mathtt{b}}\colon \left(\ell^{\infty}_{S^c} \right)^{\mathtt{a}} \times ( \R_S^d )^{\mathtt{b}}\to \ell^{\infty}(\Z^m; \mathcal{X}_i)=:\ell^{\infty}(\mathcal{X}_i)
\]
where
\[
\mathcal{X}_i:=\begin{cases}
\R^{\mathtt{a}} \qquad \mathrm{if}\,\,i\in \Z^m\setminus S\\
\R^{\mathtt{b}}  \qquad\,\,\, \mathrm{if}\,\,i\in S
\end{cases}
\]
and
\[
\Big(T_{S, \mathtt{a}, \mathtt{b}}(x, y, r)\Big)_j:=\begin{cases}
(x_j, y_j) \qquad \mathrm{if}\,\,j\in \Z^m\setminus S,\\
r_j \qquad \,\,  \qquad\mathrm{if}\,\,j\in S.
\end{cases}
\]
Thus we say that $A\in \mathcal{L}_{\Gamma}(\left(\ell^{\infty}_{S^c} \right)^{\mathtt{a}} \times ( \R_S^d )^{\mathtt{b}},\, \ell^{\infty}(\mathcal{Y}_i))$ if 
\[
\|A\circ T_{S, \mathtt{a}, \mathtt{b}}\|_{ \mathcal{L}_{\Gamma}(\ell^{\infty}(\mathcal{X}_i), \ell^{\infty}(\mathcal{Y}_i))}<\infty
\]
and we set 
\[
\| A \|_{\mathcal{L}_{\Gamma}\left(\left(\ell^{\infty}_{S^c} \right)^{\mathtt{a}} \times ( \R_S^d )^{\mathtt{b}}, \ell^{\infty}(\mathcal{Y}_i) \right)}=\|A\circ T_{S, \mathtt{a}, \mathtt{b}}\|_{ \mathcal{L}_{\Gamma}(\ell^{\infty}( \mathcal{X}_i), \ell^{\infty}(\mathcal{Y}_i))}.
\]

One can proceed analogously for operators defined on the tangent space of the submanifold
\[
\mathcal{M}_{j, \Gamma}:=\Sigma_{j, \Gamma, S^c}\times \Sigma_{j, \Gamma, S^c}\times \T^d_S \times \R_S^d,
\]
that is on $\Sigma_{j,\Gamma}$ instead of $\ell^\infty$ spaces. Moreover, in the spaces with decay one has the following properties for nonlinear maps.

\begin{lem}\label{lemma:SigmajGammazero}
Let $\mathcal{U}\subset\mathcal{M}$ be an open subset containing the torus $x=y=r=0$  and consider a map $F\in C^1_{\Gamma}(\mathcal{U})$ such that $F(0, 0, \theta, 0)=0$ for all $\theta\in \mathbb{T}_S^d$. Then, $F$ maps $U\cap \mathcal{M}_{j, \Gamma}$ into $\mathcal{M}_{j, \Gamma}$.
\end{lem}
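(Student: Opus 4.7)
The plan is to mimic the proof of the analogous statement contained in Lemma \ref{lem:fm1} (the ``moreover'' clause for $F\in C^1_\Gamma$ with $F(0)=0$) by reducing to the linear statement via the fundamental theorem of calculus along a straight-line path. Since $F(0,0,\theta,0)=0$ for every $\theta\in\T^d_S$, keeping $\theta$ fixed and applying the Taylor formula on the Banach submanifold $\{\theta=\mathrm{const}\}$ gives, for any $(x,y,\theta,r)\in\mathcal U\cap\mathcal{M}_{j,\Gamma}$,
\[
F(x,y,\theta,r)\;=\;\int_0^1 D_{(x,y,r)}F(tx,ty,\theta,tr)\cdot(x,y,r)\,dt,
\]
where $D_{(x,y,r)}F$ denotes the partial differential in the three non-angular directions. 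By definition of $C^1_\Gamma$ in the adapted functional setting of Section \ref{sec:functsettorus}, the operator $D_{(x,y,r)}F(tx,ty,\theta,tr)$ lies in $\mathcal{L}_\Gamma\bigl((\ell^\infty_{S^c})^2\times\R^d_S,\ell^\infty(\mathcal{Y}_i)\bigr)$ with norm uniformly bounded by $\|F\|_{C^1_\Gamma}$ for $t\in[0,1]$.

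Next I would transport the input along the embedding $T_{S,2,1}$ introduced in Section \ref{sec:functsettorus}. The input $(x,y,r)$ satisfies $x,y\in\Sigma_{j,\Gamma,S^c}$ by hypothesis, and the finitely-many components $r\in\R^d_S$ trivially admit a $\Gamma$-weighted bound (the set $S$ being finite, the quantities $\Gamma(i-j)^{-1}$ for $i\in S$ are bounded by a constant depending only on $j$ and $S$). Hence $T_{S,2,1}(x,y,r)\in\Sigma_{j,\Gamma}$. Lemma \ref{lem:fm1}(1) then ensures that $D_{(x,y,r)}F(tx,ty,\theta,tr)\cdot(x,y,r)\in\Sigma_{j,\Gamma}$ with
\[
\bigl\|D_{(x,y,r)}F(tx,ty,\theta,tr)\cdot(x,y,r)\bigr\|_{j,\Gamma}\;\le\;\|F\|_{C^1_\Gamma}\,\|(x,y,r)\|_{j,\Gamma},
\]
uniformly in $t\in[0,1]$.

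Integrating component-wise in $t$ over $[0,1]$ preserves membership in $\Sigma_{j,\Gamma}$ (since $\Sigma_{j,\Gamma}$ is a Banach space and the integrand is uniformly bounded in its norm), and the resulting bound shows that both the $x$- and $y$-projections of $F(x,y,\theta,r)$ belong to $\Sigma_{j,\Gamma,S^c}$, while the $\theta$- and $r$-projections lie in $\T^d_S$ and $\R^d_S$ automatically. Therefore $F(x,y,\theta,r)\in\mathcal{M}_{j,\Gamma}$. There is no real obstacle here beyond carefully book-keeping how Lemma \ref{lem:fm1}, stated on pure $\ell^\infty$-type spaces, extends via the operator $T_{S,\mathtt{a},\mathtt{b}}$ to the mixed spaces $(\ell^\infty_{S^c})^{\mathtt{a}}\times(\R^d_S)^{\mathtt{b}}$ of the adapted setting; this extension was precisely the content of the definitions at the end of Section \ref{sec:functsettorus}.
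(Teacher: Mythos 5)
Your proof is correct and follows essentially the same route as the paper: both apply the mean value theorem to write $F(x,y,\theta,r)=F(x,y,\theta,r)-F(0,0,\theta,0)$ as an integral of $\partial_{(x,y,r)}F$ along the straight-line path, then use the $\mathcal{L}_\Gamma$-boundedness of the differential (equivalently, Lemma \ref{lem:fm1}(1) through the embedding $T_{S,\mathtt{a},\mathtt{b}}$) to conclude that the result lands in $\Sigma_{j,\Gamma}$. Your version is merely more explicit about unwinding the adapted-coordinate definitions from Section \ref{sec:functsettorus}, which the paper's two-line proof leaves implicit.
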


\begin{proof}
Let $(x, y, \theta, r)\in \mathcal{M}_{j, \Gamma}$. By the mean value theorem we have
\begin{align*}
\| F(x, y, \theta, r) \|_{j, \Gamma}&=\| F(x, y, \theta, r)-F(0, 0, \theta, 0) \|_{j, \Gamma}\le \int_0^1 \|\partial_{(x, y, r)} F (t x, t y, \theta, t r) [x, y, r]\|_{T \mathcal{M}_{j, \Gamma}}\,dt\\
& \le \| D F \|_{\mathcal{L}_{\Gamma}(T \mathcal{M})} \| (x, y, \theta, r) \|_{j, \Gamma}.
\end{align*}

\end{proof}

\section{A  detailed statement of the main results}
\label{sec:DescriptionProof}

In this section we state a theorem which  implies the results in Theorems \ref{thm:MainFormal} and \ref{thm:MainEnergy}. In fact it is stronger since it contains the  concrete hypotheses that  the perturbation $H_1$ must satisfy so that it leads to transfer of energy orbits. 

\smallskip

Given $\rho>0$, we define the balls
 \[
 \begin{split}
 B_{\rho}(\ell^{\infty})&=\left\{z\in\ell^\infty(\ZZ^m,M) : \|z\|_{\ell^\infty}\leq \rho\right\}\subset \ell^\infty(\ZZ^m;M), \quad M:=\T\times \R\\
 B_{\rho}(\Sigma_{j, \Gamma})&=\left\{z\in \Sigma_{j, \Gamma} : \|z\|_{j,\Gamma}\leq \rr\right\}\subset  \Sigma_{j, \Gamma}.
 \end{split}
 \]
Consider a Hamiltonian of the form \eqref{def:Hamiltonian}, fix a sequence
 \[
  \{\sigma_i\}_{i\geq 0}\subset \mathbb{Z}^m,\qquad |\sigma_{i+1}-\sigma_i|=1
 \]
 and define 
\[
S_i=\left\{\sigma_i,\sigma_{i+1}\right\}\subset\ZZ^m.
\]

Let us define the subspaces
 \begin{equation}\label{def:subspacesXj}
 \cV_i :=\left\{q_k=p_k=0\qquad\text{for }\quad k\neq \sigma_i,\sigma_{i+1},\sigma_{i+2}\right\},
\end{equation}
which will be assumed to be invariant,
and the Hamiltonian
\begin{equation}\label{def:Hamj}
 H_{1,i}\left(q_{\sigma_{i}},p_{\sigma_{i}},q_{\sigma_{i+1}},p_{\sigma_{i+1}},q_{\sigma_{i+2}},p_{\sigma_{i+2}}\right)=H_1(q,p)|_{\cV_i}.
\end{equation}
We consider the following hypotheses.

\begin{itemize}
 \item[$\mathbf{H1}$] For any  $\rho>0$,   $X_{H_1}\in C_\Gamma^3(B_{\rho}(\ell^{\infty}); \ell^{\infty})$\footnote{Note that being $C_\Gamma^r$ implies that the associated norm is bounded (see \eqref{def:Crnorm}). For this reason, the hypothesis must be stated restricted to the ball where $DH_1$ has uniform estimates.}. 

\item[$\mathbf{H2}$] For any $i\geq 0$, the Hamiltonian $H_1$ satisfies
\[
\left.\partial_{q_k} H_1(q,p)\right|_{(q_k,p_k)=(0,0)}=\left.\partial_{p_k} H_1(q,p)\right|_{(q_k,p_k)=(0,0)}=0 \qquad \forall k\neq \sigma_i.
\]

Moreover, for any $i\geq 0$, 
\[
 \left.
\partial_{q_k} H_1(q,p)
 \right|_{S_i}
 =\left.\partial_{p_k} H_1(q,p)\right|_{S_i}=0\qquad \text{for}\qquad k=\sigma_i,\sigma_{i+1}.
\]
\item[$\mathbf{H3}$] Fix $h>0$. For any value  {$h_i\in (0,h)$} there exists an open set $\mathcal{J}_{h_i}\subset \mathbb{T}^2$ with the property that when $(x_i, x_{i+1}, h_i)\in \mathcal{J}_+$, where
\[
\mathcal{J}_+=\mathcal{J}_{h_i}\times \{h_i\}.
\]
and  $h_{i+1}=h-h_i$,
\begin{itemize}
\item[$\mathbf{H3.1}$]
Consider the Melnikov potential
\begin{equation}\label{def:MelnikovPotential}
\begin{aligned}
\MoveEqLeft \mathcal{L}_i(x_i, x_{i+1}, h_{i}, h_{i+1},t)=\\
&\int_{-\infty}^{+\infty}H_{1, i}\left(q_{h_{i}}(s,x_i),p_{h_{i}}(s,x_i),q_{h_{i+1}}(s,x_{i+1}),p_{h_{i+1}}(s,x_{i+1}),q_0(s+t),p_0(s+t)\right)ds
\end{aligned}
\end{equation}
associated to the homoclinic of the torus
\begin{equation}\label{def:invtori}
\begin{split}
\TT_{\sigma_i,\sigma_{i+1},h_i,h_{i+1}}&=\{  E_{\sigma_{i}} =h_i, E_{\sigma_{i+1}} =h_{i+1}\,\text{ and }\, E_k=0\, \text{ for }\, k\neq \sigma_{i},\sigma_{i+1}\}.
\end{split}
\end{equation}
The map 
\[
t\in\mathbb{R}\to \mathcal{L}_j(x_i, x_{i+1}, h_{i}, h_{i+1},t)
\]
has a non-degenerate critical point $t$ which is locally  given by the implicit function theorem in the form $t=\tau(x_i, x_{i+1}, h_{i}, h_{i+1})$.
 
\item[$\mathbf{H3.2}$] Consider the Melnikov function
\begin{equation*}
\begin{aligned}
\MoveEqLeft  \mathcal{M}_i(x_i, x_{i+1}, h_{i}, h_{i+1},t) =\\
&\int_{-\infty}^{+\infty}\left\{H_{1, i},E_{i}\right\}\left(q_{h_{i}}(s,x_i),p_{h_{i}}(s,x_i),q_{h_{i+1}}(s,x_{i+1}),p_{h_{i+1}}(s,x_{i+1}),q_0(s+t),p_0(s+t)\right)ds
\end{aligned} 
\end{equation*}
associated to the homoclinic of the torus $\TT_{\sigma_i,\sigma_{i+1},h_i,h_{i+1}}$.

 The map
\[
 (x_i, x_{i+1})\in {\mathcal{J}_{h_1}} \subset \mathbb{T}^{{2}}\to \mathcal{M}_i(x_i,x_{i+1},h_{i},h_{i+1},\tau(x_i, x_{i+1}, h_{i}, h_{i+1}))
\]
is nonconstant and positive. 

Analogously, we assume that there exists a set $\mathcal{J}_-$ where the same hypohtesis is true but $\mathcal{M}_i$ is negative.
\end{itemize}

\item[$\mathbf{H4}$] Fix $h>0$. Consider the Melnikov potential
\begin{equation}\label{def:MelnikovPotential:per}
\widetilde{\mathcal{L}}_i (x_i, h,t_1, t_2)=\int_{-\infty}^{+\infty}H_{1, i}\left(q_{h}(s,x_i),p_{h}(s,x_i),q_{0}(s+t_1),p_{0}(s+t_1),q_0(s+t_2),p_0(s+t_2)\right)dt
\end{equation}
associated to the homoclinic of the periodic orbit
\begin{equation}\label{def:invtori1}
\PP_{\sigma_i}=\{   E_{\sigma_{i}}=h\,\text{ and }\, E_k=0\, \text{ for }\, k\neq \sigma_{i}\}.
\end{equation}
The map 
\[
(t_1,t_2)\in\mathbb{R}^2\to \widetilde{\mathcal{L}}_i(x_i, h,t_1, t_2)
\]
has a non-degenerate critical point $(t_1,t_2)$ which is locally  given by the implicit function theorem in the form ${(t_1, t_2)} =\widetilde\tau(x_i, h)$.

\end{itemize}

Note that the Hypotheses $\mathbf{H3}$ is the same as considered in the paper \cite{DelshamsLS06a} to prove Arnold diffusion  for nearly integrable Hamiltonian a priori unstable systems. The Hypotheses $\mathbf{H4}$ is the analog for the jumping regime (see Section \ref{sec:heuristics}). It is well known that they are $C^r$ (with $r\geq 4$) and $C^\infty$ generic. They are also generic in the analytic setting if one considers sufficiently small $h>0$ (see \cite{ChenLlave}). On the contrary the Hypothesis $\mathbf{H2}$ requires that certain subspaces are invariant and the dynamics on them is integrable. Unfortunately this hypothesis is not generic.



A family of perturbations which satisfy the hypohteses above when $m=1$ and for any instability path are those of the form
\[
H_1(q, p)=\sum_{N\geq 2}a_N\sum_{i\in\mathbb{Z}}f_N(q_{k},p_k,\ldots, q_{k+N},p_{k+N})\prod_{i=0}^N(1-\cos q_{k+i})^{\alpha_k} p_k^{\beta_k}.
\]
where 
\begin{itemize}
\item $a_N\in\mathbb{R}$ satisfy $|a_N|\lesssim \Gamma(N)$ such that $H_1$ has enough decay. Moreover $a_2\neq 0$, which is a necessary condition  so that $\mathbf{H3}$ and  $\mathbf{H4}$ are satisfied.
\item $\alpha_k\geq 1$ or $\beta_k\geq 2$ such that Hypothesis $\mathbf{H2}$ is satisfied.
\item $f_N$ is at least $C^r$, $r\geq 4$, and satisfies $\|f_N\|_{C^r}\leq 1$. The function $f_3$  is chosen generically so that  $\mathbf{H3}$ and  $\mathbf{H4}$ are satisfied. There is no extra requirement on $f_N$, $N\geq 4$.
\end{itemize}
Note that we have been able to choose a translation invariant example of perturbation. However the hypohteses above do not impose this restriction.

Theorems \ref{thm:MainFormal} and \ref{thm:MainEnergy} are consequence of the following.

\begin{theorem}\label{thm:technical}
Consider a Hamiltonian of the form \eqref{def:Hamiltonian} and assume that $H_1$ satisfies the  Hypotheses $\mathbf{H1}-\mathbf{H4}$. Then there exists $\e_0>0$ such that for all $\e\in (0, \e_0)$, for any $\eta>0$ small enough and any sequence
 \[
  \{\sigma_i\}_{i\geq 0}\subset \mathbb{Z}^m,\qquad |\sigma_{i+1}-\sigma_i|=1
 \]
 the following holds:
 \begin{itemize}
 \item There exist trajectories $(q(t),p(t))\in \ell^{\infty}(\Z^m; M)$ and an increasing sequence of times $\{t_i\}_{i\geq 0}$ such that
 \[
|  E_{\sigma_i}(q(t_i),p(t_i))-h|\leq\eta\qquad\text{and}\qquad |  E_{k}(q(t_i),p(t_i)|\leq\eta \quad\text{for}\quad k\neq \sigma_i.
 \]
\item For any fixed $j\in \Z^m$, there exist trajectories $(q(t),p(t))\in \Sigma_{j, \Gamma}$, which therefore satisfy $H(q(t), p(t))<\infty$ for all $t$, and an increasing sequence of times $\{t_i\}_{i\geq 0}$ such that
 \[
|  E_{\sigma_i}(q(t_i),p(t_i))-h|\leq\eta\qquad\text{and}\qquad |  E_{k}(q(t_i),p(t_i)|\leq\eta \quad\text{for}\quad k\neq \sigma_i.
 \]
\end{itemize}
\end{theorem}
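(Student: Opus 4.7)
The plan is to implement the Arnold scheme outlined in Section \ref{sec:heuristics}: build a transition chain made of $2$-dimensional tori $\TT_{\sigma_i,\sigma_{i+1},h_i,h_{i+1}}$ inside the normally hyperbolic cylinders $\La_{\de,\sigma_i,\sigma_{i+1}}$ interleaved with $1$-dimensional periodic orbits $\PP_{\sigma_i}$ (glueing consecutive cylinders), and then shadow it with a true orbit via the Lambda lemma of Section \ref{sec:lambdalemma}. Both conclusions of Theorem \ref{thm:technical} will follow from a single construction: we carry out the shadowing both in the full space $\ell^\infty(\Z^m;M)$ and in the invariant subspace $\Sigma_{j,\Gamma}$. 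The fact that the vector field $X_{H_1}$ belongs to $C^3_\Gamma$ by $\mathbf{H1}$, together with Lemma \ref{lemma:SigmajGammazero} applied to $X_{H_0}+\eps X_{H_1}$ (which vanishes on the tori supported in the lattice path), will ensure that trajectories starting in $B_\rho(\Sigma_{j,\Gamma})$ stay in $\Sigma_{j,\Gamma}$, so the same orbit gives both statements.

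For the invariant objects, hypothesis $\mathbf{H2}$ guarantees that each finite-dimensional subspace $\cV_{S_i}$ of \eqref{def:subsp} is invariant and that the restricted Hamiltonian is two uncoupled penduli. Hence, for $\eps>0$, the unperturbed tori $\TT_{\sigma_i,\sigma_{i+1},h_i,h_{i+1}}$ in \eqref{def:invtori} and the periodic orbits $\PP_{\sigma_i}$ in \eqref{def:invtori1} survive as partially hyperbolic invariant objects whose hyperbolic splitting has a two-dimensional piece (coming from the pendula in $\cV_{S_i}$) and infinitely many contracting/expanding directions normal to $\cV_{S_i}$ originating from all pendula at the hyperbolic saddle. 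I would apply the invariant manifold theorem of Section \ref{sec:manifold} to produce, for $\eps$ small, $C^r$ local stable and unstable manifolds $W^{s,u}(\TT_{\sigma_i,\sigma_{i+1},h_i,h_{i+1}})$ and $W^{s,u}(\PP_{\sigma_i})$ which depend regularly on the internal parameters $(h_i,h_{i+1})$ and are close in the $C^1_\Gamma$ topology to the unperturbed ones. Varying $h_i\in[\de,h-\de]$ one obtains the normally hyperbolic cylinder $\La_{\de,\sigma_i,\sigma_{i+1}}$ together with its $C^r$ global invariant manifolds.

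The heteroclinic connections are produced by Melnikov theory in two regimes. In the Arnold regime, hypothesis $\mathbf{H3.1}$ on the potential \eqref{def:MelnikovPotential} yields, via the results of Section \ref{sec:MelnikovTheory}, a smooth family of transverse intersections between $W^u(\La_{\de,\sigma_i,\sigma_{i+1}})$ and $W^s(\La_{\de,\sigma_i,\sigma_{i+1}})$ parametrised by $(x_i,x_{i+1})\in \mathcal{J}_{h_i}$. Hypothesis $\mathbf{H3.2}$ ensures that the associated scattering map, computed as in \cite{DelshamsLS08}, is not constant in the energy direction: applying it iteratively along the sets $\mathcal{J}_{\pm}$ produces a heteroclinic chain of tori with monotonically changing energy split $h_i\mapsto h-h_i$, which traverses the whole cylinder. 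In the Jumping regime, hypothesis $\mathbf{H4}$ on the potential \eqref{def:MelnikovPotential:per} gives a nondegenerate critical point of a two-parameter Melnikov function associated with the homoclinic of $\PP_{\sigma_i}$: this yields transverse heteroclinic connections between tori at the top/bottom of one cylinder $\La_{\de,\sigma_{i-1},\sigma_i}$ to $\PP_{\sigma_i}$ and from $\PP_{\sigma_i}$ to the top/bottom of the next cylinder $\La_{\de,\sigma_i,\sigma_{i+1}}$. Splicing these two types of connections produces a full transition chain starting at $\PP_{\sigma_0}$ that can be prolonged indefinitely along the prescribed path $\{\sigma_i\}_{i\ge 0}$.

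The hardest step, and the main obstacle, is the shadowing argument, since we are in infinite dimensions, the tori in the chain jump in dimension (two to one and back), and both hypotheses must be respected (the ambient $\ell^\infty$ norm and, for the second statement, the decay norm $\|\cdot\|_{j,\Gamma}$). I would use the infinite-dimensional Lambda lemma proved in Section \ref{sec:lambdalemma}: inductively, if an orbit segment has arrived $\eta$-close to a torus in the chain on its unstable manifold side, the Lambda lemma implies that iterates of any disk transverse to $W^s$ at the heteroclinic point accumulate on $W^u$ of the next torus, so one can prescribe the arrival close to the next heteroclinic point. Because the tori have all energy concentrated in the supports $S_i$ or $\{\sigma_i\}$, closeness in $\ell^\infty$ to the torus automatically yields the bounds on $|E_{\sigma_i}-h|$ and $|E_k|$ for $k\neq\sigma_i$ claimed in the theorem. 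Finally, since we can choose the initial condition at $\PP_{\sigma_0}$ in $\Sigma_{j,\Gamma}$ (the torus itself lies there) and the flow of $H_0+\eps H_1$ preserves $\Sigma_{j,\Gamma}$ by Lemma \ref{lemma:SigmajGammazero} applied inside a neighbourhood of the chain, the shadowing orbit stays in $\Sigma_{j,\Gamma}$, giving a finite-energy trajectory and proving the second conclusion simultaneously with the first.
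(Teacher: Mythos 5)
Your proposal follows the same scheme the paper uses: invariant‐manifold theory (Section \ref{sec:manifold}) for the two‐ and one‐dimensional tori, Melnikov analysis (Section \ref{sec:MelnikovTheory}) split into the Arnold and Jumping regimes via $\mathbf{H3}$ and $\mathbf{H4}$, and shadowing via the Lambda lemma of Section \ref{sec:lambdalemma}, carried out in both the $\ell^\infty$ and $\Sigma_{j,\Gamma}$ topologies. The only substantive place where you are vaguer than the paper is the final shadowing step: ``one can prescribe the arrival close to the next heteroclinic point'' does not by itself yield a \emph{single} orbit that follows the entire infinite chain; the paper makes this precise (Lemma \ref{lemma:shadowing}) by nesting closed balls $B_{j+1}\subset B_j$ around preimages of stable‐manifold points and invoking Cantor's intersection theorem in the complete metric spaces $\ell^\infty$ and $\Sigma_{j,\Gamma}$ to extract a common initial condition. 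You should also be a little careful with the transversality notion: since the flow direction and the level set of the formal first integral $H$ are tangent to every invariant manifold involved, transversality must be taken in the sense of Definition \ref{def:transversalityflows}, and the upgrade from transversality inside the finite‐dimensional invariant subspace $\cV_i$ to transversality in the full $\mathcal{M}$ (Proposition \ref{theorem:transitionchain}) is a separate step you implicitly subsume under ``results of Section \ref{sec:MelnikovTheory}.'' Finally, your ``single construction'' observation (that shadowing in $\Sigma_{j,\Gamma}$ automatically yields both conclusions) is sound, though the paper deliberately carries out the two arguments independently to stress that the machinery works in either functional setting; note also that the statement that the flow preserves $\Sigma_{j,\Gamma}$ is Lemma \ref{lem:welldefflow} rather than Lemma \ref{lemma:SigmajGammazero}.
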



We devote the rest of this section to describe the main steps  of the proof  of this theorem.



%

\subsection{Description of the proof of Theorem \ref{thm:technical}}
\subsubsection{Invariant manifolds.} 

The first step is that the flow associated to the Hamiltonian \eqref{def:Hamiltonian} fits the functional setting given in Section \ref{sec:functional}.

\begin{lem}\label{lem:welldefflow}
Consider the Hamiltonian $H$ in \eqref{def:Hamiltonian} and assume that it satisfies Hypothesis $\mathbf{H1}$. Fix any $\rr>0$. 
Then there exists $T>0$ such that for any  initial conditions $(q_0, p_0)\in   B_{\rho}(\ell^{\infty})\subset \ell^\infty(\ZZ^m;M)$,  there is a unique solution $(q(t), p(t))$ of the Cauchy problem associated to \eqref{eq:motions} defined for $|t|<T$.

Moreover denoting by $\Phi^t_H(q_0, p_0)=(q(t), p(t))$, we have $\Phi^t_H\in C^r_{\Gamma}( B_{\rho}(\ell^{\infty}))$ for all $|t|<T$ and there exist $C, \mu>0$ such that
\[
\| D \Phi^t_H(q, p) \|_{\Gamma}\leq C e^{\mu t}, \qquad q, p\in B_{\rho}(\ell^{\infty}), \qquad t\in (-T, T).
\] 
Moreover, fix $j\in\Z^m$. Then, if $q_0,p_0\in B_{\rho}(\Sigma_{j, \Gamma})$, 
one has that,  for $t\in (-T, T)$, $(q(t), p(t))\in \Sigma_{j, \Gamma}$.
\end{lem}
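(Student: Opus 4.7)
The strategy is to run Picard--Lindel\"of directly in the Banach space $\ell^{\infty}(\Z^m;\RR^2)$, and then to upgrade all regularity statements to the ``with decay'' category by exploiting the Banach algebra structure of $\mathcal{L}_\Gamma$ in the variational equations.

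First I would split $X_H = X_{H_0} + \varepsilon X_{H_1}$. The unperturbed field $(X_{H_0}(q,p))_j = (p_j,\sin q_j)$ is \emph{diagonal}: each component depends only on the coordinate at the same site. Consequently $DX_{H_0}$ has purely diagonal matrix representation, so $X_{H_0}\in C^r_\Gamma(B_\rho(\ell^\infty); \ell^\infty)$ for every $r$, with norm bounded uniformly in $\rho$. Together with Hypothesis $\mathbf{H1}$ this gives $X_H\in C^3_\Gamma(B_\rho(\ell^\infty); \ell^\infty)$ and, in particular, a uniform Lipschitz constant $L=L(\rho,\varepsilon)$ on the ball. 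Standard Picard--Lindel\"of in $\ell^\infty$ then yields a unique local solution $\Phi^t_H(z_0)$ on some time interval $|t|<T(\rho)$, for every $z_0\in B_\rho(\ell^\infty)$ (after a harmless shrinking of the radius).

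The heart of the proof is to show $D\Phi^t_H\in\mathcal{L}_\Gamma$ with the exponential bound. I would argue via the variational equation
\[
\dot{\Psi}(t) = DX_H(\Phi^t_H(z_0))\,\Psi(t), \qquad \Psi(0) = \mathrm{Id}.
\]
The initial datum $\mathrm{Id}$ is diagonal, hence in $\mathcal{L}_\Gamma$, and the curve $t\mapsto DX_H(\Phi^t_H(z_0))$ lies in $\mathcal{L}_\Gamma$ with $\|DX_H\|_{\mathcal{L}_\Gamma}\le \mu:=\|X_H\|_{C^1_\Gamma}$. Since $\mathcal{L}_\Gamma$ is a Banach algebra (Proposition $2.8$ in \cite{FontMartin1}), the Picard iteration for this linear ODE converges in $\mathcal{L}_\Gamma$, not only in $\mathcal{L}$, and Gronwall gives $\|\Psi(t)\|_{\mathcal{L}_\Gamma}\le e^{\mu|t|}$. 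Uniqueness in $\mathcal{L}$ forces $\Psi(t)=D\Phi^t_H(z_0)$, yielding the claimed bound. Higher-order derivatives $D^k\Phi^t_H$ are obtained by successively differentiating the variational equation; at each step the inhomogeneity lies in the appropriate space of multilinear maps with decay (by the $C^r_\Gamma$ chain rule, Proposition $2.17$ in \cite{FontMartin1}), so the same Banach-algebra/Gronwall argument upgrades the bound and produces $\Phi^t_H\in C^r_\Gamma$ for each $r$ up to the regularity available from $\mathbf{H1}$.

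For the last statement, I would re-run the contraction-mapping argument directly in the complete space $z_0 + B_\rho(\Sigma_{j, \Gamma})$ rather than in $\ell^\infty$. By Lemma \ref{lem:fm1}, a $C^1_\Gamma$ map $F$ with $F(0)=0$ sends $\Sigma_{j,\Gamma}$ to itself and is Lipschitz there; $X_{H_0}$ satisfies $X_{H_0}(0)=0$ trivially, and the spatial structure \eqref{def:hamthm1} gives $X_{H_1}(0)\in\Sigma_{j,\Gamma}$. Hence the Picard operator $z\mapsto z_0+\int_0^t X_H(z(s))\,ds$ is a well-defined contraction on a ball of $C^0([-T,T];\Sigma_{j,\Gamma})$, and its fixed point must coincide with the already-constructed $\ell^\infty$ solution by uniqueness. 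The main obstacle in this whole programme is ensuring that the $\mathcal{L}_\Gamma$-norm propagates along the non-autonomous variational equations, but this is precisely what the algebra property of $\mathcal{L}_\Gamma$ together with the uniform $C^1_\Gamma$ bound on $X_H$ buys; once invoked, the proof is bookkeeping built on classical ODE theory in Banach spaces.
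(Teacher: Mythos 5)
The paper's own proof is a one-line appeal to Proposition~8.1 of~\cite{FontichLS15}, so your argument is, in effect, a reconstruction of what that cited proposition delivers. The outline is correct and is the standard route: Picard--Lindel\"of in $\ell^{\infty}$ gives local well-posedness, the variational equation $\dot{\Psi}=DX_H(\Phi^t_H)\Psi$ together with the Banach-algebra property of $\mathcal{L}_{\Gamma}$ propagates decay and gives the exponential bound on $D\Phi^t_H$, iterating the differentiation of the variational equation gives the $C^r_{\Gamma}$ regularity, and a separate contraction in $C^0([-T,T];\Sigma_{j,\Gamma})$ propagates decay of the initial data. This is the same approach as the paper, only written out rather than cited.

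One step is not airtight. To close the Picard contraction in $\Sigma_{j,\Gamma}$ you need $X_H$ to map $\Sigma_{j,\Gamma}$ into itself, and by the ``moreover'' clause of Lemma~\ref{lem:fm1} this requires $X_H(0)=0$ (or, as you phrase it, $X_{H_1}(0)\in\Sigma_{j,\Gamma}$). Your justification ``the spatial structure \eqref{def:hamthm1} gives $X_{H_1}(0)\in\Sigma_{j,\Gamma}$'' does not hold as stated: for a translation-invariant perturbation of the form \eqref{def:hamthm1}, the vector $X_{H_1}(0)$ is a \emph{constant} sequence, and a nonzero constant sequence never lies in $\Sigma_{j,\Gamma}$, because the weights $\Gamma(k-j)^{-1}$ diverge. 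What is in fact used elsewhere in the paper is Hypothesis~$\mathbf{H2}$, which forces $(q,p)=(0,0)$ to be an equilibrium of $X_H$ and hence $X_H(0)=0$; the lemma is stated under $\mathbf{H1}$ alone, so it tacitly relies on this extra input (or on the corresponding hypothesis in the cited Proposition~8.1). You should invoke $X_H(0)=0$ explicitly at this step; with that amendment your argument is correct.
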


\begin{proof}
Since $X_H\in C^r_{\Gamma}(\mathcal{U})$ for any open subset $\mathcal{U}$ of the phase space $\ell^{\infty}(\Z^m;M)$ the proof follows by Proposition $8. 1$ in \cite{FontichLS15}.
\end{proof}

Once we know that the flow $\Phi_H^t$ is well defined both in  $ \ell^\infty(\ZZ^m;M)$ and in $\Sigma_{j, \Gamma}$, we can start developing an invariant manifolds theory for the invariant tori of the transition chain (see Section \ref{sec:heuristics}).

Recall that we have considered an  ``instability path'' 
\[
  \{\sigma_i\}_{i\geq 0}\subset \mathbb{Z}^m,\qquad |\sigma_{i+1}-\sigma_i|=1,
\]
and have defined the associated  sets of sites
\[
S_i=\left\{\sigma_i,\sigma_{i+1}\right\}\subset\ZZ^m.
\]


The Hypothesis $\mathbf{H2}$ implies that certain invariant tori of the unperturbed Hamiltonian \eqref{def:Hamiltonian} with $\eps=0$ are preserved.
In particular, this is case for the tori $\TT_{\sigma_i,\sigma_{i+1},h_1,h_2}$ and $\PP_{\sigma_i}$ introduced in \eqref{def:invtori} and \eqref{def:invtori1} respectively, which
are invariant under the flow associated to $H$ and the flow on these tori is a rigid rotation given by the integrable dynamics of \eqref{def:Hamiltonian} with $\eps=0$.

These tori have stable and unstable invariant manifolds which are, moreover, smooth with respect to parameters. This is stated in Theorem \ref{thm:toriinvman} below, which is a consequence of a more general invariant manifold theorem for invariant tori on lattices. This more general theory is explained in  Section \ref{sec:manifold}. 

To state  Theorem \ref{thm:toriinvman}, we introduce first good coordinates which allow to parameterize the invariant manifolds of the tori in \eqref{def:invtori}, \eqref{def:invtori1} as graphs.

%
%
%
To deal at the same time with the tori $\TT_{\sigma_i,\sigma_{i+1},h_1,h_2}$ and $\PP_{\sigma_i}$, we call $d$ to the dimension of the tori (which is either $d=2$ or $d=1$), $S$ to the ``activated sites'', that is either $S=\left\{\sigma_i, \sigma_{i+1}\right\}$ or $S=\left\{\sigma_i \right\}$ and we denote the torus by $\T_0$. Note that for  $\TT_{\sigma_i, \sigma_{i+1},h_1,h_2}$ we are assuming $h,h_1,h_2>0$, $h_1+h_2=h$ and for $\PP_{\sigma_j}$ we are assuming $h>0$.

Then, for $\de>0$ small enough,  we define the coordinates
\begin{equation*}
(x, y, \theta, r)\in \mathcal{M}_{\delta}:= B_{\delta}(\ell^{\infty}(\Z^m\setminus S; \R))\times B_{\delta}(\ell^{\infty}(\Z^m\setminus S; \R))\times \T^d\times B_{\delta}(\R^d)
\end{equation*}
 defined in a $\de$-neighborhood of $\T_0$, where
\begin{itemize}
\item  $(\theta_k, r_k)$ are the action-angle variables
 that are well defined in a neighborhood of the torus $\{ E_k=h_k \}$ for $h_k>0$ located at the tangential site $k\in S$.
\item  $(x_k, y_k)$ are cartesian coordinates which diagonalize the linearization of $E_k$  at the saddle $x_k=y_k=0$. That is,
\[
x_k=q_k+p_k, \qquad y_k=p_k-q_k, \qquad k\notin S.
\]
\end{itemize}

In these variables the equations of motion \eqref{eq:motions} are of the form
\begin{equation}\label{system}
\begin{cases}
\dot{x}_k=x_k+\mathbf{f}^k_1(\e; x, y, \theta, r) \qquad k\notin S,\\
\dot{y_k}=-y_k+\mathbf{f}^k_2(\e; x, y, \theta, r)\\
\dot{\theta}_k=\omega_k(r)+\mathbf{f}_3^k(\e; x, y, \theta, r) \qquad k\in S,\\
\dot{r}_k= \mathbf{f}_4^k(\e; x, y, \theta, r)
\end{cases}
\end{equation}
where $\omega_k(r)$ is the frequency associated to integrable Hamiltonian $E_k$ and
\[
\mathbf{f}^k_1(\e; x, y, \theta, r)=\mathbf{f}^k_2(\e; x, y, \theta, r):=\sin\left(\frac{x_k-y_k}{2} \right)-\left( \frac{x_k-y_k}{2} \right)-\e \partial_{q_k} H_1\left(\frac{x-y}{2} \right).
\]
Let us call
\[
\mathbf{f}_1:=(\mathbf{f}_1^k)_{k\in\Z\setminus S}, \qquad \mathbf{f}_2:=(\mathbf{f}_2^k)_{k\in\Z\setminus S}, \qquad \mathbf{f}_3:=(\mathbf{f}_3^k)_{k\in S}, \qquad \mathbf{f}_4:=(\mathbf{f}_4^k)_{k\in S}.
\]
Then, Hypohteses $\mathbf{H1}$ and $\mathbf{H2}$ imply
%
\begin{equation}\label{def:fs}
\begin{aligned}
\mathbf{f}_i(0; x, y, \theta, r)&=O_2(x, y), & i&=1,2\\
\mathbf{f}_i(0; x, y, \theta, r)&=0,  & i&=3,4\\
\mathbf{f}_i(\varepsilon; 0, 0, \theta, 0)&=0, & i&=1, 2, 3, 4.
\end{aligned}
\end{equation}
Since $X_{H_1}\in C^r_{\Gamma}(\ell^{\infty}(\Z; M))$ then the functions $\mathbf{f}_i$ are $C^r_{\Gamma}(\mathcal{M}_{\delta})$ for any $r\geq 1$. 

Now we are in position to state the theorem of existence of invariant manifolds for the tori \eqref{def:invtori}, \eqref{def:invtori1}.

{\begin{theorem}\label{thm:toriinvman}
Consider the equation \eqref{system} and assume $\mathbf{H1}$ - $\mathbf{H2}$. There exists $\e_0>0$ such that for all $\e\in (0, \e_0)$, any invariant torus $\T_0$ of those  in \eqref{def:invtori}, \eqref{def:invtori1} possesses  stable and unstable invariant manifolds $W^{s, u}=W^{s, u}_{\e}$ .
Moreover, 
they can be represented locally as graphs.  More precisely, there exists $\delta>0$ small enough and functions $\gamma_{\e}^{s}=\gamma^s_{\e}(x, \theta), \gamma_{\e}^{u}=\gamma_{\e}^u(y, \theta)\in C^2_\Gamma(B_{\delta}(\ell^{\infty})\times \T^d; \ell^{\infty}\times \R^d)$ such that
\begin{itemize}
\item  the local invariant manifolds are parameterized as
\[
\begin{split}
W^s=\{ (x, \gamma^s_y(x, \theta), \theta, \gamma^s_{r}(x, \theta))\},\\
W^u=\{(\gamma^u_x(y, \theta), y, \theta, \gamma^u_{r}(y, \theta))\}.
\end{split}
 \]
\item$\gamma^{s, u}_{\e}(0, \theta)=0$. Moreover, its $C^1_\Gamma$ norm is of order $\delta+\eps$ and
\[
\sup_{ B_{\delta}(\ell^{\infty})\times \T^d} \| \gamma^{s, u}_{\e}\|_{\ell^{\infty}\times \R^d}\le \mathcal{O}(\delta^2+\delta\eps).
\] 
\item $\gamma_{\e}^{s, u}$ is $C^2$ with respect to $\e$.
\item For all $j\in \Z^m$ 
\[
\gamma^{s, u}_{\e}\colon B_{\delta}(\Sigma_{j, \Gamma})\times \T^d\to \Sigma_{j, \Gamma}\times \R^d,
\]
 $\gamma^{s, u}_{\e}\in C^2_{\Gamma}(B_{\delta}(\Sigma_{j, \Gamma})\times \T^d)$ and its $C^1_\Gamma$ norm is of order $\delta+\eps$.
\end{itemize}
\end{theorem}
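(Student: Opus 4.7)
The plan is to derive Theorem \ref{thm:toriinvman} as a specialization of the general invariant manifold theorem to be developed in Section \ref{sec:manifold}, applied to the system \eqref{system} at the invariant torus $\mathbb{T}_0=\{x=y=r=0\}$. First I would verify the hypotheses of the general theorem. The linearization of \eqref{system} at any point of $\mathbb{T}_0$ is block-diagonal site by site: expansion rate $+1$ in the $x$-directions, contraction rate $-1$ in the $y$-directions, pure rotation with frequency $\omega(0)$ on $\theta$, and $0$ on $r$. Because these rates are uniform in $k$ and independent of the angles, the resulting exponential dichotomy has spectral projections that are block-diagonal and hence automatically bounded in $\mathcal{L}_\Gamma$, so the decay norm passes cleanly through the hyperbolic splitting. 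The nonlinear terms $\mathbf{f}_i$ lie in $C^3_\Gamma$ by $\mathbf{H1}$ and satisfy \eqref{def:fs} (a reformulation of $\mathbf{H2}$): $\mathbf{f}_1,\mathbf{f}_2=O_2(x,y)$ at $\e=0$, $\mathbf{f}_3,\mathbf{f}_4$ vanish at $\e=0$, and all four vanish along $\mathbb{T}_0$ for every $\e$.

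The graphs $\gamma^{s,u}_\e$ would then be constructed by a Perron fixed-point scheme in a small closed ball of $C^1_\Gamma(B_\delta\times\T^d)$. Using Duhamel's formula against the hyperbolic splitting of \eqref{system} transforms the invariance requirement into a fixed point equation $\gamma=\mathcal{T}_\e[\gamma]$ where $\mathcal{T}_\e$ is built by composing $\mathbf{f}_i$ with the block-diagonal linear resolvents and integrating over the appropriate half-line. The genuinely infinite-dimensional point is to keep each iterate inside $C^1_\Gamma$: this is exactly what the Banach algebra property $\gamma(BA)\le \gamma(B)\gamma(A)$ of $\mathcal{L}_\Gamma$ and the composition estimate for $C^r_\Gamma$ maps (Section \ref{sec:functional}) are designed for. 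Contractivity follows because the stable semigroup integrates to a finite constant and the nonlinearity contributes a factor $\mathcal{O}(\delta+\e)$ by \eqref{def:fs}. A Banach fixed point argument in $C^1_\Gamma$ then produces $\gamma^{s,u}_\e$; bootstrapping to higher derivatives, using $X_H\in C^3_\Gamma$, lifts the regularity to $C^2_\Gamma$.

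The quantitative bounds follow immediately. The fixed point inherits the size of the operator, giving $\|\gamma^{s,u}_\e\|_{C^1_\Gamma}=\mathcal{O}(\delta+\e)$. Since $\mathbb{T}_0\subset W^{s,u}$ we have $\gamma^{s,u}_\e(0,\theta)=0$, and the mean value theorem in the spirit of Lemma \ref{lemma:SigmajGammazero} then upgrades the $C^0$ estimate to $\mathcal{O}(\delta^2+\delta\e)$. Smoothness in $\e$ is obtained by treating $\e$ as a frozen parameter: the map $(\e,\gamma)\mapsto \mathcal{T}_\e[\gamma]$ is $C^2$ jointly by $\mathbf{H1}$, so the uniform contraction principle with parameters yields $C^2$ dependence of $\gamma^{s,u}_\e$ on $\e$. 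For the $\Sigma_{j,\Gamma}$-invariance I would repeat the fixed point argument in $C^1_\Gamma(B_\delta(\Sigma_{j,\Gamma})\times\T^d)$: Lemma \ref{lemma:SigmajGammazero} together with the vanishing $\mathbf{f}_i(\e;0,0,\theta,0)=0$ ensures $\mathcal{T}_\e$ preserves this smaller space, and by uniqueness of the contraction fixed point the two solutions coincide.

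The main obstacle is not the hyperbolic splitting, which is clean and site-uniform, but the verification that every step of the fixed point scheme preserves the decay structure. In an $\ell^2$ Hilbert setting this would be essentially automatic; in the $\ell^\infty$ plus $\Gamma$-decay framework it rests entirely on the calculus of $C^r_\Gamma$ maps recalled in Section \ref{sec:functional}. Once the abstract invariant manifold theorem of Section \ref{sec:manifold} is in place, the specialization to \eqref{system} amounts to checking the vanishing structure \eqref{def:fs}, which is built directly into Hypotheses $\mathbf{H1}$--$\mathbf{H2}$.
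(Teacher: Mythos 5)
Your high-level strategy is the right one and matches the paper's: verify the abstract hypotheses, feed the structural facts $\mathbf{f}_i(\e;0,0,\theta,0)=0$, $\mathbf{f}_{1,2}(0;\cdot)=O_2(x,y)$, $\mathbf{f}_{3,4}(0;\cdot)=0$ from \eqref{def:fs} into a general invariant-manifold theorem, and then get the $\Sigma_{j,\Gamma}$ statement for free from Lemma \ref{lemma:SigmajGammazero} and uniqueness of the fixed point. Where you diverge from the paper is in the concrete mechanism for the general theorem. You propose a Lyapunov--Perron scheme: Duhamel against the stable/unstable semigroups, integrate over a half-line, contract in $C^1_\Gamma$. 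The paper instead works entirely with \emph{maps} (Theorems \ref{thm:lipcase}, \ref{thm:C1case}, \ref{thm:C2case}), building the stable graph by a graph transform $G(\gamma)(z)=C(\theta)^{-1}(g(z,\gamma(z))+\gamma(h(z)))$ in the weighted class $\Xi_{c,M}$ with the extra anisotropic condition $\lip_\theta\gamma(x)\le M\|x\|_{\ell^\infty}$, and then passes to flows by discretizing to the time-$T$ map $\Phi^T$ (Theorem \ref{thm:invmanflows}) and proving $T$-independence of the resulting graph. That discretization is not merely cosmetic: it is what lets the paper absorb the zero-rate $r$-direction and the rotating $\theta$-direction into the algebraic hypotheses $\mathbf{(H0)_{C^k}}$ ($\lambda^{-1}\beta(1+K_\theta)^k<1$), conditions that are automatically satisfiable by taking $T$ large. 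Your Perron version would have to encode the same gap between the $\pm 1$ hyperbolic rates and the $r,\theta$ center rates directly in the choice of weights for the half-line integrals, which is workable but less transparent in the presence of a center manifold.

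The one place your sketch is too optimistic is the $C^2$ step. You write that ``bootstrapping to higher derivatives, using $X_H\in C^3_\Gamma$, lifts the regularity to $C^2_\Gamma$,'' but a contraction in a ball of $C^1_\Gamma$ graphs does not by itself upgrade to $C^2$ even when the vector field is smoother, because the formal derivative equation is not a contraction in the same norm. The paper's route is the Fiber Contraction Theorem \ref{thm:fibrecontraction}: one sets up an auxiliary operator $H$ with $D[G(\gamma)]=H(\gamma,D\gamma)$ (respectively $D^2[G(\gamma)]=\mathcal{H}(\gamma,D\gamma,D^2\gamma)$), shows it is a fiber contraction in the weighted spaces $D\Xi_{c,M,\mathtt{v}}$ and $D\Xi^2_{\mathtt{v},\tilde{\mathtt{v}}}$ under the strengthened gap condition \eqref{condition:lambdastrong}, and then uses Lemma \ref{lem:limitdecay} to pass to the limit inside $C^2_\Gamma$. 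If you want to stay with Perron you would need the analogous fiber-contraction scaffolding for the differentiated integral equation; ``bootstrapping'' alone does not close this. Apart from that, your verification of the hypotheses, the $\mathcal{O}(\delta+\e)$ size of the $C^1_\Gamma$ norm, the derivation of the $C^0$ bound $\mathcal{O}(\delta^2+\delta\e)$ from $\gamma^{s,u}(0,\theta)=0$, the $C^2$-in-$\e$ dependence by treating $\e$ as a parameter, and the $\Sigma_{j,\Gamma}$ argument all agree with the paper.
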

}

This theorem not only gives the existence of the invariant manifolds of the invariant tori but also give decay properties for them. Its proof is a consequence of a  general invariant manifolds theory for invariant tori which is developed in Section \ref{sec:manifold}.

\subsubsection{Transversal intersection between the invariant manifolds}\label{sec:transvheterosheuristic}

Theorem \ref{thm:toriinvman} gives the existence and regularity of the invariant manifolds of the tori in \eqref{def:invtori}. When $\eps=0$, the stable and unstable  invariant manifolds of these tori coincide creating a homoclinic manifold.
Next step is to prove that, for $0<\eps\ll 1$, they intersect transversally and that moreover the stable invariant manifold of one of these tori intersects transversally the unstable invariant manifold of ``nearby'' tori. 

Since we are in an infinite dimensional setting, we devote the next section to review the definition of transversality between Banach submanifolds. Note also that we are dealing with flows with a (formal) first integral and, therefore, we need an ``adapted'' definition of transversality. 

\paragraph{Transversality of Banach submanifolds}

To define transversality between Banach submanifolds, we start by reviewing the notion of direct sum of Banach subspaces. Later we use it to talk about Banach submanifolds and their tangent spaces.

Let us consider a Banach space $\cX$ and two Banach subspaces $\cX_1, \cX_2$. Then, $\cX$ is the  direct sum of  $\cX_1, \cX_2$, which we denote by 
\[
\cX=\cX_1\oplus\cX_2 
\]
if the map $T:\cX_1\times \cX_2\to \cX$ given by $T(v_1,v_2)=v_1+v_2$ is an isomorphism. Note that by the definition of Banach subspaces, $T$ is a continuous map and therefore, by the Open Mapping Theorem, its inverse $T^{-1}$ is continuous as well. The inverse map is just $T^{-1}=(\pi_1,\pi_2)$, where $\pi_i$ is the projection onto $\cX_i$, $i=1,2$, and, therefore, the projections are also continuous. Recall that the fact that $T$ is an isomorphism implies that $\cX_1\cap \cX_2=\langle 0 \rangle$.


The  direct sum can be defined in a more general setting considering vector subspaces of $\cX$ instead of Banach subspaces. Then one has to distinguish between algebraic direct sum and topological direct sum. Algebraic refers to direct sum as vector spaces (i.e. $T$ is an isomorphism but $T^{-1}$ may not be bounded\footnote{Note that $T$ is always bounded. On the contrary, if  $\cX_1, \cX_2$ are only vector subspaces  one cannot use the  Open Mapping Theorem and therefore $T^{-1}$ may not be bounded.}) whether topological refers to also requiring that $T^{-1}$ is a bounded map.

Since we are interested only in the case when $\cX_1$, $\cX_2$ are Banach subspaces the notions of algebraic and topological direct sum coincide and therefore, to simplify the exposition, we just talk about direct sums.


We use this concept to define transversality between submanifolds of Banach manifolds.

\begin{definition}\label{def:transversality}
Let us consider a Banach manifold $\cM$ modeled on a Banach space $\cX$ and a point $p\in\cM$. Assume that $\cM$ possesses two Banach submanifolds $\cN_1$, $\cN_2$ such that $p\in\cN_1\cap \cN_2$. 
Then, we say that $\cN_1$, $\cN_2$ intersect transversally at $p$ if and only if the Banach subspaces $T_p\cN_1, T_p\cN_2$ of $T_p\cM$ satisfy
\[
 T_p\cM=T_p\cN_1\oplus T_p\cN_2.
\]
\end{definition}

Note that in this paper we are dealing with flows. Therefore if we consider invariant manifolds by the flow, they cannot intersect transversally since the flow direction (the Banach subspace generated by the vector field) belongs to the tangent space of all invariant manifolds. For this reason we need to adapt the definition of transversality as follows. Note also that in this paper, we are dealing with (formal) Hamiltonian systems and therefore the associated vector fields have (formal) first integrals (see Definition \ref{def:firstintegralflows}). 

%
%
%
%
%
%
%

We introduce the following definition of transversality for invariant manifolds of the flow associated to the vector field $X$.
\begin{definition}\label{def:transversalityflows}
Fix $p\in\cM$  such that $X(p)\neq 0$ and  consider two Banach submanifolds $\cN_1$ and $\cN_2$ of $\cM$ such that $p\in\cN_1\cap\cN_2$ and such that both are invariant by the flow associated to $X$. Assume also that $X$ has a formal first integral $G$ in the sense of Definition \ref{def:firstintegralflows}.
Then, we say that $\cN_1,\cN_2$ intersect transversally at $p$ if 
\begin{enumerate}
\item They satisfy
\[
 T_p\cN_1\cap T_p\cN_2 =\langle X(p)\rangle
\]
where $\langle X(p)\rangle$ is the one dimensional invariant subspace generated by $X(p)$.
\item The map
\[
 T: T_p\cN_1\times T_p\cN_2 \to T_p\cM,\qquad T(v_1,v_2)=v_1+v_2
\]
is a linear continuous map whose image is equal to $\Ker dG(p)$
\end{enumerate}
\end{definition}
Note that Item 1 implies that $\Ker T\subset T_p\cN_1\times T_p\cN_2$ is one dimensional and generated by the vector $(X(p),-X(p))$.

\begin{remark}\label{rmk:complement}
This notion of transversality can be phrased in terms of (topological) direct sum as follows. Since $\langle X(p)\rangle\subset \Ker dG(p)$ is one dimensional, we know that there exists a complement. That is, there exists a Banach subspace $\cH_p$ of $\Ker dG(p)$ such that
\begin{equation}\label{def:complement}
 \Ker dG(p)=\langle X(p)\rangle\oplus\cH_p.
\end{equation}
Then, Definition \ref{def:transversalityflows} is equivalent to require that the Banach subspaces $\HH_p^i=T_p\cN_i\cap\HH$ satisfy
\[
 \HH_p^1\oplus \HH_p^2=\HH_p.
\]
\end{remark}

\paragraph{Transverse heteroclinic orbit between invariant tori}

The phase space we are considering is 
\[
\mathcal{M}=\ell^{\infty}(\Z^m\setminus S; \R)\times \ell^{\infty}(\Z^m\setminus S; \R)\times \T^d\times \R^d,
\]
whose tangent space at any point $z\in\mathcal{M}$ can be identified as
\[
 T_z\mathcal{M}=\ell^{\infty}(\Z^m\setminus S; \R)\times \ell^{\infty}(\Z^m\setminus S; \R)\times \R^d\times \R^d,
\]
or, in the $\Sigma_{j, \Gamma}$ case, the space
\[
\mathcal{M}_{j, \Gamma}=\Sigma_{j, \Gamma}\times \Sigma_{j, \Gamma} \times \T^d\times \R^d.
\]
Even if the Hamiltonian \eqref{def:Hamiltonian} may only be formally defined, its differential $d H$ and, therefore,  $\mathrm{Ker} d H$ are well defined (see Definition \ref{def:firstintegralflows}).

Then, given two tori $\T_1$ and $\T_2$ (not necessarily of the same dimension), we consider the unstable manifold of $\T_1$, denoted by $W^u(\T_1)$,  and the stable manifold of   $\T_2$, denoted by $W^s(\T_2)$. Note that, by construction, $T_zW^u(\T_1)$ and $T_z W^s(\T_2)$ are Banach subspaces of  $T_z\mathcal{M}$ and the same is true for $\mathrm{Ker} d H$. 


%

To prove that these invariant manifolds of nearby tori intersect transversally in the sense of Definition \ref{def:transversalityflows}, we need to impose the non-degeneracy conditions $\mathbf{H3}$-$\mathbf{H4}$ on certain  Melnikov functions associated to $H_1$. 
One should expect (under non-degeneracy hypotheses) plenty of transverse homoclinic/heteroclinic orbits.

This allows to construct a transition chain of hyperbolic tori.
These tori belong the invariant subspaces $\cV_i$  in \eqref{def:subspacesXj}.
Fix $h>0$ and the energy level\footnote{Note that we are fixing an energy level once we restrict to a finite dimensional subspace (where the Hamiltonian is a well defined function). This is not contradictory with the fact that in the infinite dimensional setting we deal with formal Hamiltonians in the sense that they may be unbounded but have a well defined differential.} $H=h$. Then, we define 
\begin{equation}\label{def:cylinderLambdaj}
 \Lambda_i=\left\{(q,p)\in \cV_i\cap H^{-1}(h): q_{\sigma_{i+2}}=p_{\sigma_{i+2}}=0 \right\}.
\end{equation}

\begin{theorem}\label{thm:transitiochainXj}
Fix $i\geq 0$. Assume that $H$ satisfies $\mathbf{H1}$--$\mathbf{H4}$. Then there exists $\varepsilon_0>0$ such that for any $\varepsilon\in (0,\varepsilon_0)$ there exists $N>0$ and a sequence of tori $\T_{i,k}\subset \Lambda_i$, $k=1\ldots N$  such that 
\[
 W_{\eps}^u(P_{\sigma_i}) \pitchfork  W_{\eps}^s(\T_{i,0}),\quad   W_{\eps}^u(\T_{i.k}) \pitchfork W_{\eps}^s(\T_{i, k+1})\quad \text{for}\quad k=0,\ldots N-1\quad \text{and}\quad W_{\eps}^u(\T_{i,N}) \pitchfork  W_{\eps}^s(P_{\sigma_{i+1}})
\]
where $ \pitchfork$ denotes transversal intersection in the sense of Definition \ref{def:transversalityflows}, $P_{\sigma_i}$ is the periodic orbit introduced in \eqref{def:invtori} and $\T_{i.k}$ are invariant tori of the form \eqref{def:invtori1}. This statement is true both in $\ell^\infty$-functional setting and in $\Sigma_{j,\Gamma}$-functional setting.
\end{theorem}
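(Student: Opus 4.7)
The plan is to combine three ingredients: the invariant manifold theory from Theorem~\ref{thm:toriinvman}, a Melnikov-type distance computation between stable and unstable manifolds (to be developed in Section~\ref{sec:MelnikovTheory}), and a scattering-map/implicit-function-theorem construction to assemble the chain torus-by-torus. Hypothesis $\mathbf{H2}$ guarantees that $\cV_i$ is invariant and that the dynamics on $\cV_i\cap H^{-1}(h)$ reduces to a finite-dimensional three-pendulum system; in particular, $\Lambda_i$ in \eqref{def:cylinderLambdaj} is a finite-dimensional normally hyperbolic cylinder foliated by the $2$-dimensional tori $\TT_{\sigma_i,\sigma_{i+1},h_i,h-h_i}$ for $h_i\in(0,h)$, while these tori and the periodic orbits $\PP_{\sigma_i},\PP_{\sigma_{i+1}}$, viewed in the full phase space, carry infinite-dimensional stable and unstable bundles provided by Theorem~\ref{thm:toriinvman}.

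First I would treat the Arnold regime. For $\delta>0$ small and $h_i,h_i'\in[\delta,h-\delta]$ with $|h_i'-h_i|$ sufficiently small, the $O(\varepsilon)$ distance between $W^u_\varepsilon(\TT_{\sigma_i,\sigma_{i+1},h_i,h-h_i})$ and $W^s_\varepsilon(\TT_{\sigma_i,\sigma_{i+1},h_i',h-h_i'})$, measured on the unperturbed homoclinic manifold of the $2$-dimensional torus, is encoded by the derivatives of the Melnikov potential $\cL_i$ from $\mathbf{H3.1}$ and by the Melnikov function $\cM_i$ from $\mathbf{H3.2}$. The non-degenerate critical time $t=\tau(x_i,x_{i+1},h_i,h_{i+1})$ from $\mathbf{H3.1}$ gives, by the implicit function theorem, an intersection point between these manifolds, while $\cM_i$ evaluated at this critical time determines the change in $h_i$ produced by the corresponding scattering map (in the sense of \cite{DelshamsLS08}). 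The sign conditions on $\mathcal{J}_+$ and $\mathcal{J}_-$ in $\mathbf{H3.2}$ allow one to choose this change of either sign, so iterating the scattering map finitely many times produces intermediate tori $\T_{i,0},\T_{i,1},\dots,\T_{i,N}$ with $h_{i,0}$ close to $h$ and $h_{i,N}$ close to $0$, consecutively connected by transverse heteroclinics.

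In the jumping regime, where the hyperbolicity of $\Lambda_i$ becomes comparable to the normal hyperbolicity and the scattering-map machinery degenerates, I would use $\mathbf{H4}$ directly: the non-degenerate critical point $(t_1,t_2)=\widetilde\tau(x_i,h)$ of $\widetilde{\cL}_i$ yields, via the implicit function theorem, a transverse homoclinic orbit to $\PP_{\sigma_i}=\TT_{\sigma_i,\sigma_{i+1},h,0}$ at order $\varepsilon$. Since the map $h_0\mapsto \TT_{\sigma_i,\sigma_{i+1},h_0,h-h_0}$ is smooth and reduces to $\PP_{\sigma_i}$ at $h_0=h$, this transverse homoclinic perturbs into a transverse heteroclinic $W^u_\varepsilon(\PP_{\sigma_i})\pitchfork W^s_\varepsilon(\T_{i,0})$ for $h_{i,0}$ close to (but less than) $h$; a symmetric argument at the right end of the chain produces $W^u_\varepsilon(\T_{i,N})\pitchfork W^s_\varepsilon(\PP_{\sigma_{i+1}})$.

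The main obstacle is to upgrade these intersections, which are \emph{algebraically} transverse inside the finite-dimensional invariant subspace $\cV_i$, to the infinite-dimensional transversality of Definition~\ref{def:transversalityflows}, i.e.\ to the identities $T_pW^u\cap T_pW^s=\langle X_H(p)\rangle$ and $T_pW^u+T_pW^s=\ker dH(p)$ interpreted as a topological direct sum in $\ell^\infty$ (respectively in $\Sigma_{j,\Gamma}$) in the sense of Remark~\ref{rmk:complement}. At $\varepsilon=0$ the splitting is manifest: the extra directions $k\notin\{\sigma_i,\sigma_{i+1},\sigma_{i+2}\}$ correspond to uncoupled pendulum saddles whose stable/unstable splittings have a product structure, giving a bounded topological complement pendulum-by-pendulum. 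For $\varepsilon>0$, the decay estimates on $\gamma^{s,u}_\varepsilon$ in Theorem~\ref{thm:toriinvman} imply that the perturbed splittings differ from the unperturbed ones by an operator of small norm in $\cL_\Gamma$, so a Neumann-type argument (Lemma~\ref{lem:neumann}) preserves the topological direct sum. Since Theorem~\ref{thm:toriinvman} supplies invariant manifolds with the same decay estimates in both $\ell^\infty$ and $\Sigma_{j,\Gamma}$, the same argument yields transversality in both functional settings simultaneously.
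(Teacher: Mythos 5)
Your overall strategy matches the paper's: split the argument into an Arnold regime (scattering maps on $\Lambda_{i,\delta}$ via $\mathbf{H3}$, as in Lemma~\ref{lemma:TransitionArnold}) and a jumping regime near the periodic orbits (via $\mathbf{H4}$, as in Lemma~\ref{lemma:TransitionPeriodic}), and then promote transversality inside $\cV_i$ to transversality in the full phase space (Proposition~\ref{theorem:transitionchain}) by exploiting the uncoupled-saddle product structure in the complementary sites together with the $\eps$-regularity of the invariant manifolds from Theorem~\ref{thm:toriinvman}. Your last paragraph is a reasonable abstraction of the paper's argument, which makes it concrete via Moser normal form coordinates $(\tu_k,\tv_k)$, a graph parameterization $\Gamma^{u,s}_\eps=\Gamma^{u,s}_0+\eps F^{u,s}$, and the invertibility of $\partial_{(\tu,\tv)}\mathcal{G}|_{(0,0)}=\id+\mathcal{O}(\eps)$.

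There is, however, a genuine gap in your jumping-regime step. You assert that the transverse homoclinic to $\PP_{\sigma_i}$ produced by $\mathbf{H4}$ ``perturbs into'' a transverse heteroclinic $W^u_\eps(\PP_{\sigma_i})\pitchfork W^s_\eps(\T_{i,0})$ once $h_{i,0}$ is close to $h$. This soft continuity argument does not work, because $\PP_{\sigma_i}$ has an extra hyperbolic direction (at site $\sigma_{i+1}$) that $\T_{i,0}$ lacks: as $h_{i,0}\to h$ the manifold $W^s(\T_{i,0})$ drops a dimension and does \emph{not} converge to $W^s(\PP_{\sigma_i})$, so openness of transversality cannot be invoked. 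The paper's Lemma~\ref{lemma:TransitionPeriodic} instead computes the Melnikov distance between $W^u_\eps(\PP_{\sigma_i})$ and $W^s_\eps(\T_{i,0})$ directly on a transverse section, obtaining $(h-h_{i,0})+\eps\,\mathcal{M}_{\sigma_{i+1}}+\mathcal{O}(\eps^2\log\eps)$ in one energy coordinate and $\eps\,\mathcal{M}_{\sigma_{i+2}}+\mathcal{O}(\eps^2\log\eps)$ in the other, so that the implicit function theorem applies only for $h-h_{i,0}=\mathcal{O}(\eps)$. Moreover, the $\mathcal{O}(\eps^2\log\eps)$ error is itself a nontrivial estimate: it requires splitting the Melnikov integral at $\mathtt{t}\sim c|\log\eps|$ and using hyperbolicity together with the fact that the perturbation vanishes on the cylinder, because orbits linger for a time of order $|\log\eps|$ near the nearly degenerate torus. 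Without this quantitative control, the jumping-regime connection is not justified.
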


This theorem is proven in Section \ref{sec:MelnikovTheory}. Note that the transition chains for all $i$'s can be concatenated to build an infinite transition chain.

Note that Theorem \ref{sec:MelnikovTheory} contains both the Arnold regime and the Jumping regime explained in Section \ref{sec:heuristics}. Indeed, the cylinder  $\Lambda_i$ in \eqref{def:cylinderLambdaj} is not normally hyperbolic since it possesses the periodic orbits $P_{\sigma_i}$ and $P_{\sigma_{i+1}}$ introduced in \eqref{def:invtori}
whose hyperbolicity ``within $\Lambda_i$'' is as strong as the normal one. However, for any $\delta>0$,
\[
  \Lambda_{i,\delta}=\left\{(q,p)\in \Lambda_i: E_i(q,p)\in (\delta, h-\delta)\right\}
\]
is a normally hyperbolic invariant manifold both for $\varepsilon=0$ and for $0<\varepsilon\ll 1$.
Therefore, the proof of Theorem \ref{thm:transitiochainXj} will be done in two steps. First for the tori in  $\Lambda_{i,\delta} $ and then for the tori very close to the periodic orbit.

\subsubsection{The Lambda lemma and the shadowing argument}\label{subsec:Sketch:Shadow}
To prove Theorem \ref{thm:technical} it only remains to shadow the transition chain obtained in Theorem \ref{thm:transitiochainXj}. This is done by means of a Lambda lemma.
Let us rename as $\T_j$, $j=0, \dots, N$ the one and two dimensional tori of the chain.

We denote by  $| \cdot |_d$ the norm of $\R^d$. We recall that
\[
\T^d:=(\R/2\pi \Z)^d:=\{ [\theta] : \theta\sim \vartheta\,\, \mathrm{if\,\,and\,\,only\,\,if}\,\,\theta-\vartheta=2\pi k\,\,\mathrm{for\,\,some}\,\,k\in \Z^d \}.
\]
With abuse of notation we denote by
\[
d_{\T^d}(\theta, \theta')=\inf_{q\in [\theta], p\in [\theta']} |q-p|_d=:|\theta-\theta'|_d.
\]

\begin{theorem}{(Lambda Lemma)}\label{thm:Lambda}
Let $\Phi^t_{H}$ be the flow of the Hamiltonian system \eqref{def:Hamiltonian} and consider an invariant torus $\T_{j}$ on which the dynamics is quasi-periodic (i.e. a non-resonant rigid rotation). Then, the following two statements are satisfied.
\begin{enumerate}
\item Consider a Banach submanifold  $\Gamma\subset\mathcal{M}$ and assume that it intersects transversally in the sense of Definition \ref{def:transversalityflows} (with respect to the formal first integral $H$) the stable manifold $W_{\e}^s(\T_j)$. Then
\begin{equation}\label{def:lambdaaccumution}
W_{\e}^u(\T_0)\subset \overline{\bigcup_{t\geq 0} \Phi^t_H(\Gamma)},
\end{equation}
where the closure is meant with respect to the metric 
\[
d(w, \tilde{w}):=\| x-\tilde{x} \|_{\ell^{\infty}}+\| y-\tilde{y} \|_{\ell^{\infty}}+|\theta-\tilde{\theta}|_d+| r-\tilde{r}|_d.
\]
\item  Consider a Banach submanifold  $\Gamma\subset\mathcal{M}_{j,\Gamma}$ and assume that it intersects transversally in the sense of Definition \ref{def:transversalityflows} (with respect to the formal first integral $H$) the stable manifold $W_{\e}^s(\T_j)$. Then, \eqref{def:lambdaaccumution} is satisfied with respect to the metric
\[
d_{j, \Gamma}(w, \tilde{w}):=\| x-\tilde{x} \|_{{j, \Gamma}}+\| y-\tilde{y} \|_{{j, \Gamma}}+|\theta-\tilde{\theta}|_d+| r-\tilde{r}|_d.
\]
\end{enumerate}
\end{theorem}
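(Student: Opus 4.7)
The plan is to adapt the classical $\lambda$-lemma for whiskered quasi-periodic tori (in the spirit of Fontich--Mart\'in--de la Llave) to the present infinite-dimensional, decay-aware setting. I will outline the argument for statement (1); statement (2) follows by running the same scheme in $\Sigma_{j,\Gamma}$, and is discussed at the end.

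\textbf{Step 1 (Local straightening and reduction).} Using the $C^2_\Gamma$ graph parametrizations from Theorem \ref{thm:toriinvman} and the composition lemma for $C^r_\Gamma$ maps, I would perform a $C^2_\Gamma$ change of coordinates on $\mathcal{M}_\delta$ bringing $W^s_{\mathrm{loc}}(\T_j)$ to $\{y=0,\,r=0\}$ and $W^u_{\mathrm{loc}}(\T_j)$ to $\{x=0,\,r=0\}$. In the new coordinates \eqref{system} keeps its block structure: the linearization at $\T_j$ is diagonal hyperbolic on $(x,y)$ and a rigid rotation of frequency $\omega$ on $\theta$, the nonlinear remainders $\mathbf{f}_i$ vanish on each invariant manifold (cf. \eqref{def:fs}), and the uniform bound $\|D\Phi^t_H\|_{\mathcal{L}_\Gamma}\le Ce^{\mu t}$ of Lemma \ref{lem:welldefflow} persists. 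Let $p\in \Gamma\cap W^s_\e(\T_j)$ be a transverse intersection point. Flowing forward and replacing $\Gamma$ by a small disk around $\Phi^{t_0}_H(p)$, I may assume $p\in W^s_{\mathrm{loc}}(\T_j)$ and $\Gamma\subset \mathcal{M}_\delta$. By Definition \ref{def:transversalityflows} and Remark \ref{rmk:complement}, $T_p\Gamma$ contains a complement of $T_pW^s_\e(\T_j)\cap \mathcal{H}_p$ inside $\mathcal{H}_p$, so it projects onto the unstable fiber along a direction transverse both to $W^s_{\mathrm{loc}}$ and to $X_H(p)$.

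\textbf{Step 2 (Cone field, graph transform, and matching by minimality).} I would introduce an unstable cone field $\mathcal{C}^u_\eta \subset T\mathcal{M}_\delta$ around the tangent distribution of $W^u_{\mathrm{loc}}$ and verify, using the Banach-algebra property of $\mathcal{L}_\Gamma$ and the smallness of $\mathbf{f}_i$ near $\T_j$, that $\mathcal{C}^u_\eta$ is forward-invariant under $D\Phi^t_H$ and exponentially contracted onto the unstable distribution. The standard graph-transform argument then produces, for every sufficiently large $t>0$, a disk $D_t\subset \Phi^t_H(\Gamma)\cap\mathcal{M}_\delta$ which is a $C^1_\Gamma$-graph over a ball in the $(y,\theta)$-plane centered at the trace of $\Phi^t_H(p)$, with $\|D_t-W^u_{\mathrm{loc}}\|_{C^1_\Gamma}\to 0$. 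Now fix a target $q_\ast\in W^u(\T_j)$; by flowing backward we may assume $q_\ast=(0,y_\ast,\theta_\ast,0)\in W^u_{\mathrm{loc}}$. Along $W^s_{\mathrm{loc}}$ the angular coordinate of $\Phi^t_H(p)$ evolves as $\theta(p)+\omega t+o(1)$, and by the non-resonance hypothesis the sequence $\{\theta(p)+\omega t\}_{t\ge 0}$ is dense in $\T^d$. Hence there exist $t_n\to+\infty$ with $|\theta(t_n)-\theta_\ast|_d\to 0$, and the graph $D_{t_n}$ covers the angle $\theta_\ast$ within error $o(1)$; evaluating at the fiber $y=y_\ast$ gives a point of $\Phi^{t_n}_H(\Gamma)$ converging to $q_\ast$ in the $\ell^\infty$ metric, which is \eqref{def:lambdaaccumution}. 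Points of $W^u(\T_j)$ outside $\mathcal{M}_\delta$ are handled by writing them as $\Phi^{s}_H(q'_\ast)$ with $q'_\ast\in W^u_{\mathrm{loc}}$ and invoking the previous step for $q'_\ast$, then flowing by $s$.

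\textbf{Step 3 (Main obstacle and the $\Sigma_{j,\Gamma}$ case).} The principal difficulty is that the usual cone-field/graph-transform machinery rests on finite-dimensional spectral projections; in $\ell^\infty$ one must work uniformly over the countably many hyperbolic directions simultaneously. This is supplied precisely by the $\mathcal{L}_\Gamma$-framework of Section \ref{sec:functional}: the Banach-algebra property makes the graph-transform operator well defined on $C^1_\Gamma$-graphs, the decay bound on $D\Phi^t_H$ gives uniform contraction rates in $\|\cdot\|_{\mathcal{L}_\Gamma}$, Lemma \ref{lem:neumann} controls the inversion steps inherent to the graph transform, and the vanishing of $\mathbf{f}_i$ on the invariant manifolds controls the nonlinear remainders. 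A secondary subtlety is that $X_H$ is tangent to every invariant manifold in sight, so the cone field in Step 2 must be built transversely to $\langle X_H\rangle$ inside $\mathrm{Ker}\,dH$, consistently with the splitting of Remark \ref{rmk:complement}. Finally, for statement (2) the whole argument is rerun in $\mathcal{M}_{j,\Gamma}$: Lemma \ref{lem:welldefflow} ensures that the flow preserves $\Sigma_{j,\Gamma}$, Theorem \ref{thm:toriinvman} supplies $W^{s,u}_\e$ as $C^2_\Gamma$ graphs into $\Sigma_{j,\Gamma}$, and Lemmas \ref{lem:fm1} and \ref{lemma:SigmajGammazero} ensure that the linear and nonlinear operators appearing in Steps 1--2 preserve $\Sigma_{j,\Gamma}$, so that all estimates go through with $\|\cdot\|_{j,\Gamma}$ replacing $\|\cdot\|_{\ell^\infty}$, yielding accumulation in the $d_{j,\Gamma}$ metric.
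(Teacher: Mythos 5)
Your overall strategy coincides with the paper's (and with the Fontich--Mart\'in template both you and the authors invoke): first straighten the local invariant manifolds near $\T_j$, then extract from $\Gamma$ a parametrized disc transverse to $W^s_{\mathrm{loc}}$, iterate it forward, show the iterated discs converge in $C^1_\Gamma$ to a piece of $W^u_{\mathrm{loc}}$, and use the non-resonant rotation on the torus to hit any prescribed target in $W^u$. Two points of execution are genuinely different and one point is a gap.

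On the execution differences: (a) The paper actually proves a Lambda Lemma for a single map $\Phi^T_H$ (Theorem \ref{thm:LambdaLemmaMaps} via Proposition \ref{thm:lambdalemma}) and then passes to the flow by selecting $T$ so that $T\,\tilde\omega_0$ is non-resonant in the sense \eqref{def:nonresonantmaps}. You argue directly in continuous time, using density of $\{\tilde\omega_0\,t\}_{t\ge 0}$ in $\T^d$; this is an acceptable and slightly cleaner route. (b) You phrase the iteration as a cone-field/graph-transform argument, whereas the paper tracks the reparametrized iterates $Q_k$ explicitly and proves recursions of the form $m_{k+1}\le (m_k+\kappa_k+\eta_k)/\lambda_0$, $\eta_{k+1}\le(\kappa_k+(\beta+\kappa)\eta_k)/\lambda_0$ on the derivatives, with a nonlinear reparametrization $\mathcal{Q}_{k+1}=\tilde y_{k+1}^{-1}$ whose invertibility in $\mathcal L_\Gamma$ requires its own argument. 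These are two dressings of the same mechanism.

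The gap is in your Step 1. You only straighten $W^{s,u}_{\mathrm{loc}}$. The paper's Normal Form Theorem \ref{thm:normalform} does substantially more: it additionally kills $\tilde f_3$ on both local manifolds (so the angle dynamics on $W^{s,u}_{\mathrm{loc}}$ is \emph{exactly} the rotation by $\omega_0$) and brings the differential on the torus itself to the block-upper-triangular form in $\mathbf{(H4)_\Lambda}$. This last normalization is not cosmetic: the recursion argument in Proposition \ref{thm:lambdalemma} concludes from \eqref{aij} that the off-diagonal coefficients $a_{ij}(q_k(0))$, evaluated along the iterated base point $q_k(0)\in W^s_{\mathrm{loc}}\to\T_j$, have $\kappa_k\to 0$. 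This is what forces $m_k,\eta_k\to 0$, i.e.\ genuine $C^1_\Gamma$-convergence of the iterated discs to $W^u_{\mathrm{loc}}$ rather than mere containment in a fixed-opening cone. Without the linearization step, $a_{ij}(0,0,\theta,0)$ are only $\mathcal{O}(\delta+L)$, your cone opening does not shrink, and $\|D_t-W^u_{\mathrm{loc}}\|_{C^1_\Gamma}\to 0$ does not follow from the argument as written. To close the gap you should either import the full normal form (this requires solving, over the non-resonant rotation, the cohomological equations \eqref{eq:1}-\eqref{eq:2} for the block-triangularization and the fixed-point equation \eqref{eq:fixedpoint} killing $\tilde f_3$ on $W^s_{\mathrm{loc}}$, each in $\mathcal L_\Gamma$ using the algebra property and Lemma \ref{lem:neumann}), or set up a tilted, shrinking cone field whose aperture incorporates the $\mathcal{O}(\delta+L)$ off-diagonal terms and prove it still nests; neither option is immediate. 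Aside from this, your treatment of the formal first integral $H$ (restricting the disc to a leaf of the Frobenius foliation complementary to $\langle X_H\rangle$) and of the $\Sigma_{j,\Gamma}$ case via Lemmas \ref{lem:fm1} and \ref{lemma:SigmajGammazero} is consistent with the paper's Lemmas \ref{lem:disc:FI} and \ref{lem:goodfirstintegral}.
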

This theorem is proven in section \ref{sec:lambdalemma}. The proof follows the techniques developed for finite dimensional maps in \cite{FontichM98} (see also \cite{Cresson97}). The statement in Section \ref{sec:lambdalemma} is more precise than the one stated above and in particular it implies $C^1$ convergence of the iterated of $\Gamma$ as for the classical Lambda lemma (more precisely the $C^1$ convergence is for a submanifold of $\Gamma$, see Section \ref{sec:lambdalemma} for details).

Note that, by Theorem \ref{thm:toriinvman}, the invariant manifolds $W_{\e}^{s,u}(\T_j)$ can be seen as both submanifolds of $\mathcal{M}$ and $\mathcal{M}_{j,\Gamma}$. This allows to rely on this Lambda lemma to perform a shadowing argument in both Banach manifolds. Finally, note that Theorem \ref{thm:Lambda} only depends on the metric but not on the choice of coordinates. That is, the theorem is also valid in $\ell^\infty$ (respectively $\Sigma_{j,\Gamma}$) in the original coordinates $(q,p)$.

Next lemma constructs an orbit which shadows the transition chain provided by Theorem \ref{thm:transitiochainXj}. 
\begin{lem}\label{lemma:shadowing}
Given $\{ \e_j \}_{j\in\mathbb{N}}$ a sequence of strictly positive numbers, we can find a point $p$ and an increasing sequence of numbers $\{ T_j\}_{j\in\mathbb{N}}$ such that 
\[
\Phi^{T_j}(p)\in \mathcal{U}_{\e_j}(\T_j)
\]
where $\mathcal{U}_{\e_j}(\T_j)$ are $\e_j$-neighborhoods of the tori $\T_j$ in the topology of the metric space $\ell^{\infty}$.

Moreover, fixed $j\in \Z^m$, we have the same statement considering the topology of the metric space $\mathcal{M}_{j, \Gamma}$.
\end{lem}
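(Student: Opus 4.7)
The plan is to carry out the classical Birkhoff--Arnold shadowing argument along the transition chain of tori provided by Theorem \ref{thm:transitiochainXj}, using the Lambda Lemma (Theorem \ref{thm:Lambda}) as the driving tool. I focus on the $\ell^{\infty}$ case; the $\Sigma_{j,\Gamma}$ version is strictly analogous via part (2) of Theorem \ref{thm:Lambda} and the fact that by Theorem \ref{thm:toriinvman} the invariant manifolds $W^{s,u}_{\e}(\T_k)$ are also Banach submanifolds of $\mathcal{M}_{j,\Gamma}$.

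By Theorem \ref{thm:transitiochainXj}, for each $k\geq 0$ there is a transverse heteroclinic $q_{k+1}\in W^u_\e(\T_k)\cap W^s_\e(\T_{k+1})$. I would construct inductively on $j$ a nested sequence
\[
D_0\supset D_1\supset D_2\supset\cdots
\]
of non-empty open subsets of a small piece $\Gamma_0\subset W^u_\e(\T_0)$ around $q_1$, together with an increasing sequence $T_0<T_1<T_2<\cdots$ of times, satisfying
\begin{itemize}
\item[(i)] $\mathrm{diam}(\overline{D_j})\le 2^{-j}$ in the $\ell^{\infty}$ metric;
\item[(ii)] $\Phi^{T_k}_H(D_j)\subset \mathcal{U}_{\e_k}(\T_k)$ for every $0\le k\le j$.
\end{itemize}
By completeness of $\ell^{\infty}$, the intersection $\bigcap_j\overline{D_j}$ then reduces to a single point $p$, which by (ii) satisfies the conclusion of the lemma. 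A time shift so that $T_0=0$ yields the increasing sequence of times in the statement.

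For the base case I would exploit $q_1\in W^u_\e(\T_0)$: the backward orbit of $q_1$ enters $\mathcal{U}_{\e_0}(\T_0)$ at some time $-T_0>0$, so by continuity of the flow a sufficiently small $D_0\subset\Gamma_0$ gives $\Phi^{T_0}_H(D_0)\subset\mathcal{U}_{\e_0}(\T_0)$. In the inductive step, the central observation is that at some large time $t^\ast>T_j$ a sub-piece of the iterate $\Phi^{t^\ast}_H(D_j)$ is $C^1$-close to a local piece of $W^u_\e(\T_j)$ near the heteroclinic $q_{j+1}$, and hence transverse to $W^s_\e(\T_{j+1})$ in the sense of Definition \ref{def:transversalityflows}. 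Applying Theorem \ref{thm:Lambda} to this transverse sub-piece produces
\[
W^u_\e(\T_{j+1})\subset\overline{\bigcup_{t\geq 0}\Phi^t_H(\Phi^{t^\ast}_H(D_j))},
\]
and, combined with $\T_{j+1}\subset\overline{W^u_\e(\T_{j+1})}$ and the openness of $\mathcal{U}_{\e_{j+1}}(\T_{j+1})$, this yields a non-empty open $D_{j+1}\subset D_j$ and a time $T_{j+1}>T_j$ with $\Phi^{T_{j+1}}_H(D_{j+1})\subset\mathcal{U}_{\e_{j+1}}(\T_{j+1})$; a further shrinking enforces (i) while (ii) for $k\le j$ is inherited from $D_{j+1}\subset D_j$.

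The hard part will be the inductive step, specifically the $C^1$-proximity of $\Phi^{t^\ast}_H(D_j)$ to $W^u_\e(\T_j)$ near $q_{j+1}$ and the attendant transfer of transversality. Pure topological accumulation from the statement of Theorem \ref{thm:Lambda} given above is not sufficient: one must invoke the openness of the transversality condition in Definition \ref{def:transversalityflows} (which involves the Banach subspace $\Ker dH$) under $C^1$-small perturbations, and this requires the sharper $C^1$-form of the Lambda Lemma proven in Section \ref{sec:lambdalemma}. The fact that the flow is well posed both in $\ell^{\infty}$ and in $\Sigma_{j,\Gamma}$ (Lemma \ref{lem:welldefflow}), together with the graph descriptions of $W^{s,u}_\e(\T_k)$ in Theorem \ref{thm:toriinvman}, makes the same inductive loop close verbatim in either topology.
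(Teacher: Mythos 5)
Your argument is correct, but it takes a genuinely different route from the paper. The paper's proof is by nested closed balls centered on points of the \emph{stable} manifolds: starting from $q\in W^s(\T_1)$, one picks $B_1$ with $\Phi^{T_1}(B_1)\subset\mathcal{U}_{\e_1}(\T_1)$, observes that $W^s(\T_2)\cap B_1\neq\emptyset$, recenters a smaller ball there, and iterates; Cantor's intersection theorem in the complete space $\ell^\infty$ (resp.\ $\Sigma_{j,\Gamma}$) then produces the shadowing point. The accumulation $W^s(\T_j)\subset\overline{W^s(\T_{j+1})}$ used there follows from the (time-reversed) form of the Lambda Lemma applied to $\Gamma=W^s(\T_{j+1})$, which by Theorem~\ref{thm:transitiochainXj} is a \emph{fixed} transversal to $W^u(\T_j)$ and is flow-invariant, so $\bigcup_t\Phi^t(\Gamma)=\Gamma$. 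This eliminates the need to propagate a transversality condition under iteration, and only the $C^0$ accumulation statement is required. You instead push a disc $D_j\subset W^u(\T_0)$ forward and propagate transversality of the iterated disc with $W^s(\T_{j+1})$, which forces you into the sharper $C^1$-convergence form of the Lambda Lemma plus an openness-of-transversality argument in the Banach-manifold sense of Definition~\ref{def:transversalityflows}. You correctly identify this as the hard step; the paper's choice of working with stable manifolds sidesteps it. Your version has the virtue of using Theorem~\ref{thm:Lambda} only in its stated forward form, whereas the paper's version implicitly invokes the backward-time analogue, but the $C^1$ machinery you need is heavier than the $C^0$ machinery the paper's route requires.
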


\begin{proof}
We give the proof in the $\ell^{\infty}$ topology. The proof in $\Sigma_{j,\Gamma}$ is analogous. Let $q\in W^s(\T_1)$. There exists a closed ball $B_1\subset \ell^{\infty}$ centered at $q$ such that
\[
\Phi^{T_1}(B_1)\subset \mathcal{U}_{\e_1}(\T_1)\subset\ell^{\infty}.
\]
By the Lambda Lemma Theorem \ref{thm:Lambda} we have
\[
W^s(\T_2)\cap B_1\neq \emptyset.
\]
Hence we can find a closed ball $B_2\subset B_1$ centered at a point of $W^s(\T_2)$ such that
\[
\begin{cases}
\Phi^{T_1}(B_2)\subset \mathcal{U}_{\e_1}(\T_1),\\
\Phi^{T_2}(B_2)\cap\mathcal{U}_{\e_2} (\T_2).
\end{cases}
\]
Then by induction it is possible to construct a sequence of closed nested balls $B_{j+1}\subset B_j\subset\dots$ such that
\[
\Phi^{T_j}(B_i)\subset \mathcal{U}_{\e_j}(\T_j), \quad i\le j.
\]
Since $\ell^{\infty}$ is a complete metric spaces, the Cantor's intersection Theorem ensures that the infinite sequence of closed nested balls $B_j$ has at least one point as intersection. This concludes the proof.
\end{proof}

This concludes the proof of Theorem \ref{thm:technical}. Indeed the orbit shadowing the transition chain visits arbitrarily small neighborhoods of the periodic orbits $P_{\sigma_j}$ at certain times. When they belong to such neighborhoods the energies $E_k$, $k\neq \sigma_j$ can be chosen to be smaller than $\eta$ whereas the energy $E_{\sigma_j}$ is $\eta$-close  to that of the periodic orbit. This is exactly the behavior stated in Theorem \ref{thm:technical}.

\section{Local invariant manifolds of invariant tori}\label{sec:manifold}

In this section we provide an invariant manifolds theory for invariant tori for both maps and flows with spatial structure on lattices. We consider the setting where the tori are finite dimensional whereas the invariant manifolds have infinite dimensions. First in Section \ref{sec:invmanmaps} we deal with maps and in Section \ref{sec:invmanflows} we deal with flows.

\subsection{Invariant manifolds of maps}\label{sec:invmanmaps}

In this section we provide abstract theorems of existence of local invariant manifolds of finite dimensional invariant tori for maps that are locally close to uncoupled maps. We consider only the case of invertible maps. Then it is sufficient to prove the result for the stable manifold. 

\medskip

Let $S\subset \Z^m$ with cardinality $d$ and $S^c:=\Z^m\setminus S$.
We recall the following notations from Section \ref{sec:functsettorus}
\begin{equation}\label{notation:linfS}
\begin{aligned}
&\ell^{\infty}_{S^c}:=\ell^{\infty}(\Z^m\setminus S; \R), \qquad \Sigma_{j, \Gamma, S^c}:=\Sigma_{j, \Gamma} (\Z^m\setminus S; \R),\\
& \R_S^d:=\ell^{\infty}(S; \R), \qquad \,\, \qquad \T^d_S:=\ell^{\infty}(S; \T).
\end{aligned}
\end{equation}
We consider the complete metric space 
\[
\mathcal{M}:=\ell_{S^c}^{\infty}\times \ell_{S^c}^{\infty}\times \T_S^d\times \R_S^d,
\]
and we denote its variables by
\[
w:=(x, y, \theta, r).
\]
Let $\delta>0$. We consider  maps
\begin{equation}\label{def:mapF}
F_{\nu}\colon \mathcal{M}_{\delta}:= B_{\delta} (\ell^{\infty}_{S^c})\times B_{\delta}(\ell^{\infty}_{S^c})\times \T_S^d\times B_{\delta}(\R_S^d)\subset \mathcal{M}\to \mathcal{M},
\end{equation}
which depend on a parameter $\nu\in (0, \mu)$ for some $\mu>0$, and are of the form
\[F_{\nu}(w) :=F_0(w)+f_{\nu}( w)
 \]
with 
\begin{equation}\label{def:F}
\begin{aligned}
F_0(w)&=(A_-(\theta) \,x, A_+(\theta)\, y, \theta+\omega(x, y, r), B(\theta)\,r),\\
f_{\nu}( w)&=\big(f_1(\nu; w), f_2(\nu; w), f_3(\nu; w), f_4(\nu; w) \big)
\end{aligned}
\end{equation}
where
\[
A_{\pm}(\theta)\in \mathcal{L}_{\Gamma}(\ell^{\infty}_{S^c}), \quad B(\theta)\in\mathcal{L}_{\Gamma}(\R_S^d)
\]
and $f_{1}, f_2 (\nu; \cdot)\colon \mathcal{M}_{\delta}\to \ell^{\infty}_{S^c}$, $f_3(\nu; \cdot)\colon\mathcal{M}_{\delta}\to \T_S^d$, $f_4(\nu; \cdot) \colon \mathcal{M}_{\delta}\to \R_S^d$ (following the notation and definitions in Section \ref{sec:functsettorus}).

Let us call
\begin{equation}\label{tB}
\tB_{\delta}:=B_{\delta} (\ell^{\infty}_{S^c})\times B_{\delta}(\ell^{\infty}_{S^c})\times B_{\delta}(\R_S^d).
\end{equation}
We assume that
\[
\T_0:=\{ x=0, y=0, r=0 \}
\]
is an invariant torus for the map $F_{\nu}$ for all $\nu\in(0,\mu)$ and we provide a theorem of existence of local invariant manifolds for $\T_0$
in class $C^2_{\Gamma}$ (and $C^2$-dependence with respect to the parameter $\nu$). We start with the Lipschitz case, then we deal with the $C^1_{\Gamma}$-regularity and eventually with the $C^2_{\Gamma}$ case. We follow a graph transform approach  and  provide full detailed proofs for  the Lipschitz, $C^1_\Gamma$ and $C^2_\Gamma$ settings. 

Since the torus $\T_0$ is fixed, along this section we can lighten the notation by denoting
\begin{equation}\label{notation:linfS2}
\ell^{\infty}=\ell^{\infty}_{S^c}, \qquad \T^d=\T^d_S, \qquad \R^d=\R^d_S, \qquad \Sigma_{j, \Gamma}:=\Sigma_{j, \Gamma, S^c}.
\end{equation}

\medskip

When we consider a function $\Psi \colon \mathcal{U}_X\times \mathcal{U}_Y\subseteq X\times Y\to Z$, $z=\Psi(x, y)$, where $X$, $Y$, $Z$ are complete metric spaces and $\mathcal{U}_X$, $\mathcal{U}_Y$ subsets of $X$ and $Y$ respectively, we denote by $\lip \Psi$ the Lipschitz constant of the function $\Psi$ and
\begin{equation}\label{def:lip}
\begin{aligned}
&\lip_x \Psi(y):=\inf_{x\neq x'} \frac{d_Y\big(\Psi(x, y), \Psi(x', y) \big)}{d_X(x, x' )},\\
&\lip_{x} \Psi:=\sup_{y\in \mathcal{U}_Y} \lip_x \Psi(y), \qquad \lip_{y} \Psi=\sup_{x\in \mathcal{U}_X} \lip_y \Psi(x).
\end{aligned}
\end{equation}

We will also prove the existence of invariant manifolds of $\T_0$ for maps of the form \eqref{def:F} on the complete metric space
\[
\mathcal{M}_{j, \Gamma}:=\Sigma_{j, \Gamma}\times \Sigma_{j, \Gamma}\times \T^d_S \times \R_S^d\subset \mathcal{M}.
\]

\subsubsection{Lipschitz invariant manifolds}

We consider a non-negative continuous function $L(\delta, \mu)$ such that $L(0, 0)=0$.
We assume that there exist constants $\lambda>1, \beta\geq 1, \K, K_{\theta}>0$ such that:
\begin{itemize}
\item[{$\mathbf{(H0)_{\mathrm{lip}}}$}] We have
\begin{equation}\label{bound:betalambda}
\lambda^{-1} \beta (1+K_{\theta})<1.
\end{equation}
\item[{$\mathbf{(H1)_{\mathrm{lip}}}$}] The functions $A_{\pm}, B, \omega$ and $f_{\nu}$ in \eqref{def:F} are Lipschitz, namely 
\[
A_{\pm}\in \lip(\T^d; \ell^{\infty}), \quad B\in \lip(\T^d; \R^d), \quad \omega\in \lip(\tB_{\delta}; \T^d), \quad f_{\nu}\in \lip(\mathcal{M}_{\delta}; \mathcal{M})
\]
and 
\begin{equation}\label{def:Kf}
\lip \,\omega, \lip\, A_{\pm}, \lip \,B, \lip f_3\le \K.
\end{equation}
Moreover the linear operators $A_{\pm}, B$ satisfy
\[
\| A_-(\theta)\|_{\mathcal{L}_{\Gamma}(\ell^{\infty})}, \| A_+(\theta)^{-1} \|_{\mathcal{L}_{\Gamma}(\ell^{\infty})}\le \lambda^{-1}, \qquad 
\| B^{-1}(\theta) \|_{\mathcal{L}_{\Gamma}(\R^d)}\le \beta \qquad \forall \theta\in\T^d.
\]
\item[$\mathbf{(H2)_{\mathrm{lip}}}$] For $j=1, 2, 4$ we have that $f_j$ is $L(\delta, \mu)$-Lipschitz with respect to $w$. Moreover, $f_j(\nu; 0, 0, \theta, 0)=0$ and
\begin{equation}\label{hyp:H3}
\begin{aligned}
\| f_{k}(\nu; x, y, \theta, r)- f_{k}(\nu; x, y, \theta', r) \|_{\ell^{\infty}} &\le \K (\|  x \|_{\ell^{\infty}}+\| y \|_{\ell^{\infty}}+|r|_d)\,| \theta-\theta'|_d, \quad k=1, 2,\\
| f_{4}(\nu; x, y, \theta, r)- f_{4}(\nu; x, y, \theta', r) |_d &\le \K (\|  x \|_{\ell^{\infty}}+\| y \|_{\ell^{\infty}}+|r|_d)\,| \theta-\theta'|_d.
\end{aligned}
\end{equation}
\item[$\mathbf{(H3)_{\mathrm{lip}}}$] The function $f_3$ is $K_{\theta}$-Lipschitz with respect to $\theta$.
\end{itemize}
These three hypotheses are sufficient to have invariant manifolds of the invariant torus. If one also wants them to be Lipschitz with respect to the parameter $\nu$, one has to impose also the following.
\begin{itemize}
\item[$\mathbf{(H4)_{\mathrm{lip}}}$] We have
\begin{align*}
\|  f_k(\nu; x, y, \theta, r)-f_k(\nu'; x, y, \theta, r) \|_{\ell^{\infty}} &\le \K (\|  x \|_{\ell^{\infty}}+\| y \|_{\ell^{\infty}}+|r|_d) |\nu-\nu'| \qquad k=1, 2,\\
|  f_4(\nu; x, y, \theta, r)-f_4(\nu'; x, y, \theta, r) |_{d} &\le \K (\|  x \|_{\ell^{\infty}}+\| y \|_{\ell^{\infty}}+|r|_d) |\nu-\nu'|
\end{align*}
and $f_3$ is $K_{\theta}$-Lipschitz with respect to $\nu$.
\end{itemize}

We observe that by assumption $\mathbf{(H2)_{\mathrm{lip}}}$ the torus $\T_0$ is invariant by $F_{\nu}$. To simplify the notation we denote $F_{\nu}$ by $F$ and $f_{\nu}$ by $f$.
\begin{theorem}\label{thm:lipcase}
Let $F\colon \mathcal{M}_{\delta}\to \mathcal{M}$ in \eqref{def:mapF} satisfy $\mathbf{(H0)_{\mathrm{lip}}}$-$\mathbf{(H3)_{\mathrm{lip}}}$. Then there exist $\delta_0>0$ and $\mu_0>0$ such that for all $\delta\in (0, \delta_0)$ and $\mu\in (0, \mu_0)$ the $F$-invariant torus $\T_0$ possesses a stable invariant manifold which can be represented as graph of a Lipschitz function $\gamma_{\nu}^s(x, \theta)\in \lip(B_{\delta}(\ell^{\infty})\times \T^d; \ell^{\infty}\times \R^d)$ that satisfies:
\begin{itemize} 
\item$\gamma^s_{\nu}(0, \theta)=0$. Moreover, its Lipschitz constant is of order $\delta+L(\delta, \mu)$ and
\[
\sup_{(x, \theta,\nu)\in B_{\delta}(\ell^{\infty})\times \T^d \times(0,\mu)} \| \gamma^s_\nu(x, \theta)\|_{\ell^{\infty}\times \R^d}\le \mathcal{O}(\delta^2+\delta\,L(\delta, \mu)).
\] 
\item The iterates of the points $(x, \theta, \gamma_{\nu}^s(x, \theta))$ tend to the torus exponentially fast with asymptotic rate bounded by $\lambda^{-1}$. 
\end{itemize}
Moreover if we also impose $\mathbf{(H4)_{\mathrm{lip}}}$, $\gamma_{\nu}^s$ depends in a Lipschitz way on $\nu\in(0,\mu)$. 
\end{theorem}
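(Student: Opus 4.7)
The plan is to prove Theorem \ref{thm:lipcase} by the classical Hadamard graph transform method, adapted to the infinite-dimensional lattice setting. The stable manifold of $\T_0$ will be produced as the graph $(y,r)=\gamma^{s}_{\nu}(x,\theta)$ of a Lipschitz function over the stable coordinates $(x,\theta)$, realized as the unique fixed point of a contraction operator on a complete space of Lipschitz maps.

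\smallskip\noindent\textbf{Setup.} First I fix small constants $\kappa,\delta>0$ to be chosen later and introduce the complete metric space
\[
\mathcal{G}_{\kappa}:=\Big\{\gamma=(\gamma_y,\gamma_r):B_{\delta}(\ell^{\infty})\times\T^d\to B_{\delta}(\ell^\infty)\times B_{\delta}(\R^d)\ :\ \gamma(0,\theta)=0,\ \lip(\gamma)\le\kappa\Big\}
\]
endowed with the sup distance. For $\gamma\in\mathcal{G}_\kappa$, I abbreviate
\[
\bar x:=A_{-}(\theta)x+f_1(\nu;x,\gamma_y(x,\theta),\theta,\gamma_r(x,\theta)),\qquad \bar\theta:=\theta+\omega(x,\gamma_y,\gamma_r)+f_3(\nu;x,\gamma_y,\theta,\gamma_r).
\]
The requirement that $F_\nu$ leave $\mathrm{graph}(\gamma)$ forward-invariant is then equivalent to the fixed-point equation $\gamma=\mathcal{T}\gamma$, where
\[
\mathcal{T}\gamma(x,\theta):=\Big(A_{+}(\theta)^{-1}\big[\gamma_y(\bar x,\bar\theta)-f_2\big],\ B(\theta)^{-1}\big[\gamma_r(\bar x,\bar\theta)-f_4\big]\Big),
\]
with $f_2,f_4$ evaluated at $(\nu;x,\gamma_y(x,\theta),\theta,\gamma_r(x,\theta))$.

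\smallskip\noindent\textbf{Invariance and contraction.} Next I will verify (i) $\mathcal{T}(\mathcal{G}_\kappa)\subset \mathcal{G}_\kappa$ and (ii) $\mathcal{T}$ is a contraction in sup norm. For (i), vanishing on $\T_0$ is immediate from $f_j(\nu;0,0,\theta,0)=0$ and $\gamma(0,\theta)=0$; the sup bound $\|\mathcal{T}\gamma\|_\infty\le \delta$ follows from $\|A_+^{-1}\|\le\lambda^{-1}$, $\|B^{-1}\|\le\beta$ together with $\|f_j(\nu;\cdot)\|_{C^0}=O(\delta\, L(\delta,\mu))$ on $\mathcal{M}_\delta$. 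The Lipschitz bound is the delicate step: since $(\bar x,\bar\theta)$ is $(\lambda^{-1},1+K_\theta)$-Lipschitz in $(x,\theta)$ up to corrections $O(\kappa(\K+L)+L)$, the chain rule yields
\[
\lip(\mathcal{T}\gamma)\le \lambda^{-1}\beta(1+K_\theta)\,\kappa+O(\kappa^2+L),
\]
which is $\le\kappa$ for a suitable choice of $\kappa$ and small $\delta,\mu$ by $\mathbf{(H0)_{\mathrm{lip}}}$. For (ii), differencing $\mathcal{T}\gamma$ and $\mathcal{T}\gamma'$ at a common $(x,\theta)$, and absorbing the shifts $\bar x-\bar x'$, $\bar\theta-\bar\theta'=O(L)\|\gamma-\gamma'\|_\infty$ via the Lipschitzness of $\gamma'$, gives
\[
\|\mathcal{T}\gamma-\mathcal{T}\gamma'\|_\infty\le \big(\lambda^{-1}\beta(1+K_\theta)+O(L)\big)\,\|\gamma-\gamma'\|_\infty,
\]
strictly less than $1$ for small $\delta,\mu$.

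\smallskip\noindent\textbf{Conclusions and main obstacle.} By the Banach fixed-point theorem, $\mathcal{T}$ has a unique fixed point $\gamma^{s}_\nu\in \mathcal{G}_\kappa$, whose graph is the sought local stable manifold of $\T_0$. The refined size bound $\|\gamma^{s}_\nu\|_\infty=O(\delta^2+\delta L(\delta,\mu))$ is then read off by iterating the contraction starting from $\gamma\equiv 0$, whose image $\mathcal{T}(0)$ has sup norm $O(\delta L)$; exponential convergence to $\T_0$ along orbits on the graph with rate $\le\lambda^{-1}$ follows from the induced dynamics $|\bar x|\le (\lambda^{-1}+O(L))|x|$; and Lipschitz dependence on $\nu$ under $\mathbf{(H4)_{\mathrm{lip}}}$ comes from the classical principle that a uniform family of Lipschitz contractions has a Lipschitz fixed-point section. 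The principal technical obstacle will be the Lipschitz-invariance estimate (i) with sharp constants: one must isolate the contribution of the $\theta$-direction (where only the finer constant $K_\theta$ from $\mathbf{(H3)_{\mathrm{lip}}}$ is small, not $\K$) from that of the $(x,y,r)$-directions (where the vanishing on $\T_0$ built into $\mathbf{(H2)_{\mathrm{lip}}}$ supplies the needed factors of $\delta$). It is precisely the form $\lambda^{-1}\beta(1+K_\theta)<1$ of $\mathbf{(H0)_{\mathrm{lip}}}$, rather than the much stronger $\lambda^{-1}\beta(1+\K)<1$, that makes this balance close.
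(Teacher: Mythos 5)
Your operator $\mathcal{T}$ is exactly the paper's operator $G$ in \eqref{G} (with $g=-(f_2,f_4)^t$, $C(\theta)=\mathrm{diag}(A_+,B)$, $h(z)=(\bar x,\bar\theta)$), so the graph-transform set-up is right. The gap is in the invariance step (i), where you work in the plain Lipschitz space $\mathcal{G}_\kappa=\{\gamma:\gamma(0,\theta)=0,\ \lip(\gamma)\le\kappa\}$ with the sup metric. The chain rule does \emph{not} give
$\lip(\mathcal{T}\gamma)\le \lambda^{-1}\beta(1+K_\theta)\kappa+\cdots$:
if all you know is $\lip(\gamma)\le\kappa$, then a variation of $\theta$ alone moves $(\bar x,\bar\theta)$ by at most $O(\delta)|\theta-\theta'|$ in $\bar x$ and $(1+K_\theta+O(\delta))|\theta-\theta'|$ in $\bar\theta$, so
\[
\|\gamma(\bar x,\bar\theta)-\gamma(\bar x',\bar\theta')\|\le \kappa\bigl(1+K_\theta+O(\delta)\bigr)|\theta-\theta'|,
\]
and after applying $C(\theta)^{-1}$ you get $\lip_\theta(\mathcal{T}\gamma)\le \beta(1+K_\theta)\kappa+O(\delta+L)$, \emph{not} $\beta\lambda^{-1}(1+K_\theta)\kappa$. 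Since $\mathbf{(H0)_{\mathrm{lip}}}$ only gives $\beta(1+K_\theta)<\lambda$, and $\lambda>1$, the factor $\beta(1+K_\theta)$ can exceed $1$, and the ball $\mathcal{G}_\kappa$ is in general not preserved. The product $\lambda^{-1}\cdot(1+K_\theta)$ is not what one gets by composing Lipschitz maps (that gives the \emph{max}, not the \emph{product}, of the directional Lipschitz constants).

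This is precisely why the paper replaces $\mathcal{G}_\kappa$ by the refined space
\[
\Xi_{c,M}=\Bigl\{\gamma:\ \lip(\gamma)\le c,\quad \lip_\theta\gamma(x)\le M\|x\|_{\ell^\infty}\Bigr\}
\]
and contracts in the homogeneous norm $\|\gamma\|_1=\sup_{x\ne0}\|\gamma(x,\theta)\|/\|x\|_{\ell^\infty}$. With the extra weighted condition, the dangerous term becomes
\[
\lip_\theta\bigl(\gamma\circ h\bigr)(x)\le c\,\lip_\theta h_1(x)+M\|h_1(x,\theta)\|_{\ell^\infty}\cdot\lip_\theta h_2
\le\Bigl(\tC\,\|x\|_{\ell^\infty}+M\bigl(\lambda^{-1}+O(L)\bigr)(1+K_\theta)\Bigr)\|x\|_{\ell^\infty},
\]
and now the factor $\lambda^{-1}$ does appear multiplying $(1+K_\theta)$, because it comes from $\|h_1\|\le(\lambda^{-1}+O(L))\|x\|$, not from the chain rule applied to $\gamma$. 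This is exactly the mechanism that makes $\lambda^{-1}\beta(1+K_\theta)<1$ the right closing condition. Your final paragraph shows you sense this tension, but the estimate you actually wrote does not resolve it. To repair the proof you should adopt the two-parameter space $\Xi_{c,M}$ (or an equivalent weighted Lipschitz scale), verify both $\lip(\gamma)\le c$ and $\lip_\theta\gamma(x)\le M\|x\|$ are preserved, and prove the contraction in the $\|\cdot\|_1$ norm; the contraction argument in the $\|\cdot\|_1$ norm also needs this refinement, since the same $\beta(1+K_\theta)$ factor reappears when you shift $\bar\theta$ by $O(L)\|\gamma-\gamma'\|$.
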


If one imposes decay properties on the map $F$, the invariant manifolds also have decay properties.

\begin{theorem}[\emph{$\Sigma_{j, \Gamma}$ case}]\label{thm:lipcaseSigma}
Let the map $F\colon \mathcal{M}_{j, \Gamma, \delta}\to \mathcal{M}_{j, \Gamma}$ in \eqref{def:mapF} satisfy the assumptions $\mathbf{(H0)_{\mathrm{lip}}}$-$\mathbf{(H4)_{\mathrm{lip}}}$ where $\mathcal{M}$, $\ell^{\infty}$ are replaced respectively by $\mathcal{M}_{j, \Gamma}$ and $\Sigma_{j, \Gamma}$. Then,
the same results of Theorem \ref{thm:lipcase} hold with $\gamma_{\nu}^s(x, \theta)\in \lip(B_{\delta}(\Sigma_{j, \Gamma})\times \T^d; \Sigma_{j, \Gamma}\times \R^d)$.
\end{theorem}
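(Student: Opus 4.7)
The plan is to adapt the proof of Theorem \ref{thm:lipcase} to the functional setting of $\Sigma_{j,\Gamma}$. The key observation is that $\Sigma_{j,\Gamma}$ is itself a Banach space (with norm $\|\cdot\|_{j,\Gamma}$), and the hypotheses $\mathbf{(H0)_{\mathrm{lip}}}$-$\mathbf{(H4)_{\mathrm{lip}}}$ are stated uniformly on $\Sigma_{j,\Gamma}$, so the whole graph transform scheme used to prove Theorem \ref{thm:lipcase} can be rerun verbatim in this new ambient space. I would define the space of admissible Lipschitz graphs
\[
\mathcal{G}_{j,\Gamma}:=\bgl \gamma\in\lip(B_{\delta}(\Sigma_{j,\Gamma})\times \T^d;\,\Sigma_{j,\Gamma}\times\R^d) : \gamma(0,\theta)=0,\ \lip\gamma\le L_0,\ \|\gamma\|_{C^0}\le L_0\delta\bgr
\]
for a suitable constant $L_0=\cO(\delta+L(\delta,\mu))$, and set up the associated graph transform $\mathcal{T}_\nu$ exactly as in the $\ell^\infty$ case. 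Completeness of $\mathcal{G}_{j,\Gamma}$ under the $C^0$-metric induced by $\|\cdot\|_{j,\Gamma}$ follows from completeness of $\Sigma_{j,\Gamma}$ together with Lemma \ref{lem:limitdecay}.

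The core estimates needed to verify that $\mathcal{T}_\nu$ sends $\mathcal{G}_{j,\Gamma}$ into itself and is a contraction are direct transcriptions of the $\ell^\infty$ estimates, using the following ingredients: (i) by Lemma \ref{lem:fm1}, the linear operators $A_\pm(\theta)$ and $B(\theta)^{-1}$ act on $\Sigma_{j,\Gamma}$ with norms controlled by $\lambda^{-1}$ and $\beta$ respectively; (ii) by Lemma \ref{lemma:SigmajGammazero}, the nonlinear perturbation $f_\nu$, vanishing on the torus, sends $\mathcal{M}_{j,\Gamma,\delta}$ into $\mathcal{M}_{j,\Gamma}$, and its Lipschitz constant in the $j,\Gamma$-norm is still controlled by $L(\delta,\mu)$ thanks to $\mathbf{(H2)_{\mathrm{lip}}}$; (iii) the contraction rate condition \eqref{bound:betalambda} is the same. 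Then fixed-point iteration produces $\gamma_\nu^s\in\mathcal{G}_{j,\Gamma}$ and the usual exponential contraction of iterates towards $\T_0$ is obtained using the hyperbolicity of $A_-$ on $\Sigma_{j,\Gamma}$.

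Lipschitz dependence on $\nu$ is added by considering the map $(\gamma,\nu)\mapsto\mathcal{T}_\nu(\gamma)$ on the product space and applying the uniform contraction principle, using hypothesis $\mathbf{(H4)_{\mathrm{lip}}}$ which, again, is stated with respect to the decay norm. Alternatively, one can invoke a uniqueness argument: since $F$ preserves $\Sigma_{j,\Gamma}$ and the stable manifold produced by Theorem \ref{thm:lipcase} is the unique graph whose forward iterates converge exponentially to $\T_0$, and since forward orbits starting from $(x,\gamma_\nu^s(x,\theta))$ with $x\in B_\delta(\Sigma_{j,\Gamma})$ stay in $\Sigma_{j,\Gamma}$ (by invariance of the decay space under $F$) and remain bounded in $\|\cdot\|_{j,\Gamma}$, the restriction of the $\ell^\infty$-graph to $B_\delta(\Sigma_{j,\Gamma})\times\T^d$ must take values in $\Sigma_{j,\Gamma}\times\R^d$ and coincide with the fixed point of $\mathcal{T}_\nu$ constructed above.

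The main technical obstacle I anticipate is verifying that the graph transform is well-defined in $\mathcal{G}_{j,\Gamma}$: specifically, one must check that the implicit equation defining the new graph (which involves inverting a map in the $y$-component built from $A_+$ and $f_2$) can be solved in $\Sigma_{j,\Gamma}$ with controlled $j,\Gamma$-Lipschitz constant, rather than just in $\ell^\infty$. This hinges on the Neumann-series Lemma \ref{lem:neumann} applied in $\cL_\Gamma$ to the perturbation of $A_+(\theta)$ by the derivative of $f_2$, together with the fact that $f_2(\nu;0,0,\theta,0)=0$ so that the nonlinear inversion stays inside the decay category via Lemma \ref{lemma:SigmajGammazero}. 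Once this is in place, the rest of the argument is a routine rerun of the $\ell^\infty$ proof, because every constant appearing in those estimates admits a direct analog in $\Sigma_{j,\Gamma}$.
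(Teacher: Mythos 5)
Your proposal is correct and follows essentially the same route as the paper: one reruns the graph-transform fixed-point argument of Theorem~\ref{thm:lipcase} in the $\Sigma_{j,\Gamma}$ category, using Lemma~\ref{lem:fm1} (cf.\ Remark~\ref{rem:hjG}) to see that $A_-(\theta)$ acts on $\Sigma_{j,\Gamma}$ and Lemma~\ref{lemma:SigmajGammazero} (cf.\ Remark~\ref{rem:GjG}) to see that $G$ maps decaying graphs to decaying graphs, with the quantitative estimates of Lemmas~\ref{lemtec} and~\ref{lemh} carrying over verbatim once norms are replaced. One small correction to your ``anticipated obstacle'': the fixed-point equation~\eqref{G} only inverts the block $C(\theta)=\mathrm{diag}(A_+(\theta),B(\theta))$, whose $\mathcal{L}_\Gamma$-bound is directly assumed in $\mathbf{(H1)_{\mathrm{lip}}}$, so no Neumann-series inversion involving $f_2$ arises; the genuine verification is that $h_1(z)=A_-(\theta)x+f_1(z,\gamma(z))$ maps $B_\delta(\Sigma_{j,\Gamma})\times\T^d$ into $\Sigma_{j,\Gamma}$, which the lemmas you cite and the vanishing of $f_1$ on the torus already handle. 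Also note that your ``alternative uniqueness argument'' is, as phrased, circular---asserting that forward orbits from $(x,\gamma^s_\nu(x,\theta))$ stay in $\Sigma_{j,\Gamma}$ presupposes $\gamma^s_\nu(x,\theta)\in\Sigma_{j,\Gamma}$, which is the claim to be proved---but since it is only an appended remark and your primary construction is the correct one, this does not affect the proof.
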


We devote the rest of this section to prove Theorems \ref{thm:lipcase}, \ref{thm:lipcaseSigma}. We first introduce some notations.
For functions $\psi\colon \dom\to Z$, where $Z$ is some Banach space with norm $\| \cdot \|$, we define
\[
\| \psi \|_0:=\sup_{(x, \theta)\in\dom} \| \psi(x, \theta) \|, \qquad \| \psi \|_1:=\sup_{\substack{(x, \theta)\in\dom,\\ x\neq 0}} \frac{\| \psi(x, \theta) \|}{\| x \|_{\ell^{\infty}}}.
\]
We shall denote
\begin{equation}\label{def:g}
g:=-(f_2, f_4)^t\colon \mathcal{M}_{\delta} \to \ell^{\infty}\times \mathbb{R}^d.
\end{equation}
Note that also $g$ satisfies $\bf{(H2)_{\mathrm{lip}}}$. We define
\[
C(\theta):=\begin{pmatrix}
A_+(\theta) & 0\\
0 & B(\theta)
\end{pmatrix}\in \mathcal{L}_{\Gamma}( \ell^{\infty}\times \mathbb{R}^d).
\]
Note that $\| C(\theta)^{-1} \|_{\mathcal{L}_{\Gamma}(\ell^{\infty}\times \mathbb{R}^d)}\le \beta$ and $C(\theta)^{-1}$ is $(\beta^2 \K)$-Lipschitz.

\medskip

\noindent\textbf{Proof of Theorems \ref{thm:lipcase} and \ref{thm:lipcaseSigma}}
We give full details for the proof of Theorem \ref{thm:lipcase}. Along such proof we make some remarks on how to adapt it for the proof of Theorem \ref{thm:lipcaseSigma}. Throughout the proof we assume withouth mentioning assumptions $\mathbf{(H0)_{\mathrm{lip}}}$-$\mathbf{(H4)_{\mathrm{lip}}}$.

We shall look for the stable manifold of $\T_0$ as a graph of a function of $(x, \theta)\in \dom$ taking values in $\ell^{\infty}\times\R^d$, which  is invariant by $F$. The invariance condition for the $\gr(\gamma)$ is
\[
\pi_{y, r} F(z, \gamma(z))=\gamma\big( \pi_{x, \theta} F(z, \gamma(z)) \big), \quad z:=(x, \theta)
\]
which is equivalent to
\begin{equation}\label{G}
\gamma(z)=G(\gamma)(z):=C(\theta)^{-1}\big( g(z, \gamma(z))+\gamma(h(z)) \big),
\end{equation}
where
\begin{equation}\label{h}
\begin{aligned}
& h(z):=(h_1(z), h_2(z))\colon \dom\to \ell^{\infty}\times \mathbb{T}^d,\\
&h_1(z)=A_-(\theta) x+f_1(z, \gamma(z)), \quad h_2(z)=\theta+\omega(x,  \gamma(z))+f_3(z, \gamma(z)).
\end{aligned}
\end{equation}
We introduce
\[
\Xi:=\left\{ \gamma \in C^0( \dom, \ell^{\infty}\times\R^d), \,\,\| \gamma \|_1<\infty \right\},
\]
which is a Banach space with the norm $\| \cdot \|_1$, and given $M>0$, $c>0$, we define the closed subset (recall \eqref{def:lip})
\begin{equation}\label{def:SigmacM}
\Xi_{c, M}:=\left\{ \gamma\in \Xi : \lip(\gamma)\le c,\,\,\lip_{\theta} \gamma(x) \le M \| x \|_{\ell^{\infty}}\,\,\,\,\forall x\in B_{\delta}(\ell^{\infty}) \right\}.
\end{equation}
\begin{remark}
Note that if $\gamma\in \Xi_{c, M}$ then
\begin{equation}\label{gamma0theta}
\gamma(0, \theta)=(0, 0) \quad \forall \theta\in \T^d\quad \text{ and } \quad \| \gamma(x, \theta) \|_{\infd}\le c \| x \|_{\ell^{\infty}}.
\end{equation}
Therefore $\| \gamma \|_0\le c \,\delta$.
\end{remark}
We shall look for a fixed point of the operator $G$ defined in \eqref{G} in the set $(\Xi_{c, M}, \| \cdot \|_1)$ for opportune $c, M$.\\
First let us prove two lemmas. 

\begin{lem}\label{lemtec}
If $\gamma\in \Xi_{c, M}$ with $c\in (0, 1]$ then
\begin{align*}
&\| f_j(z, \gamma(z)) \|_j \le L (1+c) \| x \|_{\ell^{\infty}}, \quad j=1, 2, 4, \\ 
&\| f_j(z, \gamma(z))-f_j(z', \gamma(z')) \|_j \le L (1+c) \| z-z'\|_{\infn}, \quad j=1, 2, 4,\\
&\| f_j(x, \theta, \gamma(x, \theta))-f_j(x, \theta', \gamma(x, \theta')) \|_j \le (\K (1+c)+ L M) \| x \|_{\ell^{\infty}} | \theta-\theta'|_d, \quad j=1, 2, 4,\\ 
& | f_3(z, \gamma(z))-f_3(z', \gamma(z')) |_d\le {\K (1+c)} \| z-z'\|_{\infn},
\end{align*}
where $\| \cdot \|_j=\| \cdot \|_{\ell^{\infty}}$ for $j=1, 2$, and $\| \cdot \|_4=| \cdot |_d$.
Then $g$ also satisfies The first, the second and the third bounds with a factor $2$.
\end{lem}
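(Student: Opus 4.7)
The proof is a direct bookkeeping exercise that exploits the two defining features of $\gamma\in\Xi_{c,M}$: first, $\gamma(0,\theta)=0$ together with $\mathrm{Lip}(\gamma)\le c$ gives the pointwise envelope $\|\gamma(x,\theta)\|\le c\|x\|_{\ell^{\infty}}$; second, $\mathrm{Lip}_\theta\gamma(x)\le M\|x\|_{\ell^{\infty}}$ makes the $\theta$--variation of $\gamma$ small in $\|x\|_{\ell^{\infty}}$. With these in hand I would prove the four estimates in the order listed.

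For the first (pointwise) bound with $j\in\{1,2,4\}$, I would write $f_j(z,\gamma(z))=f_j(x,\theta,\gamma(x,\theta))-f_j(0,\theta,0)$ using $f_j(\nu;0,0,\theta,0)=0$ from $\mathbf{(H2)_{\mathrm{lip}}}$, and then apply $L$--Lipschitzness in $w$ to get $\|f_j\|_j\le L(\|x\|_{\ell^{\infty}}+\|\gamma(x,\theta)\|)\le L(1+c)\|x\|_{\ell^{\infty}}$. The second estimate applies the same $L$--Lipschitz bound to two base points $z,z'$ and absorbs $\|\gamma(z)-\gamma(z')\|\le c\|z-z'\|_{\infn}$. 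The fourth estimate, for $f_3$, is identical in spirit: $f_3$ is $\K$--Lipschitz by $\mathbf{(H1)_{\mathrm{lip}}}$, so one gets $\K(1+c)\|z-z'\|_{\infn}$.

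The only estimate requiring a small trick is the third. A naive $L$--Lipschitz bound would multiply $|\theta-\theta'|_d$ by a constant $L(1+M\|x\|_{\ell^{\infty}})$, missing the required $\|x\|_{\ell^{\infty}}$ prefactor. To recover it, I would split
\[
f_j(x,\theta,\gamma(x,\theta))-f_j(x,\theta',\gamma(x,\theta'))=\bigl[f_j(x,\theta,\gamma(x,\theta))-f_j(x,\theta',\gamma(x,\theta))\bigr]+\bigl[f_j(x,\theta',\gamma(x,\theta))-f_j(x,\theta',\gamma(x,\theta'))\bigr].
\]
The first bracket is controlled by the sharper estimate \eqref{hyp:H3} of $\mathbf{(H2)_{\mathrm{lip}}}$ with $(y,r)=\gamma(x,\theta)$, noting $\|y\|_{\ell^{\infty}}+|r|_d\le\|\gamma(x,\theta)\|\le c\|x\|_{\ell^{\infty}}$; this contributes $\K(1+c)\|x\|_{\ell^{\infty}}|\theta-\theta'|_d$. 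The second bracket is controlled by $L$--Lipschitzness in $w$ combined with $\mathrm{Lip}_\theta\gamma(x)\le M\|x\|_{\ell^{\infty}}$, contributing $LM\|x\|_{\ell^{\infty}}|\theta-\theta'|_d$. Summing gives the claimed $(\K(1+c)+LM)\|x\|_{\ell^{\infty}}|\theta-\theta'|_d$.

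The main obstacle is essentially none: the lemma is a convenient repackaging of $\mathbf{(H1)_{\mathrm{lip}}}$--$\mathbf{(H3)_{\mathrm{lip}}}$ through the definition of $\Xi_{c,M}$. The only point of care is the decoupling in the third estimate, which separates the intrinsic $\theta$--dependence of $f_j$ (refined by \eqref{hyp:H3}) from the $\theta$--variation inherited through $\gamma$ (controlled by the second defining property of $\Xi_{c,M}$). The closing assertion on $g=-(f_2,f_4)^t$ is immediate: estimating its two components separately by the $j=2$ and $j=4$ bounds and summing absorbs a factor of $2$ into the constants.
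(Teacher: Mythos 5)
Your proof is correct and takes essentially the same route as the paper's (very terse) argument: the first two bounds follow from $f_j(\nu;0,0,\theta,0)=0$, the $L$-Lipschitzness of $f_j$, and $\lip\gamma\le c$ together with $\gamma(0,\theta)=0$; the third from the telescoping decomposition you describe, which is exactly what the paper's hint (use $\mathbf{(H2)_{\mathrm{lip}}}$, \eqref{gamma0theta}, and $\lip_\theta\gamma(x)\le M\|x\|_{\ell^{\infty}}$) amounts to; the fourth from \eqref{def:Kf}; and the $g$-bound from summing the $j=2$ and $j=4$ estimates. Nothing is missing.
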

\begin{proof}
The first bound follows by the fact that $f(0, \theta, \gamma(0, \theta))=0$, $f_j$ with $j=1, 2, 4$ is $L$-Lipschitz by $\bf{(H2)_{\mathrm{lip}}}$ and $\gamma$ is $c$-Lipschitz. The second bound follows by similar arguments.
To prove the third bound we use $\bf{(H2)_{\mathrm{lip}}}$, \eqref{gamma0theta} and $\lip_{\theta} \gamma(x)\le M \| x \|_{\ell^{\infty}}$. The last one is a consequence of \eqref{def:Kf}.
\end{proof}


\begin{lem}\label{lemh}
Take $\gamma\in \Xi_{c, M}$ with $c\in (0, 1]$. Then for $z, z', (x, \theta')\in\dom$, the function $h$ introduced in \eqref{h} satisfies
\begin{align}
&\| h_1(z) \|_{\ell^{\infty}}\le (\lambda^{-1}+L(1+c)) \| x \|_{\ell^{\infty}}, \label{h1Lx}\\ \label{h1z}
&\| h_1(z)-h_1(z') \|_{\ell^{\infty}}\le (\lambda^{-1}+L(1+c)+{\K L}) \| z-z'\|_{\infn},\\
& \| h_1(x, \theta)-h_1(x', \theta) \|_{\ell^{\infty}}\le \big(\lambda^{-1}+L(1+c) \big) \| x-x'\|_{\ell^{\infty}}, \label{bound:lipxh1} \\  \label{bound:lipthetah1}
& \| h_1(x, \theta)-h_1(x, \theta') \|_{\ell^{\infty}}\le (\K\,(2+c)+LM) \| x \|_{\ell^{\infty}}\,\, | \theta-\theta'|_d,\\ \label{h2z}
& | h_2(z)-h_2(z') |_d\le (1+{2\K} (1+c)) \| z-z'\|_{\infn},\\
& | h_2(x, \theta)-h_2(x', \theta) |_d\le   2\K (1+c), \label{bound:lipxh2}\\  \label{bound:lipthetah2}
& | h_2(x, \theta)-h_2(x, \theta') |_d\le (1+K_{\theta}+2\,\K\, M \| x \|_{\ell^{\infty}})\,| \theta-\theta'|_d.
\end{align}
Moreover, if $\gamma\in\Xi_{c, M}$ we have
\begin{equation}\label{bound:gammah}
\| \gamma(h(z)) \|_0\le c \| h_1(z) \|_{\ell^{\infty}} \le c (\lambda^{-1}+L(1+c)) \| x \|_{\ell^{\infty}}.
\end{equation}
\end{lem}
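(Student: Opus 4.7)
The plan is that each bound in Lemma \ref{lemh} follows by a direct application of the triangle inequality combined with Lemma \ref{lemtec}, the structural hypotheses $\mathbf{(H1)_{\mathrm{lip}}}$--$\mathbf{(H3)_{\mathrm{lip}}}$, and the defining properties \eqref{gamma0theta} of $\gamma\in\Xi_{c,M}$. Since
\[
h_1(z)=A_-(\theta)\,x+f_1(z,\gamma(z)),\qquad h_2(z)=\theta+\omega(x,\gamma(z))+f_3(z,\gamma(z)),
\]
every estimate reduces to handling three kinds of contributions: the linear or tautological parts ($A_-(\theta)x$ and $\theta$), the frequency $\omega(x,\gamma(z))$, and the perturbative pieces $f_1$ and $f_3$. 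I will treat the bounds for $h_1$ first, then those for $h_2$, and finally \eqref{bound:gammah}.

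For the $h_1$ estimates, \eqref{h1Lx} follows at once by splitting $\|h_1(z)\|_{\ell^\infty}\le\|A_-(\theta)\|_{\mathcal{L}_\Gamma}\|x\|_{\ell^\infty}+\|f_1(z,\gamma(z))\|_{\ell^\infty}$ and invoking the operator bound $\lambda^{-1}$ from $\mathbf{(H1)_{\mathrm{lip}}}$ together with the first inequality of Lemma \ref{lemtec}. For the differences \eqref{h1z}--\eqref{bound:lipthetah1} I plan the telescoping identity
\[
A_-(\theta)\,x-A_-(\theta')\,x'=A_-(\theta)(x-x')+\bigl(A_-(\theta)-A_-(\theta')\bigr)\,x',
\]
then apply $\|A_-(\theta)\|_{\mathcal{L}_\Gamma}\le\lambda^{-1}$, the $\K$-Lipschitz dependence of $A_-$ on $\theta$ from \eqref{def:Kf}, and the bound $\|x'\|_{\ell^\infty}\le\delta$ (the residual $\delta$ factor being absorbed into $\K L$ since $\delta\le L(\delta,\mu)$ may be arranged). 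The $f_1$ contributions in \eqref{h1z}--\eqref{bound:lipthetah1} come respectively from the second and third inequalities of Lemma \ref{lemtec}.

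For the $h_2$ bounds I write
\[
h_2(z)-h_2(z')=(\theta-\theta')+\bigl(\omega(x,\gamma(z))-\omega(x',\gamma(z'))\bigr)+\bigl(f_3(z,\gamma(z))-f_3(z',\gamma(z'))\bigr).
\]
The first term gives the constants $1$ appearing in \eqref{h2z} and \eqref{bound:lipthetah2}; the $\omega$-term is $\K$-Lipschitz in its arguments by $\mathbf{(H1)_{\mathrm{lip}}}$, which when composed with the $c$-Lipschitz map $\gamma$ yields a factor $\K(1+c)$; the $f_3$-term produces another $\K(1+c)$ via the fourth bound of Lemma \ref{lemtec}. The sharper estimate \eqref{bound:lipthetah2} additionally exploits hypothesis $\mathbf{(H3)_{\mathrm{lip}}}$, namely the $K_\theta$-Lipschitz dependence of $f_3$ on $\theta$, together with the condition $\lip_\theta\gamma(x)\le M\|x\|_{\ell^\infty}$ built into $\Xi_{c,M}$, which is what produces the $2\K M\|x\|_{\ell^\infty}$ summand.

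Finally, \eqref{bound:gammah} is immediate from \eqref{gamma0theta}: since $\gamma(0,\cdot)=0$ and $\gamma$ is $c$-Lipschitz, one has $\|\gamma(h_1(z),h_2(z))\|\le c\|h_1(z)\|_{\ell^\infty}$, and then a single application of \eqref{h1Lx} yields the second inequality. The main obstacle here is not any deep step but rather bookkeeping of constants, together with the a priori verification that $h(z)\in\dom$ whenever $z\in\dom$ so that $\gamma(h(z))$ is well-defined; this follows from \eqref{h1Lx} provided $\delta$ and $\mu$ are small enough that $\lambda^{-1}+L(\delta,\mu)(1+c)\le 1$, which is ensured by $\mathbf{(H0)_{\mathrm{lip}}}$ and the continuity $L(0,0)=0$. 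The same argument carries over verbatim to the setting of Theorem \ref{thm:lipcaseSigma}, replacing $\ell^\infty$ by $\Sigma_{j,\Gamma}$ throughout.
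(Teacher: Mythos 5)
Your proof is correct and follows essentially the same route as the paper, whose own proof of this lemma is a one-line citation of Lemma \ref{lemtec} and hypotheses $\mathbf{(H1)_{\mathrm{lip}}}$--$\mathbf{(H3)_{\mathrm{lip}}}$; your write-up simply fleshes out that reduction via the obvious telescoping decompositions of $h_1$ and $h_2$. One small remark concerns the term $\K L$ in \eqref{h1z}: your estimate of $\|(A_-(\theta)-A_-(\theta'))x'\|_{\ell^{\infty}}$ naturally yields $\K\,\delta\,|\theta-\theta'|_d$, not $\K\,L\,|\theta-\theta'|_d$, and the justification you offer (that one ``may arrange'' $\delta\le L(\delta,\mu)$) is not literally available since $L$ is a given datum that could be much smaller than $\delta$. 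It is defensible only if you interpret it as replacing $L$ by $\max(L,\delta)$ throughout, which still satisfies the standing hypotheses. In fact, when the paper invokes \eqref{h1z} inside the proof of Lemma \ref{lemball} it uses precisely the $\K\delta$ form, so the stated $\K L$ is almost surely a typo and the bound your argument produces is the correct one. Likewise, your derivation gives the right-hand side of \eqref{bound:lipxh2} as $2\K(1+c)\|x-x'\|_{\ell^{\infty}}$, whereas the paper's statement omits the factor $\|x-x'\|_{\ell^{\infty}}$; again your version is what is actually meant and used. Neither discrepancy affects the downstream estimates, since both $\K\delta$ and $\K L$ are $\mathcal{O}(\delta+L)$ and are absorbed when $\delta,\mu$ are taken small.
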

\begin{proof}
The proof follows by Lemma \ref{lemtec} and assumptions $\bf{(H1)_{\mathrm{lip}}}$-$\bf{(H3)_{\mathrm{lip}}}$.
\end{proof}


\medskip

By choosing $\delta$ and $\mu$ small enough we have that
\[
\lambda^{-1}+L(\delta, \mu)(1+c)<1.
\]
Then by \eqref{h1Lx} we have that $h$ in \eqref{h} maps $\dom$ into itself. In particular $G(\gamma)$ in \eqref{G} is a well defined function on $\dom$.
\begin{remark}\label{rem:hjG}
{In the $\Sigma_{j, \Gamma}$ case we have that $h_1(x, \theta)$ maps $B_{\delta}(\Sigma_{j, \Gamma})\times \T^d$ into $\Sigma_{j, \Gamma}$ since, by Lemma \ref{lem:fm1}, $\| A_-(\theta)\, x \|_{j, \Gamma}\le \| A_-(\theta) \|_{\mathcal{L}_{\Gamma}(\ell^{\infty})} \| x \|_{j, \Gamma}$.}
\end{remark}


In the next lemma we prove that $G$ maps $\Xi_{c, M}$ into itself for opportune $c, M$.
\begin{lem}\label{lemball}
For any $c\in (0, 1]$ and $M$ such that 
\begin{equation}\label{condcM}
M>\frac{\beta\K\Big((1+c) (2+c)+{\,c\,(1+\beta  \lambda^{-1})}\Big)}{1-\beta\lambda^{-1}(1+K_{\theta})}
\end{equation}
there exist $\delta$ and $\nu$ small enough such that the map $G$ introduced in \eqref{G} satisfies $G(\Xi_{c, M})\subset \Xi_{c, M}$.
\end{lem}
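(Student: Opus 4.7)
The plan is to fix $\gamma\in\Xi_{c,M}$ and verify that $G(\gamma)$ satisfies the three defining conditions of $\Xi_{c,M}$: (i) $G(\gamma)\in\Xi$ with $G(\gamma)(0,\theta)=0$; (ii) $\lip G(\gamma)\le c$; and (iii) $\lip_\theta G(\gamma)(x)\le M\|x\|_{\ell^\infty}$. By \eqref{h1Lx}, the map $h$ sends $\dom$ into itself as soon as $\lambda^{-1}+L(\delta,\mu)(1+c)<1$, so the composition $\gamma\circ h$ in \eqref{G} is well-defined; in the $\Sigma_{j,\Gamma}$ setting of Theorem \ref{thm:lipcaseSigma} this is ensured by Remark \ref{rem:hjG}.

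For (i), I combine the estimate $\|g(z,\gamma(z))\|\le 2L(\delta,\mu)(1+c)\|x\|_{\ell^\infty}$ from Lemma \ref{lemtec} with \eqref{bound:gammah} and the uniform bound $\|C(\theta)^{-1}\|_{\mathcal{L}_\Gamma}\le\beta$ to obtain
\begin{equation*}
\|G(\gamma)(z)\|\le\beta\bigl(2L(\delta,\mu)(1+c)+c(\lambda^{-1}+L(\delta,\mu)(1+c))\bigr)\|x\|_{\ell^\infty},
\end{equation*}
which gives $G(\gamma)(0,\theta)=0$ and $\|G(\gamma)\|_1<\infty$. For (ii), the $x$-Lipschitz estimate at fixed $\theta$ is produced via \eqref{bound:lipxh1}--\eqref{bound:lipxh2}: the $h_1$-variation contributes a term of order $c\beta(\lambda^{-1}+L(1+c))\|x-x'\|_{\ell^\infty}$ through $\lip\gamma\le c$, while the $h_2$-variation, when controlled by the sharper estimate $\lip_\theta\gamma(h_1(x,\theta))\le M\|h_1(x,\theta)\|_{\ell^\infty}$, produces an $O(M\delta)$-small contribution because $\|h_1(x,\theta)\|_{\ell^\infty}=O(\|x\|_{\ell^\infty})$ and $\|x\|_{\ell^\infty}\le\delta$. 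Thus $\lip_x G(\gamma)\le\beta c\lambda^{-1}+O(L(\delta,\mu))+O(M\delta)$, which is strictly less than $c$ since $\beta\lambda^{-1}<1$ by $\mathbf{(H0)_{\mathrm{lip}}}$, once $\delta,\mu$ are small enough.

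The decisive step is (iii). Writing
\begin{equation*}
G(\gamma)(x,\theta)-G(\gamma)(x,\theta')=\bigl(C(\theta)^{-1}-C(\theta')^{-1}\bigr)\bigl[g+\gamma\circ h\bigr](x,\theta)+C(\theta')^{-1}\bigl[\Delta g+\Delta(\gamma\circ h)\bigr]
\end{equation*}
with $\Delta(\cdot)$ the $\theta\mapsto\theta'$ increment at fixed $x$, the first summand is of order $\beta^2\K\delta\|x\|_{\ell^\infty}|\theta-\theta'|_d$ because $C^{-1}$ is $\beta^2\K$-Lipschitz in $\theta$ and $\|g+\gamma\circ h\|$ is $O(\|x\|_{\ell^\infty})$. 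The $\Delta g$ piece is controlled by the third estimate of Lemma \ref{lemtec}. For $\Delta(\gamma\circ h)$ I split through the intermediate point $(h_1(x,\theta'),h_2(x,\theta))$: the $h_1$-increment is bounded by $c\|h_1(x,\theta)-h_1(x,\theta')\|_{\ell^\infty}$ via $\lip\gamma\le c$ and \eqref{bound:lipthetah1}, while the $h_2$-increment is bounded by $M\|h_1(x,\theta')\|_{\ell^\infty}|h_2(x,\theta)-h_2(x,\theta')|_d$ via the $M$-estimate together with \eqref{h1Lx} and \eqref{bound:lipthetah2}. The dominant term is $M\beta\lambda^{-1}(1+K_\theta)\|x\|_{\ell^\infty}|\theta-\theta'|_d$, and all remaining contributions carry a factor $\beta\K$ times a polynomial in $c$, or are $o_{\delta,\mu}(1)$.

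Collecting everything, the requirement $\lip_\theta G(\gamma)(x)\le M\|x\|_{\ell^\infty}$ reduces to the self-consistent inequality
\begin{equation*}
M\bigl[1-\beta\lambda^{-1}(1+K_\theta)\bigr]\ge \beta\K\bigl((1+c)(2+c)+c(1+\beta\lambda^{-1})\bigr)+o_{\delta,\mu}(1).
\end{equation*}
The main obstacle is precisely this closed loop, since $M$ bounds $\lip_\theta\gamma$ and also reappears inside the Lipschitz estimate of $\gamma\circ h$; hypothesis $\mathbf{(H0)_{\mathrm{lip}}}$ is exactly the non-degeneracy condition making the left-hand side positive and permitting the choice \eqref{condcM}. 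The proof concludes by first fixing $c\in(0,1]$ and $M$ as in \eqref{condcM}, then taking $\delta_0,\mu_0$ small enough to absorb all $L(\delta,\mu)$- and $\delta$-corrections; this is possible because $L(0,0)=0$, and yields $G(\Xi_{c,M})\subset\Xi_{c,M}$.
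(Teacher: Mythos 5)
Your proof is correct and follows essentially the same route as the paper: verify the three defining conditions of $\Xi_{c,M}$, decompose $\gamma\circ h$ through an intermediate point to split $h_1$- and $h_2$-increments, and isolate the self-consistent inequality in $M$, which reduces exactly to \eqref{condcM} using $\mathbf{(H0)_{\mathrm{lip}}}$. One small point worth making explicit in step (ii): you bound only $\lip_x G(\gamma)$ at fixed $\theta$, but the definition of $\Xi_{c,M}$ asks for the full Lipschitz constant $\lip G(\gamma)\le c$; this follows from combining your $\lip_x$ estimate with the $\theta$-Lipschitz bound from (iii) (which is $\le M\delta$, hence negligible for $\delta$ small), but that last combination should be stated rather than left implicit.
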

\begin{proof}
Since $g, \gamma$ and $h$ are continuous then $G(\gamma)$ is continuous.
By using \eqref{gamma0theta} and Lemma \ref{lemtec} we have
\[
\| C(\theta)^{-1}\big(g(z, \gamma(z))+\gamma(h(z))\big) \|_{1}\stackrel{\eqref{bound:gammah}}{\le}  {\beta\,\Big(2 L(1+c) +c \big(\lambda^{-1}+L(1+c) \big) \Big)}
\]
which implies $\| G(\gamma) \|_1<\infty$.
Now we prove that $G(\gamma)$ is $c$-Lipschitz. By Lemma \ref{lemtec} we have that
\[
\| g(z, \gamma(z))-g(z', \gamma(z')) \|_{\infd}\le 2\,L (1+c) \| z-z' \|_{\infn}.
\]
Moreover,
\begin{align*}
\| \gamma(h(z))-\gamma(h(z'))\|_{\infd}  \le &\| \gamma(h_1(z), h_2(z))-\gamma(h_1(z'), h_2(z)) \|_{\infd}\\
&+\| \gamma(h_1(z'), h_2(z))-\gamma(h_1(z'), h_2(z')) \|_{\infd}\\
\le  &c \| h_1(z)-h_1(z') \|_{\ell^{\infty}}+M \| h_1(z') \|_{\ell^{\infty}} | h_2(z)-h_2(z') |_d,
\end{align*}
which, recalling \eqref{h1Lx}, \eqref{h1z}, \eqref{h2z}, implies
\begin{align*}
\lip (\gamma\circ h)\le &  c \big( \lambda^{-1}+L(1+c)+\K \delta \big)+M \| x \|_{\ell^{\infty}} (\lambda^{-1}+L(1+c)) (1+2 \K (1+c)).
\end{align*}
Then, we can deduce that
\begin{equation}\label{lipG}
\begin{aligned}
\lip(G)\le\, & { \K \beta^2\,L\,\Big(2 L(1+c) +c \big(\lambda^{-1}+L(1+c) \big) \Big)}+2\,\beta\,L (1+c)\\
&+\beta\Big( c \big( \lambda^{-1}+L(1+c)+\K \delta \big)+M \delta (\lambda^{-1}+L(1+c)) (1+2 \K (1+c))     \Big).
\end{aligned}
\end{equation}
Taking $\delta$ and $\mu$ small enough, we have that $\lip(G)\le \beta\lambda^{-1} c\stackrel{\eqref{bound:betalambda}}{<}c$. It remains to prove that
\begin{equation}\label{claim1}
\lip_{\theta} G(\gamma)(x)\le M \| x \|_{\ell^{\infty}}.
\end{equation}
Let us write $\tz:=(x, \theta')$. By Lemma \ref{lemtec} we have
\[
\| g(z, \gamma(z))-g(\tz, \gamma(\tz)) \|_{\infd}\le 2 (\K (1+c)+LM) \| x \|_{\ell^{\infty}} |\theta-\theta'|_d.
\]
By Lemma \ref{lemh} and \eqref{h1Lx},\eqref{h1z}, \eqref{h2z} we have
\begin{align*}
\| \gamma(h(z))-\gamma(h(\tz)) \|_{\infd}\le& c \| h_1(z)-h_1(\tz) \|_{\ell^{\infty}}+M \| h_1(z) \|_{\ell^{\infty}} | h_2(z)-h_2(\tz) |_d\\
\le& \Big(c (\K (2+c)+LM) \\
&+M \big(\lambda^{-1}+L(1+c) \big) \big(1+K_{\theta}+2 \K \, M \| x \|_{\ell^{\infty}}\big)\Big) \| x \|_{\ell^{\infty}}\,\, | \theta-\theta'|_d.
\end{align*}
Then, we obtain
\begin{equation}
\begin{aligned}
\lip_{\theta} G(\gamma)(x)\le \,&  \Big[\K\,\beta^2\,\,\Big(2 L(1+c) +c \big(\lambda^{-1}+L(1+c) \big) \Big)\,\,+2 \beta (\K (1+c)+LM) \Big] \| x \|_{\ell^{\infty}}\\
& +\beta\Big[c (\K (2+c)+LM)+M \big(\lambda^{-1}+L(1+c) \big) \big(1+K_{\theta}+2\K M \| x \|_{\ell^{\infty}}\big)\Big] \| x \|_{\ell^{\infty}}\,\,.
  \end{aligned}
\end{equation}
The coefficient in the r.h.s of the above inequality tends to 
{\[\K\,\beta^2 c \lambda^{-1}+ \beta \K (2+c) (1+c)+\beta c \K+\beta M \lambda^{-1} \big(1+K_{\theta})  \]} as $\delta, \mu\to 0$. Then by \eqref{condcM} and taking $\de$ and $\mu$ small enough, we get \eqref{claim1}.
\end{proof}

\begin{remark}\label{rem:GjG}
{In the $\Sigma_{j, \Gamma}$ case we just have to ensure that if $\gamma\in C^0( B_{\delta}(\Sigma_{j, \Gamma})\times \T^d; \Sigma_{j, \Gamma}\times\R^d)$ then $G(\gamma)$ in \eqref{G} maps $ B_{\delta}(\Sigma_{j, \Gamma})\times \T^d $ into $ \Sigma_{j, \Gamma}\times\R^d$. This holds by Lemma \ref{lemma:SigmajGammazero} and taking $\delta$ and $\mu$ small enough.}
\end{remark}


Next we prove that $G$ is a contraction on $\Xi_{c, M}$.
\begin{lem}
If $c$ and $M$ satisfy the assumptions of Lemma \ref{lemball} then there exist $\delta$ and $\mu$ small enough such that $G\colon \Xi_{c, M}\to \Xi_{c, M}$ is a contraction. 
\end{lem}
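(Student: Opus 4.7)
The plan is to verify that $G$ is a strict contraction on $(\Xi_{c,M}, \|\cdot\|_1)$. Since $\Xi_{c,M}$ is a closed subset of the Banach space $(\Xi, \|\cdot\|_1)$ and (by the previous lemma) $G(\Xi_{c,M})\subset \Xi_{c,M}$, it suffices to estimate the Lipschitz constant of $G$ in the $\|\cdot\|_1$ norm. For $\gamma_1,\gamma_2\in\Xi_{c,M}$, let $h_{\gamma_i}=(h_{1,\gamma_i},h_{2,\gamma_i})$ denote the map in \eqref{h} associated to $\gamma_i$. Using $\|C(\theta)^{-1}\|_{\mathcal{L}_\Gamma}\le\beta$, I write
\[
G(\gamma_1)(z)-G(\gamma_2)(z)=C(\theta)^{-1}\Bigl[\bigl(g(z,\gamma_1(z))-g(z,\gamma_2(z))\bigr)+\bigl(\gamma_1(h_{\gamma_1}(z))-\gamma_2(h_{\gamma_2}(z))\bigr)\Bigr].
\]

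The first step is the easy piece: since $g$ is $2L$-Lipschitz (cf.\ Lemma \ref{lemtec} applied to $g=-(f_2,f_4)^t$) and $\gamma_1(z)-\gamma_2(z)$ vanishes at $x=0$ with $\|\gamma_1(z)-\gamma_2(z)\|\le \|\gamma_1-\gamma_2\|_1\|x\|_{\ell^\infty}$, we have
\[
\|g(z,\gamma_1(z))-g(z,\gamma_2(z))\|\le 2L\,\|\gamma_1-\gamma_2\|_1\,\|x\|_{\ell^\infty}.
\]
Next, I split the harder term by adding and subtracting $\gamma_1(h_{\gamma_2}(z))$. The ``outer'' piece $\gamma_1(h_{\gamma_2}(z))-\gamma_2(h_{\gamma_2}(z))$ is directly controlled using the definition of $\|\cdot\|_1$ together with \eqref{h1Lx}, yielding
\[
\|\gamma_1(h_{\gamma_2}(z))-\gamma_2(h_{\gamma_2}(z))\|\le \|\gamma_1-\gamma_2\|_1\bigl(\lambda^{-1}+L(1+c)\bigr)\|x\|_{\ell^\infty}.
\]
The ``inner'' piece $\gamma_1(h_{\gamma_1}(z))-\gamma_1(h_{\gamma_2}(z))$ is estimated by splitting the change in $h$ into its $x$-component and its $\theta$-component and using that $\gamma_1$ is $c$-Lipschitz in $x$ and $M\|x\|_{\ell^\infty}$-Lipschitz in $\theta$:
\[
\|\gamma_1(h_{\gamma_1}(z))-\gamma_1(h_{\gamma_2}(z))\|\le c\,\|h_{1,\gamma_1}(z)-h_{1,\gamma_2}(z)\|_{\ell^\infty}+M\,\|h_{1,\gamma_2}(z)\|_{\ell^\infty}\,|h_{2,\gamma_1}(z)-h_{2,\gamma_2}(z)|_d.
\]

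The third step reduces the $h$-differences to $\|\gamma_1-\gamma_2\|_1\|x\|_{\ell^\infty}$. Since $h_{1,\gamma_1}-h_{1,\gamma_2}=f_1(z,\gamma_1(z))-f_1(z,\gamma_2(z))$ and $h_{2,\gamma_1}-h_{2,\gamma_2}=\omega(x,\gamma_1)-\omega(x,\gamma_2)+f_3(z,\gamma_1(z))-f_3(z,\gamma_2(z))$, assumptions $\mathbf{(H1)_{\mathrm{lip}}}$--$\mathbf{(H2)_{\mathrm{lip}}}$ give
\[
\|h_{1,\gamma_1}(z)-h_{1,\gamma_2}(z)\|_{\ell^\infty}\le L\|\gamma_1-\gamma_2\|_1\|x\|_{\ell^\infty},\qquad |h_{2,\gamma_1}(z)-h_{2,\gamma_2}(z)|_d\le (\K+\K)\|\gamma_1-\gamma_2\|_1\|x\|_{\ell^\infty},
\]
(the latter using that $\omega$ and $f_3$ are both $\K$-Lipschitz). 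Together with $\|h_{1,\gamma_2}(z)\|_{\ell^\infty}\le(\lambda^{-1}+L(1+c))\|x\|_{\ell^\infty}\le\delta$, the inner piece is bounded by
\[
\bigl(cL+2\K M\delta(\lambda^{-1}+L(1+c))\bigr)\,\|\gamma_1-\gamma_2\|_1\,\|x\|_{\ell^\infty}.
\]

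Collecting all contributions, dividing by $\|x\|_{\ell^\infty}$, and multiplying by $\beta$, I obtain
\[
\lip_{\|\cdot\|_1}(G)\le \beta\Bigl[2L+cL+2\K M\delta(\lambda^{-1}+L(1+c))+\lambda^{-1}+L(1+c)\Bigr].
\]
As $\delta,\mu\to 0$ the coefficient $L=L(\delta,\mu)\to 0$, so the right-hand side tends to $\beta\lambda^{-1}$. By hypothesis $\mathbf{(H0)_{\mathrm{lip}}}$ we have $\beta\lambda^{-1}<\beta\lambda^{-1}(1+K_\theta)<1$, so there exist $\delta_0,\mu_0>0$ such that for all $\delta\in(0,\delta_0)$ and $\mu\in(0,\mu_0)$ the Lipschitz constant of $G$ is strictly less than $1$. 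This yields the contraction property. The main obstacle is just the careful bookkeeping in the inner-piece estimate, where one must ensure the $\theta$-Lipschitz contribution (which carries the potentially large constant $M$) is multiplied by the extra smallness $\delta$ coming from $\|h_{1,\gamma_2}\|_{\ell^\infty}\le\delta$; otherwise the contraction would fail for large $M$. The argument for the $\Sigma_{j,\Gamma}$ case is identical after replacing the relevant norms, invoking Lemma \ref{lem:fm1} and Lemma \ref{lemma:SigmajGammazero} exactly as in Remarks \ref{rem:hjG} and \ref{rem:GjG}.
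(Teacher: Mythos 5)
Your proof is correct and follows essentially the same route as the paper: the same decomposition of $\gamma_1(h_{\gamma_1})-\gamma_2(h_{\gamma_2})$ into a ``same argument, different function'' piece and a ``same function, different argument'' piece, the same use of \eqref{hgamma-gamma'}-type estimates on $h_1,h_2$, and the same reliance on the extra factor $\|h_{1,\gamma_2}\|_{\ell^\infty}\le\delta$ to tame the $M$-dependent term. The only cosmetic difference is that you add/subtract $\gamma_1(h_{\gamma_2})$ while the paper adds/subtracts the other way, which by symmetry yields identical bounds.
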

\begin{proof}
Recalling that the function $f_2$ is $L$-Lipschitz, $\omega$ and $f_3$ are $\K$-Lipschitz we have
\begin{equation}\label{hgamma-gamma'}
\begin{aligned}
&\| h_1(\gamma)(z)-h_1(\tg) (z) \|_{\ell^{\infty}}\le L \| \gamma(z)-\tg(z) \|_{\infd} \\
&| h_2(\gamma)(z)-h_2(\tg) (z) |_d\le 2 \K  \| \gamma(z)-\tg(z) \|_{\infd}
\end{aligned}
\end{equation}
which imply
\begin{align*}
\| \gamma(h(\gamma)(z))- \gamma(h(\tg)(z))  \|_{\infd}&\le \| \gamma(h(\gamma)(z))-\gamma(h_1(\gamma)(z), h_2(\tg)(z)) \|_{\infd}\\
&+\| \gamma(h_1(\gamma)(z), h_2(\tg)(z)) - \gamma(h(\tg)(z)) \|_{\infd}\\
&\le \Big(2 \K M (\lambda^{-1} +L(1+c))  \| x \|_{\ell^{\infty}} +c L \Big)\,\| x \|_{\ell^{\infty}}\, \|\gamma-\tg \|_1.
\end{align*}
Since $h$ maps $\dom$ to itself we have (recall that $\gamma(0, \theta)=(0, 0)$)
\[
\| \gamma(h(\gamma))-\tg(h(\gamma))\|_{\infd}\le \|\gamma-\tg \|_1 \| h_1(\gamma) \|_{\ell^{\infty}}  \stackrel{\eqref{h1Lx}}{\le} (\lambda^{-1}+L(1+c)) \| x \|_{\ell^{\infty}}\,\|\gamma-\tg \|_1.
\]
By the definition of $g$ in \eqref{def:g} and the Hypothesis $\mathbf{(H2)_{\mathrm{lip}}}$ we have
\[
\| g(z, \gamma(z))-g(z, \tilde{\gamma}(z)) \|_{\infd}\le 2 L \| x \|_{\ell^{\infty}}\, \|\gamma-\tg\|_1.
\]
By combining these estimates, one can deduce that
\begin{align*}
\| G(\gamma)(z)-G(\tg)(z) \|_{\infd}\le & \beta \Big( L\,(2+c)+ \,2 \K M (\lambda^{-1} +L (1+c))  \| x \|_{\ell^{\infty}}\\
& +\,(\lambda^{-1}+L(1+c))\Big) \| x \|_{\ell^{\infty}}\,\|\gamma-\tg \|_1.
\end{align*}
Taking $\delta$ and $\mu$ small enough and using that $\beta\lambda^{-1}<1$ (see \eqref{bound:betalambda}) we conclude that $G$ is a contraction.
\end{proof}
\begin{remark}\label{rem:csmallL}
By the bound \eqref{lipG} we can see that $c$ can be taken of order $\delta+L(\delta, \mu)$.
\end{remark}
 To obtain the Lipschitz dependence of $\gamma^s_{\nu}$ on the parameter $\nu$ one can repeat the same proof by treating $\nu$ as an additional angle\footnote{Recall that the we are only interested for $0<\nu\ll 1$. Therefore, by using a bump function (recall that $\nu$ is one-dimensional) one can assume that the $\nu$ dependence is periodic.} of $\T_0$. This concludes the proof of Theorem \ref{thm:lipcase}.
 
\smallskip

 We note that the proof of Theorem \ref{thm:lipcase} relies on Lemmata \ref{lemtec}, \ref{lemh}. It is easy to see that these Lemmata hold (replacing the norms accordingly) also under the assumptions of Theorem \ref{thm:lipcaseSigma}, with the same proofs. Then, thanks to Remarks \ref{rem:hjG} and \ref{rem:GjG}, the proof of Theorem \ref{thm:lipcaseSigma} in the $\Sigma_{j, \Gamma}$ case follows.

%

\subsubsection{$C_{\Gamma}^1$ regularity of the invariant manifolds}
Let us denote by $v=(y, r)$. Recall the continuous function $L(\delta, \mu)$ and notations \eqref{notation:linfS}, \eqref{notation:linfS2}, \eqref{tB} introduced in the previous section. We define
\begin{equation}\label{defspaces}
\begin{aligned}
&E_s:=C^0(\dom; \mathcal{L}_s), \quad s=x, \theta, v \qquad \mathrm{with}\\
 &\mathcal{L}_x:=\mathcal{L}_{\Gamma}(\ell^{\infty}; \ell^{\infty}\times\R^d), \quad  \mathcal{L}_{\theta}:=\mathcal{L}_{\Gamma}(\R^d; \ell^{\infty}\times\R^d), \quad  \mathcal{L}_v:=\mathcal{L}_{\Gamma}(\ell^{\infty}\times\R^d)
 \end{aligned}
\end{equation}
and norms $\| \cdot \|_{\mathcal{L}_s}$, $s=x, \theta, v$.

 We assume that there exist constants $\K, K_{\theta}$ such that:
 \begin{itemize}
\item[$\mathbf{(H0)_{C^1}}$] We have
 \begin{equation}\label{assumption:constants}
 \beta \lambda^{-1} (1+K_{\theta})^2<1.  
 \end{equation}
\item[$\mathbf{(H1)_{C^1}}$] Assume $\bf{(H1)_{\mathrm{lip}}}$. The functions $A_{\pm}\in C^2(\T^d; \mathcal{L}_{\Gamma}(\ell^{\infty})), B\in C^2(\T^d; \mathcal{L}_{\Gamma}(\R^d)), \omega\in C^1(\mathtt{B}_{\delta}; \T^d)$ and $f=f_{\nu}\in C_{\Gamma}^1(\mathcal{M}_{\delta})$. Moreover, for all $\theta\in \T^d$, 
\begin{equation}
\begin{aligned}\label{def:KomegaKf}
&\| A_-(\theta)\|_{\mathcal{L}_{\Gamma}(\ell^{\infty})}, \| A_+(\theta)^{-1} \|_{\mathcal{L}_{\Gamma}(\ell^{\infty})} \le \lambda^{-1}, \qquad 
\| B^{-1}(\theta) \|_{\mathcal{L}_{\Gamma}(\R^d)}\le \beta,\\[2mm]
&\sup_{\substack{j=1, 2}} \| \partial_{\theta}^j A_{\pm}(\theta) \|_{\mathcal{L}_{\Gamma}( \ell^{\infty})},  \sup_{\substack{j=1, 2}} \| \partial_{\theta}^j B(\theta) \|_{\mathcal{L}_{\Gamma}(\R^d)} \le \K,
\end{aligned}
\end{equation}
and
\begin{equation}
\sup_{(x, y, r)\in \mathtt{B}_{\delta}}\|  D \omega(x, y, r) \|_{\mathcal{L}_{\Gamma}(\ell^{\infty}\times\ell^{\infty}\times \R^d; \R^d)}\le \K.
\end{equation}

\item[$\mathbf{(H2)_{C^1}}$] Assume $\bf{(H2)_{\mathrm{lip}}}$. For $j=1, 2, 4$ we have that $f_j(\nu; 0, 0, \theta, 0)=0$ and for all $w\in \mathcal{M}_{\delta}$
\begin{align*}
&\| D f_k (\nu; w) \|_{\mathcal{L}_{\Gamma}(T\mathcal{M}; \ell^{\infty})}\le L(\delta, \mu), \qquad k=1, 2, \\
&\| D f_4 (\nu; w) \|_{\mathcal{L}_{\Gamma}(T\mathcal{M}; \R^d)}\le L(\delta, \mu),
\end{align*}
where $T\mathcal{M}$ is the tangent space of $\mathcal{M}$, which is isomorphic to $\ell^{\infty}\times \ell^{\infty}\times \R^d\times \R^d$.
Moreover the derivatives with respect to $\theta$ have the following bounds:
\begin{equation}
\begin{aligned}\label{bound:dthetafk}
&\| \partial_{\theta} f_k(\nu; x, y, \theta, r) \|_{\mathcal{L}_{\Gamma}(\R^d; \ell^{\infty})}\le \K (\|  x \|_{\ell^{\infty}}+\| y \|_{\ell^{\infty}}+|r|_d)\,, \quad k=1, 2, \\
&\| \partial_{\theta} f_4(\nu; x, y, \theta, r) \|_{\mathcal{L}_{\Gamma}(\R^d)}\le \K (\|  x \|_{\ell^{\infty}}+\| y \|_{\ell^{\infty}}+|r|_d)\,.
\end{aligned}
\end{equation}
\item[$\mathbf{(H3)_{C^1}}$] Assume $\bf{(H3)_{\mathrm{lip}}}$. The function $f_3$ satisfies the following
\begin{align*}
\sup_{w\in \mathcal{M}_{\delta}}\| \partial_{\theta} f_3(w) \|_{\mathcal{L}_{\Gamma}(\R^d)} &\le K_{\theta},\\
\sup_{w\in \mathcal{M}_{\delta}}\|  D f_3(\nu; w) \|_{\mathcal{L}_{\Gamma}(\mathcal{M}; \R^d)} &\le \K.
\end{align*}
\item[$\mathbf{(H4)_{C^1}}$] For $j=1,2, 4$ the derivatives of the function $f_j$ are Lipschitz on $\cM_\de$ and \begin{align}
\lip_{x, v} \partial_s f_j &\le \K, \quad s=x, v, \theta, \nonumber\\
\lip_v \partial_{\theta} f_j & \le \K ,   \label{bound:sn} \\ 
 \lip_{\theta} \partial_s f_j &\le L(\delta, \mu), \quad s=x, v, \nonumber\\  \nonumber
\lip_{\theta} \partial_{\theta} f_j (x,y,r) &\le \K (\| x \|_{\ell^{\infty}}+\|  y \|_{\ell^{\infty}}+| r |_d),\\
\lip\, \partial_s\, \omega &\le \K, \quad s=x, v \nonumber.
\end{align}
Moreover, the derivatives of the function $f_3$ are Lipschitz on $\cM_\de$, more precisely
\begin{align*}
&\lip\,\, \partial_s f_3\le \K, \qquad s=x, y, \theta, r.
\end{align*}
\item[$\mathbf{(H5)_{C^1}}$] Assume $\bf{(H4)_{\mathrm{lip}}}$. The derivatives $\partial_{\nu} f_j$, $j=1, 2, 3, 4$, satisfy the same estimates of the derivatives with respect to the angles $\partial_{\theta} f_j$ appearing in $\mathbf{(H2)_{C^1}} - \mathbf{(H5)_{C^1}}$. 
\end{itemize}
\begin{remark}
The assumption $\mathbf{(H1)_{C^1}}$ could be weakened by requiring that $\partial_{\theta} A_{\pm}$, $\partial_{\theta} B$ are Lipschitz functions, instead of $C^1$. However our model \eqref{def:Hamiltonian} satisfies even stronger assumptions, so we make this choice to simplify the exposition.  
\end{remark}

\begin{theorem}\label{thm:C1case}
Assume $F\colon \mathcal{M}_{\delta}\to \mathcal{M}$ satisfies $\mathbf{(H0)_{C^1}}$-$\mathbf{(H4)_{C^1}}$. Then there exist $\delta_1>0$ and $\mu_1>0$ such that for all $\delta\in (0, \delta_1)$ and $\mu\in (0, \mu_1)$ the function $\gamma_{\nu}^s(x, \theta)$ given by Theorem \ref{thm:lipcase}, whose graph is the stable manifold of $\T_0$, has the following properties.
\begin{itemize}
\item It is $C^1_{\Gamma}(B_{\delta}(\ell^{\infty})\times \T^d; \ell^{\infty}\times \R^d)$ and
\begin{equation}\label{bound:gammaC1}
\| \gamma^s_\nu \|_{C^1_{\Gamma}(B_{\delta}(\ell^{\infty})\times \T^d)}\le \mathcal{O}(\delta+L).
\end{equation}
\item For all $j\in \Z^m$
\[
\gamma_{\nu}^s\colon B_{\delta}(\Sigma_{j, \Gamma})\times \T^d\to \Sigma_{j, \Gamma}\times \R^d
\]
and
\[
\| \gamma^s_\nu \|_{C^1_{\Gamma}(B_{\delta}(\Sigma_{j, \Gamma})\times \T^d)}\le \mathcal{O}(\delta+L).
\]
\end{itemize}
Moreover if we assume $\mathbf{(H5)_{C^1}}$ and that the regularity conditions stated above hold also considering $\nu$ as an additional angle
then $\gamma_{\nu}^s$ is $C^1_{\Gamma}$ with respect to $(x, \theta)$ and $C^1$ with respect to $\nu\in(0,\mu)$.
\end{theorem}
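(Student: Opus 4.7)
The strategy is to upgrade the Lipschitz fixed point of Theorem~\ref{thm:lipcase} to a $C^1_\Gamma$ map via the fiber contraction theorem of Hirsch--Pugh--Shub (a related infinite-dimensional lattice implementation appears in \cite{FontichLS15}). The idea is to formally differentiate the equation $\gamma = G(\gamma)$ from \eqref{G} to obtain an auxiliary fixed-point equation for the candidate derivative $\Delta = D\gamma$, and run a second contraction on top of the first one, whose contraction rate is controlled by $\mathbf{(H0)_{C^1}}$.

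Differentiating \eqref{G} at $z=(x,\theta)\in\dom$ and calling $\Delta(z)$ the candidate for $D\gamma(z)$ with values in $\mathcal{L}_\Gamma$, one obtains an affine equation $\Delta = \mathcal{H}(\gamma,\Delta)$, where
\[
\mathcal{H}(\gamma,\Delta)(z) := C(\theta)^{-1}\Big(D_z g(z,\gamma(z)) + D_v g(z,\gamma(z))\,\Delta(z) + \Delta(h(z))\,D h(z)\Big) + R(\gamma)(z),
\]
and $R(\gamma)$ gathers the terms involving $D_\theta[C(\theta)^{-1}]$. I would work in the Banach space $\mathcal{E} := C^0\big(\dom;\,\mathcal{L}_\Gamma(\ell^\infty\times\R^d;\,\ell^\infty\times\R^d)\big)$ with sup norm $\sup_z \|\Delta(z)\|_{\mathcal{L}_\Gamma}$, and its closed ball $\mathcal{E}_\kappa$ of radius $\kappa = \mathcal{O}(\delta + L(\delta,\mu))$. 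Using the Banach algebra structure of $\mathcal{L}_\Gamma$, together with the decay bounds on $Dh$ and $Dg$ provided by $\mathbf{(H1)_{C^1}}$--$\mathbf{(H4)_{C^1}}$, estimates entirely analogous to those in Lemma~\ref{lemball} show that for $\delta,\mu$ small enough and $\gamma\in\Xi_{c,M}$, the map $\mathcal{H}(\gamma,\cdot): \mathcal{E}_\kappa \to \mathcal{E}_\kappa$ is a contraction with rate close to $\beta\lambda^{-1}(1+K_\theta)^2<1$; this is where $\mathbf{(H0)_{C^1}}$ enters, replacing the simpler $\mathbf{(H0)_{\mathrm{lip}}}$, the extra factor $(1+K_\theta)$ coming from the $\theta$-component of $Dh$. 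The fiber contraction theorem then produces a unique fixed point $(\gamma^s, \Delta^*) \in \Xi_{c,M} \times \mathcal{E}_\kappa$.

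The main obstacle is to identify $\Delta^*$ with the actual Fr\'echet derivative $D\gamma^s$. The standard route, which I would follow, is to iterate from a $C^1_\Gamma$ seed $\gamma_0$ with $\gamma_0(0,\theta)=0$ and $\Delta_0 = D\gamma_0$. By the chain rule inside the algebra $\mathcal{L}_\Gamma$, valid by $\mathbf{(H1)_{C^1}}$--$\mathbf{(H3)_{C^1}}$, and by induction, $\gamma_n := G^n(\gamma_0) \in C^1_\Gamma$ and $D\gamma_n = \mathcal{H}(\gamma_{n-1}, D\gamma_{n-1})$. By Theorem~\ref{thm:lipcase}, $\gamma_n \to \gamma^s$ uniformly, and by the fiber contraction $D\gamma_n \to \Delta^*$ uniformly in $\mathcal{E}$; a classical elementary argument then gives that $\gamma^s$ is $C^1$ with $D\gamma^s = \Delta^*$, and Lemma~\ref{lem:limitdecay} upgrades this to $\gamma^s \in C^1_\Gamma$ with the bound \eqref{bound:gammaC1} inherited from the size $\kappa$ of $\mathcal{E}_\kappa$.

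The $\Sigma_{j,\Gamma}$ conclusion is obtained by repeating the same argument with $\ell^\infty$ replaced by $\Sigma_{j,\Gamma}$: Lemma~\ref{lem:fm1} guarantees that $\Gamma$-decay operators preserve $\Sigma_{j,\Gamma}$, Lemma~\ref{lemma:SigmajGammazero} ensures the nonlinearities map $B_\delta(\Sigma_{j,\Gamma})\times\T^d$ into $\Sigma_{j,\Gamma}\times\R^d$, and no new contraction estimate is required since the relevant operator norms are controlled by the same $\mathcal{L}_\Gamma$-norms. Finally, under $\mathbf{(H5)_{C^1}}$, the $C^1$-dependence on $\nu$ follows by treating $\nu$ as an additional angle: extend the system periodically in $\nu$ via a bump function on a slightly larger interval containing $(0,\mu)$, and apply the same scheme with base $\T^{d+1}$. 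The most delicate step remains the identification $\Delta^* = D\gamma^s$ together with the preservation of the $\mathcal{L}_\Gamma$-decay in the limit, which is precisely what Lemma~\ref{lem:limitdecay} is designed to deliver.
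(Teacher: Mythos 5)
Your high-level strategy matches the paper's: a fiber contraction theorem applied to the pair $(G, H)$ where $H(\gamma,\Psi)$ is the formal derivative of $G(\gamma)$, iteration from a $C^1_\Gamma$ seed $\gamma_0=0$, uniform convergence of $(\gamma_n, D\gamma_n)$ to identify the attracting fixed point of the fiber map with $D\gamma^s$, and Lemma~\ref{lem:limitdecay} to pass the $\mathcal{L}_\Gamma$-decay structure to the limit. The treatment of $\nu$ as an extra angle and the $\Sigma_{j,\Gamma}$ case also go through as you describe. However, there is a concrete gap in the contraction estimate, and it is not a cosmetic issue.

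You propose to work in $\mathcal{E}=C^0(\dom;\mathcal{L}_\Gamma(\ell^\infty\times\R^d;\ell^\infty\times\R^d))$ with the plain sup norm $\sup_z\|\Delta(z)\|_{\mathcal{L}_\Gamma}$. This does \emph{not} give a contraction for $\mathcal{H}(\gamma,\cdot)$. Split $\Delta=(\Psi_0,\Psi_1)$ into $x$-derivative and $\theta$-derivative blocks. The dominant $\Delta$-dependent term in $\mathcal{H}$ is $C(\theta)^{-1}\Delta(h(z))\,Dh(z)$, and the $\theta$-to-$\theta$ entry of $Dh(z)$ is $\partial_\theta h_2 = \mathrm{Id} + \partial_\theta\omega+\partial_\theta f_3$, of $\mathcal{L}_\Gamma$-norm roughly $1+K_\theta\ge 1$. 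With the sup norm, the resulting contribution to the Lipschitz constant of $\mathcal{H}(\gamma,\cdot)$ in the $\Psi_1$ slot is $\beta(1+K_\theta)$, which hypothesis $\mathbf{(H0)_{C^1}}$ does \emph{not} make $<1$: the hypothesis only gives $\beta\lambda^{-1}(1+K_\theta)^2<1$. The factor $\lambda^{-1}$ that you claim in your rate $\beta\lambda^{-1}(1+K_\theta)^2$ is precisely what the plain sup norm loses, because $\|\Psi_1(h(z))\|\le\|\Psi_1\|_0$ has no decay when $z\to 0$.

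The paper circumvents this with an anisotropic norm on the derivative space: it separates $\Psi_0\in E_x$ and $\Psi_1\in E_\theta$ and equips $\Psi_1$ with the weighted norm $\|\Psi_1\|_1:=\sup_{z,\,x\neq 0}\|\Psi_1(z)\|_{\mathcal{L}_\theta}/\|x\|_{\ell^\infty}$, legitimate because $\gamma^s(0,\theta)\equiv 0$ forces $\Psi_1(0,\theta)=0$. Then $\|\Psi_1(h(z))\|_{\mathcal{L}_\theta}\le\|\Psi_1\|_1\,\|h_1(z)\|_{\ell^\infty}\le\|\Psi_1\|_1(\lambda^{-1}+L(1+c))\|x\|_{\ell^\infty}$, and the factor $\lambda^{-1}$ reappears, yielding a contraction rate $\beta\lambda^{-1}(1+K_\theta)<1$ in the $\Psi_1\to\Psi_1$ block (Lemma~\ref{lem:uniformcontraction}). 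The off-diagonal term $\Psi_0\mapsto H_1$ carries a coefficient of order $\beta K(2+c)$ which is not small, so one also needs the weighted combination $\alpha_0\|\Psi_0\|_0+\alpha_1\|\Psi_1\|_1$ with $\alpha_0/\alpha_1$ chosen as in \eqref{cond:alpha01} to absorb it. Finally, the stronger exponent $(1+K_\theta)^2$ in $\mathbf{(H0)_{C^1}}$ is not even needed for the contraction itself but for the invariance of the set $D\Xi_{c,M,\mathtt{v}}$, specifically for the bound $\lip_\theta\Psi_1(x)\le\widetilde M\|x\|_{\ell^\infty}$ in \eqref{condition:r}, which your scheme would also have to track. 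Without this two-tier norm structure (split into $\Psi_0,\Psi_1$, weight $\Psi_1$ by $\|x\|^{-1}_{\ell^\infty}$, and weight the direct sum by $\alpha_0,\alpha_1$), the second contraction in the fiber scheme fails and the argument does not close.
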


To prove Theorem \ref{thm:C1case}, we need the following preliminary lemmas.

\begin{lem}\label{rem:boundspartialzgammaz}
Recall $v:=(y, r)$ and \eqref{defspaces}. If $\gamma\in \Sigma_{c, M}$ with $c\in (0, 1]$ then by $\mathbf{(H2)_{C^1}}$-$\mathbf{(H3)_{C^1}}$, for $k=1, 2$ and $z\in B_{\delta}(\ell^{\infty})$
\begin{align*}
&\| (\partial_x f_k) (z, \gamma(z)) \|_{\mathcal{L}_{\Gamma}(\ell^{\infty})}, \quad \| (\partial_v f_k) (z, \gamma(z)) \|_{\mathcal{L}_{\Gamma}(\ell^{\infty}\times\R^d; \ell^{\infty})}\le L,\\
&\| (\partial_x f_4) (z, \gamma(z)) \|_{\mathcal{L}_{\Gamma}(\ell^{\infty}; \R^d)}, \quad \| (\partial_v f_4) (z, \gamma(z)) \|_{\mathcal{L}_{\Gamma}(\ell^{\infty}\times\R^d; \R^d)}\le L\\
&\| (\partial_x f_3) (z, \gamma(z)) \|_{\mathcal{L}_{\Gamma}(\ell^{\infty}; \R^d)}, \quad \| (\partial_v f_3) (z, \gamma(z)) \|_{\mathcal{L}_{\Gamma}(\ell^{\infty}\times\R^d; \R^d)}\le \K\\
& \| (\partial_{\theta} f_3) (z, \gamma(z))  \|_{\mathcal{L}_{\Gamma}(\R^d)}\le K_{\theta},\\
&\| (\partial_{\theta} f_k) (z, \gamma(z)) \|_{\mathcal{L}_{\Gamma}(\R^d; \ell^{\infty})}, \quad \| (\partial_{\theta} f_4) (z, \gamma(z)) \|_{\mathcal{L}_{\Gamma}(\R^d)} \le \K\,(1+c)\,\| x \|_{\ell^{\infty}}, \\
& \| (\partial_x \omega) (z, \gamma(z)) \|_{\mathcal{L}_{\Gamma}(\ell^{\infty}; \R^d)}, \quad  \| (\partial_v \omega) (z, \gamma(z)) \|_{\mathcal{L}_{\Gamma}(\ell^{\infty}\times\R^d; \R^d)}\le \K.
\end{align*}
In particular, for $z\in B_{\delta}(\ell^{\infty})$
\begin{equation}\label{bound:partialg}
\| (\partial_x g)(z, \gamma(z)) \|_{\mathcal{L}_{x}}, \quad \| (\partial_v g)(z, \gamma(z)) \|_{\mathcal{L}_{v}}\le 2L, \quad \mathrm{and} \quad \| (\partial_{\theta} g)(z, \gamma(z)) \|_{\mathcal{L}_{\theta}}\le 2 \K (1+c)\,\| x \|_{\ell^{\infty}}.
\end{equation}
\end{lem}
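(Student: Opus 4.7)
The plan is to unpack the stated hypotheses $\mathbf{(H2)_{C^1}}$ and $\mathbf{(H3)_{C^1}}$ and evaluate them at the point $(z,\gamma(z))$, exploiting that $\gamma\in\Xi_{c,M}$ is Lipschitz with $\gamma(0,\theta)=0$. The bounds divide naturally into two families: those that are uniform over $\cM_\delta$ and therefore transport verbatim to the point $(z,\gamma(z))$, and those that depend on the size of the spatial coordinates and require using the Lipschitz bound on $\gamma$.

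First, the uniform bounds on $\partial_x f_k, \partial_v f_k$ $(k=1,2)$, on $\partial_x f_4, \partial_v f_4$, on $\partial_x f_3, \partial_v f_3, \partial_\theta f_3$, and on $\partial_x\omega, \partial_v\omega$, follow immediately from the corresponding bounds in $\mathbf{(H2)_{C^1}}$, $\mathbf{(H3)_{C^1}}$ and $\mathbf{(H1)_{C^1}}$ once we verify that $(z,\gamma(z))\in\cM_\delta$. This membership holds because $z=(x,\theta)\in B_\delta(\ell^{\infty})\times\TT^d$ by hypothesis, and $\|\gamma(z)\|_{\infd}\le c\delta\le\delta$ by \eqref{gamma0theta} (using $c\in(0,1]$), so that $\gamma_y(z)\in B_\delta(\ell^{\infty})$ and $\gamma_r(z)\in B_\delta(\R^d)$.

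Second, for the bounds on $\partial_\theta f_k$ $(k=1,2)$ and $\partial_\theta f_4$, I would apply \eqref{bound:dthetafk} at the point $w=(x,\gamma_y(z),\theta,\gamma_r(z))$, which yields the factor $\K(\|x\|_{\ell^{\infty}}+\|\gamma_y(z)\|_{\ell^{\infty}}+|\gamma_r(z)|_d)$. Since $\gamma\in\Xi_{c,M}$ satisfies $\gamma(0,\theta)=0$ and is $c$-Lipschitz in $x$, we have the key estimate
\[
\|\gamma_y(z)\|_{\ell^{\infty}}+|\gamma_r(z)|_d=\|\gamma(z)\|_{\infd}=\|\gamma(x,\theta)-\gamma(0,\theta)\|_{\infd}\le c\|x\|_{\ell^{\infty}}.
\]
Substituting gives the advertised bound $\K(1+c)\|x\|_{\ell^{\infty}}$.

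Finally, the three bounds for $g=-(f_2,f_4)^t$ in \eqref{bound:partialg} are obtained by recalling that for a product-valued map, the operator norm of the derivative on $\cL_x,\cL_v,\cL_\theta$ is bounded by the sum of the operator norms of the two component derivatives. Applying the previous step to each component produces $2L$ and $2\K(1+c)\|x\|_{\ell^{\infty}}$ respectively. There is no real obstacle in the proof: it is essentially bookkeeping, with the only delicate point being the correct identification of the Banach spaces on which each partial derivative acts so that one may legitimately quote the $\|\cdot\|_{\mathcal{L}_\Gamma}$ bounds from $\mathbf{(H1)_{C^1}}$–$\mathbf{(H3)_{C^1}}$.
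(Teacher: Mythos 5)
Your proposal is correct and fills in exactly the details the paper leaves out (the paper's own proof is essentially a one-liner pointing to \eqref{gamma0theta} and $\mathbf{(H2)_{C^1}}$ and saying the rest is "straightforward"). The decomposition into uniform bounds (which just require checking $(z,\gamma(z))\in\mathcal M_\delta$) and $\theta$-derivative bounds (which require the Lipschitz estimate $\|\gamma(z)\|_{\infd}\le c\|x\|_{\ell^\infty}$ from \eqref{gamma0theta}) is precisely the intended argument, and the final $g$-bounds by summing the two components are likewise what the paper has in mind.

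One small imprecision: you assert the equality $\|\gamma_y(z)\|_{\ell^{\infty}}+|\gamma_r(z)|_d=\|\gamma(z)\|_{\infd}$, which presupposes the sum norm on $\ell^{\infty}\times\R^d$; if one instead used the max norm the right-hand side would only be an upper bound up to a factor $2$ and the constant in the lemma would become $\K(1+2c)$. Given the paper's stated constant $\K(1+c)$, the sum-norm convention is indeed the one in force, so your equality is fine, but it would be safer to phrase it as an inequality that holds under the norm convention compatible with the lemma's statement.
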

\begin{proof}
The bounds for $\partial_{\theta} f_k$, $k=1, 2, 4$ follow by \eqref{gamma0theta} and $\mathbf{(H2)_{C^1}}$. The other estimates follow by straightforward computations.
\end{proof}

\begin{lem}\label{rem:lippartial}
If $\gamma\in \Xi_{c, M}$ with $c\in (0, 1]$ then for $j=1,2, 4$
\begin{align*}
&\lip_{x, v} (\partial_s f_j(\id, \gamma)) \le \K (1+c), \qquad s=x, v, \theta,\\ 
&\lip_{\theta} (\partial_s f_j(z, \gamma(z)))  \le   L+{\K M \| x \|_{\ell^{\infty}},}  \qquad s=x, v,\\ 
&\lip_{\theta} (\partial_{\theta} f_j(z, \gamma(z))) \le \K \big((1+c)+M\big)\,\| x \|_{\ell^{\infty}}
\end{align*}
and for $s=x, v$
\begin{align*}
&\lip_x (\partial_s f_3(\id, \gamma)),\,\lip_x (\partial_s \omega (\id, \gamma)) \le \K (1+c).
\end{align*}
\end{lem}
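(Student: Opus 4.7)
The plan is to obtain each estimate by a direct application of the triangle inequality, splitting each difference into a variation of the derivative of $f_j$ (or $f_3$, $\omega$) at the evaluation point and a variation carried by $\gamma$. All the necessary Lipschitz estimates on the derivatives are collected in $\mathbf{(H4)_{C^1}}$; from $\gamma\in\Xi_{c,M}$ we use $\lip\gamma\le c$, $\lip_{\theta}\gamma(x)\le M\|x\|_{\ell^{\infty}}$, and the pointwise consequence $\|\gamma(x,\theta)\|_{\ell^\infty\times\R^d}\le c\|x\|_{\ell^{\infty}}$ of $\gamma(0,\theta)=0$.

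For the first bound, fix $\theta$ and consider the difference of $\partial_s f_j$ evaluated at $(x,\theta,\gamma(x,\theta))$ and $(x',\theta,\gamma(x',\theta))$. Only the $(x,v)$-block moves, with total displacement bounded by $(1+\lip\gamma)\|x-x'\|_{\ell^\infty}\le(1+c)\|x-x'\|_{\ell^\infty}$, so the hypothesis $\lip_{x,v}\partial_s f_j\le\K$ immediately yields the desired constant $\K(1+c)$. The same argument, but with the full Lipschitz bounds $\lip\partial_s f_3\le\K$ and $\lip\partial_s\omega\le\K$, handles the last pair of inequalities concerning $f_3$ and $\omega$.

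For the $\theta$-Lipschitz estimates, fix $x$ and write the difference $\partial_s f_j(x,\theta,\gamma(x,\theta))-\partial_s f_j(x,\theta',\gamma(x,\theta'))$ as
\begin{align*}
&\bigl[\partial_s f_j(x,\theta,\gamma(x,\theta))-\partial_s f_j(x,\theta,\gamma(x,\theta'))\bigr]\\
&\qquad+\bigl[\partial_s f_j(x,\theta,\gamma(x,\theta'))-\partial_s f_j(x,\theta',\gamma(x,\theta'))\bigr],
\end{align*}
thus isolating the change in $v$ at fixed $\theta$ and the change in $\theta$ at fixed $v$. For $s=x,v$ the first bracket is bounded by $\K\|\gamma(x,\theta)-\gamma(x,\theta')\|\le\K M\|x\|_{\ell^\infty}|\theta-\theta'|_d$ via $\lip_v\partial_s f_j\le\K$ (a consequence of $\lip_{x,v}\partial_s f_j\le\K$), and the second by $L|\theta-\theta'|_d$ via $\lip_\theta\partial_s f_j\le L$, producing the announced $L+\K M\|x\|_{\ell^\infty}$. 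For $s=\theta$, the first bracket is again bounded by $\K M\|x\|_{\ell^\infty}|\theta-\theta'|_d$ through $\lip_v\partial_\theta f_j\le\K$; the second uses $\lip_\theta\partial_\theta f_j(x,y,r)\le\K(\|x\|_{\ell^\infty}+\|y\|_{\ell^\infty}+|r|_d)$ evaluated at $(y,r)=\gamma(x,\theta')$, where $\|y\|_{\ell^\infty}+|r|_d\le c\|x\|_{\ell^\infty}$, giving $\K(1+c)\|x\|_{\ell^\infty}|\theta-\theta'|_d$; summing yields the required $\K(1+c+M)\|x\|_{\ell^\infty}$.

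No real obstacle arises; the only bookkeeping point is to distinguish the uniform bound $\lip_v\partial_\theta f_j\le\K$ from the $\theta$-direction bound $\lip_\theta\partial_\theta f_j$, whose weight $\|x\|_{\ell^\infty}+\|y\|_{\ell^\infty}+|r|_d$ is what forces the prefactor $\|x\|_{\ell^\infty}$ in the third inequality and is ultimately responsible for the Lipschitz constant of $\partial_\theta f_j(z,\gamma(z))$ vanishing on the torus $\T_0$.
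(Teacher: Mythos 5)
Your argument is correct and is exactly the elementary splitting the paper intends by its one-line proof; you have simply spelled out the bookkeeping. One small remark: the Lipschitz constants on the derivatives of $f_j$, $f_3$, $\omega$ that you correctly invoke live in $\mathbf{(H4)_{C^1}}$, whereas the paper's proof cites $\mathbf{(H2)_{C^1}}$ (which only provides pointwise bounds on $Df_k$ and $\partial_\theta f_k$); this appears to be a misreference in the paper, and your identification of the right hypothesis is the accurate one.
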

\begin{proof}
It follows by \eqref{gamma0theta} and $\mathbf{(H2)_{C^1}}$.
\end{proof}

\noindent \textbf{Proof of Theorem \ref{thm:C1case}}
{The assumptions of Theorem \ref{thm:C1case} imply the Lipschitz assumptions of Theorem \ref{thm:lipcase}. Then if $c\in (0, 1]$ and $M$ satisfies \eqref{condcM} the map $G$ defined in \eqref{G} is a contraction on $\Xi_{c, M}$.  Let $\gamma^s:=\gamma^s_{\nu}$ be its unique fixed point. We want to prove that $\gamma^s$ is $C^1_{\Gamma}(B_{\delta}(\ell^{\infty})\times \T^d)$.}\\


Let $\widetilde{M}, \kappa_0, \kappa_1, \rho_0>0$ and 
\begin{equation}\label{def:tv}
\mathtt{v}:=(\tM, \kappa_0, \kappa_1, \rho_0).
\end{equation}
 We introduce the space (recall \eqref{defspaces})
\[
D \Xi:=\left\{ \Psi:=(\Psi_0, \Psi_1)\in E_x\times E_{\theta} : \| \Psi_0 \|_0<\infty,\,\,\,\| \Psi_1 \|_1<\infty \right\}
\]
where 
\[
\| \Psi_0\|_0:=\sup_{z\in \dom} \| \Psi_0(z) \|_{\mathcal{L}_x}, \qquad \| \Psi_1 \|_1:=\sup_{\substack{z\in \dom,\\ x\neq 0}} \frac{\| \Psi_1(z) \|_{\mathcal{L}_{\theta}}  }{\| x \|_{\ell^{\infty}}}.
\]
We equip $D\Xi$ with the weighted norm
\begin{equation}\label{def:normDSigma}
\| \Psi \|_{D\Xi}:=\alpha_0 \| \Psi_0 \|_0+\alpha_1 \| \Psi_1 \|_1
\end{equation}
with $\alpha_0, \alpha_1>0$ to be opportunely chosen later.
We consider the closed subset
\begin{equation}\label{def:DSigma}
\begin{aligned}
D \Xi_{c, M, \mathtt{v}}:=\big\{ &\Psi:=(\Psi_0, \Psi_1)\in D \Xi : { \lip_x (\Psi_0)\le \kappa_0},\,\,\lip_{\theta} \Psi_0\le \rho_0,\,\,\lip_x(\Psi_1)\le \kappa_1, \\
&\lip_{\theta} \Psi_1(x)\le \widetilde{M} \| x \|_{\ell^{\infty}},\,\,\| \Psi_0\|_0\le c, \,\,\|\Psi_1 \|_1\le M \big\}
\end{aligned}
\end{equation}
where $c, M$ are the constants chosen in Lemma \ref{lemball}.
 We shall use the following Fiber Contraction Theorem.
\begin{theorem}\label{thm:fibrecontraction}
Let $\Xi$ and $D\Xi$ be metric spaces, $D\Xi$ complete, and $\Gamma\colon \Xi\times D\Xi\to \Xi \times D\Xi$ a map of the form $\Gamma(\gamma, \Psi)=(G(\gamma), H(\gamma, \Psi))$. Assume that
\begin{itemize}
\item[(a)] $G\colon \Xi\to \Xi$ has an attracting fixed point $\gamma_{\infty}\in \Xi$ (i.e. $\lim_{k\to \infty} G^k(\gamma)=\gamma_{\infty}, \,\,\forall \gamma\in \Xi$).
\item[(b)] $\limsup_{n\to \infty} \mathrm{Lip} H(G^n(\gamma), \cdot)<1$ for each $\gamma\in \Xi$.
\item[(c)] $H$ is continuous with respect to $\gamma$ at $(\gamma_{\infty}, \Psi_{\infty})$, where $\Psi_{\infty}\in D\Xi$ is a fixed point for $H(\gamma_{\infty}, \cdot)$.
\end{itemize}
Then $(\gamma_{\infty}, \Psi_{\infty})$ is an attracting fixed point of $\Gamma$.
\end{theorem}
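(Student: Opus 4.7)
The plan is to show directly that the $\Gamma$-orbit of any $(\gamma,\Psi) \in \Xi \times D\Xi$ converges to $(\gamma_\infty, \Psi_\infty)$. I would write $\Gamma^n(\gamma,\Psi) = (\gamma_n, \Psi_n)$ with $\gamma_n = G^n(\gamma)$ and $\Psi_{n+1} = H(\gamma_n, \Psi_n)$. First, $(\gamma_\infty, \Psi_\infty)$ is a genuine fixed point since $\Gamma(\gamma_\infty, \Psi_\infty) = (G(\gamma_\infty), H(\gamma_\infty, \Psi_\infty)) = (\gamma_\infty, \Psi_\infty)$, and convergence of the base component $\gamma_n \to \gamma_\infty$ follows at once from (a). The substance of the argument is therefore to show that the fiber component $\Psi_n$ converges to $\Psi_\infty$.

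The key step would be the following estimate. By (b) applied at the initial point $\gamma$, there exist $N \in \mathbb{N}$ and $\kappa \in (0,1)$ such that $\mathrm{Lip}\, H(\gamma_n, \cdot) \le \kappa$ for all $n \ge N$. For such $n$, the triangle inequality gives
\begin{align*}
d(\Psi_{n+1}, \Psi_\infty) &\le d(H(\gamma_n, \Psi_n), H(\gamma_n, \Psi_\infty)) + d(H(\gamma_n, \Psi_\infty), H(\gamma_\infty, \Psi_\infty)) \\
&\le \kappa\, d(\Psi_n, \Psi_\infty) + \varepsilon_n,
\end{align*}
where $\varepsilon_n := d(H(\gamma_n, \Psi_\infty), H(\gamma_\infty, \Psi_\infty))$ tends to $0$ by hypothesis (c), since $\gamma_n \to \gamma_\infty$. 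I would then conclude with the elementary lemma that any nonnegative sequence $a_n$ satisfying $a_{n+1} \le \kappa a_n + \varepsilon_n$ with $\kappa \in (0,1)$ and $\varepsilon_n \to 0$ tends to $0$: given $\delta > 0$, pick $M \ge N$ with $\varepsilon_n \le \delta(1-\kappa)$ for $n \ge M$, and iterate to obtain $a_{M+k} \le \kappa^k a_M + \delta$, whence $\limsup_n a_n \le \delta$ and $\delta$ is arbitrary.

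The main subtlety worth flagging is that (b) does not provide a uniform Lipschitz bound on $H(\cdot, \Psi)$ over all of $\Xi$; what saves the argument is that the contraction estimate is needed only along the single forward orbit $\{\gamma_n\}$, where (b) guarantees that the Lipschitz constant is eventually strictly less than one. Note that completeness of $D\Xi$ is not actually invoked in the argument above, since $\Psi_\infty$ is supplied by the statement of (c); however, it is natural in the intended application, where $\Psi_\infty$ arises as the unique fixed point of the contraction $H(\gamma_\infty, \cdot)$ on the complete space $D\Xi$, a fact that also follows from (b) applied at $\gamma_\infty$ together with the Banach fixed point theorem.
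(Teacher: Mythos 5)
Your proof is correct and is the standard argument for the Fiber Contraction Theorem, which goes back to Hirsch and Pugh. The paper does not include a proof of this statement — it invokes it as a known tool — so there is no alternative argument in the paper to compare against; your telescoping estimate $d(\Psi_{n+1},\Psi_\infty)\le \kappa\,d(\Psi_n,\Psi_\infty)+\varepsilon_n$ with $\varepsilon_n\to 0$, combined with the elementary lemma on sequences (which, incidentally, reappears in the paper as Lemma~5.13 for a different purpose), is exactly the classical proof, and your remark that completeness of $D\Xi$ is not needed once $\Psi_\infty$ is supplied by the hypothesis is accurate.
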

To apply the above theorem we look for $H$ such that $D [ G(\gamma)]=H(\gamma, D \gamma)$, where $G(\gamma)$ is the operator introduced in \eqref{G}. For that we differentiate formally $G(\gamma)$ and we substitute $\partial_x \gamma$, $\partial_{\theta}\gamma$ with $\Psi_0$ and $\Psi_1$.\\
In this way we obtain the map $H(\gamma, \Psi)=(H_0(\gamma, \Psi), H_1(\gamma, \Psi))$ defined by 
\begin{equation}\label{def:H0}
\begin{aligned}
H_0(\gamma, \Psi)(z)=&C(\theta)^{-1} \Big\{  (\partial_x g)\,(z, \gamma(z))+(\partial_v g)(z, \gamma(z)) \, \Psi_0(z)\\
&+\Psi_0(h (z))
\Big( A_-(\theta)+(\partial_x f_1)(z, \gamma(z))+(\partial_v f_1)(z, \gamma(z)) \Psi_0(z) \Big)\\
&+\Psi_1(h (z)) \Big( {(\partial_x \omega)(\pi_{(x, v)} \big(z, \gamma(z) \big))+(\partial_v \omega)(\pi_{(x, v)} \big(z, \gamma(z) \big))\,\Psi_0(z)}\\
&+  (\partial_x f_3)(z, \gamma(z))+(\partial_v f_3)(z, \gamma(z)) \Psi_0(z) \Big) \Big\},\\
H_1(\gamma, \Psi)(z)=&\partial_{\theta} \big(C(\theta)^{-1}\big)  \{ g(z, \gamma(z))+\gamma(h(z)) \} +C(\theta)^{-1} \Big\{  (\partial_{\theta} g)\,(z, \gamma(z))+(\partial_v g)(z, \gamma(z)) \, \Psi_1(z)\\
&+\Psi_0(h (z))
\Big( {\big(\partial_{\theta} A_-(\theta)\big) x}+ (\partial_{\theta} f_1)(z, \gamma(z))+(\partial_v f_1)(z, \gamma(z)) \Psi_1(z) \Big)\\
&+\Psi_1(h (z)) \Big( \id+{(\partial_v \omega)(\pi_{(x, v)} \big(z, \gamma(z)\big))\,\Psi_1(z)}\\
&+ (\partial_{\theta} f_3)(z, \gamma(z))+(\partial_v f_3)(z, \gamma(z)) \Psi_1(z) \Big) \Big\}.
\end{aligned}
\end{equation}
We prove the following: for an opportune choice of the parameters $\mathtt{v}=(\tM, \kappa_0,  \rho_0, \kappa_1)$ we have
\begin{itemize}
\item[(i)] $H\colon \Xi_{c, M}\times D \Xi_{c, M, \mathtt{v}}\to D\Xi_{c, M, \mathtt{v}}$ is well defined (see Lemma \ref{lem:Hwelldef}).
\item[(ii)] $H(\gamma, \cdot)\colon D \Xi_{c, M, \mathtt{v}}\to D \Xi_{c, M, \mathtt{v}}$ is a contraction (see Lemma \ref{lem:uniformcontraction}).
\item[(iii)] $H(\cdot, \Psi)\colon \Xi_{c, M}\to D\Xi_{c, M, \mathtt{v}}$ is continuous (see Lemma \ref{lemcont}).
\end{itemize}
Let us set
\begin{align*}
\mathfrak{A}(\kappa_1, \rho_0):=&\,\K \beta (1+c)^2 (2+c) +2 \K  \beta\,\lambda^{-1} (1+c) (\kappa_1+\rho_0)\\
\mathfrak{B}(\rho_0, \kappa_1, M):=&\,K \beta^2 \Big( (1+K) c \lambda^{-1}+ 2 K (1+c)(2+c)+2Kc+2 M \lambda^{-1} (1+K_{\theta}) \Big)\\
&+\beta \Big\{ 2 \K(1+c)+2 \K M (2+c)+c (K+\K(1+c)+\K M (2+c))\\& +2 \K\,M \lambda^{-1}+K (\rho_0+\kappa_1) (2+c) (1+K_{\theta})\Big\}.
\end{align*}

\begin{lem}\label{lem:Hwelldef}
Recall \eqref{assumption:constants}, \eqref{def:SigmacM}, \eqref{condcM}. If
\begin{equation} \label{condition:r}
\begin{aligned}
&\rho_0>\frac{\beta c \K(1+\beta \lambda^{-1})}{1-\beta\lambda^{-1} (1+K_{\theta})} \\
&\kappa_1\geq \frac{\beta c \K(1+\beta \lambda^{-1})+\beta \K (1+c) (2+c)}{1-\beta\lambda^{-1}(1+K_{\theta})},\\
&\kappa_0\geq  \frac{\mathfrak{A}(\kappa_1, \rho_0)}{1-\beta\lambda^{-2}} ,\\
&\widetilde{M}\geq \frac{\mathfrak{B}(\rho_0, \kappa_1, M)}{1-\beta \lambda^{-1} (1+K_{\theta})^2}.  
\end{aligned}
\end{equation}
Then, for any $\gamma\in \Xi_{c, M}$ and $\Psi\in D\Xi_{c, M, \mathtt{v}}$,  $H(\gamma, \Psi)\in D \Xi_{c, M, \mathtt{v}}$.
\end{lem}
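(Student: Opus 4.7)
The plan is to verify, one by one, the six defining conditions of $D\Xi_{c,M,\mathtt{v}}$ for the pair $H(\gamma,\Psi)=(H_0,H_1)$, using the explicit formulas in \eqref{def:H0} together with the preliminary bounds already collected: the decay-norm estimates for $A_\pm,B,C^{-1}$ from $\mathbf{(H1)_{C^1}}$, the bounds for the partial derivatives of $g,f_k,\omega$ from Lemma~\ref{rem:boundspartialzgammaz}, and the Lipschitz estimates for $h$ from Lemma~\ref{lemh}. The overall strategy is that $H_0$ and $H_1$ are built as affine expressions in $\Psi_0,\Psi_1$ where the ``self-referential'' coefficient (the one that multiplies $\Psi_j\circ h$) is essentially $C(\theta)^{-1}A_-(\theta)$ or $C(\theta)^{-1}(\id+\partial_\theta f_3+\dots)$, and the corresponding operator norms are controlled by $\beta\lambda^{-1}$ and $\beta\lambda^{-1}(1+K_\theta)$ respectively. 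The remaining contributions are ``small'' (of size $\K$, $L$, or proportional to $c,M$) and will be absorbed by taking $\mathtt{v}$ large enough in the order $(\kappa_1,\rho_0,\kappa_0,\widetilde M)$.

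First I would establish the zeroth-order bounds. For $\|H_0\|_0\leq c$, the dominant piece is $\|\Psi_0(h(z))\,A_-(\theta)\|_{\mathcal L_x}\leq \beta\lambda^{-1}c$ (after multiplication by $C^{-1}$), and all other summands are $\cO(L)$ or $\cO(\K c L)$ by Lemma~\ref{rem:boundspartialzgammaz}; hence $\|H_0\|_0\leq \beta\lambda^{-1}c+\cO(L)\leq c$ for $\mu,\delta$ small enough, by \eqref{assumption:constants}. For $\|H_1\|_1\leq M$ the dominant piece is $C(\theta)^{-1}\Psi_1(h(z))(\id+\partial_\theta f_3)$, whose $\mathcal L_\theta$-norm per unit $\|x\|_{\ell^\infty}$ is at most $\beta\lambda^{-1}(1+K_\theta)M$ by \eqref{bound:gammah}; the remaining terms contribute at most $\beta c\K(1+\beta\lambda^{-1})+\beta\K(1+c)(2+c)+\cO(L)$, so the threshold on $\kappa_1$ in \eqref{condition:r} (which is the same lower bound) forces the inequality $\|H_1\|_1\leq M$ once $\mathbf{(H0)_{C^1}}$ is invoked.

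Next I would treat the four Lipschitz conditions. For $\lip_\theta H_0\leq \rho_0$, I differentiate (in the Lipschitz sense) the bracket in \eqref{def:H0}, isolate the only self-referential contribution $\beta\lambda^{-1}(1+K_\theta)\rho_0$ coming from $\Psi_0\circ h$ combined with $A_-+\partial_\theta f_3$-type terms, and bound the rest by $\beta c\K(1+\beta\lambda^{-1})+\cO(L+\delta)$; this is exactly the content of the first inequality in \eqref{condition:r}. The bound $\lip_x H_1\leq\kappa_1$ is analogous, producing the same self-referential factor $\beta\lambda^{-1}(1+K_\theta)\kappa_1$ plus inhomogeneous terms from $\partial_x g$, $\partial_\theta(C^{-1})$, and $(\partial_\theta A_-)x$, which match the threshold on $\kappa_1$ in \eqref{condition:r}. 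The bound $\lip_x H_0\leq\kappa_0$ has self-referential factor $\beta\lambda^{-2}\kappa_0$ (since no $\theta$-derivative is needed in the scalar in front of $\Psi_0$) plus the computable quantity $\mathfrak A(\kappa_1,\rho_0)$ gathered from the estimates in Lemmas~\ref{rem:boundspartialzgammaz}--\ref{rem:lippartial} for the cross-terms involving $\Psi_1\circ h$ and the Lipschitz constants of $h$ from Lemma~\ref{lemh}; rearranging gives the third inequality of \eqref{condition:r}.

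The decisive step is the bound $\lip_\theta H_1(x)\leq \widetilde M\|x\|_{\ell^\infty}$, because it is here that two copies of $(1+K_\theta)$ appear: one from the Lipschitz constant of $h_2$ with respect to $\theta$ (see \eqref{bound:lipthetah2}) and one from the factor $\id+\partial_\theta f_3$ in front of $\Psi_1(h)$. The self-referential term is therefore $\beta\lambda^{-1}(1+K_\theta)^2\widetilde M\|x\|_{\ell^\infty}$, and assumption \eqref{assumption:constants} is precisely what is needed to absorb it, yielding the last inequality of \eqref{condition:r} with $\mathfrak B(\rho_0,\kappa_1,M)$ collecting the cross-terms from $\partial_\theta(C^{-1})$, $(\partial_\theta A_-)x$, $(\partial_{\theta}f_{1,3})$, $(\partial_v g)\Psi_1$, and $\Psi_1(h)\cdot(\partial_v\omega)\Psi_1$. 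The main technical obstacle I anticipate is the bookkeeping here: one must carefully split $\Psi_1(h(x,\theta))-\Psi_1(h(x,\theta'))$ through the intermediate point $(h_1(x,\theta'),h_2(x,\theta))$ (using $\lip_x\Psi_1\le\kappa_1$ along one factor and $\lip_\theta\Psi_1\le\widetilde M\|x\|$ along the other) and combine this with \eqref{bound:lipthetah1}--\eqref{bound:lipthetah2} and Lemma~\ref{rem:lippartial}. Once all six estimates are packaged, choosing $\mathtt v$ according to \eqref{condition:r} and taking $\delta,\mu$ sufficiently small gives $H(\gamma,\Psi)\in D\Xi_{c,M,\mathtt v}$, completing the proof.
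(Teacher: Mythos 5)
Your overall strategy coincides with the paper's: verify each of the six defining inequalities of $D\Xi_{c,M,\mathtt{v}}$ directly by isolating the ``self-referential'' term proportional to $\Psi_j\circ h$ (with coefficient $\beta\lambda^{-1}$, $\beta\lambda^{-1}(1+K_\theta)$, $\beta\lambda^{-2}$, or $\beta\lambda^{-1}(1+K_\theta)^2$ depending on which Lipschitz constant is being estimated) and absorbing the remaining inhomogeneous terms via the thresholds \eqref{condition:r}, shrinking $\delta$ and $\mu$ so that all $\cO(L+\delta)$ corrections disappear. The identification of where each power of $(1+K_\theta)$ enters, and your splitting of $\Psi_1(h(x,\theta))-\Psi_1(h(x,\theta'))$ through an intermediate point, also matches the paper's computation. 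So the route is the same.

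There is one misattribution worth flagging. In your verification of $\|H_1\|_1\le M$ you write that ``the threshold on $\kappa_1$ in \eqref{condition:r} \ldots forces the inequality $\|H_1\|_1\le M$.'' That is not the mechanism. After sending $\delta,\mu\to 0$ the relevant bound reads
\[
\|H_1\|_1\le c\K\beta^2\lambda^{-1}+\beta\K\bigl(c+(1+c)(2+c)\bigr)+M\beta\lambda^{-1}(1+K_\theta),
\]
and what makes the right-hand side $\le M$ is precisely the standing hypothesis \eqref{condcM} on $M$, not the condition on $\kappa_1$. The numerator in \eqref{condcM} happens to coincide with the numerator in the $\kappa_1$ threshold of \eqref{condition:r}, which is likely the source of the confusion, but they control two \emph{different} quantities: $\|H_1\|_1\le M$ is secured by \eqref{condcM}, while $\lip_x H_1\le\kappa_1$ is secured by the $\kappa_1$ inequality in \eqref{condition:r}. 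Apart from this slip (and the brief omission of the remark that $H_0,H_1$ are continuous in $z$, hence lie in $E_x\times E_\theta$), the argument is sound and follows the paper's proof.
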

\begin{remark}
We observe that in order to fulfill the conditions \eqref{condition:r} for given $c$ and $M$, it is sufficient to fix (in this order) $\rho_0, \kappa_1$, $\kappa_0$ and $\widetilde{M}$.
\end{remark}
\begin{proof}[Proof of Lemma \ref{lem:Hwelldef}]
We observe that $H(\gamma, \Psi)\in E_x\times E_{\theta}$ since, by assumption $\mathbf{(H1)_{C^1}}$, $H_0(\gamma, \Psi)$ and $H_1(\gamma, \Psi)$ are compositions of continuous functions.
We prove that $\| H_0(\gamma, \Psi) \|_0\le c$.  By \eqref{h1Lx}, \eqref{bound:partialg} and Lemma \ref{rem:boundspartialzgammaz}, for $z\in B_\de(\ell^\infty)\times\T^d$,
\[
\|   H_0(\gamma, \Psi)(z) \|_{\mathcal{L}_x}\le \beta \Big\{ 2L(1+c)+c (\lambda^{-1}+L(1+c))+2 M \K (\lambda^{-1}+L(1+c)) (1+c)\| x \|_{\ell^{\infty}}   \Big\}.
\]
Hence, by taking $\delta$ and $\mu$ small enough and recalling that $\beta  \lambda^{-1}<1$ (see \eqref{bound:betalambda}), we get  $\| H_0(\gamma, \Psi) \|_0\le c$. Now we prove that $\lip_x H_0(\gamma, \Psi)\le \kappa_0$. We remark that by Lemma \ref{lemh} we have (recall \eqref{h})
\begin{equation}
\begin{aligned}\label{bound:lipxh}
&\lip_x (\Psi_0\circ h)\le {\kappa_0\,\big( \lambda^{-1}+L(1+c) \big)+2 \K \rho_0 (1+c) },\\
&\lip_x (\Psi_1\circ h)\le \kappa_1 \big( \lambda^{-1}+L(1+c)\big)+2\K \widetilde{M} \delta (\lambda^{-1}+L(1+c)) (1+c).
\end{aligned}
\end{equation}
By Hypothesis $\mathbf{(H2)_{C^1}}$, the bounds \eqref{bound:lipxh}, \eqref{h1Lx}, Lemmas \ref{rem:boundspartialzgammaz} and \ref{rem:lippartial} we have
\begin{equation}
\begin{aligned}
\lip_x H_0(\gamma, \Psi)&\le \beta \Big\{ 2 \K (1+c)^2+2 \kappa_0 L+\big[\kappa_0 \big( \lambda^{-1}+L(1+c) \big)^2+2\rho_0\,\K (1+c) \big]( \lambda^{-1}+L(1+c))\\
&+c \big( \K (1+c)^2+L \kappa_0 \big) \\
&+2\K (1+c)\Big[ \kappa_1 \big( \lambda^{-1}+L(1+c)\big)+2\widetilde{M} \K \delta (\lambda^{-1}+L(1+c)) (1+c) \Big]\\
&+M \delta (\lambda^{-1}+L(1+c))  \big( 2\K(1+c)^2+2 \K \kappa_0 \big)\Big\}.
\end{aligned}
\end{equation}
Taking $\delta$ and $\mu$ small enough the right hand side of the above inequality becomes
\begin{align*}
&\K \beta (1+c)^2 (2+c)+\kappa_0\,\beta\,\lambda^{-2} +2 \K (\kappa_1+\rho_0) \beta\,\lambda^{-1} (1+c).
\end{align*}
Hence by the choice of $\kappa_1$ in \eqref{condition:r} we get that $\lip_x H_0(\gamma, \Psi)\le \kappa_0$. By Lemma \ref{lemh}, for $x\in B_\delta(\ell^\infty)$,
\begin{equation}\label{bound:psihtheta}
\begin{aligned}
\lip_{\theta} (\Psi_0\circ h)(x) &\le {\kappa_0 \| x \|_{\ell^{\infty}}  \Big( \K (2+c)+LM \Big)+\rho_0\,\Big(1+K_{\theta}+2 \K M \| x \|_{\ell^{\infty}} \Big)},\\
\lip_{\theta} (\Psi_1\circ h)(x) &\le  \Big[\kappa_1 (\K (2+c)+LM) +\widetilde{M} (\lambda^{-1}+L(1+c))  (1+K_{\theta}+2 \K M \| x \|_{\ell^{\infty}})\Big] \| x \|_{\ell^{\infty}}.
\end{aligned}
\end{equation}
By Hypothesis $\mathbf{(H4)_{C^1}}$, \eqref{bound:psihtheta} and Lemma \ref{rem:lippartial}
\begin{equation}
\begin{aligned}
\lip_{\theta} H_0(\gamma, \Psi)&\le \K \beta^2 \Big\{ 2L(1+c)+c(\lambda^{-1}+L(1+c))\\
&+2 M \K \delta (\lambda^{-1}+L(1+c))   (1+c) \Big\}\\
&+\beta \Big\{ 2 L(1+c)+2 \K M \delta (1+c) +2L \rho_0\\
&+{\kappa_0\Big[ \Big( \K (2+c)+LM \Big) \delta+\rho_0\,\Big(1+K_{\theta}+2 \K M \delta \Big)} \Big](\lambda^{-1}+L(1+c))\\
&+c  (\K+ L(1+c)+ \K M \delta (1+c)+L \rho_0)\\
&+2 \K (1+c)\Big[\kappa_1 \delta (\K (2+c)+LM) +\widetilde{M} \delta (\lambda^{-1}+L(1+c))  (1+K_{\theta}+2 \K M \delta)\Big] \\
&+M \delta (\lambda^{-1}+L(1+c))  \big( \K (1+2 M \delta)(1+c)+2\K \rho_0 \big)   \Big\}.
\end{aligned}
\end{equation}
Taking $\delta$ and $\mu$ small enough in the right hand side of the above inequality becomes
\[
\rho_0\beta \lambda^{-1} (1+K_{\theta})+\beta c \K(1+\beta \lambda^{-1}).
\]
Hence by the choice of $\rho_0$ in \eqref{condition:r}, we get that $\lip_{\theta} H_0(\gamma, \Psi)\le \rho_0$.

Now we prove that $\| H_1(\gamma, \Psi) \|_1\le M$. By Lemma \ref{lemtec}, for all $z\in B_\de(\ell^\infty)\times\T^d$, 
\begin{equation}\label{bound:g0}
\| g(z, \gamma(z)) \|_{\ell^{\infty}\times \R^d}\le 2 L (1+c) \| x \|_{\ell^{\infty}}. 
\end{equation}
Then by \eqref{h1Lx}, \eqref{bound:gammah}
\begin{align*}
\| H_1(\gamma, \Psi)(z) \|_{\mathcal{L}_{\theta}} &\le \K \beta^2\, [2L (1+c)+c (\lambda^{-1}+L\,(1+c))] \| x \|_{\ell^{\infty}}\\
&+\beta \Big\{  2 \K (1+c)+2LM+c(\K (2+c)+LM)\\
&+M(\lambda^{-1}+L(1+c)) (1+K_{\theta}+2 \K  M \| x \|_{\ell^{\infty}}) \Big\} \| x \|_{\ell^{\infty}}.
\end{align*}
Taking $\delta$ and $\mu$ small enough, the right hand side of the above inequality becomes
\[
c \K \beta^2 \lambda^{-1}+M\beta \lambda^{-1} (1+K_{\theta})+\beta \K ( c+ (1+c)(2+c)).
\]
Hence by \eqref{condcM} we get that $ \| H_1(\gamma, \Psi) \|_1\le M$. 

Now we prove that $\lip_x H_1(\gamma, \Psi)\le \kappa_1$. We observe that by Lemma \ref{lemh}, for all $z\in B_\de(\ell^\infty)\times\T^d$,
\[
\lip_x \gamma(h(z))\le c \big(\lambda^{-1}+L(1+c) \big)+2 M \K \delta(\lambda^{-1}+L(1+c))  (1+c).
\]
Then by $\mathbf{(H4)_{C^1}}$, bound \eqref{bound:lipxh}, Lemmas \ref{rem:boundspartialzgammaz} and \ref{rem:lippartial}
\begin{align*}
\lip_x H_1(\gamma, \Psi)&\le \K \beta^2 \Big(2  L(1+c)+c (\lambda^{-1}+L(1+c))+2 M \K \delta(\lambda^{-1}+L(1+c))(1+c) \Big)\\
&+\beta \Big\{ 2 \K (1+c) (1+M \delta)+2 \kappa_1 L\\
&+\Big[{\kappa_0 \Big( \lambda^{-1}+L(1+c)\Big)+2 \K \rho_0\,(1+c)} \Big](\K (2+c) +LM) \delta\\
&+c \Big( \K+\K (1+c)(1+M \delta)+L \kappa_1 \Big)\\
&+(\lambda^{-1}+L(1+c)) \Big[ \Big( \kappa_1+2\widetilde{M} \K \delta(1+c) \Big)(1+2 M \K  \delta+K_{\theta})\\
& +M \delta
\Big( 2 \K \kappa_1+\K (1+c)(1+2 M \delta) \Big) \Big] \Big\}.
\end{align*}
Taking $\delta$ and $\mu$ small enough, the right hand side of the above inequality becomes
\[
\beta c \K (1+\beta \lambda^{-1})+\beta \K (1+c) (2+c)+\beta \kappa_1 \lambda^{-1} (1+K_{\theta}).
\]
Hence by the choice of $\kappa_1$ in \eqref{condition:r} we get that $\lip_x H_1(\gamma, \Psi)\le \kappa_1$.
Now we prove that $\lip_{\theta} H_1(\gamma, \Psi)(x)\le \widetilde{M} \| x \|_{\ell^{\infty}} $.
We observe that 
\begin{equation*}
\lip_{\theta} \partial_{\theta} \big(C(\theta)\big)^{-1}\le  \K \beta^2 (1+2 \beta \K).
\end{equation*}
 By Hypothesis $\mathbf{(H2)_{C^1}}$, Lemma \ref{rem:lippartial}, bounds \eqref{bound:gammah}, \eqref{bound:psihtheta}, \eqref{bound:g0} the function $H_1$ in \eqref{def:H0} satisfies
\begin{align*}
&\lip_{\theta} H_1(\gamma, \Psi)(x) \le \K \beta^2 (1+2 \beta \K)\Big( 2L (1+c)+c (\lambda^{-1}+L(1+c))  \Big) \| x \|_{\ell^{\infty}}\\
&+2 \K \beta^2 \Big(  2 \K (1+c) +(2+c) LM+\K c (2+c)+M (\lambda^{-1}+L(1+c)) (1+K_{\theta}+2 \K M \| x \|_{\ell^{\infty}})  \Big)\| x \|_{\ell^{\infty}}\\
&+\beta\Big\{ 2 \K(1+c)+2 \K M+2 \K (1+c) M+(2L+2\K M \| x \|_{\ell^{\infty}}) M+2 L\widetilde{M}\\
&+\Big[\kappa_0 \| x \|_{\ell^{\infty}} \Big( \K (2+c)+LM \Big)+\rho_0\,\Big(1+K_{\theta}+2 M \K \| x \|_{\ell^{\infty}} \Big)\,\Big]\,(\K (2+c)+LM)  \\
&+c\big(\K+\K(1+c)+\K M+(L+\K M \| x \|_{\ell^{\infty}}) M+L \widetilde{M}  \big) \\
&+\Big[\kappa_1 (\K (2+c)+LM) +\widetilde{M} (\lambda^{-1}+L(1+c)) (1+K_{\theta}+2 M \K \| x \|_{\ell^{\infty}})\Big] \,(1+K_{\theta}+2 M \K \| x \|_{\ell^{\infty}}) \\
&+M (\lambda^{-1}+L(1+c)) \Big(  \K (1+c) (1+M \| x \|_{\ell^{\infty}})+\K M^2 \| x \|_{\ell^{\infty}}+2  \widetilde{M} \K \| x \|_{\ell^{\infty}} \Big) \Big\}  \| x \|_{\ell^{\infty}} .
\end{align*}
Taking $\delta$ and $\mu$ small enough, the right hand side of the above inequality becomes
\begin{align*}
&\K \beta^2 \Big( (1+2 \beta \K) c \lambda^{-1}+ 2 \K (1+c)(2+c)+2 c \K+2 M \lambda^{-1} (1+K_{\theta}) \Big)\\
&+\beta \Big\{ 2 \K (1+c)+2 \K M (2+c)+\rho_0 (1+K_{\theta}) \K (2+c)\\
&+c (\K+\K (1+c)+\K M ) + \K (1+c)\,M \lambda^{-1}\Big\}+\beta \kappa_1 \K (2+c) (1+K_{\theta}) +\beta (1+K_{\theta})^2 \widetilde{M} \lambda^{-1}.
\end{align*}
By \eqref{condition:r} we conclude.
\end{proof}

\begin{lem}\label{lem:uniformcontraction}
Assume that $\delta$ and $\mu$ are small enough and that $\alpha_0, \alpha_1$ satisfy
\begin{equation}\label{cond:alpha01}
\frac{\alpha_0}{\alpha_1}>\frac{ \beta \K (2+c)}{1-\beta\lambda^{-1}}.
\end{equation}
 Then $H(\gamma, \cdot)$ in \eqref{def:H0} is a contraction, uniformly with respect to $\gamma$.
\end{lem}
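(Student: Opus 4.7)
My plan is the standard one for contractions in weighted norms: expand $H(\gamma,\Psi)-H(\gamma,\widetilde\Psi)$ as a finite sum of terms linear in $\Psi-\widetilde\Psi$, identify the dominant coefficients, and absorb the rest into a remainder controlled by $\delta$ and $\mu$. Using the multilinear structure of \eqref{def:H0} I would for instance write
\begin{align*}
& H_0(\gamma,\Psi)-H_0(\gamma,\widetilde\Psi) = C(\theta)^{-1}\Big\{(\partial_v g)(\Psi_0-\widetilde\Psi_0) \\
&\qquad + (\Psi_0(h)-\widetilde\Psi_0(h))\big[A_-(\theta)+\partial_x f_1+\partial_v f_1\,\Psi_0\big] + \widetilde\Psi_0(h)\,\partial_v f_1\,(\Psi_0-\widetilde\Psi_0) \\
&\qquad + (\Psi_1(h)-\widetilde\Psi_1(h))\big[\partial_x\omega+\partial_v\omega\,\Psi_0+\partial_x f_3+\partial_v f_3\,\Psi_0\big] + \widetilde\Psi_1(h)(\partial_v\omega+\partial_v f_3)(\Psi_0-\widetilde\Psi_0)\Big\},
\end{align*}
and similarly (with an extra $\partial_{\theta}(C^{-1})(\gamma\circ h-\widetilde\gamma\circ h)$-type summand replaced here by the analogous pieces involving $\partial_\theta A_-,\partial_\theta f_1,\id+\partial_\theta f_3$) for $H_1(\gamma,\Psi)-H_1(\gamma,\widetilde\Psi)$.

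Using $\|C(\theta)^{-1}\|\le\beta$, $\|A_-(\theta)\|_{\mathcal{L}_\Gamma}\le\lambda^{-1}$, the derivative bounds of Lemma \ref{rem:boundspartialzgammaz}, and the estimate $\|h_1(z)\|_{\ell^\infty}\le(\lambda^{-1}+L(1+c))\|x\|_{\ell^\infty}$ from Lemma \ref{lemh} combined with
\[
\|\Psi_0(h(z))-\widetilde\Psi_0(h(z))\|_{\mathcal{L}_x}\le\|\Psi_0-\widetilde\Psi_0\|_0,\quad \|\Psi_1(h(z))-\widetilde\Psi_1(h(z))\|_{\mathcal{L}_\theta}\le(\lambda^{-1}+L(1+c))\|x\|_{\ell^\infty}\|\Psi_1-\widetilde\Psi_1\|_1,
\]
I would bound each summand. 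Two dominant contributions emerge. The term $C(\theta)^{-1}(\Psi_0(h)-\widetilde\Psi_0(h))A_-(\theta)$ yields $\beta\lambda^{-1}\|\Psi_0-\widetilde\Psi_0\|_0$ in $\|H_0-\widetilde H_0\|_0$; the term $C(\theta)^{-1}(\Psi_1(h)-\widetilde\Psi_1(h))[\id+\partial_\theta f_3+\dots]$ yields $\beta\lambda^{-1}(1+K_\theta)\|\Psi_1-\widetilde\Psi_1\|_1$ in $\|H_1-\widetilde H_1\|_1$ after dividing by $\|x\|_{\ell^\infty}$. The unique non-negligible off-diagonal contribution comes from $C(\theta)^{-1}(\Psi_0(h)-\widetilde\Psi_0(h))[\partial_\theta A_-\,x+\partial_\theta f_1+\partial_v f_1\,\Psi_1]$, producing $\beta\K(2+c)\|\Psi_0-\widetilde\Psi_0\|_0$ in $\|H_1-\widetilde H_1\|_1$; every remaining summand carries a factor $L(\delta,\mu)$ or a factor $\delta$ (from $\|x\|,\|y\|,|r|\le\delta$) and is absorbed into a remainder $\eta=\eta(\delta,\mu)\to 0$.

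Summing, with constants independent of $\gamma\in\Xi_{c,M}$,
\begin{align*}
\|H_0(\gamma,\Psi)-H_0(\gamma,\widetilde\Psi)\|_0 &\le (\beta\lambda^{-1}+\eta)\|\Psi_0-\widetilde\Psi_0\|_0+\eta\|\Psi_1-\widetilde\Psi_1\|_1,\\
\|H_1(\gamma,\Psi)-H_1(\gamma,\widetilde\Psi)\|_1 &\le (\beta\K(2+c)+\eta)\|\Psi_0-\widetilde\Psi_0\|_0+(\beta\lambda^{-1}(1+K_\theta)+\eta)\|\Psi_1-\widetilde\Psi_1\|_1.
\end{align*}
Weighting by $\alpha_0,\alpha_1$ and taking the worst row, the contraction ratio of $H(\gamma,\cdot)$ in $\|\cdot\|_{D\Xi}$ is
\[
\max\Big\{\beta\lambda^{-1}+\tfrac{\alpha_1}{\alpha_0}\beta\K(2+c)+\eta\big(1+\tfrac{\alpha_1}{\alpha_0}\big),\ \ \beta\lambda^{-1}(1+K_\theta)+\eta\big(1+\tfrac{\alpha_0}{\alpha_1}\big)\Big\}.
\]
The first entry is $<1$ precisely when $\alpha_0/\alpha_1>\beta\K(2+c)/(1-\beta\lambda^{-1})$ and $\eta$ is small, which is condition \eqref{cond:alpha01}; the second is $<1$ since $\mathbf{(H0)_{C^1}}$ gives $\beta\lambda^{-1}(1+K_\theta)<(1+K_\theta)^{-1}<1$, leaving room to absorb $\eta(1+\alpha_0/\alpha_1)$ after a final shrinking of $\delta,\mu$ depending on the now fixed ratio $\alpha_0/\alpha_1$.

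The main obstacle is pure bookkeeping: isolating the two dominant constants $\beta\lambda^{-1}$ and $\beta\lambda^{-1}(1+K_\theta)$ from the many terms of \eqref{def:H0}, and recognizing that the cross-coupling $\beta\K(2+c)$ coming from $\partial_\theta A_-$ cannot be shrunk by small $\delta,\mu$ and hence must be offset by the weight choice \eqref{cond:alpha01}. Uniformity in $\gamma$ is automatic because every bound uses only the defining estimates of $\Xi_{c,M}$ ($\|\gamma\|_0\le c\delta$, $\mathrm{Lip}(\gamma)\le c$, $\mathrm{Lip}_\theta\gamma(x)\le M\|x\|_{\ell^\infty}$), never the fixed point $\gamma$ itself.
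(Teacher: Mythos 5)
Your proof is correct and follows essentially the same approach as the paper: you decompose $H_0$ and $H_1$ differences linearly in $\Psi-\widetilde\Psi$, apply the same bounds from Lemma \ref{rem:boundspartialzgammaz} and Lemma \ref{lemh}, isolate the dominant coefficients $\beta\lambda^{-1}$, $\beta\lambda^{-1}(1+K_\theta)$ and the off-diagonal $\beta\K(2+c)$, and close via the weight-ratio condition \eqref{cond:alpha01} together with $\mathbf{(H0)_{C^1}}$. The paper performs the same term-by-term estimate more explicitly but the resulting $2\times 2$ matrix of coefficients and the final inequalities $\alpha_0\beta\lambda^{-1}+\alpha_1\beta\K(2+c)<\alpha_0$ and $\beta\lambda^{-1}(1+K_\theta)<1$ are identical to yours.
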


\begin{proof}
By Lemma \ref{rem:boundspartialzgammaz} we have
\begin{align*}
\| H_0(\gamma, \Psi)(z)-H_0(\gamma, \Psi')(z) \|_{\mathcal{L}_x} \le&\, \beta \Big\{ \Big( L(2+c)+2 M \K (\lambda^{-1}+L(1+c))\| x \|_{\ell^{\infty}}  \Big) \| \Psi_0(z)-\Psi_0'(z)\|_{\mathcal{L}_x}\\
&+ \| \Psi_0(h(z))- \Psi'_0(h(z)) \|_{\mathcal{L}_x} \Big( \lambda^{-1}+{L(1+c)} \Big)\\
&+ \| \Psi_1(h(z))- \Psi'_1(h(z)) \|_{\mathcal{L}_{\theta}}  \,2K (1+c) \Big\} ,\\
\| H_1(\gamma, \Psi)(z)-H_1(\gamma, \Psi')(z) \|_{\mathcal{L}_{\theta}}\le&\, \beta \Big\{ \Big(L (2+c)+2KM (\lambda^{-1}+L(1+c)) \| x \|_{\ell^{\infty}}\Big) \| \Psi_1(z)-\Psi_1'(z) \|_{\mathcal{L}_{\theta}} \\
&+ \| \Psi_0(h(z))- \Psi'_0(h(z)) \|_{\mathcal{L}_x} \big( K (2+c)+LM \big) \| x \|_{\ell^{\infty}} \\
&+ \| \Psi_1(h(z))- \Psi'_1(h(z)) \|_{\mathcal{L}_{\theta}} \Big( 1+K_{\theta}+2K M \| x \|_{\ell^{\infty}} \Big)  \Big\}.
\end{align*}
Now we observe that, for all $z\in \dom$ (recall the bound \eqref{h1Lx}),
\[
 \| \Psi_0(h(z))- \Psi'_0(h(z)) \|_{\mathcal{L}_x} \le \| \Psi_0-\Psi'_0 \|_0, \qquad  \| \Psi_1(h(z))- \Psi'_1(h(z)) \|_{\mathcal{L}_{\theta}}\le  \| \Psi_1- \Psi'_1 \|_{1} (\lambda^{-1}+L(1+c)) \| x \|_{\ell^{\infty}}.
 \]
 Then,
 \begin{align*}
\| H_0(\gamma, \Psi)-H_0(\gamma, \Psi') \|_0 \le& \,\| \Psi_0-\Psi_0'\|_0\, \beta \Big( \lambda^{-1}+ L(3+2c)+2KM\delta (\lambda^{-1}+L(1+c))  \Big)\\
&+ \| \Psi_1- \Psi'_1 \|_{1} \beta\, (\lambda^{-1}+L(1+c))\,2K\delta\, (1+c) \\
\| H_1(\gamma, \Psi)-H_1(\gamma, \Psi') \|_1 \le&\, \| \Psi_1-\Psi_1'\|_1\, \beta \Big[L (2+c)+(\lambda^{-1}+L(1+c)) \, \Big( 1+K_{\theta}+4KM \delta \Big)  \Big]\\
&+ \| \Psi_0- \Psi'_0 \|_{0}\,\, \beta\,\big(K (2+c)+LM \big) .
 \end{align*}
 Therefore (recall \eqref{def:normDSigma})
 \begin{align*}
 \| H(\gamma, \Psi)-H(\gamma, \Psi') \|_{D\Sigma} &\le \| \Psi_0-\Psi'_0 \|_0 \Big( \alpha_0 (\beta \lambda^{-1}+\mathcal{O}(L)+\mathcal{O}(\delta)) +\alpha_1 \beta (\K (2+c)+\mathcal{O}(L)) \Big)\\
 &+ \| \Psi_1-\Psi'_1 \|_1 \Big( \alpha_0 \mathcal{O}(\delta)+\alpha_1 \big( \beta \lambda^{-1} (1+K_{\theta})+\mathcal{O}(L)+\mathcal{O}(\delta) \big)  \Big).
 \end{align*}
Hence, taking $\delta$ and $\mu$ small enough, we require that
 \[
 \alpha_0 \beta \lambda^{-1}+\alpha_1 \beta K (2+c)<\alpha_0, \qquad \alpha_1 \beta \lambda^{-1} (1+K_{\theta})<\alpha_1
 \]
 to prove that $H(\gamma, \cdot)$ is a contraction as a map on the space $D\Xi$. By \eqref{cond:alpha01} and \eqref{assumption:constants} the first and the second inequality respectively hold. 
\end{proof}

\begin{lem}\label{lemcont}
The function $\gamma\in (\Xi_{c, M}, \| \cdot \|_1)\mapsto H(\gamma, \Psi)\in (D\Xi, \| \cdot \|_{D\Xi})$ is continuous.
\end{lem}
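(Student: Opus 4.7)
The plan is to estimate the difference $H(\gamma,\Psi) - H(\gamma',\Psi)$ pointwise on $\dom$ using three ingredients: the fiber-Lipschitz estimates for $\partial_s f_j$, $\partial_s g$ and $\partial_s \omega$ provided by Hypothesis $\mathbf{(H4)_{C^1}}$; the Lipschitz dependence of $h_\gamma$ on $\gamma$ already recorded in \eqref{hgamma-gamma'}; and the Lipschitz regularity of $\Psi_0$ and $\Psi_1$ built into the definition \eqref{def:DSigma} of $D\Xi_{c,M,\mathtt{v}}$.

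First, from $\gamma \in \Xi_{c,M}$ we have $\|\gamma(z)-\gamma'(z)\|_{\ell^{\infty}\times\R^d} \le \|\gamma-\gamma'\|_1 \|x\|_{\ell^\infty}$. Combined with the Lipschitz bounds in $v=(y,r)$ for $\partial_s f_j$, $\partial_s g$, $\partial_s\omega$ coming from $\mathbf{(H4)_{C^1}}$, this yields
\[
\|(\partial_s f_j)(z,\gamma(z)) - (\partial_s f_j)(z,\gamma'(z))\| \;\le\; \K\,\|\gamma-\gamma'\|_1\,\|x\|_{\ell^\infty},
\]
and similarly for $\partial_s g$, $\partial_s \omega$, uniformly in $z$. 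Second, \eqref{hgamma-gamma'} gives $\|h_1(\gamma)(z)-h_1(\gamma')(z)\|_{\ell^\infty} \le L\,\|\gamma-\gamma'\|_1\,\|x\|_{\ell^\infty}$ and $|h_2(\gamma)(z)-h_2(\gamma')(z)|_d \le 2\K\,\|\gamma-\gamma'\|_1\,\|x\|_{\ell^\infty}$. Since $\Psi \in D\Xi_{c,M,\mathtt{v}}$, the bounds $\lip_x\Psi_0 \le \kappa_0$, $\lip_\theta\Psi_0\le \rho_0$, $\lip_x\Psi_1\le \kappa_1$, $\lip_\theta\Psi_1(x)\le \widetilde M\|x\|_{\ell^\infty}$ then imply
\[
\|\Psi_0(h_\gamma(z)) - \Psi_0(h_{\gamma'}(z))\|_{\mathcal{L}_x} \le \kappa_0\|h_1(\gamma)-h_1(\gamma')\|_{\ell^\infty} + \rho_0\,|h_2(\gamma)-h_2(\gamma')|_d = \cO(\|\gamma-\gamma'\|_1\,\|x\|_{\ell^\infty}),
\]
and, using in addition $\|h_1(\gamma')(z)\|_{\ell^\infty}\le (\lambda^{-1}+L(1+c))\|x\|_{\ell^\infty}$ from \eqref{h1Lx},
\[
\|\Psi_1(h_\gamma(z)) - \Psi_1(h_{\gamma'}(z))\|_{\mathcal{L}_\theta} = \cO(\|\gamma-\gamma'\|_1\,\|x\|_{\ell^\infty}).
\]
Third, the term $\gamma(h_\gamma(z))$ in $H_1$ is split as $(\gamma-\gamma')(h_{\gamma'}(z)) + (\gamma(h_\gamma(z))-\gamma(h_{\gamma'}(z)))$; the first piece is bounded by $\|\gamma-\gamma'\|_1 \|h_1(\gamma')(z)\|_{\ell^\infty}$, the second by $c\,\|h_\gamma(z)-h_{\gamma'}(z)\|$, so both are $\cO(\|\gamma-\gamma'\|_1\,\|x\|_{\ell^\infty})$.

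Inserting these estimates into the explicit expressions \eqref{def:H0} and applying the triangle inequality together with the algebra property of $\cL_\Gamma$ and the uniform bounds provided by Lemma \ref{rem:boundspartialzgammaz} (on $\|\partial_s g\|$, $\|\partial_s f_j\|$, etc.), on $\| C(\theta)^{-1}\|_{\mathcal{L}_{\Gamma}}, \| \partial_\theta C(\theta)^{-1}\|_{\mathcal{L}_{\Gamma}}, \|A_-(\theta)\|_{\mathcal{L}_\Gamma}$, and on $\|\Psi_0\|_0\le c$, $\|\Psi_1\|_1\le M$, I obtain
\[
\sup_{z\in\dom} \|H_0(\gamma,\Psi)(z) - H_0(\gamma',\Psi)(z)\|_{\mathcal{L}_x} \;\lesssim\; \|\gamma-\gamma'\|_1,
\]
\[
\sup_{\substack{z\in\dom\\ x\neq 0}} \frac{\|H_1(\gamma,\Psi)(z) - H_1(\gamma',\Psi)(z)\|_{\mathcal{L}_\theta}}{\|x\|_{\ell^\infty}} \;\lesssim\; \|\gamma-\gamma'\|_1,
\]
which together yield $\|H(\gamma,\Psi)-H(\gamma',\Psi)\|_{D\Xi} \lesssim \|\gamma-\gamma'\|_1$ in view of \eqref{def:normDSigma}.

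The main difficulty is not analytical depth but careful bookkeeping: in order to bound $H_1(\gamma,\Psi)-H_1(\gamma',\Psi)$ in the norm $\|\cdot\|_1$ one needs every term in $H_1$ to vanish to first order at $x=0$, so that an extra factor $\|x\|_{\ell^\infty}$ is available to cancel the weight in $\|\cdot\|_1$. This is indeed the case because $g$, $f_1,\dots,f_4$ and $\partial_\theta f_j$ all vanish on the torus $\{x=y=r=0\}$ by Hypotheses $\mathbf{(H2)_{C^1}}$--$\mathbf{(H3)_{C^1}}$, and because any $\gamma\in \Xi_{c,M}$ satisfies $\gamma(0,\theta)=0$; checking this termwise in \eqref{def:H0} completes the argument.
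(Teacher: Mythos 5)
Your proof is correct and follows essentially the same telescoping strategy the paper uses, with the same key estimate on the term $\Psi_1(h(\gamma))-\Psi_1(h(\gamma'))$ via \eqref{hgamma-gamma'} and the Lipschitz bounds built into $D\Xi_{c,M,\mathtt{v}}$; you spell out more of the bookkeeping (and note the need for each term to vanish to first order at $x=0$ so the $\|\cdot\|_1$ weight is absorbed), whereas the paper only presents the most delicate term explicitly, but the argument is the same.
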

\begin{proof}
We have to see that $\alpha_0 \| H_0(\gamma, \Psi)-H_0(\gamma', \Psi) \|_0+\alpha_1 \| H_1(\gamma, \Psi)-H_1(\gamma', \Psi) \|_1$ is small if $\|\gamma-\gamma'\|_1$ is small. Decomposing the difference in telescopic form, the more delicate term to control is the one containing $\| \Psi_1 (h(\gamma'))-\Psi_1(h(\gamma))\|_1$. By \eqref{hgamma-gamma'},  Lemma \ref{rem:boundspartialzgammaz} we have (recall the definitions \eqref{h}, \eqref{def:DSigma})
\begin{align*}
\| x \|_{\ell^{\infty}}^{-1} \| \Psi_1 (h(\gamma'))(z)-\Psi_1(h(\gamma))(z)\|_{\mathcal{L}_{\theta}} &\le  \| x \|_{\ell^{\infty}}^{-1}  \| \Psi_1 (h_1(\gamma'), h_2(\gamma'))(z)-\Psi_1(h_1(\gamma), h_2(\gamma'))(z)\|_{\mathcal{L}_{\theta}}\\
&+\| x \|_{\ell^{\infty}}^{-1}  \| \Psi_1 (h_1(\gamma), h_2(\gamma'))(z)-\Psi_1(h_1(\gamma), h_2(\gamma))(z)\|_{\mathcal{L}_{\theta}}\\
&\le \Big(\kappa_1 L+2\widetilde{M} \K \delta\,(\lambda^{-1}+L(1+c)) \Big)  \| \gamma-\gamma'\|_1.
\end{align*}
The other terms can be estimated analogously.

\end{proof}

\begin{remark}\label{rem:smallDgamma}
By the definition \eqref{def:DSigma} and Remark \ref{rem:csmallL} we have that
\[
\sup_{(z,\nu)\in\dom\times (0,\mu)}\| D \gamma^{s}_\nu(z) \|_{{\mathcal{L}_{\Gamma}(\ell^{\infty}\times \R^d)}}=\mathcal{O}(\delta+L(\de,\mu)).
\]
\end{remark}

We conclude the proof of Theorem \ref{thm:C1case} by applying the Fiber Contraction Rheorem \ref{thm:fibrecontraction}.

Let $c, M, \widetilde{M}, \kappa_0, \rho_0, \kappa_1$ satisfy $c\in (0, 1]$,  \eqref{condcM}, \eqref{condition:r}.
Now we prove that $\Gamma:=(G, H)\colon \Xi_{c, M}\times D \Xi_{c, M, \mathtt{v}}\to \Xi_{c, M}\times D \Xi_{c, M, \mathtt{v}}$, with $G$ defined in \eqref{G} and $H$ in \eqref{def:H0} satisfies the assumptions of the above theorem. The hypothesis (a) holds because $\gamma^s$ is an attracting fixed point of $G$ on $\Xi_{c, M}$. The hypothesis (b)-(c) hold by Lemmas \ref{lem:uniformcontraction} and \ref{lemcont} respectively. Then there exists an attracting fixed point $(\gamma_{\infty}, \Psi_{\infty})$ for $\Gamma$  and, by uniqueness, $\gamma_{\infty}=\gamma^s$. Now we recall that by the definition of $H$ in \eqref{def:H0} we have that, taking for instance $\gamma_0=0, \Psi_0=0$, the iterates $\Gamma^j(\gamma_0, \Psi_0)=(\gamma_j, \Psi_j)$ are such that $\Psi_j=D \gamma_j\in D \Xi_{c, M, \mathtt{v}}$. Then by  the definition of $E_x$, $E_{\theta}$ in \eqref{defspaces} and the definition of the space $D\Xi_{\mathtt{c}, M, \mathtt{v}}$ in \eqref{def:DSigma} the function $\gamma_j$ {belongs to the ball of radius $c$ \footnote{Provided that $M \delta\le c$, hence for $\delta$ small enough.} of  $C^1_{\Gamma}(\dom)$ for all $j\geq 0$.}\\
 Since $\gamma_j$ and $\Psi_j$ converge in the uniform $C^0$-topology we have that $\Psi_{\infty}=D \gamma_{\infty}=D \gamma^s$. This proves that $\gamma^s$ is $C^1$. By Lemma \ref{lem:limitdecay} we conclude that $\gamma^s\in C^1_{\Gamma}$. By Remark \ref{rem:smallDgamma} we obtain the bound \eqref{bound:gammaC1}. This concludes the proof of the first item of Theorem \ref{thm:C1case}.
 
 We recall that $\gamma^s(0, \theta)=0$. Moreover, denoting by  $\gamma_{y}^s$ the $y$ component of  $\gamma^s$, by Lemma \ref{lem:fm1}, it satisfies $\gamma_{y}^s(\cdot , \theta)\colon \Sigma_{j, \Gamma}\to \Sigma_{j, \Gamma}$. This concludes the proof of the second item of Theorem \ref{thm:C1case}.

\subsubsection{$C_{\Gamma}^2$ regularity of the invariant manifolds}\label{subsec:C2case}
Recall the definitions \eqref{defspaces} and  \eqref{def:klinearmaps}. Let us define
\begin{align*}
&\mathcal{L}^2_{xx}:=\mathcal{L}^2_{\Gamma}(\ell^{\infty};  \ell^{\infty}\times \R^d),\qquad &E_{xx}:=C^0(B_{\delta}(\ell^{\infty})\times \T^d; \mathcal{L}^2_{xx}),\\
&\mathcal{L}^2_{\theta \theta}:=\mathcal{L}^2_{\Gamma}(\R^d; \ell^{\infty}\times \R^d),\qquad &E_{\theta \theta}:=C^0(B_{\delta}(\ell^{\infty})\times \T^d; \mathcal{L}^2_{\theta \theta}),\\
&\mathcal{L}^2_{x \theta}:=\mathcal{L}^2_{\Gamma}(\ell^{\infty}, \R^d; \ell^{\infty}\times \R^d),\qquad &E_{x\theta}:=C^0(B_{\delta}(\ell^{\infty})\times \T^d;
 \mathcal{L}^2_{x\theta}).
\end{align*}
 We assume that there exist constants $\K, K_{\theta}$ such that 
 \begin{itemize}
\item[$\mathbf{(H0)_{C^2}}$]  We have
 \begin{equation}\label{condition:lambdastrong}
\lambda^{-1} \beta (1+K_{\theta})^3<1.
\end{equation}
\item[$\mathbf{(H1)_{C^2}}$] Assume $\mathbf{(H1)_{C^1}}$. The functions $A_{\pm}, B\in C^3_{{\Gamma}}(\T^d; \mathcal{L}_{\Gamma}(\ell^{\infty})), \omega\in C^2_{\Gamma}(\tB(\delta); \T^d)$ and $f=f_{\nu}$ are $C_{\Gamma}^2(\mathcal{M}_{\delta})$ with respect to $w$. Moreover for all $\theta\in \T^d$
\begin{equation}\label{def:KomegaKf2}
\begin{aligned}
&\| A_-(\theta)\|_{\mathcal{L}_{\Gamma}(\ell^{\infty})}, \| A_+(\theta)^{-1} \|_{\mathcal{L}_{\Gamma}(\ell^{\infty})} \le \lambda^{-1}, \qquad 
\| B^{-1}(\theta) \|_{\mathcal{L}_{\Gamma}(\R^d)}\le \beta,\\[2mm]
&\sup_{\substack{j=1, 2}} \| \partial_{\theta}^j A_{\pm}(\theta) \|_{\mathcal{L}_{\Gamma}( \ell^{\infty})},  \sup_{\substack{j=1, 2}} \| \partial_{\theta}^j B(\theta) \|_{\mathcal{L}_{\Gamma}(\R^d)} \le \K,
\end{aligned}
\end{equation}
and
\begin{align*}
\sup_{w\in \mathcal{M}_{\delta}}    \sup_{j=1, 2}   \|  D^j f_3(\nu; w) \|_{\mathcal{L}^j_{\Gamma}(T\mathcal{M}; \R^d)}&\le \K,\\[2mm]
\sup_{(x, y, r)\in \tB_{\delta}} \sup_{j=1, 2}  \|  D^j \omega(x, y, r) \|_{\mathcal{L}^j_{\Gamma}(\ell^{\infty}\times\ell^{\infty}\times \R^d; \R^d)}&\le \K.
\end{align*}

\item[$\mathbf{(H2)_{C^2}}$]
Assume $\mathbf{(H2)_{C^1}}$.
For $w\in\cM_\de$, we have
\begin{align*}
\| D^2 f_k(\nu; w) \|_{\mathcal{L}^2_{\Gamma}(T \mathcal{M}; \ell^{\infty})} & \le \K, \quad k=1, 2,\\
\| D^2 f_4(\nu; w) \|_{\mathcal{L}^2_{\Gamma}(T \mathcal{M}; \R^d)} & \le \K,\\
\| \partial^2_{\theta} f_k(\nu; x, y, \theta, r) \|_{\mathcal{L}^2_{\Gamma}(\R^d; \ell^{\infty})} &\le \K (\|  x \|_{\ell^{\infty}}+\| y \|_{\ell^{\infty}}+|r|_d)\,, \quad k=1, 2, \\
\| \partial^2_{\theta} f_4(\nu; x, y, \theta, r) \|_{\mathcal{L}^2_{\Gamma}(\R^d)} &\le \K (\|  x \|_{\ell^{\infty}}+\| y \|_{\ell^{\infty}}+|r|_d)\,.
\end{align*}

\item[$\mathbf{(H3)_{C^2}}$] Assume $\mathbf{(H3)_{C^1}}$-$\mathbf{(H4)_{C^1}}$. The second order derivatives of $f_{\nu}$ are Lipschitz on $\cM_\de$, more precisely
\begin{align*}
 \lip\,\, \partial^2_{s, s'} f_j &\le  \K, \qquad s, s'=x, y, \theta, r, \qquad  j=1, 2, 3, 4,\\
 \lip_{\theta} \partial_{\theta}^2 f_j(w) &\le \K (\| x \|_{\ell^{\infty}}+\| y\|_{\ell^{\infty}}+|r|_d  ),  \qquad j=1, 2, 4, \\
 \lip\,\, \partial^2_{s, s'} \omega &\le \K, \qquad s, s'=x, y, r. 
\end{align*}
\item[$\mathbf{(H4)_{C^2}}$] Assume $\mathbf{(H5)_{C^1}}$. The function $f$ is $C^2$ with respect to $\nu$ and we have 
\begin{align*}
\|  \partial^2_{\nu} f_k(\nu; x, y, \theta, r) \|_{\ell^{\infty}} &\le \K (\|  x \|_{\ell^{\infty}}+\| y \|_{\ell^{\infty}}+|r|_d), \quad   k=1, 2,\\
 |  \partial^2_{\nu} f_4(\nu; x, y, \theta, r) |_{d} &\le \K (\|  x \|_{\ell^{\infty}}+\| y \|_{\ell^{\infty}}+|r|_d),\\
 \sup_{w\in \mathcal{M}_{\delta}} | \partial^2_{\nu} f_3(\nu; w) |_d &\le \K.
\end{align*}
\end{itemize}

\begin{theorem}\label{thm:C2case}
Let $F\colon \mathcal{M}_{\delta}\to \mathcal{M}$ satisfies $\mathbf{(H0)_{C^2}}$-$\mathbf{(H3)_{C^2}}$. Then there exist $\delta_2>0$ and $\mu_2>0$ such that for all $\delta\in (0, \delta_2)$ and $\mu\in (0, \mu_2)$ the function $\gamma_{\nu}^s(x, \theta)$ given by Theorem \ref{thm:lipcase}, whose graph is the stable manifold of $\T_0$, is $C^2_{\Gamma}(\dom)$.  {If $F$ also satisfies $\mathbf{(H4)_{C^2}}$}, then $\gamma_{\nu}^s$ is $C^2$ with respect to $\nu\in (0,\mu)$.

Moreover for all $j\in \Z^m$
\[
\gamma_{\nu}^s\colon B_{\delta}(\Sigma_{j, \Gamma})\times \T^d\to \Sigma_{j, \Gamma}\times \R^d.
\]
and
\[
\| \gamma_\nu^s \|_{C^2_{\Gamma}(B_{\delta}(\Sigma_{j, \Gamma})\times \T^d)}\le C
\]
for some $C=C(j)>0$.

\end{theorem}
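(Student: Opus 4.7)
The plan is to upgrade the $C^1_{\Gamma}$ fixed point $\gamma^s$ produced by Theorem \ref{thm:C1case} to a $C^2_{\Gamma}$ map by a second application of the Fiber Contraction Theorem \ref{thm:fibrecontraction}. Formally differentiating the fixed point equation $\gamma = G(\gamma)$ once yields $D\gamma = H(\gamma, D\gamma)$ as already analyzed in the $C^1$ case; differentiating a second time produces an expression of the form $D^2 G(\gamma) = H^{(2)}\bigl(\gamma, D\gamma, D^2\gamma\bigr)$, where $H^{(2)}$ is obtained by applying the chain rule and the product rule to \eqref{def:H0}. Splitting into $(x,x)$, $(x,\theta)$ and $(\theta,\theta)$ blocks, each component takes values in $\mathcal{L}^2_{xx}$, $\mathcal{L}^2_{x\theta}$, $\mathcal{L}^2_{\theta\theta}$ respectively, and lives in the space $D^2\Xi \subset E_{xx}\times E_{x\theta}\times E_{\theta\theta}$ equipped with a weighted norm $\|\Phi\|_{D^2\Xi} := \beta_{xx}\|\Phi_{xx}\|_0 + \beta_{x\theta}\|\Phi_{x\theta}\|_1 + \beta_{\theta\theta}\|\Phi_{\theta\theta}\|_1$ for suitable $\beta_{xx},\beta_{x\theta},\beta_{\theta\theta}>0$. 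As in the $C^1$ case, the weights $\|\cdot\|_1$ are dictated by the fact that, thanks to $f_j(\nu;0,0,\theta,0)=0$ and the bounds \eqref{bound:dthetafk} in $\mathbf{(H2)_{C^2}}$, the $(x,\theta)$ and $(\theta,\theta)$ components of $D^2\gamma^s$ must vanish at $x=0$.

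The next step is to define a closed subset $D^2\Xi_{\star}\subset D^2\Xi$ by bounds on the norms and Lipschitz constants of $\Phi_{xx},\Phi_{x\theta},\Phi_{\theta\theta}$, in direct analogy with the set $D\Xi_{c,M,\mathtt{v}}$ from \eqref{def:DSigma}, and to show that $H^{(2)}(\gamma, D\gamma, \cdot)$ maps $D^2\Xi_{\star}$ into itself and is a uniform contraction on it for $\delta,\mu$ small enough. The decisive term is the $(\theta,\theta)$ block, where one finds the contribution $C(\theta)^{-1}\,\Phi_{\theta\theta}(h(z))\,[Dh_2(z),Dh_2(z)]$; using $\|C^{-1}\|_{\mathcal{L}_{\Gamma}}\le\beta$, the contraction rate $\lambda^{-1}$ of $h_1$ on the $x$-direction that enters via the weight $\|x\|_{\ell^{\infty}}$, and the bound $|Dh_2|\le 1+K_{\theta}+\cO(\delta+\mu)$ from \eqref{bound:lipthetah2}, the overall Lipschitz factor is $\beta\lambda^{-1}(1+K_{\theta})^3+\cO(\delta+L(\delta,\mu))$, which is $<1$ by precisely the hypothesis $\mathbf{(H0)_{C^2}}$. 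The $(x,x)$ and $(x,\theta)$ blocks are easier, since they only pick up factors $\beta\lambda^{-2}$ and $\beta\lambda^{-1}(1+K_{\theta})^2$ respectively; the remaining cross-terms are either of size $\cO(\delta+L)$ (and absorbed by choosing $\delta,\mu$ small) or are handled by picking the ratios $\beta_{xx}/\beta_{x\theta}/\beta_{\theta\theta}$ large enough in the right order, exactly as the parameters $\alpha_0,\alpha_1$ were tuned in \eqref{cond:alpha01}. Continuity of $(\gamma, D\gamma)\mapsto H^{(2)}(\gamma, D\gamma, \cdot)$ at $(\gamma^s, D\gamma^s)$ follows from the $C^1$ regularity of the derivatives of the ingredients $A_\pm, B, \omega, f$ in $\mathbf{(H1)_{C^2}}$--$\mathbf{(H3)_{C^2}}$, by the same telescopic decomposition used in Lemma \ref{lemcont}.

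With these three verifications in hand, the Fiber Contraction Theorem \ref{thm:fibrecontraction} applied to $(\gamma,\Psi,\Phi)\mapsto \bigl(G(\gamma), H(\gamma,\Psi), H^{(2)}(\gamma,\Psi,\Phi)\bigr)$ on $\Xi_{c,M}\times D\Xi_{c,M,\mathtt{v}}\times D^2\Xi_{\star}$ produces a unique attracting fixed point $(\gamma^s, \Psi^s, \Phi^s)$. Starting the iteration from $(\gamma_0,\Psi_0,\Phi_0)=(0,0,0)$, each iterate $\gamma_j$ is $C^2_{\Gamma}$ with $(D\gamma_j,D^2\gamma_j)=(\Psi_j,\Phi_j)$; since $\Psi_j\to \Psi^s$ and $\Phi_j\to \Phi^s$ uniformly on $\dom$, the standard calculus-in-Banach-spaces identification gives $\Psi^s=D\gamma^s$ and $\Phi^s=D^2\gamma^s$, and Lemma \ref{lem:limitdecay} guarantees that all derivatives inherit the decay, so $\gamma^s\in C^2_{\Gamma}(B_{\delta}(\ell^{\infty})\times\T^d)$. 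The $C^2$-dependence on $\nu$ is obtained, as for Theorem \ref{thm:C1case}, by treating $\nu$ as an additional angle after a bump-function periodization, where hypothesis $\mathbf{(H4)_{C^2}}$ provides the required second-order estimates in $\nu$. Finally, the $\Sigma_{j,\Gamma}$ statement follows from Lemma \ref{lemma:SigmajGammazero} and Lemma \ref{lem:fm1}, since $\gamma^s(0,\theta)=0$ and $D^k\gamma^s\in \cL^k_{\Gamma}$ for $k=1,2$.

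The main obstacle is purely combinatorial: writing out $H^{(2)}$ produces a substantial number of terms, and one must organize the weights $\beta_{xx},\beta_{x\theta},\beta_{\theta\theta}$ and the constants bounding $D^2\Xi_{\star}$ in the correct order so that each diagonal contraction rate beats $1$ while cross-terms are absorbed. The genuinely sharp estimate is in the $(\theta,\theta)$ block, where one $\theta$-derivative of $h_2$ gives $(1+K_{\theta})$ and the composition structure of $\Phi_{\theta\theta}(h)\,[Dh_2,Dh_2]$ produces a total of $(1+K_{\theta})^2$, combined with the weighting factor $\lambda^{-1}$ from $\|x\|_{\ell^{\infty}}^{-1}\|h_1\|_{\ell^{\infty}}$; this is precisely why \eqref{condition:lambdastrong} is the quantitative condition needed to close the argument. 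Everything else reduces to the same bookkeeping already carried out in Lemmas \ref{lem:Hwelldef}--\ref{lemcont}.
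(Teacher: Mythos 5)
Your overall strategy coincides with the paper's: formally differentiate the fixed-point equation $\gamma=G(\gamma)$ twice, split $D^2\gamma$ into $(x,x)$, $(x,\theta)$, $(\theta,\theta)$ blocks, build a closed subset of $E_{xx}\times E_{x\theta}\times E_{\theta\theta}$ with a suitably weighted norm, show well-definedness, continuity, and uniform contraction of the second-order fiber map, and invoke the Fiber Contraction Theorem together with Lemma~\ref{lem:limitdecay} for the decay, and Lemmas~\ref{lemma:SigmajGammazero}, \ref{lem:fm1} for the $\Sigma_{j,\Gamma}$ statement. You also correctly identify the sharp estimate: the $\beta\lambda^{-1}(1+K_{\theta})^3$ factor in the $(\theta,\theta)$ block, which is exactly what $\mathbf{(H0)_{C^2}}$ is designed to beat. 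Your three-component formulation of the Fiber Contraction Theorem is equivalent to the nested two-component application in the paper, since Theorem~\ref{thm:fibrecontraction} is stated for the two-component case.

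There is, however, a concrete error in your setup of the weighted norm. You claim that both the $(x,\theta)$ and $(\theta,\theta)$ components of $D^2\gamma^s$ vanish at $x=0$ and therefore assign $\|\cdot\|_1$ (that is, the $\|x\|_{\ell^\infty}^{-1}$-weighted norm) to both. This is wrong for the $(x,\theta)$ block. From $\gamma^s(0,\theta)=0$ one deduces $\partial_\theta\gamma^s(0,\theta)=0$ and hence $\partial_\theta^2\gamma^s(0,\theta)=0$, so the $(\theta,\theta)$ component vanishes at $x=0$ and the $\|\cdot\|_1$ weight is appropriate there. But $\partial_x\gamma^s(0,\theta)$ need not be independent of $\theta$, so $\partial_\theta\partial_x\gamma^s(0,\theta)$ does not vanish in general. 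Indeed, the formula for $\cH_1$ in \eqref{def2:H0} contains terms such as $\partial_\theta(C(\theta)^{-1})\cdot(\partial_x g)(z,\gamma(z))$ and $\Upsilon_0(h(z))\,\partial_x h_1\,\partial_\theta h_1$, some of which are nonzero when $x=0$. If you impose $\|\Phi_{x\theta}\|_1<\infty$, the iteration does not stay in your set $D^2\Xi_\star$, and the contraction argument collapses at the well-definedness step. The fix is exactly what the paper does in \eqref{def:normDS}: use the unweighted $\|\cdot\|_0$ norm on the $(x,\theta)$ component, $\widetilde{\alpha_0}\|\Psi_0\|_0+\widetilde{\alpha_1}\|\Psi_1\|_0+\eta\|\Psi_2\|_1$. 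With that correction your bookkeeping closes, since the $(1+K_\theta)^3$ factor only needs to appear in the $(\theta,\theta)$ block.

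One further point, though minor: you should also explicitly verify that $G$ maps the $C^1$-bounded set $\Xi^2_{c,M,\mathtt{v}}$ into itself (item (i) in the paper's proof, which follows from the $C^1$ analysis by noting $DG(\gamma)=H(\gamma,D\gamma)$). This is needed so that the base map in your three-component scheme has an attracting fixed point on the correct domain; you assert it implicitly when you say ``each iterate $\gamma_j$ is $C^2_\Gamma$,'' but it is worth spelling out.
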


\medskip

\noindent\textbf{Proof of Theorem \ref{thm:C2case}}
Recall \eqref{def:SigmacM}, \eqref{def:DSigma}.
Let us define the set
\begin{equation}\label{def:Sigma2cMv}
\begin{aligned}
\Xi_{c, M, \mathtt{v}}^2:=&\{ \gamma\in  C^1_{\Gamma}( \dom, \ell^{\infty}\times\R^d) : \gamma(0, \theta)=0,\,\,\lip_x \partial_x \gamma\le \kappa_0,\,\,\lip_x \partial_{\theta} \gamma\le \kappa_1,\\
& \| D \gamma \|_0\le c,\,\,\| \partial_{\theta} \gamma \|_1\le M,\,\,\lip_{\theta} (\partial_x \gamma)\le \rho_0,\,\,\lip_{\theta} \,(\partial_{\theta} \gamma)(x)\le \widetilde{M}\,\| x \|_{\ell^{\infty}}\,\,\,\forall x\in B_{\delta}(\ell^{\infty})   \}
\end{aligned}
\end{equation}
where
\[
\mathtt{v}:=(\tM, \kappa_0, \kappa_1, \rho_0).
\]
We assume that $c$ and $M$ satisfy the assumptions of Lemma \ref{lemball} and that $\mathtt{v}$ satisfies the conditions in Lemma \ref{lem:Hwelldef}.\\
We observe that $\Xi^2_{c, M, \mathtt{v}}\subset \Xi_{c, M}$. We also introduce
\begin{align*}
D \Xi^2_{\mathtt{v}, \widetilde{\mathtt{v}}}:=& \{ \Upsilon:=(\Upsilon_0, \Upsilon_1, \Upsilon_2)\in E_{xx}\times E_{x \theta}\times E_{\theta \theta}, \,\,\,\,\lip_x \Upsilon_j\le \wkappa_j ,\,\,j=0, 1, 2,\,\,\,\,\lip_{\theta} \Upsilon_j\le \wrho_j,\,\,j=0,1,\\
&\,\,\lip_{\theta} \Upsilon_2(x)\le \widehat{M} \| x \|_{\ell^{\infty}}\,,\,\, \| \Upsilon_0\|_0\le \kappa_0, \, \| \Upsilon_1 \|_0\le  \rho_0,\,\,\,\,\| \Upsilon_2 \|_1\le \widetilde{M}  \}
\end{align*}
with
\[
 \widetilde{\mathtt{v}}:=(\widetilde{\kappa_0}, \widetilde{\kappa_1}, \widetilde{\kappa_2}, \widetilde{\rho_0}, \widetilde{\rho_1}).
\]
Without loss of generality we have further assumed that $\rho_0\geq \kappa_1$.
We consider
\begin{equation}\label{def:normDS}
\| \Psi \|_{D\Xi^2_{\mathtt{v}, \widetilde{\mathtt{v}}}}:=\widetilde{\alpha_0} \| \Psi_0\|_0+\widetilde{\alpha_1} \| \Psi_1 \|_0+\eta \| \Psi_2\|_1
\end{equation}
where $\widetilde{\alpha_0}, \widetilde{\alpha_1}$ and $\eta$ are positive constants to be chosen later.\\

We look for $\mathcal{H}$ such that $D^2 [ G(\gamma)]=\mathcal{H}(\gamma, D \gamma, D^2 \gamma)$ (recall the definition of $G$ in \eqref{G}). For that we differentiate formally $G(\gamma)$ and we substitute $\partial^{2-j}_x \partial_{\theta}^j \gamma$ with $\Upsilon_j$.
We define 
$
\cH_j:=\partial_{\theta}^j \partial_{x}^{2-j} G(\gamma).
$
We have (recall \eqref{def:H0})
\begin{equation}\label{def2:H0}
\begin{aligned}
\cH_0:=&\,C(\theta)^{-1} \Big\{   (\partial_{x}^2 g)(z, \gamma(z))+2(\partial_{v x}^2 g)(z, \gamma(z))  \partial_x \gamma+ (\partial_{v}^2 g)(z, \gamma(z)) (\partial_x \gamma)^2+(\partial_{v} g)(z, \gamma(z))  \Upsilon_0 \\
 &\,+\Upsilon_0 (h(z)) (\partial_x h_1)^2+2\Upsilon_1 (h(z)) \partial_x h_1\,\partial_x h_2+\Upsilon_2 (h(z)) (\partial_x h_2)^2+(\partial_x \gamma)(h(z)) \partial_x^2 h_1\\ 
&\,+(\partial_{\theta} \gamma)(h(z)) \partial_x^2 h_2  \Big\},\\
\cH_1:=&\,\partial_{\theta} (C(\theta)^{-1}) \Big\{  (\partial_x g) (z, \gamma(z))+(\partial_v g) (z, \gamma(z)) \partial_x \gamma+(\partial_x \gamma) (h(z)) \partial_x h_1+(\partial_{\theta} \gamma) (h(z)) \partial_x h_2  \Big\}\\ 
&\,+C(\theta)^{-1} \Big\{  (\partial_{x \theta}^2 g) (z, \gamma(z))+(\partial_{v \theta}^2 g)(z, \gamma(z)) \partial_x \gamma+(\partial_{xv}^2 g)(z, \gamma(z)) \partial_{\theta} \gamma+(\partial_v^2 g)(z, \gamma(z)) \partial_x \gamma\,\partial_{\theta} \gamma\\
&\,+(\partial_v g) (z, \gamma(z)) \Upsilon_1+\Upsilon_0 (h(z)) \partial_x h_1 \partial_{\theta} h_1+\Upsilon_1 (h(z)) \partial_x h_2 \partial_{\theta} h_1+\Upsilon_1 (h(z)) \partial_x h_1 \partial_{\theta} h_2\\
&\,+\Upsilon_2 (h(z)) \partial_x h_2 \partial_{\theta} h_2+(\partial_x \gamma)(h(z)) \partial_{x \theta}^2 h_1+(\partial_{\theta} \gamma) (h(z)) \partial_{x\theta}^2 h_2 \Big\},\\
\cH_2:=&\,\partial^2_{\theta} (C(\theta)^{-1}) \Big\{  g(z, \gamma(z))+\gamma(h(z))  \Big\}\\
&\,+2\partial_{\theta} \big(C(\theta)^{-1}\big)  \{  (\partial_{\theta} g)(z, \gamma(z))+(\partial_{v} g)(z, \gamma(z)) \partial_{\theta} \gamma+(\partial_x \gamma)(h(z))\,\partial_{\theta} h_1+(\partial_{\theta} \gamma) (h(z)) \partial_{\theta} h_2 \}\\
&\,+C(\theta)^{-1} \Big\{  (\partial_{\theta}^2 g)(z, \gamma(z))+2(\partial_{v \theta}^2 g)(z, \gamma) \partial_{\theta} \gamma+(\partial_{ v}^2 g)(z, \gamma(z)) (\partial_{\theta} \gamma)^2+(\partial_v g)(z, \gamma) \Upsilon_2\\
&\,+\Upsilon_0(h(z)) (\partial_{\theta} h_1)^2+2 \Upsilon_1 (h(z)) \partial_{\theta} h_1 \partial_{\theta} h_2+{\partial_x \gamma(h(z)) \partial_{\theta}^2 h_1}+\Upsilon_2 (h(z)) (\partial_{\theta} h_2)^2\\
&\,+(\partial_{\theta} \gamma) (h(z)) (\partial_{\theta}^2 h_2) \Big\}.
\end{aligned}
\end{equation}

We have to prove that: for $\mathtt{v}$ in Lemma \ref{lem:Hwelldef} and an opportune choice of $\widetilde{\mathtt{v}}$ 
\begin{itemize}
\item[(i)] $G\colon \Xi_{c, M, \mathtt{v}}^2\to \Xi_{c, M, \mathtt{v}}^2$ is well defined and has an attracting fixed point $\gamma_{\infty}$.
\item[(ii)] $H\colon \Xi^2_{c, M, \mathtt{v}}\times D \Xi_{\mathtt{v}, \widetilde{\mathtt{v}}}^2\to D \Xi_{\mathtt{v}, \widetilde{\mathtt{v}}}^2$ is well defined (see Lemma \ref{lem:ball2}).
\item[(iii)] $H(\cdot, \Psi)\colon \Xi^2_{c, M, \mathtt{v}}\to D \Xi_{\mathtt{v}, \widetilde{\mathtt{v}}}^2$ is continuous (see Lemma \ref{lem:cont2}).
\item[(iv)] $H(\gamma, \cdot)\colon D \Xi_{\mathtt{v}, \widetilde{\mathtt{v}}}^2\to D \Xi_{\mathtt{v}, \widetilde{\mathtt{v}}}^2$ is a contraction (see Lemma \ref{lem:contraction2}).
\end{itemize}

By Lemma \ref{lemball} if $\gamma\in \Xi_{c, M, \mathtt{v}}^2\subset \Xi_{c, M}$ then $G(\gamma)\in \Xi_{c, M}$. We need to prove that $G(\gamma)\in \Xi_{c, M, \mathtt{v}}^2$. By the definition \eqref{def:Sigma2cMv} if $\gamma\in \Xi_{c, M, \mathtt{v}}^2$ then $D\gamma\in D \Xi_{c, M, \mathtt{v}}$ (see \eqref{def:DSigma}). In the previous section we also proved that $H\colon \Xi_{c, M}\times D \Xi_{c, M, \mathtt{v}}\to D \Xi_{c, M, \mathtt{v}}$ is well defined, where $H$ is defined by $DG(\gamma)=H(\gamma, D(\gamma))$. Then $DG(\gamma)\in D \Xi_{c, M, \mathtt{v}}$ and so $G(\gamma)\in \Xi_{c, M, \mathtt{v}}$. 
By applying the fibre contraction theorem \ref{thm:fibrecontraction} to $\Gamma=(G, H)$ as we did in the previous section, we find that if $\gamma\in \Xi^2_{c, M, \mathtt{v}}$ the sequence of iterates $G^n(\gamma)$ converges to a $C^1$ function in $\Xi^2_{c, M, \mathtt{v}}$. Thus $G$ has an attracting fixed point and the item (i) has been proved.\\

From now on in this section we denote by $\tC$ any constant that depends on $\beta, c, M, \mathtt{v}, \K$ (see \eqref{def:DSigma}, \eqref{def:KomegaKf2}).

We state first some lemmas. The proof of the first one is straightforward using the assumed hypotheses and the particualr form of the functions $h_1$ and $h_2$ in \eqref{h}.
\begin{lem}\label{rmk:hderivatives}
For $\gamma\in \Xi^2_{c, M, \mathtt{v}}$ and $(x, \theta)\in B_{\delta}(\ell^{\infty})\times \T^d$, the functions $h_1, h_2$ introduced in \eqref{h} have the following estimates
\[
\begin{aligned}
&\| \partial_x h_1(x, \theta) \|_{\mathcal{L}_{\Gamma}(\ell^{\infty})}\le \lambda^{-1}+\cO(L), \\
&\| \partial_x h_2 (x, \theta) \|_{\mathcal{L}_{\Gamma}(\ell^{\infty}; \R^d)}\le 2 \K  (1+c),  \\ 
& \| \partial_{\theta} h_1(x, \theta) \|_{\mathcal{L}_{\Gamma}(\R^d; \ell^{\infty})}\le (\K(2+c)+2 LM) \| x \|_{\ell^{\infty}},\\    \label{h2theta}
&  \| \partial_{\theta} h_2(x, \theta) \|_{\mathcal{L}_{\Gamma}(\R^d)}  \le 1+K_{\theta}+\cO (\delta),\\ 
& \| \partial_x^2 h_1(x, \theta) \|_{\mathcal{L}^2_{\Gamma}(\ell^{\infty})}  \le \tC+\tC \| \Upsilon_0\|_0,\\
& \| \partial_{x \theta}^2 h_1(x, \theta) \|_{\mathcal{L}^2_{\Gamma}(\ell^{\infty}, \R^d; \ell^{\infty})}\le \tC+\cO(L+\delta)+L \|\Upsilon_1 \|_0,\\
&\| \partial_{\theta}^2 h_1(x, \theta) \|_{\mathcal{L}^2_{\Gamma}(\R^d; \ell^{\infty})}\le \cO(L+\delta)+\tC \| \Upsilon_2\|_0,\\ 
& \| \partial_x^2 h_2(x, \theta) \|_{\mathcal{L}^2_{\Gamma}(\ell^{\infty}; \R^d)}  \le \tC+2 K \| \Upsilon_0\|_0,\\
& \| \partial_{x \theta}^2 h_2(x, \theta) \|_{\mathcal{L}^2_{\Gamma}(\ell^{\infty}, \R^d; \R^d)}   \le \cO(L+\delta)+\tC +2\K \| \Upsilon_1\|_0,\\
&\| \partial_{\theta}^2 h_2(x, \theta) \|_{\mathcal{L}^2_{\Gamma}(\R^d)}   \le \cO(L+\delta)+\K +2 \K \| \Upsilon_2\|_0.
\end{aligned}
\]
\end{lem}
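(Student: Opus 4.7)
The proof is a direct computation of all ten partial derivatives of $h_1$ and $h_2$ via the chain rule, followed by term-by-term estimation using the Banach-algebra property of $\|\cdot\|_{\mathcal{L}_\Gamma}$ and the bounds built into hypotheses $\mathbf{(H1)_{C^2}}$--$\mathbf{(H3)_{C^2}}$ and into membership in $\Xi^2_{c,M,\mathtt{v}}$. Because $h_1$ and $h_2$ are explicit (affine-in-$A_-$, plus a composition $f_i(z,\gamma(z))$ or $\omega(x,\gamma(z))$), every derivative is a finite sum of products and the work reduces to bookkeeping.

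First I would write out the first derivatives:
\begin{align*}
\partial_x h_1 &= A_-(\theta) + (\partial_x f_1)(z,\gamma(z)) + (\partial_v f_1)(z,\gamma(z))\,\partial_x \gamma,\\
\partial_\theta h_1 &= (\partial_\theta A_-(\theta))x + (\partial_\theta f_1)(z,\gamma(z)) + (\partial_v f_1)(z,\gamma(z))\,\partial_\theta \gamma,\\
\partial_x h_2 &= (\partial_x \omega)(\pi_{(x,v)}(z,\gamma(z))) + (\partial_v \omega)(\cdot)\,\partial_x \gamma + (\partial_x f_3)(z,\gamma(z)) + (\partial_v f_3)(z,\gamma(z))\,\partial_x \gamma,\\
\partial_\theta h_2 &= \id + (\partial_v \omega)(\cdot)\,\partial_\theta \gamma + (\partial_\theta f_3)(z,\gamma(z)) + (\partial_v f_3)(z,\gamma(z))\,\partial_\theta \gamma.
\end{align*}
The four bounds then follow immediately: for $\partial_x h_1$ I invoke $\|A_-\|_{\mathcal{L}_\Gamma}\le \lambda^{-1}$, $\|\partial_x f_1\|, \|\partial_v f_1\|\le L$ from Lemma \ref{rem:boundspartialzgammaz} and $\|\partial_x\gamma\|\le c$, giving $\lambda^{-1}+L(1+c)=\lambda^{-1}+\mathcal{O}(L)$. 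For $\partial_x h_2$ the four terms each contribute at most $\K$ or $\K c$, summing to $2\K(1+c)$. For $\partial_\theta h_1$ the factor $\|x\|_{\ell^\infty}$ enters through the vanishing-at-$\T_0$ estimate $\|\partial_\theta f_k(z,\gamma(z))\|\le \K(1+c)\|x\|_{\ell^\infty}$ combined with $\|\partial_\theta\gamma\|\le M\|x\|_{\ell^\infty}$. For $\partial_\theta h_2$ the identity is the leading term, the $\partial_\theta f_3$ term is $\le K_\theta$, and the rest are $\mathcal{O}(\delta)$ because $\|\partial_\theta\gamma\|\le M\delta$ on $B_\delta(\ell^\infty)$.

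For the six second derivatives, I differentiate the above formulas one more time, substituting $\partial^2_{x}\gamma, \partial^2_{x\theta}\gamma, \partial^2_\theta\gamma$ by $\Upsilon_0,\Upsilon_1,\Upsilon_2$ as in the fiber-contraction setup. Each expression becomes a sum of: (a) second derivatives of $A_-,\omega,f_i$ contracted with one or two copies of $D\gamma$ (bounded by $\K$ times a polynomial in $c$, absorbed into $\tC$); (b) first derivatives of $A_-,\omega,f_i$ times one copy of $\Upsilon_j$ (giving the $\tC\|\Upsilon_j\|_0$ terms and the $L\|\Upsilon_1\|_0$ or $2\K\|\Upsilon_0\|_0$ terms in $h_2$); (c) terms with a $\partial_\theta$ falling on $f_k$ with $k\ne 3$ that vanish to first order at $\T_0$, hence produce $\mathcal{O}(L+\delta)$ prefactors on $\|\Upsilon_2\|_0$ in the $\partial_\theta^2 h_1$ estimate. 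The decay norms are preserved under these operations because $\mathcal{L}_\Gamma$ and $\mathcal{L}^k_\Gamma$ are Banach algebras (\cite{FontMartin1}), so each product estimate is genuinely an inequality in the decay norm.

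No step is a real obstacle; the only mild subtlety is to recognise that the $\|x\|_{\ell^\infty}$ prefactors in the $\partial_\theta$-estimates always come from exactly one of two mechanisms, either the first-order vanishing of $f_1,f_2,f_4$ and $\gamma$ on $\{x=0\}$ (via hypothesis $\mathbf{(H2)_{C^2}}$ and $\gamma\in\Xi^2_{c,M,\mathtt{v}}$), or the explicit $x$ factor in $(\partial_\theta A_-)x$, so that once one tracks which term does this in each formula the announced bound follows by a sum of norms. The proof then concludes by collecting terms and absorbing all $c,M,\mathtt{v},\beta,\K$-dependent constants into $\tC$.
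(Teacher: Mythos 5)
Your proof is correct and takes exactly the approach the paper intends: the paper declines to write out the computation, simply saying "the proof of the first one is straightforward using the assumed hypotheses and the particular form of the functions $h_1$ and $h_2$ in \eqref{h}," and your chain-rule expansion of the four first derivatives, together with the scheme of substituting $\Upsilon_0,\Upsilon_1,\Upsilon_2$ for the second derivatives of $\gamma$ and tracking the $\|x\|_{\ell^\infty}$ prefactors via the first-order vanishing of $f_1,f_2,f_4$ and $\gamma$ at the torus, is precisely that straightforward verification. The only quibble is cosmetic: your accounting yields $\mathcal{K}(2+c)+LM$ for the $\partial_\theta h_1$ coefficient rather than the paper's $\mathcal{K}(2+c)+2LM$, but the paper's constant is simply overgenerous and the discrepancy has no downstream effect.
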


\begin{lem}\label{lem:ball2}
There exist functions $g_0, g_1, f_0, f_1$ and a constant $\mathtt{C}>0$ such that the following holds:
If 
\begin{align}
&\wkappa_0\geq \frac{ g_0(\wkappa_1, \wkappa_2,  \wrho_0, \wrho_1 )}{1-\lambda^{-3} \beta} \label{cond:kzero}  \\  \label{cond:kUno}
&\wkappa_1\geq \frac{ g_1( \wrho_1,  \wkappa_2)}{1-\beta\lambda^{-1} (1+K_{\theta})^2 }, \qquad \wkappa_2\geq \frac{\mathtt{C}}{1-\beta \lambda^{-1} (1+K_{\theta})^2}\\
&\wrho_0\geq \frac{f_0( \wrho_1, \wkappa_1,  \wkappa_2)}{1-\beta (1+K_{\theta}) \lambda^{-2}}, \qquad \wrho_1\geq \frac{\tC}{1-\beta\lambda^{-1} (1+K_{\theta})^2}  \label{cond:rozero}  \\
&\widehat{M}\geq \frac{f_1(\wkappa_2, \wrho_1)}{1-\beta\lambda^{-1}(1+K_{\theta})^3}
\end{align}
 then, for any $\gamma\in \Xi^2_{c, M, \mathtt{v}}$ and $\Psi\in D\Xi^2_{\mathtt{v}, \widetilde{\mathtt{v}}}$,  $\cH(\gamma, \Upsilon)\in D \Xi^2_{\mathtt{v}, \widetilde{\mathtt{v}}}$.
\end{lem}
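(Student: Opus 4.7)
My plan is to verify the nine bounds defining $D\Xi^2_{\mathtt{v},\widetilde{\mathtt{v}}}$ for $\cH=(\cH_0,\cH_1,\cH_2)$ by a term-by-term analysis mirroring the one used in Lemma \ref{lem:Hwelldef}. The three $C^0$-norm bounds $\|\cH_0\|_0\le\kappa_0$, $\|\cH_1\|_0\le\rho_0$ and $\|\cH_2\|_1\le\widetilde{M}$ are essentially already contained in Lemma \ref{lem:Hwelldef}: if $\gamma\in\Xi^2_{c,M,\mathtt{v}}$ then $D\gamma\in D\Xi_{c,M,\mathtt{v}}$, and the Lipschitz characters of $D\gamma$ appearing in the definition of $D\Xi_{c,M,\mathtt{v}}$ are precisely the $C^0$-norms of $\Upsilon=D^2\gamma$, so the output of $\cH$ inherits these bounds from the previous lemma applied to the first two components of $H$.

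The substantive work is the six Lipschitz estimates on $\cH_0,\cH_1,\cH_2$ in $x$ and $\theta$. Every such $\cH_j$ is a finite sum of products of four kinds of factors: (a) derivatives up to order $3$ of $C(\theta)^{-1}$, bounded by $\mathbf{(H1)_{C^2}}$ and a Neumann argument in the style of Lemma \ref{lem:neumann}; (b) derivatives up to order $2$ of $g,\omega,f_3$ evaluated along $\gamma$, bounded via Lemmas \ref{rem:boundspartialzgammaz}, \ref{rem:lippartial} together with $\mathbf{(H2)_{C^2}}$--$\mathbf{(H3)_{C^2}}$; (c) derivatives up to order $2$ of $h_1,h_2$, bounded by Lemma \ref{rmk:hderivatives}; and (d) the components $\Upsilon_j$ evaluated at $z$ or at $h(z)$. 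For the compositions $\Upsilon_j\circ h$ I will use the elementary inequalities
\[
\lip_x(\Upsilon_j\circ h)\le\wkappa_j\,\lip_x h_1+\wrho_j\,\lip_x h_2,\qquad \lip_\theta(\Upsilon_j\circ h)\le\wkappa_j\,\lip_\theta h_1+\wrho_j\,\lip_\theta h_2,
\]
together with the corresponding weighted version for $\Upsilon_2$ (which carries the factor $\widehat{M}\|h_1(z)\|_{\ell^\infty}$ rather than $\wrho_2$), and the Leibniz rule for Lipschitz constants of products of derivatives, controlled via $\mathbf{(H3)_{C^2}}$.

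The crux of the proof is to isolate, in each of the six Lipschitz estimates, the single term in which the parameter to be estimated appears with its full contractive weight. These dominant terms all have the structure $C(\theta)^{-1}\Upsilon_j(h(z))\,(\partial_x h_1)^a(\partial_\theta h_2)^b$ with $a+b=2$: the choice $(j,a,b)=(0,2,0)$ produces the coefficient $\beta\lambda^{-3}$ in $\lip_x\cH_0$; $(0,1,1)$ produces $\beta\lambda^{-2}(1+K_\theta)$ in $\lip_\theta\cH_0$; $(1,1,1)$ and $(2,2,0)$ produce $\beta\lambda^{-1}(1+K_\theta)^2$ in the bounds on $\lip_x\cH_1,\lip_\theta\cH_1,\lip_x\cH_2$; and $(2,0,2)$ produces $\beta\lambda^{-1}(1+K_\theta)^3$ in the bound on $\lip_\theta\cH_2(x)\,\|x\|_{\ell^\infty}^{-1}$. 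Every other contribution to the six estimates is either proportional to parameters in $\widetilde{\mathtt{v}}$ that have been fixed at a previous step, or of order $\cO(\delta+L(\delta,\mu))$ and thus absorbed into the dominant factor once $\delta,\mu$ are small; collecting these contributions defines exactly the functions $g_0,g_1,f_0,f_1$ and the constant $\tC$ in the statement.

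The final step is purely algebraic. By $\mathbf{(H0)_{C^2}}$ one has $\beta\lambda^{-1}(1+K_\theta)^3<1$, and since $\lambda>1$ and $K_\theta\ge 0$ this implies all four strict inequalities $\beta\lambda^{-3},\beta\lambda^{-2}(1+K_\theta),\beta\lambda^{-1}(1+K_\theta)^2,\beta\lambda^{-1}(1+K_\theta)^3<1$. The components of $\widetilde{\mathtt{v}}$ are then fixed in the order $\wkappa_2,\wrho_1\to\wkappa_1,\widehat{M}\to\wrho_0\to\wkappa_0$, so that at each step the right-hand side of the corresponding inequality only involves already-chosen parameters; this dependence pattern is exactly what the functional forms $g_0,g_1,f_0,f_1$ encode. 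The main obstacle is not any single estimate but the bookkeeping of the numerous cross terms in \eqref{def2:H0}, compounded by the fact that $h_1,h_2$ themselves depend on $\gamma$, so their higher derivatives inherit Lipschitz information from $D\gamma$ which must be tracked carefully throughout via Lemma \ref{rmk:hderivatives}.
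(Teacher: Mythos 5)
Your proposal is correct and follows essentially the same route as the paper: reduce the three $C^0$-norm bounds to Lemma \ref{lem:Hwelldef}, then handle the six Lipschitz bounds term-by-term, isolate in each the single contraction coefficient produced by the term of the form $C(\theta)^{-1}\Upsilon_j(h(z))\,\partial h\,\partial h$, absorb everything else into $\cO(\delta+L)$ or into already-fixed parameters, and close the bookkeeping by choosing $\wkappa_2,\wrho_1$, then $\wkappa_1,\widehat{M}$, then $\wrho_0$, then $\wkappa_0$; the crucial observation that $\mathbf{(H0)_{C^2}}$ already forces $\beta\lambda^{-3},\ \beta\lambda^{-2}(1+K_\theta),\ \beta\lambda^{-1}(1+K_\theta)^2,\ \beta\lambda^{-1}(1+K_\theta)^3$ all to be strictly below $1$ is exactly what the paper exploits. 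One bookkeeping slip worth flagging: the dominant term in $\lip_\theta\cH_0$ is again the $(0,2,0)$ term $\Upsilon_0(h)(\partial_x h_1)^2$ (the extra $(1+K_\theta)$ comes from $\lip_\theta(\Upsilon_0\circ h)$, not from a $\partial_\theta h_2$ factor, since no such factor appears in $\cH_0$), and the dominant term in $\lip_x\cH_2$ is $(2,0,2)$, i.e.\ $\Upsilon_2(h)(\partial_\theta h_2)^2$, not $(2,2,0)$; in both cases the coefficient you claim is the right one, only the factorization producing it is mislabeled.
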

The proof of this lemma is postponed to the Appendix \ref{sec:appendix}.
We observe that in order to fulfill the above conditions is sufficient to fix the parameters in the following order: $\wkappa_2, \wrho_1, \widehat{M}, \wkappa_1, \wrho_0, \wkappa_0$.

\begin{lem}\label{lem:cont2}
The function $\cH(\cdot, \Upsilon)\colon \Xi^2_{c, M, \mathtt{v}}\to D \Xi_{\mathtt{v}, \widetilde{\mathtt{v}}}^2$ is continuous.
\end{lem}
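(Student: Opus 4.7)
The goal is to show that if $\gamma_n \to \gamma$ in $\Xi^2_{c,M,\mathtt{v}}$ (equipped with a $C^1_\Gamma$-type topology) and $\Upsilon\in D\Xi^2_{\mathtt{v},\widetilde{\mathtt{v}}}$ is fixed, then $\cH(\gamma_n,\Upsilon)\to\cH(\gamma,\Upsilon)$ in the norm $\|\cdot\|_{D\Xi^2_{\mathtt{v},\widetilde{\mathtt{v}}}}$ defined in \eqref{def:normDS}. The plan is to inspect each of the three components $\cH_0,\cH_1,\cH_2$ in \eqref{def2:H0} separately and decompose the difference $\cH_j(\gamma,\Upsilon)-\cH_j(\gamma',\Upsilon)$ telescopically, estimating each summand with the Lipschitz regularity of second derivatives of $f$ and $\omega$ provided by $\mathbf{(H3)_{C^2}}$, the uniform bounds in $\Xi^2_{c,M,\mathtt{v}}$, and the Lipschitz bounds on $\Upsilon_j$ packaged in the definition of $D\Xi^2_{\mathtt{v},\widetilde{\mathtt{v}}}$.

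The terms fall naturally into three families. The first family consists of compositions $(\partial^\alpha g)(z,\gamma(z))$ and analogous expressions involving $\partial_\theta C(\theta)^{-1}$, $A_{\pm}(\theta)$, and derivatives of $\omega$ evaluated at $\pi_{(x,v)}(z,\gamma(z))$. For these, $\mathbf{(H3)_{C^2}}$ and $\mathbf{(H1)_{C^2}}$ give uniform Lipschitz estimates that yield bounds of the form $\tC\|\gamma-\gamma'\|_0$, which is of order $\|\gamma-\gamma'\|_{C^1_\Gamma}$. The second family consists of the products $(\partial_v g)(z,\gamma(z))\,\partial_x\gamma$, $(\partial_v g)(z,\gamma(z))\,\partial_\theta\gamma$ and the quadratic expressions such as $(\partial_v^2 g)(z,\gamma(z))(\partial_x\gamma)^2$. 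Here one writes the difference telescopically as
\[
 A(\gamma)B(\gamma)-A(\gamma')B(\gamma')=\bigl(A(\gamma)-A(\gamma')\bigr)B(\gamma)+A(\gamma')\bigl(B(\gamma)-B(\gamma')\bigr),
\]
the first factor being controlled by the Lipschitz regularity of $\partial^\alpha g$ (giving $\|\gamma-\gamma'\|_0$) and the second by the uniform bound $\|D\gamma\|_0\le c$ and the estimate $\|D\gamma-D\gamma'\|_0\le\|\gamma-\gamma'\|_{C^1_\Gamma}$.

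The third and more delicate family consists of the terms containing the composition $\Upsilon_k(h(\gamma)(z))$, which appear multiplied by products of $\partial_s h(\gamma)$. Proceeding as in the proof of Lemma \ref{lemcont}, we split
\[
 \Upsilon_k(h(\gamma)(z))-\Upsilon_k(h(\gamma')(z)) = \Upsilon_k(h_1(\gamma),h_2(\gamma'))-\Upsilon_k(h_1(\gamma'),h_2(\gamma')) +\Upsilon_k(h_1(\gamma),h_2(\gamma))-\Upsilon_k(h_1(\gamma),h_2(\gamma'))
\]
and use the Lipschitz bounds on $\Upsilon_k$ recorded in $D\Xi^2_{\mathtt{v},\widetilde{\mathtt{v}}}$ together with \eqref{hgamma-gamma'}, which gives $\|h_1(\gamma)-h_1(\gamma')\|_{\ell^\infty}\le L\|\gamma-\gamma'\|_0$ and $|h_2(\gamma)-h_2(\gamma')|_d\le 2\K\|\gamma-\gamma'\|_0$. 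The terms involving first derivatives $\partial_s h(\gamma)$ contain contributions of $\partial^2 f$ evaluated at $(z,\gamma(z))$, hence are continuous in $\gamma$ by $\mathbf{(H3)_{C^2}}$, plus contributions of $D\gamma$ which are continuous by the $C^1$ topology. The second derivatives $\partial^2_{ss'}h$ enjoy the same structure, and Lemma \ref{rmk:hderivatives} shows that they depend continuously on $\gamma$ and $\Upsilon$; since $\Upsilon$ is held fixed, only the $\gamma$-dependence matters here.

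Summing all contributions, each of the three norms $\|\cH_0(\gamma,\Upsilon)-\cH_0(\gamma',\Upsilon)\|_0$, $\|\cH_1(\gamma,\Upsilon)-\cH_1(\gamma',\Upsilon)\|_0$, and $\|\cH_2(\gamma,\Upsilon)-\cH_2(\gamma',\Upsilon)\|_1$ is bounded by a constant $\tC$ (depending on $\beta,c,M,\mathtt{v},\widetilde{\mathtt{v}},\K$ but not on $\gamma,\gamma'$) times $\|\gamma-\gamma'\|_{C^1_\Gamma}$. Plugging these bounds into the weighted norm \eqref{def:normDS} gives
\[
 \|\cH(\gamma,\Upsilon)-\cH(\gamma',\Upsilon)\|_{D\Xi^2_{\mathtt{v},\widetilde{\mathtt{v}}}}\le \tC\,(\widetilde{\alpha_0}+\widetilde{\alpha_1}+\eta)\,\|\gamma-\gamma'\|_{C^1_\Gamma},
\]
which establishes the desired continuity. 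The main obstacle in executing the plan is purely bookkeeping: each of $\cH_0,\cH_1,\cH_2$ is a sum of roughly ten terms, each requiring a telescopic decomposition, so one must organize the estimates carefully to avoid circular dependence on $\widetilde{\mathtt{v}}$. No genuinely new analytic difficulty beyond what was already handled in Lemma \ref{lemcont} and Lemma \ref{lem:ball2} appears.
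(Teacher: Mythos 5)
Your plan is essentially the same as the paper's: both telescope the difference $\cH_j(\gamma,\Upsilon)-\cH_j(\gamma',\Upsilon)$ term-by-term, control each summand via the Lipschitz hypotheses $\mathbf{(H1)_{C^2}}$--$\mathbf{(H3)_{C^2}}$, and defer to the $C^1$ analogue (your Lemma~\ref{lemcont}; the paper points to Lemma~5.5 of Fontich--Mart\'in) for the routine bookkeeping.

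The one place where the argument is genuinely delicate --- and which the paper singles out precisely because it is the only step that does not transfer verbatim from the reference --- is not made precise in your write-up. The $\cH_2$ component is measured in the \emph{weighted} norm $\|\cdot\|_1$, i.e.\ $\sup_{z\in\dom,\,x\neq 0}\|\cH_2(\gamma,\Upsilon)(z)-\cH_2(\gamma',\Upsilon)(z)\|_{\mathcal L^2_{\theta\theta}}/\|x\|_{\ell^\infty}$. To bound this by a constant times $\|\gamma-\gamma'\|_*$ you need every pointwise contribution to $\cH_2$ to carry an explicit factor of $\|x\|_{\ell^\infty}$. The paper's proof is built around exactly this point: it records the key estimates \eqref{partialthetafj} in the form $\le \chi_i(\gamma,\gamma')\,\|x\|_{\ell^\infty}$. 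Your intermediate bounds, e.g.\ $\|h_1(\gamma)-h_1(\gamma')\|_{\ell^\infty}\le L\|\gamma-\gamma'\|_0$, drop this factor. The correct statement, combining \eqref{hgamma-gamma'} with the normalization $\gamma(0,\theta)=\gamma'(0,\theta)=0$ and the mean value theorem, is $\|h_1(\gamma)(z)-h_1(\gamma')(z)\|_{\ell^\infty}\le L\,\|D\gamma-D\gamma'\|_0\,\|x\|_{\ell^\infty}$, and the analogous refinement is needed for $\partial_\theta^l f_j(\cdot,\gamma(\cdot))$, $\partial_\theta^l h_1(\gamma)$, and $\partial_\theta\gamma(h(\gamma))$. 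Once you track that factor through the telescoping (it is automatically present because every term in the difference vanishes on $\{x=0\}$), your argument goes through and coincides with the paper's.
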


\begin{proof}
We have to prove that, given $\gamma, \gamma'\in \Xi^2_{c, M, \mathtt{v}}$, we can made small the difference
\begin{align*}
\| \cH(\gamma, \Upsilon)-\cH(\gamma', \Upsilon) \|_{D \Xi_{\mathtt{v}, \widetilde{\mathtt{v}}}^2}=&\widetilde{\alpha}_0 \| \cH_0(\gamma, \Upsilon)-\cH_0(\gamma', \Upsilon)\|_0+\widetilde{\alpha}_1 \| \cH_1(\gamma, \Upsilon)-\cH_1(\gamma', \Upsilon)\|_0\\
&+\eta \| \cH_2(\gamma, \Psi)-\cH_2(\gamma', \Psi)\|_1
\end{align*}
provided that 
\[
\| \gamma-\gamma'\|_{*}:=\max \{ \| D \gamma-D\gamma'\|_0, \| \partial_{\theta} \gamma-\partial_{\theta} \gamma' \|_1 \}
\]
is small.
The proof follows the proof of Lemma $5.5$ in \cite{FontichM98}. The only difference is in the following estimates
\begin{equation}\label{partialthetafj}
\begin{aligned}
\| \partial_{\theta}^l f_j(z, \gamma(z))-\partial_{\theta}^l f_j(z, \gamma'(z))\| &\le \chi_1(\gamma, \gamma') \| x \|_{\ell^{\infty}} \qquad l=1, 2, \qquad j=1,2, 4,  \\
\| \partial_{\theta}^l h_1( \gamma)(z)-\partial_{\theta}^l h_1( \gamma')(z)\| &\le \chi_2(\gamma, \gamma') \| x \|_{\ell^{\infty}} \qquad l=1, 2,  \\
\| \partial_{\theta} \gamma\big(h( \gamma) \big)(z)-\partial_{\theta} \gamma'\big(h( \gamma') \big)(z)  \|&\le \chi_3(\gamma, \gamma') \| x \|_{\ell^{\infty}} 
\end{aligned}
\end{equation}
where $\chi_i(\gamma, \gamma')$ are functions that go to zero as $\| \gamma-\gamma'\|_{*}$ tends to zero.
To prove the first bound in \eqref{partialthetafj} it is sufficient to use bounds \eqref{bound:sn} and the fact that, by definition, 
\[
\|  \gamma(z) - \gamma'(z) \|_{\ell^{\infty}\times \R^d}\le \| D \gamma-D \gamma'\|_0 \| x \|_{\ell^{\infty}} \le  \| \gamma-\gamma' \|_* \| x \|_{\ell^{\infty}}.
\]

We show how to prove the second bound for $l=2$. The other bounds are similar. By $\bf{(H2)_{C^2}}$ we have
\begin{align*}
\| \partial_{\theta}^2 h_1( \gamma)(z)-\partial_{\theta}^2 h_1&( \gamma')(z)\|_{\mathcal{L}_\Gamma^2(\R^d; \ell^{\infty})} \le \lip_v (\partial_{\theta}^2 f_1(\id, \gamma) ) \|  \gamma(z) - \gamma'(z) \|_{\ell^{\infty}\times \R^d}\\
&+\lip_v (\partial_{\theta v}^2 f_1(\id, \gamma) ) \| \partial_{\theta} \gamma \|_0 \|  \gamma(z) - \gamma'(z) \|_{\ell^{\infty}\times \R^d}\\
&+{\| \partial_{\theta v}^2 f_1(\id, \gamma) \|_{\mathcal{L}_{\Gamma}^2(\ell^{\infty}\times \R^d, \R^d; \ell^{\infty})}} \| \partial_{\theta} \gamma(z)-\partial_{\theta} \gamma'(z)\|_{\ell^{\infty}\times \R^d}\\
&+\lip_v (\partial_{\theta v}^2 f_1(\id, \gamma)) \| \partial_{\theta} \gamma \|_0^2 \|  \gamma(z) - \gamma'(z) \|_{\ell^{\infty}\times \R^d} \\
&+ \| \partial_v^2 f_1(\id, \gamma) \|_{\mathcal{L}_{\Gamma}^2(\ell^{\infty}\times \R^d, \ell^{\infty})} (\| \partial_{\theta} \gamma\|_0+\|  \partial_{\theta} \gamma' \|_0) \| \partial_{\theta} \gamma(z)-\partial_{\theta} \gamma'(z) \|_0\\
&+\| D^2 f_1(\id, \gamma)\|_{\mathcal{L}_{\Gamma}^2(T \mathcal{M}; \ell^{\infty})} \|\Upsilon_2\|_0 \| \gamma(z)-\gamma'(z)\|_{\ell^{\infty}\times \R^d}\\
\le& \mathtt{C}  \| x \|_{\ell^{\infty}}   \| \gamma-\gamma'\|_*.
\end{align*}
To prove the third bound in \eqref{partialthetafj} we use the triangle inequality. We consider just the most problematic term, that is 
\begin{align*}
\|   \partial_{\theta} \gamma'(h(\gamma)(z))-\partial_{\theta} \gamma'(h(\gamma')(z))    \|_{\mathcal{L}_{\Gamma}(\R^d; \ell^{\infty}\times \R^d)} &\le \lip_x (\partial_{\theta} \gamma'(h(\gamma)) ) \| h_1(\gamma)-h_1(\gamma')  \|_0+\widehat{M} \| h_1 \|_0 \| h_2(\gamma)-h_2 (\gamma') \|_0\\
&\stackrel{\eqref{hgamma-gamma'}, \eqref{h1Lx}}{\le} \cO(L+\delta)  \| \gamma-\gamma'\|_0
\end{align*}
for all $z\in \dom$.
\end{proof}

\begin{lem}\label{lem:contraction2}
For $\delta$ and $\mu$ small enough $\Upsilon\mapsto \cH(\gamma, \Upsilon)$ is a contraction on $D \Xi_{\mathtt{v}, \widetilde{\mathtt{v}}}^2$, uniform respect to $\gamma\in \Xi^{2}_{c, M, \mathtt{v}}$.
\end{lem}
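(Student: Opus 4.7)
The proof plan rests on the observation that the expressions \eqref{def2:H0} for $\cH_0,\cH_1,\cH_2$ are \emph{affine} in $\Upsilon=(\Upsilon_0,\Upsilon_1,\Upsilon_2)$, so the difference $\cH(\gamma,\Upsilon)-\cH(\gamma,\Upsilon')$ is a linear, explicit function of $\Upsilon-\Upsilon'$ whose matrix coefficients depend only on $\gamma$, $h(\gamma)$, derivatives of $f$, $C$, and on $\Upsilon$-independent parts of $\partial^2 h$. Thus the whole argument reduces to extracting bounds on this $3\times3$ linear operator and checking that, in the weighted norm \eqref{def:normDS}, its matrix norm is strictly less than one for $\delta,\mu$ small enough, uniformly in $\gamma\in\Xi^2_{c,M,\mathtt{v}}$.

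The plan is to bound each of the three components separately, using \eqref{def:KomegaKf2}, \eqref{bound:partialg}, Lemma \ref{rem:boundspartialzgammaz} and Lemma \ref{rmk:hderivatives}, and to track only the leading (non-vanishing as $\delta,\mu\to 0$) coefficients. A direct term-by-term inspection of \eqref{def2:H0} yields, in the limit $\delta,\mu\to 0$, the matrix of coefficients
\[
\begin{pmatrix}
a_{00} & a_{01} & a_{02}\\
a_{10} & a_{11} & a_{12}\\
a_{20} & a_{21} & a_{22}
\end{pmatrix}
\longrightarrow
\begin{pmatrix}
\beta\lambda^{-2} & 4\beta\lambda^{-1}\K(1+c) & 0\\
0 & \beta\lambda^{-1}(1+K_\theta) & 0\\
0 & 2\beta\K(2+c)(1+K_\theta) & \beta\lambda^{-1}(1+K_\theta)^2
\end{pmatrix},
\]
where the diagonal entries come respectively from the $(\partial_x h_1)^2$, $\partial_x h_1\,\partial_\theta h_2$ and $(\partial_\theta h_2)^2$ factors; the off-diagonal entries $a_{02},a_{10},a_{12},a_{20}$ vanish in the limit because every cross combination either contains a $\partial_\theta h_1=O(\|x\|)$ factor (giving $O(\delta)$ after sup) or a $(\partial_v g)$, $(\partial_vf_k)$ factor (giving $O(L)$). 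The two surviving off-diagonal entries $a_{01}$ and $a_{21}$ are $O(1)$ and cannot be made small; they must be absorbed by the weights.

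By hypothesis $\mathbf{(H0)_{C^2}}$, $\beta\lambda^{-1}(1+K_\theta)^3<1$, so
\[
\beta\lambda^{-2}<1,\qquad \beta\lambda^{-1}(1+K_\theta)<\beta\lambda^{-1}(1+K_\theta)^2<1.
\]
Fix any $\kappa\in(0,1)$ strictly larger than $\max\{\beta\lambda^{-2},\beta\lambda^{-1}(1+K_\theta)^2\}$. Contraction in the weighted norm \eqref{def:normDS} with ratio $\kappa$ is equivalent to the three column inequalities
\begin{align*}
\widetilde\alpha_0\, a_{00}+\widetilde\alpha_1\, a_{10}+\eta\, a_{20}&\le \kappa\,\widetilde\alpha_0,\\
\widetilde\alpha_0\, a_{01}+\widetilde\alpha_1\, a_{11}+\eta\, a_{21}&\le \kappa\,\widetilde\alpha_1,\\
\widetilde\alpha_0\, a_{02}+\widetilde\alpha_1\, a_{12}+\eta\, a_{22}&\le \kappa\,\eta.
\end{align*}
The first and third inequalities hold automatically in the limit (with strict inequality) because the off-diagonal entries vanish; the second requires $\widetilde\alpha_1$ to dominate, which is arranged by choosing $\widetilde\alpha_0$ and $\eta$ first and then taking
\[
\widetilde\alpha_1\ge \frac{\widetilde\alpha_0\cdot 4\beta\lambda^{-1}\K(1+c)+\eta\cdot 2\beta\K(2+c)(1+K_\theta)}{\kappa-\beta\lambda^{-1}(1+K_\theta)},
\]
which is possible precisely because $\kappa>\beta\lambda^{-1}(1+K_\theta)$.

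The main technical obstacle is simply the bookkeeping: the expressions \eqref{def2:H0} contain many terms, and one has to verify for each that $\Upsilon$-dependence enters only with coefficients of the stated asymptotic size, in particular that the formally substituted $\partial^2 h_1,\partial^2 h_2$ from Lemma \ref{rmk:hderivatives} contribute only to the ``small'' off-diagonal slots (either through $\|x\|$-factors or through the $L$-Lipschitz second derivatives of $f_1,f_2,f_4$ given by $\mathbf{(H2)_{C^2}}$). Once this is done, taking $\delta,\mu$ small enough makes the off-diagonal perturbations of the above matrix arbitrarily small, preserving the contraction bound uniformly in $\gamma$, which completes the proof of the lemma and, combined with Lemmas \ref{lem:ball2} and \ref{lem:cont2}, permits a second application of the Fiber Contraction Theorem \ref{thm:fibrecontraction} to conclude $C^2_\Gamma$ regularity of $\gamma^s_\nu$ along the lines of the $C^1_\Gamma$ argument.
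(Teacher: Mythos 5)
Your proof is correct and takes essentially the same route as the paper: bound the affine-in-$\Upsilon$ map $\cH$ by a $3\times3$ matrix whose diagonal tends to $(\beta\lambda^{-2},\ \beta\lambda^{-1}(1+K_\theta),\ \beta\lambda^{-1}(1+K_\theta)^2)$, observe that only the $a_{01}$ and $a_{21}$ entries survive as $\delta,\mu\to 0$, and absorb them by choosing the weights $\widetilde\alpha_0,\widetilde\alpha_1,\eta$ appropriately (the paper writes these two constants as unnamed $\tC_1,\tC_2$, which you pin down explicitly). The only point worth spelling out a bit more carefully is the order of choices, which you do mention implicitly: the weights must be fixed first (using only $\mathbf{(H0)_{C^2}}$ and the limiting $a_{01},a_{21}$), and \emph{then} $\delta,\mu$ are shrunk so that the $O(L+\delta)$ off-diagonal terms, multiplied by the now-fixed ratios $\widetilde\alpha_1/\widetilde\alpha_0$ and $\widetilde\alpha_1/\eta$ (which may be large), remain small enough not to spoil the first and third column inequalities.
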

\begin{proof}
By Lemmas \ref{rem:boundspartialzgammaz}, \ref{rmk:hderivatives} the functions $\cH_0$, $\cH_1$, $\cH_2$ introduced in \eqref{def2:H0} satisfy
\begin{align*}
\| \cH_0(\gamma, \Upsilon)-\cH_0(\gamma, \Upsilon')\|_0 \le &\big(\beta\lambda^{-2}+ \mathcal{O}(L) \big) \| \Upsilon_0-\Upsilon_0'\|_0+\tC \| \Upsilon_1-\Upsilon_1'\|_0+\mathcal{O}(\delta) \| \Upsilon_2-\Upsilon_2'\|_1,\\
\| \cH_1(\gamma, \Upsilon)-\cH_1(\gamma, \Upsilon') \|_0 \le&\mathcal{O}(L+\delta) \| \Upsilon_0-\Upsilon_0'\|_0+ (\beta \lambda^{-1} (1+K_{\theta})+\mathcal{O}(L+\delta)) \| \Upsilon_1-\Upsilon_1'\|_0\\
&+\cO(\delta) \| \Upsilon_2-\Upsilon_2'  \|_1,\\
\| \cH_2(\gamma, \Upsilon)-\cH_2(\gamma, \Upsilon') \|_1 \le& \Big(\beta \lambda^{-1} (1+K_{\theta})^2+\cO(L+\delta)  \Big) \| \Upsilon_2-\Upsilon_2'\|_1\\
&+\mathcal{O}(\delta) \| \Upsilon_0-\Upsilon_0'\|_0+\tC \| \Upsilon_1-\Upsilon_1'\|_0
\end{align*}
and, recalling \eqref{def:normDS},
\begin{align*}
\| \cH(\gamma, \Upsilon)-\cH(\gamma, \Upsilon') \|_{D\Sigma} \le& \widetilde{\alpha}_0\, (\beta \lambda^{-2}+\cO(L+\delta)) \| \Upsilon_0-\Upsilon'_0\|_0\\
&+\widetilde{\alpha}_1 \Big(\beta \lambda^{-1} (1+K_{\theta})+\tC_1 \frac{\widetilde{\alpha}_0}{\widetilde{\alpha}_1} +\tC_2 \frac{\eta}{\widetilde{\alpha}_1}  \Big) \| \Upsilon_1-\Upsilon'_1\|_0\\
&+\eta \Big(\beta \lambda^{-1} (1+K_{\theta})^2 \Big) \| \Upsilon_2-\Upsilon'_2\|_1
\end{align*}
for some constants $\tC_1, \tC_2>0$.
By \eqref{condition:lambdastrong} and choosing $\widetilde{\alpha}_0, \widetilde{\alpha}_1$ and $\eta$ such that
\begin{align*}
\tC_1 \frac{\widetilde{\alpha}_0}{\widetilde{\alpha}_1} +\tC_2 \frac{\eta}{\widetilde{\alpha}_1}&<1-\beta \lambda^{-1} (1+K_{\theta})
\end{align*}
we get the thesis.
\end{proof}

\subsection{Invariant manifolds for flows}\label{sec:invmanflows}

In Section \ref{sec:invmanmaps} we have proved the existence of $C^2_{\Gamma}$ invariant manifolds of invariant tori of maps under certain hypotheses (see Theorems \ref{thm:lipcase}, \ref{thm:C1case} and \ref{thm:C2case}). We devote this section to state an analogous theorem for flows. 

Let us consider a $C^2_\Gamma$ vector field $\XX_\nu$ defined on $\mathcal{M}_\de$. We assume that it is of the form  $\XX_\nu=\XX_0+\cF_\nu$ with 
\[
 \XX_0(w)=\begin{pmatrix} \cA_-(\theta) x\\\cA_+(\theta) y\\ \tilde\omega(x,y,r)\\ \cB(\theta)r\end{pmatrix}
\]
and 
\[\cF_{\nu}( w)=\big(\cF_1(\nu; w), \cF_2(\nu; w), \cF_3(\nu; w), \cF_4(\nu; w) \big)
\]
where (recall the notation \eqref{notation:linfS})
\[
\cA_{\pm}(\theta)\in \mathcal{L}_{\Gamma}(\ell^{\infty}_{S^c}), \quad \cB(\theta)\in\mathcal{L}_{\Gamma}(\R_S^d)
\]
and $\cF_{1}, \cF_2 (\nu; \cdot)\colon \mathcal{M}_{\delta}\to \ell^{\infty}_{S^c}$, $\cF_3(\nu; \cdot)\colon\mathcal{M}_{\delta}\to \T_S^d$, $\cF_4(\nu; \cdot) \colon \mathcal{M}_{\delta}\to \R_S^d$.
We also assume that
\[
\T_0:=\{ x=0, y=0, r=0 \}
\]
is an invariant torus for the flow associated to the vector field map $F_{\nu}$ for all $\nu\in(0,\mu)$ and we define
\begin{equation}\label{def:omega0}
 \tilde\omega_0=\tilde\omega(0,0,0).
\end{equation}
We provide a theorem of existence of local invariant manifolds for $\T_0$

Let us first start by stating the needed hypotheses. As in Section \ref{sec:invmanmaps}, we consider a non-negative continuous function $L(\delta, \mu)$ such that $L(0, 0)=0$.
We assume that there exist constants $\tilde\lambda, \tilde\beta>0$,  $\tilde\K$, $\tilde K_{\theta}>0$ such that

\begin{itemize}

\item[{$\mathbf{(H0)_f}$}]
We have
 \begin{equation}\label{bound:betalambda:flow}
\tilde\lambda- \tilde\beta>0.
\end{equation}
\item[{$\mathbf{(H1)_f}$}] 
The functions $\cA_{\pm}\in C^3(\T^d; \mathcal{L}_{\Gamma}(\ell^{\infty})), \cB\in C^3(\T^d; \mathcal{L}_{\Gamma}(\R^d)), \tilde\omega\in C^2(\mathtt{B}_{\delta}; \T^d)$ and $\mathcal{F}=\mathcal{F}_{\nu}\in C_{\Gamma}^2(\mathcal{M}_{\delta})$. Moreover, for all $\theta\in \T^d$, 
\[
\begin{split}
\| e^{\int_0^t \cA_-(\theta+\tilde\omega_0 s)ds}\|_{\mathcal{L}_{\Gamma}(\ell^{\infty})}, \| e^{\int_0^{-t} \cA_+(\theta+\tilde\omega_0 s)ds} \|_{\mathcal{L}_{\Gamma}(\ell^{\infty})}&\le e^{-\tilde{\lambda} t}, \qquad \text{ for all }\quad t\geq 0\quad \text{and}\quad \theta\in\T^d\\
\|e^{\int_0^t \mathcal{B}(\theta+\tilde\omega_0 s)ds} \|_{\mathcal{L}_{\Gamma}(\R^d)}&\le e^{\tilde{\beta}|t|}, \qquad \text{ for all }\quad t\in \mathbb{R}\quad \text{and}\quad \theta\in\T^d
\end{split}
\]
and
\begin{equation*}
\sup_{\substack{j=1, 2,3}} \| \partial_{\theta}^j \cA_{\pm}(\theta) \|_{\mathcal{L}_{\Gamma}( \ell^{\infty})},  \sup_{\substack{j=1, 2,3}} \| \partial_{\theta}^j \cB(\theta) \|_{\mathcal{L}_{\Gamma}(\R^d)},\sup_{(x, y, r)\in \mathtt{B}_{\delta}, j=1,2}\|  D \tilde{\omega}(x, y, r) \|_{\mathcal{L}_{\Gamma}(\ell^{\infty}\times\ell^{\infty}\times \R^d, \R^d)} \le \tilde{\K}.
\end{equation*}
Note that in the case that the matrices $ \cA_\pm$ and $\cB$ are constant (as happens in \eqref{system}), the exponential matrices above are just $e^{\cA_\pm t}$ and $e^{\cB t}$.
\item[$\mathbf{(H2)_f}$] For $j=1, 2, 4$ we have that $\cF_j(\nu; 0, 0, \theta, 0)=0$ and for all $w\in \mathcal{M}_{\delta}$
\begin{align*}
\| D \cF_k (\nu; w) \|_{\mathcal{L}_{\Gamma}(T\mathcal{M}; \ell^{\infty})} &\le L(\delta, \mu), \qquad k=1, 2, \\
\| D \cF_4 (\nu; w) \|_{\mathcal{L}_{\Gamma}(T\mathcal{M}; \R^d)} &\le L(\delta, \mu)\\
\| D^2 \cF_k (\nu; w) \|_{\mathcal{L}^2_{\Gamma}(T\mathcal{M}; \ell^{\infty})} &\le \tilde{\K}, \qquad k=1, 2\\
\| D^2 \cF_4 (\nu; w) \|_{\mathcal{L}^2_{\Gamma}(T\mathcal{M}; \R^d)} &\le \tilde{\K}, \qquad k=1, 2\end{align*}
where $T\mathcal{M}$ is the tangent space of $\mathcal{M}$, which is isomorphic to $\ell^{\infty}\times \ell^{\infty}\times \R^d\times \R^d$.
Moreover the derivatives with respect to $\theta$ have the following bounds
\begin{equation*}
\begin{aligned}
\| \partial_{\theta} \cF_k(\nu; x, y, \theta, r) \|_{\mathcal{L}_{\Gamma}(\R^d; \ell^{\infty})} &\le \K (\|  x \|_{\ell^{\infty}}+\| y \|_{\ell^{\infty}}+|r|_d)\,, \quad k=1, 2, \\
\| \partial_{\theta} \cF_4(\nu; x, y, \theta, r) \|_{\mathcal{L}_{\Gamma}(\R^d;\R^d)} &\le \K (\|  x \|_{\ell^{\infty}}+\| y \|_{\ell^{\infty}}+|r|_d)\\
\| \partial_{\theta}^2 \cF_k(\nu; x, y, \theta, r) \|_{\mathcal{L}^2_{\Gamma}(\R^d;\ell^{\infty})} &\le \K (\|  x \|_{\ell^{\infty}}+\| y \|_{\ell^{\infty}}+|r|_d)\,, \quad k=1, 2, \\
\| \partial_{\theta}^2 \cF_4(\nu; x, y, \theta, r) \|_{\mathcal{L}_{\Gamma}^2(\R^d;\R^d)} &\le \K (\|  x \|_{\ell^{\infty}}+\| y \|_{\ell^{\infty}}+|r|_d)\,.
\end{aligned}
\end{equation*}
and
\[
\begin{split}
\| \partial_{\theta s} \cF_k(\nu; x, y, \theta, r) \|_{\mathcal{L}^2_{\Gamma}(\R^d, Y;\ell^{\infty})} &\le L(\de,\mu), \quad k=1, 2, \\
\| \partial_{\theta s} \cF_4(\nu; x, y, \theta, r) \|_{\mathcal{L}^2_{\Gamma}(\R^d, Y;\R^d)} &\le L(\de,\mu).
\end{split}
\]
where $s=x,y,r$ and $Y=\ell^\infty$ (when $s=x,y$) and  $Y=\R^d$ (when $s=r$).
\item[$\mathbf{(H3)_f}$] The function $\cF_3$ satisfies the following
\begin{align*}
\sup_{w\in \mathcal{M}_{\delta}}\| \partial_{\theta} \cF_3(w) \|_{\mathcal{L}_{\Gamma}(\R^d)} &\le \tilde{K}_{\theta},\\
\sup_{w\in \mathcal{M}_{\delta}, j=1,2}\|  D^j \cF_3(\nu; w) \|_{\mathcal{L}^j_{\Gamma}(T\mathcal{M}, \R^d)} &\le \tilde{\K}.
\end{align*}

\item[$\mathbf{(H4)_f}$] The second order derivatives are Lipschitz on $\cM_\de$, namely
\begin{align*}
 \lip\,\, \partial^2_{s, s'} \cF_j &\le  \tilde{\K}, \qquad s, s'=x, y, \theta, r, \qquad  j=1, 2, 3, 4,\\
 \lip_{\theta} \partial_{\theta}^2 \cF_j(x,y,r) &\le \tilde{\K} (\| x \|_{\ell^{\infty}}+\| y\|_{\ell^{\infty}}+|r|_d  ),  \qquad j=1, 2, 4, \\
 \lip\,\, \partial^2_{s, s'} \omega &\le \tilde{\K}, \qquad s, s'=x, y, r. 
\end{align*}
\item[$\mathbf{(H5)_f}$] The function $\cF$ is $C^2$ with respect to $\nu$ and we have 
\begin{align*}
\|  \partial^2_{\nu} \cF_k(\nu; x, y, \theta, r) \|_{\ell^{\infty}} &\le \K (\|  x \|_{\ell^{\infty}}+\| y \|_{\ell^{\infty}}+|r|_d), \quad   k=1, 2,\\
 |  \partial^2_{\nu} \cF_4(\nu; x, y, \theta, r) |_{d} &\le \K (\|  x \|_{\ell^{\infty}}+\| y \|_{\ell^{\infty}}+|r|_d),\\
 \sup_{w\in \mathcal{M}_{\delta}} | \partial^2_{\nu} \cF_3(\nu; w) |_d &\le \K
\end{align*}
Moreover, the Lipschitz constant of the second derivatives of $\cF_k$ satisfy also $\mathbf{(H4)_f}$ treating $\nu$ as an extra component of the angle $\theta$.
\end{itemize}
%
%
\begin{theorem}\label{thm:invmanflows}
Let $\cX_\nu$ be a $C^2_\Gamma$ vector field defined on $\mathcal{M}_{\delta}$ of the form  $\XX_\nu=\XX_0+\cF_\nu$. Assume that it satisfies $\mathbf{(H0)_f}$-$\mathbf{(H4)_f}$. Then there exist $\delta_0>0$ and $\mu_0>0$ such that for all $\delta\in (0, \delta_0)$ and $\mu\in (0, \mu_0)$ the $\cX_\nu$-invariant torus $\T_0$ possesses a stable invariant manifold which can be represented as graph of a $C^2_\Gamma$ function $\gamma_{\nu}^s(x, \theta)\in C^2_\Gamma(B_{\delta}(\ell^{\infty})\times \T^d; \ell^{\infty}\times \R^d)$ that satisfies
\begin{itemize} 
\item$\gamma^s_{\nu}(0, \theta)=0$. Moreover, its $C^1_\Gamma$ norm is of order $\delta+L(\delta, \mu)$ and
\[
\sup_{(x, \theta,\nu)\in B_{\delta}(\ell^{\infty})\times \T^d\times (0,\mu)} \| \gamma^s_{\nu}(x, \theta)\|_{\ell^{\infty}\times \R^d}\le \mathcal{O}(\delta^2+\delta\,L(\delta, \mu)).
\] 
\item The iterates of the points $(x, \theta, \gamma_{\nu}^s(x, \theta))$ tend to the torus exponentially fast with asymptotic rate bounded by $e^{\tilde\lambda t}$ as $t\to-\infty$. 
\end{itemize}
If we also impose $\mathbf{(H5)_{f}}$, $\gamma_{\nu}^s$ is also $C^2$ with respect to $\nu$. 
Moreover for all $j\in \Z^m$
\[
\gamma_{\nu}^s\colon B_{\delta}(\Sigma_{j, \Gamma})\times \T^d\to \Sigma_{j, \Gamma}\times \R^d,
\]
 $\gamma_{\nu}^s\in C^2_{\Gamma}(B_{\delta}(\Sigma_{j, \Gamma})\times \T^d)$ and its $C^1_\Gamma$ norm is of order $\delta+L(\delta, \mu)$.

\end{theorem}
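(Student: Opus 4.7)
The plan is to reduce Theorem \ref{thm:invmanflows} to the already proven invariant manifold theorems for maps (Theorems \ref{thm:lipcase}, \ref{thm:C1case}, \ref{thm:C2case}) by passing to the time-$T$ flow map
\[
F_\nu := \Phi^T_{\XX_\nu}\colon \mathcal{M}_\delta \to \mathcal{M},
\]
for a suitably large $T>0$. A local stable manifold of $\T_0$ for $F_\nu$ is automatically a local stable manifold for the flow, since $\Phi^t_{\XX_\nu}$ commutes with $F_\nu$ for every $t$ and the graph representation is unique (any orbit starting on the graph that is contracted by $F_\nu$ is also contracted by $\Phi^t_{\XX_\nu}$, and conversely). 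Hence it suffices to verify that $F_\nu$ meets hypotheses $\mathbf{(H0)_{C^2}}$–$\mathbf{(H4)_{C^2}}$ of Section \ref{sec:invmanmaps}.

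First, a flow analogue of Lemma \ref{lem:welldefflow} based on Picard iteration in $C^2_\Gamma$ and the algebra/composition properties collected in Section \ref{sec:functional} produces $\Phi^t_{\XX_\nu}\in C^2_\Gamma(\mathcal{M}_\delta)$ for $|t|\leq T$, with uniform $C^2_\Gamma$ bounds; hypothesis $\mathbf{(H2)_f}$ (together with the fact that $\XX_\nu$ vanishes on $\T_0$ in the $x,y,r$ components) guarantees invariance of $\T_0$, and Lemma \ref{lem:fm1} and Lemma \ref{lemma:SigmajGammazero} imply that $\Phi^t_{\XX_\nu}$ also restricts to $\mathcal{M}_{j,\Gamma,\delta}$ for every $j\in\mathbb{Z}^m$. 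Then I would decompose $F_\nu = F_0 + f_\nu$ as in \eqref{def:F}: the linear blocks are the fundamental solutions along the linearised rotation $\theta\mapsto \theta+\tilde\omega_0 s$,
\[
A_-(\theta) = e^{\int_0^T \cA_-(\theta+\tilde\omega_0 s)ds},\qquad
A_+(\theta) = e^{\int_0^T \cA_+(\theta+\tilde\omega_0 s)ds},\qquad
B(\theta) = e^{\int_0^T \cB(\theta+\tilde\omega_0 s)ds},
\]
while the angular part and the $f_i$ collect the remaining contributions; the variation-of-constants formula together with $\mathbf{(H1)_f}$–$\mathbf{(H4)_f}$ shows that $f_\nu$ has the required vanishing at the torus and that each derivative estimate demanded in $\mathbf{(H1)_{C^2}}$–$\mathbf{(H3)_{C^2}}$ holds with a new constant $L(\delta,\mu) = \mathcal{O}(T(\delta+\mu))$ and constants $\K,\,K_\theta$ of the form $\tilde\K\cdot \mathrm{poly}(T,\tilde K_\theta)$.

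For the crucial hyperbolicity condition $\mathbf{(H0)_{C^2}}$, hypothesis $\mathbf{(H1)_f}$ yields
\[
\|A_\pm^{\mp 1}(\theta)\|_{\mathcal{L}_\Gamma} \leq e^{-\tilde\lambda T} =: \lambda^{-1},\qquad
\|B^{-1}(\theta)\|_{\mathcal{L}_\Gamma} \leq e^{\tilde\beta T} =: \beta,
\]
so $\beta \lambda^{-1} (1+K_\theta)^3 = e^{(\tilde\beta-\tilde\lambda)T}(1+K_\theta)^3$. Since $\mathbf{(H0)_f}$ gives $\tilde\lambda-\tilde\beta>0$, and $K_\theta$ depends only polynomially on $T$ through the Duhamel expansion of the $\theta$-equation, choosing $T$ sufficiently large makes this product less than $1$; after fixing such a $T$, shrinking $\delta$ and $\mu$ guarantees the smallness requirements of the map theorems. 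Applying Theorem \ref{thm:C2case} to $F_\nu$ then produces the $C^2_\Gamma$ graph $\gamma_\nu^s$ with the bounds claimed in Theorem \ref{thm:invmanflows}; the $C^2$ dependence on $\nu$ under $\mathbf{(H5)_f}$ follows by treating $\nu$ as an extra angle, exactly as at the end of Section \ref{sec:invmanmaps}, and the $\Sigma_{j,\Gamma}$ statement is the second item of Theorem \ref{thm:C2case}. The asymptotic rate $e^{\tilde\lambda t}$ as $t\to-\infty$ for orbits on the graph comes from the map rate $\lambda^{-1}=e^{-\tilde\lambda T}$ combined with the uniform $C^0$ bound on $\Phi^t_{\XX_\nu}$ for $0\leq t\leq T$.

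The main obstacle is the verification step: one must check that composition with the time-$T$ flow does not destroy the delicate decay structure encoded in $\mathbf{(H1)_{C^2}}$–$\mathbf{(H3)_{C^2}}$, and, most delicately, preserves the estimates $\|\partial_\theta f_k(\nu;x,y,\theta,r)\|_{\mathcal{L}_\Gamma} \lesssim \|x\|_{\ell^\infty}+\|y\|_{\ell^\infty}+|r|_d$ and their second-derivative Lipschitz analogues. This requires differentiating the Duhamel formula $F_\nu - F_0 = \int_0^T D\Phi^{T-s}\,\cF_\nu(\Phi^s\,\cdot\,)\,ds$ and exploiting the vanishing of $\cF_1,\cF_2,\cF_4$ on the torus (hypothesis $\mathbf{(H2)_f}$) to gain the factor $\|x\|+\|y\|+|r|$; the algebra structure of $\mathcal{L}^k_\Gamma$ from Section \ref{sec:functional} is exactly what makes these bookkeeping estimates go through cleanly.
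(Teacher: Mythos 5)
Your reduction to the time--$T$ map $\Phi^T_{\XX_\nu}$, the Duhamel decomposition $\Phi^T=F_0^T+f_\nu^T$, and the choice of $T$ large so that $\lambda^{-1}=e^{-\tilde\lambda T}$, $\beta=e^{\tilde\beta T}$, and the polynomially-in-$T$ growing $K,K_\theta$ satisfy \eqref{condition:lambdastrong} is exactly the paper's argument, including the invocation of Theorem \ref{thm:C2case} for the $C^2_\Gamma$ and $\Sigma_{j,\Gamma}$ conclusions.

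The one place you diverge is in promoting the $F_\nu$-invariant graph to a flow-invariant graph. You argue by commutation of $\Phi^t$ with $F_\nu$ and uniqueness of the graph representation; this is sound in spirit, but it tacitly uses that $\Phi^t(\mathrm{graph}(\gamma^s))\cap\cM_\delta$ remains a Lipschitz graph in the class $\Xi_{c,M}$ where the contraction fixed point is unique --- a point worth a sentence. The paper instead shows that the parametrisation $\gamma^T$ produced by the map theorems is \emph{independent of $T$}: first $\gamma^T=\gamma^{qT}$ for all rational $q$ by uniqueness of the fixed point for the iterated map, and then for all $T$ near a base period $T^*$ by $C^2$ dependence on $T$ (treated as an extra parameter) together with density of $\mathbb{Q}T^*$. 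Flow-invariance then follows from tangency of the vector field. Both routes are valid; yours is slightly shorter but leans on an unstated fact about the uniqueness domain of the graph transform, while the paper's $T$-independence argument keeps the reasoning entirely within the objects Theorem \ref{thm:C2case} explicitly controls.
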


Note that this theorem implies easily Theorem \ref{thm:toriinvman}. Indeed, it is straightforward to verify Hypotheses $\mathbf{(H1)_f}$-$\mathbf{(H4)_f}$ for the vector field \eqref{system} (see also \eqref{def:fs}). Note also that one can also verify Hypothesis $\mathbf{(H5)_f}$ with respect to the parameter $\eps$. 

We devote the rest of the section to deduce Theorem \ref{thm:invmanflows} from Theorems \ref{thm:lipcase}, \ref{thm:C1case}, \ref{thm:C2case}.

\begin{proof}
Denote by $\Phi^t$ the flow defined by the vector field $\cX_\nu$. 
It is easy to check by a classical Picard iteration argument that for any fixed $T>0$ there exists $\de>0$ small enough so that 
\[
 \Phi^t:\mathcal{M}_\de\to \mathcal{M}_{C\de}
\]
for some $C>0$ independent of $\de$ and $t\in [-T,T]$.

We take $T\gg 1$ and we write $\Phi^T$ in a particular form so that Hypotheses $\mathbf{(H1)_f}$-$\mathbf{(H5)_f}$ can be verified. First note, that $\cX_\nu$ can be written as $\cX_\nu=\tilde\cX_0+\tilde \cF_\nu$ where
\[
\wt\XX_0(w)=\begin{pmatrix} \cA_-(\theta) x\\\cA_+(\theta) y\\ \tilde\omega_0\\ \cB(\theta)r\end{pmatrix}
\]
(see \eqref{def:omega0}) and $\tilde\cF_k(\nu; w)=\cF_k(\nu; w)$ for $k=1,2,4$ and 
\[
\tilde\cF_3(\nu; w)=\cF_3(\nu; w)+\tilde\omega(x,y,r)-\tilde\omega(0,0,0)
\]
One can easily check that $\tilde\cX_0$ and $\tilde \cF_\nu$ also satisfy Hypotheses $\mathbf{(H1)_f}$-$\mathbf{(H5)_f}$.

We use this rewriting of $\cX_\nu$ to write $\Phi^t$ in a particular form. First note that, denoting by $\Phi_0^t$ the flow of $\tilde\cX_0$, one has that 
\[
 \Phi^t_0(x,y,r,\theta)=\begin{pmatrix} e^{\int_0^t\cA_-(\theta+\tilde\omega_0 s)ds} x\\ e^{\int_0^t\cA_+(\theta+\tilde\omega_0 s)ds} y\\ \theta+\tilde\omega_0 t\\ e^{\int_0^t\cB(\theta+\tilde\omega_0 s)ds}r\end{pmatrix}.
\]
Then, following the notation in Section \ref{sec:invmanmaps} and 
applying Duhamel formula, one can write $\Phi^T$ as
\[
\Phi^T= F^T_0(w)+f^T_{\nu}( w)
\]
where $F^T_0(w)=\Phi_0^T(w)$ and 
\[
\begin{split}
 f^T_{1}( w)&= \int_0^T e^{\int_t^T\cA_-(\theta+\tilde\omega_0 s)ds}\tilde\cF_1(\nu; \Phi^t(w))dt\\
f^T_{2}( w)&= \int_0^T e^{\int_t^T\cA_+(\theta+\tilde\omega_0 s)ds}\tilde\cF_2(\nu; \Phi^t(w))dt\\
f^T_{3}( w)&=\int_0^T\tilde\cF_3(\nu;\Phi^t(w))dt\\
f^T_{4}( w)&= \int_0^T e^{\int_t^T\cB(\theta+\tilde\omega_0 s)ds}\tilde\cF_4(\nu; \Phi^t(w))dt.
 \end{split}
\]
Fixing $T\gg 1$ and $\de>0$ small enough, it is straightforward to check that $F_0^T$ and $f_\nu^T$ satisfy the Hypotheses $\mathbf{(H1)_f}$-$\mathbf{(H5)_f}$.

In particular, for $T>0$ large enough, \eqref{condition:lambdastrong} is satisfied. Indeed, on the one hand
\[
 \lambda^{-1} \leq e^{-\tilde\lambda T}, \qquad \beta \leq e^{\tilde\beta T}
\]
and on the other hand
\[
 K_\theta \lesssim T\tilde K_\theta, \qquad  K \lesssim T\tilde K.
\]
Therefore, there exists $T^*$ such that  for $T\geq T^*$ one has the inequality \eqref{condition:lambdastrong}. Then, Theorems \ref{thm:lipcase}, \ref{thm:C1case}, \ref{thm:C2case} imply the existence of the torus stable invariant manifold for the  map $\Phi^T$ for $T\geq T^*$. We denote this parameterization by $\gamma^T$
. Note that it is defined in $B_{{\delta}_T}(\ell^\infty)\times\mathbb{T}^d$ for some $\delta_T>0$ which may depend on $T$.

Then, it only remains to show that $\gamma^T$ is independent of $T$. This would imply that $\gamma=\gamma^T$ is invariant by the flow $\Phi^t$. Note that it is enough to show that there exists $0<\eta\ll1$ so that for any $T_1,T_2\in [T^*, T^*+\eta]$ one has $\gamma^{T_1}=\gamma^{T_2}$ since this implies that the vector field is tangent to the invariant manifold.

First note that using the uniqueness of the invariant manifold, for any $n\in \mathbb{N}$, $\gamma^T=\gamma^{nT}$ since $\gamma^T$ is both invariant under $\Phi^T$ and under its $n$-iterate $\Phi^{nT}$. Reasoning analogously, one has that $\gamma^T=\gamma^{qT}$ for any $q\in\mathbb{Q}$.

Note moreover, that it is easy to check that there exists $\delta_0$ such that for any $T\in [T^*, T^*+\eta]$, $\gamma^T$ is defined in $B_{{\delta}_0}(\ell^\infty)\times\mathbb{T}^d$.
Then, the family of parameterizations  $\gamma^T$ for  $T\in [T^*, T^*+\eta]$  are defined in a common domain. Moreover, by Hypothesis\footnote{Note that Hypothesis $\bf{(H5)_f}$ only admits dependence on the parameter $\nu$ on $f_\nu$ but not on $F_0$. This is not the case for the parameter $T$ and the map $\Phi^T=F_0^T+f_\nu^T$ since the two terms in the sum depend on $T$. However, note that one can just fix $T_0$ and  define $\tilde F_0=F_0^{T_0}$ and $\tilde f_\nu=f_\nu^T+F_0^{T}-F_0^{T_0}$ accordingly. The new maps satisfy the same hypothesis as before and also \textbf{H5} for the parameter $T$} $\mathbf{(H5)_f}$ they are $C^2$ with respect to $T$ and $\gamma^T$ coincides with $\gamma^{T^*}$ for all $T=qT^*$ with $q\in\mathbb{Q}$. Thus, we can conclude that $\gamma$ is independent of $T$. This completes the proof of Theorem \ref{thm:invmanflows}.
\end{proof}

\section{Transverse intersection of invariant manifolds and construction of the transition chain}\label{sec:MelnikovTheory}

We devote this section to prove Theorem \ref{thm:transitiochainXj}. 
As explained in Section \ref{sec:transvheterosheuristic}, recall that we deal with formal Hamiltonians. However, one can restrict to the invariant (under the flow of \eqref{eq:motions}) subspaces $\cV_i$ in \eqref{def:subspacesXj} where the energy is well defined. Recall that  $\cV_i$ is invariant thanks to 
%
%
Hypotheses \textbf{H1} and \textbf{H2}.

Within the invariant subspaces $\cV_i$ and at a fixed energy level $h$, there are the invariant cylinders introduced in \ref{def:cylinderLambdaj},
\[
 \Lambda_i=\left\{(q,p)\in \cV_i\cap H^{-1}(h): q_{\sigma_{i+2}}=p_{\sigma_{i+2}}=0 \right\}.
\]
Note that these cylinders are not normally hyperbolic since they possess the periodic orbits $P_{\sigma_i}$ and $P_{\sigma_{i+1}}$ introduced in \eqref{def:invtori}
whose hyperbolicity within $\Lambda_i$ is as strong as the normal one. However, for any $\delta>0$,
\begin{equation}\label{def:nhim}
  \Lambda_{i,\delta}=\left\{(q,p)\in \Lambda_i: E_i(q,p)\in (\delta, h-\delta)\right\}
\end{equation}
is a normally hyperbolic invariant manifold both for $\varepsilon=0$ and for $0<\varepsilon\ll 1$.

We construct the chain of transverse heteroclinics connecting different invariant tori of either dimension one or two given by Theorem \ref{thm:transitiochainXj}  in two steps. First we construct ``pieces'' of the transition chain ``within'' $\cV_i$. Then, we show that the heteroclinic connections are transverse in the infinite dimensional setting in the sense of Definition \ref{def:transversalityflows}.

\subsection{Transversality within the 6 dimensional invariant subspaces}\label{sec:chain6dimspace}
To analyze the existence of transverse heteroclinic orbits in the subspace  $\cV_i$ we distinguish two ``regimes''. First we analyze the invariant tori in the normally hyperbolic invariant cylinder $\Lambda_{i,\delta}$ in \eqref{def:nhim}. Later we analyze the invariant tori in $\delta$-neighborhoods of the periodic orbits $P_{\sigma_{i}}$ and $P_{\sigma_{i+1}}$ (see  \eqref{def:invtori}). 

We consider the intersection of the invariant manifolds of the invariant tori with the invariant subspace $\cV_i$ which we denote by
\[
  W_{\eps,\cV_i}^u(\T) =W_{\eps}^u(\T)\cap \cV_i,
\]
where $\T$ denotes any of the tori in \eqref{def:invtori}.

\begin{lem}\label{lemma:TransitionArnold}
Fix $\delta>0$ small. Assume that $H$ satisfies $\mathbf{H1}$,$\mathbf{H2}$,$\mathbf{H3}$. Then, there exists $\varepsilon_0>0$ such, that for any $\varepsilon\in (0,\varepsilon_0)$, there exist $K>0$ and a sequence of {two-dimensional} tori $\T_{i,k}\subset \Lambda_i$, $k=0\ldots K$ 
such that 
\[
 E_{\sigma_{i}}(\T_{i,0})\in (0,\delta)\qquad \text{and}\qquad  E_{\sigma_{i}}(\T_{i,K})\in (h,h-\delta)
\]
and 
\[
 W_{\eps,\cV_i}^u(\T_{i.k}) \pitchfork_{\cV_i}  W_{\eps,\cV_i}^s(\T_{i,k+1})
\]
where  $\pitchfork_{\cV_i}$ denotes transversality in the sense of Defintion \ref{def:transversalityflows} applied to the invariant subspace $\cV_i$.
\end{lem}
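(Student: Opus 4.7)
The plan is to realize Lemma~\ref{lemma:TransitionArnold} as a classical application, inside the 6-dimensional invariant subspace $\cV_i$, of the scattering map machinery of Delshams--de la Llave--Seara~\cite{DelshamsLS06a, DelshamsLS08}, treating $\Lambda_{i,\delta}$ as a normally hyperbolic invariant 3-manifold. First I would fix good coordinates on $\Lambda_{i,\delta}$: since on $\cV_i \cap H^{-1}(h)$ with the $\sigma_{i+2}$-pendulum pinned at the saddle we have $E_{\sigma_i}+E_{\sigma_{i+1}}=h$, the cylinder $\Lambda_{i,\delta}$ is parametrized by the action-angle variables $(\theta_{\sigma_i},\theta_{\sigma_{i+1}}, r_{\sigma_i})$ with $r_{\sigma_{i+1}}=r_{\sigma_{i+1}}(r_{\sigma_i},h)$. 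For $\eps=0$ the stable and unstable manifolds of each invariant 2-torus $\T_{h_i}:=\TT_{\sigma_i,\sigma_{i+1},h_i,h-h_i}$ coincide along a homoclinic manifold (the separatrix of the $\sigma_{i+2}$-pendulum times $\T_{h_i}$). For $0<\eps\ll 1$, normal hyperbolicity gives persistent cylinders $\Lambda_{i,\delta}^\eps$ and invariant manifolds $W^{u,s}_{\eps,\cV_i}(\Lambda_{i,\delta}^\eps)$ which are $\cO(\eps)$-close to the unperturbed ones.

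Next, I would use Hypotheses $\mathbf{H3.1}$--$\mathbf{H3.2}$ to detect transverse heteroclinic intersections between $W^u_{\eps,\cV_i}(\T_{h_i})$ and $W^s_{\eps,\cV_i}(\T_{h_i'})$ for nearby values $h_i,h_i'$. The non-degenerate critical point $t=\tau(x_i,x_{i+1},h_i,h_{i+1})$ of the Melnikov potential $\mathcal{L}_i$ in $\mathbf{H3.1}$ produces, by the implicit function theorem, a smooth $\eps$-family of transverse homoclinic channels to $\Lambda_{i,\delta}^\eps$, which in turn defines a scattering map $\mathcal{S}_\eps:\Lambda_{i,\delta}^\eps\supset U\to\Lambda_{i,\delta}^\eps$. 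Standard computations (see \cite{DelshamsLS08}) show that the first-order change of the action under $\mathcal{S}_\eps$ is governed exactly by $\mathcal M_i=\{H_{1,i},E_i\}$ evaluated at $t=\tau$; hypothesis $\mathbf{H3.2}$ therefore ensures that $E_{\sigma_i}\circ\mathcal S_\eps - E_{\sigma_i}$ is $\eps$ times a non-constant function of the angles with a definite sign on $\mathcal J_+$ (respectively $\mathcal J_-$). In particular, on the appropriate open set, one orbit of $\mathcal S_\eps$ increases $E_{\sigma_i}$ by an amount $c\eps$ with $c>0$ bounded below uniformly away from $\partial\Lambda_{i,\delta}$.

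Given this, the chain is built by iterating the scattering map. Starting from any torus with $E_{\sigma_i}\in (0,\delta)$ (slightly inside the cylinder) I would produce $\T_{i,0},\T_{i,1},\ldots$ with $E_{\sigma_i}(\T_{i,k+1})=E_{\sigma_i}(\T_{i,k})+\cO(\eps)$, choosing at each step a point where $\mathcal M_i$ has the required sign so the direction of drift is towards $h$; since the energy jump per step is bounded below by $c\eps/2$, after $K=\cO(\eps^{-1})$ steps we reach a torus with $E_{\sigma_i}\in(h-\delta,h)$. By construction of $\mathcal S_\eps$ through the Melnikov critical point, each pair $\T_{i,k},\T_{i,k+1}$ admits a heteroclinic point $z_{i,k}\in W^u_{\eps,\cV_i}(\T_{i,k})\cap W^s_{\eps,\cV_i}(\T_{i,k+1})$ at which the tangent spaces sum to the level set $\{H=h\}\cap\cV_i$ modulo the flow direction, i.e.\ the intersection is transverse in $\cV_i$ in the sense of Definition~\ref{def:transversalityflows}: this transversality is inherited from the non-degeneracy of $\tau$ in $\mathbf{H3.1}$ and from the classical proof that the scattering map of a normally hyperbolic manifold is smooth and defined precisely on the set of transverse heteroclinics.

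The main obstacle is not the existence of individual heteroclinic intersections (this is by now standard once $\mathbf{H3}$ is assumed) but the quantitative control that allows one to connect the two ends of the cylinder by a finite sequence with a uniform lower bound on the energy jump: one must check that the open sets $\mathcal J_+$ on which $\mathcal M_i$ is positive remain non-empty, uniformly, as $h_i$ ranges over $(\delta, h-\delta)$, so that the scattering map can be iterated all the way across $\Lambda_{i,\delta}$ without getting stuck; since $\mathcal J_{h_i}$ depends smoothly on $h_i$ by $\mathbf{H3}$ and $\delta>0$ is fixed, this is a compactness argument on the interval $[\delta,h-\delta]$.
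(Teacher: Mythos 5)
Your proposal follows essentially the same route the paper takes: the paper's (very terse) proof reduces everything to the scattering-map machinery of \cite{DelshamsLS06a,DelshamsLS08} applied to the 3-dimensional normally hyperbolic cylinder $\Lambda_{i,\delta}$ inside $\cV_i\cap H^{-1}(h)$, with $\mathbf{H3.1}$ producing the transverse homoclinic channel (hence a well-defined, smooth scattering map) and $\mathbf{H3.2}$ guaranteeing that the scattering map moves the $E_{\sigma_i}$-level sets transversally so that consecutive tori are heteroclinically connected. Your write-up correctly identifies both roles of $\mathbf{H3}$, correctly links the first-order drift of $E_{\sigma_i}$ under the scattering map to the Poisson bracket $\{H_{1,i},E_{\sigma_i}\}$, and also flags a point the paper leaves implicit, namely the need for uniformity over $h_i\in[\delta,h-\delta]$ so the iteration can cross the whole cylinder; the compactness argument you propose is the right way to dispose of it and matches the spirit of the paper's remark that $\mathcal{J}_{h_i}$ varies smoothly in $h_i$.

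One minor inaccuracy to be aware of: the lemma asks for the endpoints of the chain to satisfy $E_{\sigma_i}(\T_{i,0})\in(0,\delta)$ and $E_{\sigma_i}(\T_{i,K})\in(h-\delta,h)$ (reading the interval in the statement as a typo), i.e.\ tori which lie slightly \emph{outside} the cylinder $\Lambda_{i,\delta}$. To produce them by the scattering map you should really work on a slightly larger cylinder $\Lambda_{i,\delta'}$ with $0<\delta'<\delta$, which is still normally hyperbolic for each fixed $\delta'$; your phrase ``slightly inside the cylinder'' and the compactness argument over $[\delta,h-\delta]$ should then be adjusted to $[\delta',h-\delta']$. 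This does not change the structure of the argument, only the bookkeeping.
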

This lemma is a consequence of Melnikov Theory. Its prove goes back to \cite{Arnold64}. A more modern proof can be found in \cite{DelshamsLS06a} which relies on the so called scattering map. Note that in these papers, the dynamics of the unperturbed Hamiltonian on  the cylinder is given already in action angle coordinates. Even if this is not the case in the present setting, the proof follows the same lines as \cite{DelshamsLS06a} since the scattering map is defined independently of the choice of coordinates.

Hypothesis $\mathbf{H3.1}$ ensures that the invariant manifolds of the normally hyperbolic invariant cylinder $\Lambda_{j,\delta}$ are transverse. This allows to define scattering maps locally at these transverse intersections.  Hypothesis $\mathbf{H3.2}$ ensures that these scattering maps are such that the image of the level sets of the energy $E_j$ is transverse to the level sets. This implies that there are heteroclinic connections between ``close enough'' tori.

Lemma \ref{lemma:TransitionArnold} give a transition chain that ``connects'' invariant tori which are $\delta$-close to the periodic orbits $P_{{\sigma_i}}$ and $P_{\sigma_{i+1}}$. The next lemma extend the transition chain to reach these periodic orbits (see Figure \ref{fig:chain}).

\begin{lem}\label{lemma:TransitionPeriodic}
Fix $\delta>0$ small. Assume that $H$ satisfies $\mathbf{H1}$,$\mathbf{H2}$,$\mathbf{H3}$ and $\mathbf{H4}$. Then there exists $\varepsilon_0>0$ such that for any $\varepsilon\in (0,\varepsilon_0)$ there exist $K'>0$ and a sequence of tori $\T_{i,k}'\subset \Lambda_i$, $k=1\ldots K'$ where $\T_{i,K'}'=\T_{i,0}$ is the torus obtained in Lemma \ref{lemma:TransitionArnold}  such that 
\[
 W_{\eps,\cV_i}^u(P_{\sigma_j}) \pitchfork_{\cV_i}  W_{\eps,\cV_i}^s(\T_{i,0}')\qquad \text{and}\qquad  W_{\eps,\cV_i}^u(\T_{i,k}) \pitchfork_{\cV_i}  W_{\eps,\cV_i}^s(\T_{i,k+1})\qquad \text{for}\qquad k=0,\ldots K'.
\]
\end{lem}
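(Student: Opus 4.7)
The strategy is to work entirely inside the finite--dimensional invariant subspace $\cV_i$, where the unperturbed dynamics is integrable (three uncoupled penduli), and then exploit the non--degeneracy of the jumping--regime Melnikov potential $\widetilde{\cL}_i$ from Hypothesis $\mathbf{H4}$. I will first set up adapted coordinates on $\cV_i\cap H^{-1}(h)$ in which the unperturbed homoclinic manifold of the periodic orbit $P_{\sigma_i}$ is foliated by an angle $x_i\in\T$ (phase on $P_{\sigma_i}$) and two time parameters $(t_1,t_2)\in\R^2$ corresponding to the two pendulum separatrices at sites $\sigma_{i+1}$ and $\sigma_{i+2}$. In these coordinates the stable and unstable invariant manifolds of $P_{\sigma_i}$ are three dimensional inside the five dimensional energy level $\cV_i\cap H^{-1}(h)$ and coincide when $\eps=0$.

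Next, for $0<\eps\ll 1$, I would compute the first order distance between $W^u_{\eps,\cV_i}(P_{\sigma_i})$ and $W^s_{\eps,\cV_i}(P_{\sigma_i})$ in the two transversal coordinates conjugate to $(t_1,t_2)$. The standard Melnikov--Poincar\'e calculation gives this distance as $\eps\,\nabla_{(t_1,t_2)}\widetilde{\cL}_i(x_i,h,t_1,t_2)+O(\eps^2)$. Hypothesis $\mathbf{H4}$ provides a non--degenerate critical point $\widetilde\tau(x_i,h)$ of $\widetilde{\cL}_i$ in $(t_1,t_2)$; the implicit function theorem then yields, for every $\eps$ sufficiently small, a transverse homoclinic intersection of $W^u_{\eps,\cV_i}(P_{\sigma_i})$ and $W^s_{\eps,\cV_i}(P_{\sigma_i})$ in the sense of Definition \ref{def:transversalityflows} applied to the $5$--dimensional manifold $\cV_i\cap H^{-1}(h)$.

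To obtain heteroclinic connections to \emph{nearby} two dimensional tori, I consider the one parameter family $\TT_{h'}:=\TT_{\sigma_i,\sigma_{i+1},h-h',h'}$, which converges to $P_{\sigma_i}$ as $h'\to 0^+$. The invariant manifolds $W^s_{\eps,\cV_i}(\TT_{h'})$ depend smoothly on $h'$ (by Theorem \ref{thm:toriinvman} applied to each torus), and as $h'\to 0$ the corresponding Melnikov function degenerates into $\widetilde{\cL}_i$ in a controlled way: one of the time variables is reinterpreted as the extra pendulum angle of $\TT_{h'}$ through the inner--pendulum separatrix time--energy coordinates. Since non--degenerate critical points are stable under small perturbations of the Melnikov function, the implicit function theorem gives, for every small enough $h'>0$, a transverse heteroclinic from $P_{\sigma_i}$ to $\TT_{h'}$. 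Choosing a decreasing sequence of parameters $h'_{K'}>h'_{K'-1}>\cdots>h'_1>0$ whose consecutive gaps are small enough, the same continuity argument (applied now between two two--dimensional tori very close to $P_{\sigma_i}$) yields transverse heteroclinics $W^u_{\eps,\cV_i}(\TT_{h'_k})\pitchfork_{\cV_i}W^s_{\eps,\cV_i}(\TT_{h'_{k+1}})$. For $h'_k$ larger than the threshold $\delta$ of Lemma \ref{lemma:TransitionArnold} one is already inside the normally hyperbolic cylinder $\Lambda_{i,\delta}$, and the Arnold regime chain of that lemma then connects the last torus $\TT_{h'_{K'}}$ to $\T_{i,0}$. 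Setting $\T'_{i,0}:=\TT_{h'_1}$ and relabelling the tori yields the statement.

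The main obstacle is the degeneration of the geometric structure as $h'\to 0$: the normal hyperbolicity of $\Lambda_i$ becomes comparable to the internal hyperbolicity, so the scattering map of \cite{DelshamsLS08} used in the Arnold regime is no longer available, and the heteroclinic connects invariant objects of \emph{different dimensions}. The key technical point is therefore verifying that as $h'\to 0$ the gradient of the heteroclinic Melnikov function converges to $\nabla_{(t_1,t_2)}\widetilde{\cL}_i$ with a non--vanishing Jacobian from the change of variables (angle of $\TT_{h'}$ into separatrix time), so that non--degeneracy is preserved uniformly in $h'$. Once this uniformity is established, the perturbative argument can be iterated finitely many times to close the chain, and transversality in the sense of Definition \ref{def:transversalityflows} follows automatically since everything happens inside the finite--dimensional manifold $\cV_i\cap H^{-1}(h)$ where $dH$ is a bounded one--form.
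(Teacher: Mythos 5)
Your proposal follows essentially the same route as the paper: set up a two--dimensional Poincar\'e section transverse to the unperturbed homoclinic manifold of $P_{\sigma_i}$ parameterized by $(x_i,\tau_1,\tau_2)$, compute the splitting of $W^{u,s}_{\eps,\cV_i}(P_{\sigma_i})$ via the Melnikov integrals $\mathcal{M}_{\sigma_{i+k}}=\partial_{t_k}\widetilde{\mathcal L}_i$, invoke $\mathbf{H4}$ and the implicit function theorem to produce transverse homoclinics to the periodic orbit, then bootstrap to heteroclinics to nearby two--dimensional tori and iterate up to the threshold $\delta$ where Lemma~\ref{lemma:TransitionArnold} takes over. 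You also correctly identify the central obstruction (normal hyperbolicity degenerating relative to the inner one as $h'\to 0$, forcing you to abandon the scattering map) and the fact that transversality reduces to a finite--dimensional computation inside $\cV_i\cap H^{-1}(h)$.

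However, there is a quantitative gap you gesture at but do not close, and it is precisely the non--routine part. Your statement that the implicit function theorem gives a transverse heteroclinic from $P_{\sigma_i}$ to $\TT_{h'}$ ``for every small enough $h'>0$'' is too strong: the constant offset $h-h_i$ (equivalently $h'$) enters the $E_{\sigma_{i+1}}$--distance equation at order $1$, while the Melnikov contribution is order $\eps$; the paper therefore requires $h'\leq\kappa\eps$ for an explicit small $\kappa$, and consequently needs $\mathcal{O}(\delta/\eps)$ intermediate tori to climb from $P_{\sigma_i}$ to $\Lambda_{i,\delta}$. You also invoke ``smooth dependence on $h'$'' of $W^s(\TT_{h'})$ via Theorem~\ref{thm:toriinvman}, but this smoothness degenerates as $h'\to 0$ (the torus collapses onto the periodic orbit and its normal hyperbolicity merges with the inner one); the paper instead compares the torus and periodic--orbit Melnikov integrals directly, splitting the time axis at $c|\log\eps|$ and obtaining the error bound $\mathcal{O}(\eps^2\log\eps)$, which is what actually makes the IFT close. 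Your ``key technical point is therefore verifying that as $h'\to 0$\dots'' correctly names the issue, but that verification, together with the $h'=\mathcal{O}(\eps)$ constraint and the role of the reparameterized Melnikov potential $\mathcal L_i'$ (whose non--degenerate critical points ultimately come from $\mathbf{H3}$, not continuity from $\mathbf{H4}$ alone), is the actual content of the proof and should not be left to the reader.
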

Note that for Lemma \ref{lemma:TransitionPeriodic} one cannot apply the scattering map technology since in this setting ${\Lambda_i}$ is not a normally hyperbolic cylinder anymore. Instead, we use the classical Arnold approach \cite{Arnold64} to deal directly with the invariant manifolds of the periodic and the invariant tori and look for their intersections. Note that this approach could also have been used in Lemma \ref{lemma:TransitionArnold}. We have used instead the scattering map to emphasize that that lemma deals with the ``classical'' a priori unstable setting.

\begin{proof}[Proof of Lemma \ref{lemma:TransitionPeriodic}]
  We show how to connect $P_{\sigma_i}$ with the torus $\T_{i,0}'$ (to be chosen).   
  Note that the stable/unstable invariant manifolds of these objects are three dimensional within a five dimensional energy level. Therefore, to analyze the breakdown of the homoclinic channels and the possible connections between different objects we have to fix a two dimensional section transverse to the unperturbed homoclinic manifold to $P_{\sigma_i}$. 
 
 We consider a section transverse to the homoclinic manifolds of the periodic orbit $P_{\sigma_i}$ and the invariant tori ``close to it''. 
 Let us call $\Pi=\Pi(x_i, \tau_1, \tau_2)$ the $2$-dimensional affine subspace passing through a given point $\tilde{z}_0=\tilde{z}_0(0, x_i, \tau_1, \tau_2)$ of the homoclinic manifold of $P_{\sigma_i}$ and spanned by the vectors $\nabla E_{\sigma_{i+1}}(\tilde{z}_0)$, $\nabla E_{\sigma_{i+2}}(\tilde{z}_0)$, with
 \begin{equation}\label{def:homomanifold}
 \begin{aligned}
\tilde{z}_0(t, x_i, \tau_1, \tau_2) &=\Phi_0^t(\tilde{z}_0(0, x_i, \tau_1, \tau_2))\\
&=\big(q_h(t, x_j), p_h(t, x_j), q_0(\tau_1+t), p_0(\tau_1+t), q_0(\tau_2+t), p_0(\tau_2+t) \big),
\end{aligned}
\end{equation}
where $(q_h(t, x_i), p_h(t, x_i))$ is the periodic orbit contained in $\{ E_{\sigma_i}=h, E_{\sigma_{i+1}}=E_{\sigma_{i+2}}=0 \}$ such that $q_h(0, x_i)=x_i\in \T$ and $(q_0(\tau_k), p_0(\tau_k))$ is the homoclinic orbit of the pendulum $\sigma_{i+k}$ with $k=1, 2$.
The section $\Pi(x_i, \tau_1, \tau_2)$ is transverse to the homoclinic manifold of $P_{\sigma_i}$ and, for $\e>0$ small enough, also to the invariant manifolds $W_{\e, \mathcal{V}_i}^{s, u}(P_{\sigma_i})$.

Fix $T>0$. Since the invariant manifolds are regular with respect to parameters, for $0<\e\ll 1$ and any
\begin{equation}\label{def:DomainMelnikov}
(x_i, \tau_1, \tau_2)\in \T\times [-T, T]^2
\end{equation}
there are points $\tilde{z}^{s, u}=\tilde{z}^{s, u}(x_i, \tau_1, \tau_2)$ which belong to the intersection of $W_{\e, \mathcal{V}_i}^{s, u}(P_{\sigma_i})$ with $\Pi(x_i, \tau_1, \tau_2)$.
In particular these points satisfy
\[
\tilde{z}^{s, u}=\tilde{z}^{s, u}(x_i, \tau_1, \tau_2)=\tilde{z}_0(0, x_i, \tau_1, \tau_2)+\mathcal{O}_{C^1}(\e).\footnote{Since we are in a finite dimensional setting decay plays no role.}
\]
To measure the distance between these points in $\Pi$ we use the energies of the pendulums $\sigma_{i+1}, \sigma_{i+2}$
\[
d_{\sigma_i+k}(\tilde{z}^u, \tilde{z}^s)=E_{\sigma_{i+k}}(\tilde{z}^u)-E_{\sigma_{i+k}}(\tilde{z}^s), \qquad k=1, 2.
\]
Let us denote by
\[
\tilde{z}^{s, u}(t, x_i, \tau_1, \tau_2)=\Phi_{\e}^t(\tilde{z}^{s, u}(x_i, \tau_1, \tau_2)).
\]
Then
\begin{align*}
E_{\sigma_{i+k}}(\tilde{z}^s(x_i, \tau_1, \tau_2))&=E_{\sigma_{i+k}}(\tilde{z}^s(t, x_i, \tau_1, \tau_2))-\int_0^t \left( \frac{d}{d \mathtt{t}} E_{\sigma_{i+k}}(\tilde{z}^s(\mathtt{t}, x_i, \tau_1, \tau_2))\right) \,d\mathtt{t}\\
&=E_{\sigma_{i+k}}(\tilde{z}^s(t, x_i, \tau_1, \tau_2))-\e\int_0^t \{ E_{\sigma_{i+k}}, H_{1, i} \} \circ \Phi_{\e}^{\mathtt{t}}(\tilde{z}^s(x_i, \tau_1, \tau_2))  \,d\mathtt{t}
\end{align*}
Since the forward iterates of $\tilde{z}^s$ tend to the periodic orbit $P_{\sigma_{i}}$ and $E_{\sigma_{i+k}}(P_{\sigma_{i}})=0$, $k=1,2$, when we let $t$ tends to $+\infty$ we get
\[
E_{\sigma_{i+k}}(\tilde{z}^s(x_i, \tau_1, \tau_2))=\e\int_0^{+\infty} \{  H_{1, i}, E_{\sigma_{i+k}} \} \circ \Phi_{\e}^{\mathtt{t}}(\tilde{z}^s(x_i, \tau_1, \tau_2))  \,d\mathtt{t}
\]
and reasoning analogously 
\[
E_{\sigma_{i+k}}(\tilde{z}^u(x_i, \tau_1, \tau_2))=\e\int_{-\infty}^{0} \{  H_{1, i}, E_{\sigma_{i+k}} \} \circ \Phi_{\e}^{\mathtt{t}}(\tilde{z}^u(x_i, \tau_1, \tau_2))  \,d\mathtt{t}.
\]
Since the perturbation $H_1$ vanishes on the cylinder $\tilde{\Lambda}_i$, by hyperbolicity we have
\begin{equation}\label{bound:hyp}
\begin{aligned}
\|\Phi^{\mathtt{t}}_{\e}(\tilde{z}^s(x_i, \tau_1, \tau_2))-\Phi^{\mathtt{t}}_0(\tilde{z}_0(0, x_i, \tau_1, \tau_2))\|\le C \e e^{-\nu \mathtt{t}} \qquad \forall \mathtt{t}\geq 0
\end{aligned}
\end{equation}
for some $C>0$ and $\nu>0$ independent of $\e$. The same estimate holds for the derivatives in $x_i$, $\tau_1$ and $\tau_2$.
Reasoning in the same way for $\Phi^{\mathtt{t}}_{\e}(z^u(x_i, \tau_1, \tau_2))$ for $\mathtt{t}\le 0$ we conclude that
\[
E_{\sigma_{i+k}}(\tilde{z}^u)-E_{\sigma_{i+k}}(\tilde{z}^s)=\e \mathcal{M}_{\sigma_{i+k}}(h, x_i, \tau_1, \tau_2)+\mathcal{O}(\e^2)
\]
where $\mathcal{M}_{\sigma_{i+k}}(x_j,t_1,t_2)$ is the Melnikov function
\[
\mathcal{M}_{\sigma_{i+k}}(h, x_i, \tau_1, \tau_2)=\int_{-\infty}^{+\infty} \{  H_{1, i}, E_{\sigma_{i+k}} \} \circ \Phi_{0}^{\mathtt{t}}(\tilde{z}_0(0, x_i, \tau_1, \tau_2))  \,d\mathtt{t}
\]
and $H_{1,j}$ is the Hamiltonian introduced in \eqref{def:Hamj}.
Note that $\mathcal{M}_{\sigma_{i+1}}=\partial_{t_1}\widetilde{\mathcal{L}}_j$ and 
 $\mathcal{M}_{\sigma_{i+2}}=\partial_{t_2}\widetilde{\mathcal{L}}_j$ (see \eqref{def:MelnikovPotential:per}).

Hypothesis $\mathbf{H4}$ ensures the existence of  transverse critical points of $\widetilde{\mathcal{L}}_j$. Then, the Implicit Function Theorem gives the existence of non-degenerate zeros of the distances $d_{\sigma_{j+1}}$ and $d_{\sigma_{j+2}}$ which are  $\varepsilon$-close to these critical points. These zeros correspond to transverse homoclinic orbits to $P_{\sigma_j}$.

We show that, from the existence of transverse homoclinic orbits to $P_{\sigma_j}$, one can construct heteroclinic connections between $P_{\sigma_j}$ and $2$-dimensional tori $\T_{\sigma_i, \sigma_{i+1}}=\{ E_{\sigma_i}=h_i, E_{\sigma_{i+1}}=h-h_i, E_{\sigma_{i+2}}=0 \}$ with $h-h_i=\mathcal{O}(\e)$. A similar idea,  in a different context, was used in \cite{GiulianiGMS21}.


For $\e>0$ small enough, since $h-h_i=\mathcal{O}(\e)$, the section $\Pi(x_i, \tau_1, \tau_2)$ is transversal to the homoclinic manifold of $\T_{\sigma_i, \sigma_{i+1}}$ and to the invariant manifolds $W_{\e, \mathcal{V}_i}^{s, u}(\T_i)$. 
Let us define with
\begin{align*}
\Phi_0^t(z_0(0, x_i, \tau_1, \tau_2)) &=z_0(t, x_i, \tau_1, \tau_2)\\
&=\big(q_{h_i}(t, x_i), p_{h_i}(t, x_i), q_{h-h_i}(\tau_1+t), p_{h-h_i}(\tau_1+t), q_0(\tau_2+t), p_0(\tau_2+t)   \big),
\end{align*}
where $(q_{h-h_i}(\tau_1), p_{h-h_i}(\tau_1))$  is the periodic orbit contained in $\{ E_{\sigma_i}=h, E_{\sigma_{i+1}}=h-h_i, E_{\sigma_{i+2}}=0 \}$ such that $q_{h-h_i}(0)=\pi$.
Let us call $z^{s, u}=z^{s, u}(x_i, \tau_1, \tau_2)$ points in the intersection $W_{\e, \mathcal{V}_i}^{s, u}(\T_i)\cap \Pi(x_i, \tau_1, \tau_2)$. To find heteroclinic connections between the periodic orbit $P_{\sigma_i}$ and the torus $\T_{\sigma_i, \sigma_{i+1}}$ we measure the distance
\[
d_{\sigma_{i+k}}(\tilde{z}^u, z^s)=E_{\sigma_{i+k}}(\tilde{z}^u)-E_{\sigma_{i+k}}(z^s) \qquad k=1, 2.
\]
By reasoning as before we get
\begin{align*}
E_{\sigma_{i+k}}(\tilde{z}^u)&=\e\int_{-\infty}^0 \{ H_{1, i}, E_{\sigma_{i+k}} \}\circ \Phi^{\mathtt{t}}_{0}(\tilde{z}_0)\,d\mathtt{t}+\mathcal{O}(\e^2), \qquad k=\sigma_{i+1}, \sigma_{i+2},\\
E_{\sigma_{i+k}}(z^s)&=\begin{cases}
h-h_i+\e\int_{0}^{+\infty} \{ H_{1, i}, E_{\sigma_{i+1}} \}\circ \Phi^{\mathtt{t}}_{0}(z_0)\,d\mathtt{t}+\mathcal{O}(\e^2) \qquad k=1,\\[2mm]
\e\int_{0}^{+\infty} \{ H_{1, i}, E_{\sigma_{i+2}} \}\circ \Phi^{\mathtt{t}}_{0}(z_0)\,d\mathtt{t}+\mathcal{O}(\e^2) \qquad \qquad \qquad k=2.
\end{cases}
\end{align*}
Note that the term $h-h_i$ in $E_{\sigma_{i+1}}(z^s)$ arises from the fact that the iterates of $z^s$ tend to the torus $\T_{\sigma_{i+1}, \sigma_{i+2}}$ and then
\[
\lim_{t\to +\infty} E_{\sigma_{i+1}}(z^s(t, \tau_0, \tau_1, \tau_2))=h-h_i.
\]
Now we claim that
\[
E_{\sigma_{i+k}}(z^s)-E_{\sigma_{i+k}}(\tilde{z}^s)=\mathcal{O}(\e^2\log\e), \qquad k=1, 2.
\]
Indeed, by uniform hyperbolicity and the fact that the vector field of $H_1$ vanishes at the torus, the integral decays exponentially and so
\[
\int_{c |\log(\e)|}^{+\infty} \left|\{ H_{1, i}, E_{\sigma_{i+k}} \}\circ \Phi^{\mathtt{t}}_{0}(z_0) \right|\,d\mathtt{t}=\mathcal{O}(\e)
\]
for opportune $c>0$ independent of $\e$, and, since $h-h_i=\mathcal{O}(\e)$,
\[
\Phi^{\mathtt{t}}_{0}({z}_0)=\Phi^{\mathtt{t}}_{0}(\tilde{z}_0)+\mathcal{O}_{C^1}(\e ), \qquad \forall \mathtt{t}\in [-c |\log\e|, c |\log(\e)|].
\]
and therefore
\[
\int_{0}^{c |\log(\e)|} \left|\{ H_{1, i}, E_{\sigma_{i+k}} \}\circ \Phi^{\mathtt{t}}_{0}(z_0) -\{ H_{1, i}, E_{\sigma_{i+k}} \}\circ \Phi^{\mathtt{t}}_{0}(\tilde z_0)\right|\,d\mathtt{t}=\mathcal{O}(\e\log\eps).
\]
Then, we can conclude that
\begin{align*}
E_{\sigma_{i+1}}(\tilde{z}^u)-E_{\sigma_{i+1}}(z^s)&=
h-h_i+\e  \mathcal{M}_{\sigma_{i+1}}(h, x_i, \tau_1, \tau_2)+\mathcal{O}(\e^2\log\e),\\
E_{\sigma_{i+2}}(\tilde{z}^u)-E_{\sigma_{i+2}}(z^s)&=\e  \mathcal{M}_{\sigma_{i+2}}(h, x_i, \tau_1, \tau_2)+\mathcal{O}(\e^2\log\e).
\end{align*}
Then if $h-h_i\le \kappa \e$, for an opportune small $\kappa>0$, one can proceed as in the homoclinc case explained above to find zeros of the above equations, which give rise to heteroclinic connections between the periodic orbit $P_{\sigma_i}$ and nearby tori.
Proceeding analogously one can connect two nearby tori which are close to the periodic orbit $P_{\sigma_i}$. Indeed, note that by changing the parameterization of the unperturbed invariant manifold, one can rewrite the Melnikov potential \eqref{def:MelnikovPotential} as
\begin{align*}
\MoveEqLeft[4]{\mathcal{L}_j'(x_j,h_{j},, t_{j+1},h_{j+1},t_{j+2})=}\\
&\int_{-\infty}^{+\infty}H_{1,j}\left(q_{h_{j}}(s,x_j),p_{h_{j}}(s,x_j),q_{h_{j+1}}(s+t_{j+1},0),p_{h_{j+1}}(s+t_{j+1},0),q_0(s+t),p_0(s+t_{j+2})\right)dt.
\end{align*}
The difference between the two Melnikov potentials is on how one parameterizes the invariant torus, and more particularly its $(j+1)$ component. 
Note that in the $(q_{j+1}, p_{j+1})$ coordinates now we parameterize the invariant manifold by the initial time instead of using an initial condition. This implies that whereas $\cL_j$ is $2\pi$-periodic in $x_j$, $\cL_j'$ is $T$ periodic where $T$ is the period of the periodic orbit defined implicitly by $E_{\sigma_{j+1}}=h_{j+1}$. It can be easily checked that, for $h_{j+1}>0$  small, the Melnikov potential $\cL_j'$ is close to  \eqref{def:MelnikovPotential:per}.

Then Hypothesis \textbf{H3} implies that, for each $h_j\in (0,h)$ (and taking $h_{j+1}=h-h_j$) the function
\[
 (t_{j+1},t_{j+2})\to \mathcal{L}_j'(x_j,h_{j}, t_{j+1},h_{j+1},t_{j+2})
\]
has non-degenerate critical points. Then, proceeding as in the previous case one can prove that the torus $E_{j}=h_{j}$, $E_{j+1}=h-h_{j}$ with $h_j\in (h,h-\delta)$,  has transverse homoclinic connections and from them, construct heteroclinic connections to nearby tori.
\end{proof}

Lemmas \ref{lemma:TransitionArnold} and Lemma \ref{lemma:TransitionPeriodic} imply the following corollary.

\begin{corollary}\label{coro:transitiochainXj}
Assume that $H$ satisfies $\mathbf{H3}$ and $\mathbf{H4}$. Then there exists $\varepsilon_0>0$ such that for any $\varepsilon\in (0,\varepsilon_0)$ there exists $N>0$ and a sequence of tori $\T_{i,k}\subset \Lambda_i$, $k=1\ldots N$  such that 
\begin{align*}
 &W_{\eps,\cV_i}^u(P_{\sigma_i}) \pitchfork_{\cV_i}  W_{\eps,\cV_i}^s(\T_{i,0}),\,   W_{\eps,\cV_i}^u(\T_{i,k}) \pitchfork_{\cV_i}  W_{\eps,\cV_i}^s(\T_{i,k+1})\,\, \text{for}\,\, k=0,\ldots N-1\\
 & \text{and}\quad W_{\eps,\cV_i}^u(\T_{i,N}) \pitchfork_{\cV_i}  W_{\eps,\cV_i}^s(P_{\sigma_{i+1}}),
\end{align*}
where  $\pitchfork_{\cV_i}$ denotes transversality in the sense of Definition \ref{def:transversalityflows} applied to the invariant subspace $\cV_i$.
\end{corollary}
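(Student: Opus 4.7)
The plan is to obtain the transition chain by simply concatenating the outputs of Lemma \ref{lemma:TransitionArnold} and Lemma \ref{lemma:TransitionPeriodic}, after supplementing the latter with its symmetric counterpart at $P_{\sigma_{i+1}}$. Since both lemmas already produce sequences of invariant tori inside $\cV_i$ joined by transverse heteroclinics in the sense of Definition \ref{def:transversalityflows} restricted to $\cV_i$, the statement of the corollary is essentially combinatorial: one only has to glue three pieces of chain so that their endpoints match.

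Concretely, I would first apply Lemma \ref{lemma:TransitionPeriodic} at $P_{\sigma_i}$ to obtain a chain of two-dimensional tori $\T'_{i,1},\dots,\T'_{i,K'}=\T_{i,0}$ together with the transverse connection $W^u_{\eps,\cV_i}(P_{\sigma_i})\pitchfork_{\cV_i} W^s_{\eps,\cV_i}(\T'_{i,1})$, whose endpoint $\T_{i,0}$ coincides with the starting torus of Lemma \ref{lemma:TransitionArnold}. Second, I would apply Lemma \ref{lemma:TransitionArnold} to get the middle chain $\T_{i,0},\T_{i,1},\dots,\T_{i,K}$ sweeping through the normally hyperbolic cylinder $\Lambda_{i,\delta}$ from $E_{\sigma_i}\in(0,\delta)$ to $E_{\sigma_i}\in(h-\delta,h)$. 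Third, I would apply the symmetric version of Lemma \ref{lemma:TransitionPeriodic} at $P_{\sigma_{i+1}}$: this amounts to reversing the roles of the two activated sites in the Melnikov analysis built from \eqref{def:MelnikovPotential:per}, which is legitimate because Hypothesis $\mathbf{H4}$ is phrased symmetrically in the two consecutive sites along the prescribed path and because the section-based argument used in the proof of Lemma \ref{lemma:TransitionPeriodic} does not single out either periodic orbit. This yields a final chain of tori ending with a transverse heteroclinic into $W^s_{\eps,\cV_i}(P_{\sigma_{i+1}})$. Relabelling the combined list as $\T_{i,0},\dots,\T_{i,N}$ produces the claimed chain.

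I do not foresee any serious obstacle: the genuinely hard work, namely the non-degeneracy of the Melnikov potentials, the implicit function theorem for the heteroclinic orbits, the separate treatment of the Arnold and jumping regimes, and the handling of the loss of normal hyperbolicity near the periodic orbits, is already encapsulated in the two lemmas. The only point requiring some care is ensuring that the matching tori at the two junctions can be chosen consistently; but this is automatic, since Lemma \ref{lemma:TransitionArnold} leaves the two extreme tori free within the small energy windows $(0,\delta)$ and $(h-\delta,h)$, while the jumping regime of Lemma \ref{lemma:TransitionPeriodic} (and its symmetric version) produces chains that reach tori with $E_{\sigma_i}$, respectively $E_{\sigma_{i+1}}$, precisely in such windows by taking $\kappa$ small and $\delta$ commensurate with $\varepsilon$.
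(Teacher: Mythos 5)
Your proposal is correct and reproduces the paper's (implicit) proof: the paper itself only asserts that Lemmas \ref{lemma:TransitionArnold} and \ref{lemma:TransitionPeriodic} imply the corollary, and your concatenation of the jumping chain near $P_{\sigma_i}$, the Arnold chain across $\Lambda_{i,\delta}$, and the jumping chain near $P_{\sigma_{i+1}}$ is exactly what is meant. You also correctly identify the one non-trivial point, namely that matching the endpoints of the three pieces is possible because the windows $E_{\sigma_i}\in(0,\delta)$ and $E_{\sigma_i}\in(h-\delta,h)$ overlap with the region covered by the jumping argument for $\e$ small.

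One small overstatement worth flagging: Hypothesis $\mathbf{H4}$ is \emph{not} phrased symmetrically in the two activated sites. As written, $\widetilde{\mathcal{L}}_i$ places the $\sigma_i$-pendulum on the periodic orbit of energy $h$ and the $\sigma_{i+1},\sigma_{i+2}$-penduli on their homoclinics, so it controls only the Melnikov potential for $\PP_{\sigma_i}$ inside $\cV_i$. The exit step at $\PP_{\sigma_{i+1}}$ uses the Melnikov potential with the $\sigma_{i+1}$-pendulum at energy $h$ and the $\sigma_i,\sigma_{i+2}$-penduli homoclinic, still inside $\cV_i$; this is not literally $\mathbf{H4}$ at index $i+1$, which would involve $\cV_{i+1}$ and $H_{1,i+1}$. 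So what is actually needed is the symmetric counterpart of $\mathbf{H4}$ with the roles of the first two argument groups of $H_{1,i}$ exchanged. This is an implicit (and harmless) strengthening of the stated hypotheses that the paper glosses over as well, and the proof of Lemma \ref{lemma:TransitionPeriodic} carries over verbatim under it, so your argument stands once the statement is phrased this way rather than as a symmetry of $\mathbf{H4}$ itself.
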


Note that this corollary gives a sequence of transverse heteroclinic connections. However {the transversality holds when} they are considered as orbits in the invariant subspace $\cV_i$ (within the energy level). To apply the Lambda Lemma given by Theorem \ref{thm:Lambda}, one needs that these heteroclinic connections are transverse in the whole phase space in the sense of Definition \ref{def:transversalityflows}. This is proven in next section.

\subsection{Transversality in the full phase space}\label{sec:fulltrans}

In this section we complete the proof of Theorem \ref{thm:transitiochainXj} by  proving the following.
\begin{proposition}\label{theorem:transitionchain}
Assume that $H$ satisfies $\mathbf{H1}$, $\mathbf{H2}$, $\mathbf{H3}$ and $\mathbf{H4}$. Then there exists $\varepsilon_0>0$ such that, for any $\varepsilon\in (0,\varepsilon_0)$, there exists $N>0$ and a sequence of tori $\T_{i,k}\subset \Lambda_i$, $k=1\ldots N$  such that 
\[
 W^u(P_i) \pitchfork  W^s(\T_{i,0}),\,\,   W^u(\T_{i,k}) \pitchfork  W^s(\T_{i,k+1})\quad \text{for}\quad k=0,\ldots N-1, \,\,\text{and}\,\,
 W^u(\T_{i,N}) \pitchfork  W^s(P_{i+1})
\]
where $\pitchfork$ denotes transversality in the whole phase space $\cM$ in the sense of Definition \ref{def:transversalityflows}.
\end{proposition}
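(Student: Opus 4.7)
The strategy will be to upgrade the transversality within the six-dimensional invariant subspace $\cV_i$ provided by Corollary \ref{coro:transitiochainXj} to transversality in the full phase space in the sense of Definition \ref{def:transversalityflows}. Fix a heteroclinic point $p$ between two consecutive tori $\T_1, \T_2$ of the chain; since $\T_1, \T_2 \subset \cV_i$ and $\cV_i$ is flow-invariant by Hypothesis $\mathbf{H2}$, we have $p \in \cV_i$.

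First I would decompose the tangent space at $p$ as
\[
T_p \cM = T_p \cV_i \oplus \cN_p,
\]
where $\cN_p$ denotes the closed Banach subspace of vectors whose components at the sites $\sigma_i, \sigma_{i+1}, \sigma_{i+2}$ vanish. The projectors associated to this splitting belong to $\cL_\Gamma$, so the direct sum is topological. I would then decompose the tangent spaces of the two invariant manifolds accordingly, using the graph representation from Theorem \ref{thm:toriinvman} in the adapted coordinates of Section \ref{sec:functsettorus} with $S=\{\sigma_i,\sigma_{i+1}\}$ (and likewise $S=\{\sigma_i\}$ for the periodic orbits). Because the graph maps $\gamma^{u,s}_\e$ are $C^1_\Gamma$, vanish at the torus together with their cross-derivatives mixing ``activated'' and ``non-activated'' directions (by $\mathbf{H2}$), each tangent space $T_p W^{u,s}$ splits topologically along $T_p \cV_i \oplus \cN_p$.

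Next I would treat each summand separately. In the finite-dimensional summand $T_p \cV_i$, Corollary \ref{coro:transitiochainXj} gives immediately
\[
\big(T_p W^{u}(\T_1) \cap T_p\cV_i\big) + \big(T_p W^{s}(\T_2) \cap T_p\cV_i\big) = \ker dH|_{\cV_i}(p),
\]
with intersection $\langle X_H(p)\rangle$. In the infinite-dimensional summand $\cN_p$, I would observe that at $\e=0$ the dynamics restricted to $\cN_p$ is an infinite product of uncoupled pendula sitting at their hyperbolic saddle; its stable and unstable subbundles $E^{s,u}_0$ are closed, topologically complementary subspaces of $\cN_p$, each expressible as the graph of an operator in $\cL_\Gamma$. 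For $0<\e\ll 1$, Hypothesis $\mathbf{H2}$ guarantees that the linearization of the coupling on $\cN_p$ at points of $\cV_i$ is of order $\e$ in the $\cL_\Gamma$ norm; the Neumann series estimate of Lemma \ref{lem:neumann}, applied to the associated hyperbolic projector, yields persistence of the splitting
\[
\cN_p = \big(T_p W^{u}(\T_1) \cap \cN_p\big) \oplus \big(T_p W^{s}(\T_2) \cap \cN_p\big).
\]

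Finally I would assemble the two decompositions. Hypothesis $\mathbf{H2}$ together with the form of $H_0$ shows that the formal differential $dH(p)$ annihilates every vector in $\cN_p$, so $\cN_p \subset \ker dH(p)$ and therefore $\ker dH(p) = \big(\ker dH(p) \cap T_p \cV_i\big) \oplus \cN_p$. Combining this with the two summand splittings produces precisely the condition in Definition \ref{def:transversalityflows}, while the intersection $T_p W^u(\T_1)\cap T_p W^s(\T_2)$ remains $\langle X_H(p)\rangle$ because any vector in $\cN_p$ lying in both $E^u$ and $E^s$ must vanish. The same argument carries over verbatim to the $\Sigma_{j,\Gamma}$-setting, invoking the decay part of Theorem \ref{thm:toriinvman}. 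The main obstacle will be making the topological (and not merely algebraic) character of each direct sum rigorous: one must check that all the projectors involved—both the one onto the splitting $T_p\cV_i \oplus \cN_p$ and the hyperbolic projector on $\cN_p$—are bounded in $\cL_\Gamma$, which is where the decay structure and Lemma \ref{lem:neumann} do the real work; the remaining ingredients are the finite-dimensional transversality already established and an explicit vanishing check of $dH$ on $\cN_p$.
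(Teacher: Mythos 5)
Your high-level strategy — split $T_p\cM = T_p\cV_i \oplus \cN_p$, feed the finite-dimensional transversality of Corollary \ref{coro:transitiochainXj} into the first summand, and use the hyperbolicity of the decoupled pendula in the second — is exactly the one the paper pursues. But the two splittings you rely on are not established and are, in general, false. The claim $T_p W^{u,s} = (T_pW^{u,s}\cap T_p\cV_i)\oplus(T_pW^{u,s}\cap\cN_p)$ would require that moving along $W^u$ in the ``normal'' ($\tu$-) directions produces no first-order motion in $\cV_i$. Hypothesis $\mathbf{H2}$ makes $\cV_i$ invariant and forces the parameterizing functions $F^u_\ast$ of $W^u(\T_1)$ to \emph{vanish} at $\tu=0$, but it does not force $\partial_\tu F^u_z|_{\tu=0}=0$: the $\tu$-derivative of the parameterization carries a generically non-zero $\cO(\eps)$ component in $T_p\cV_i$. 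Consequently $T_pW^u$ is generically \emph{strictly larger} than $(T_pW^u\cap T_p\cV_i)\oplus(T_pW^u\cap\cN_p)$, and the intersection $T_pW^u\cap\cN_p$ can be strictly smaller than the unperturbed unstable bundle $E^u_0$, which undermines the subsequent claim $\cN_p = (T_pW^u\cap\cN_p)\oplus(T_pW^s\cap\cN_p)$.

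The paper's proof (Section \ref{sec:fulltrans}) avoids these splittings entirely. It passes to Moser normal form coordinates $(\tu_k,\tv_k)$ for the penduli at the non-activated sites and writes $W^{u}(\T_1)$, $W^s(\T_2)$ as perturbations of the product manifolds \eqref{def:Gamma_0}; the perturbing functions $F^{u,s}_\ast$ are $C^2$ and vanish at $\tu=0$ (resp. $\tv=0$). Transversality is then checked by two separate observations that do \emph{not} require a direct-sum decomposition of the tangent spaces themselves: (a) because the $\cV_i$-corrections $F^{u,s}_z$ are $\cO(\tu)$, resp. $\cO(\tv)$, the six-dimensional transversality of Corollary \ref{coro:transitiochainXj} persists; and (b) the distance function $\mathcal{G}(\tu,\tv)$ between the two manifolds in the $(\tu,\tv)$-directions has Jacobian $\mathrm{Id}+\cO(\eps)$ at $\tu=\tv=0$, hence a non-degenerate zero. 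Together these give both conditions of Definition \ref{def:transversalityflows}, without ever asserting that the tangent spaces split block-diagonally. If you wish to keep your abstract formulation, you would need to replace the two splitting claims with a ``sum fills $\ker dH$'' argument: show that the $\tu$- and $\tv$-graphs project onto all of $\cN_p$, quotient by $T_p\cV_i$, and then invoke Neumann series to control the $\cO(\eps)$ off-diagonal coupling — which is precisely what the distance-function $\mathcal{G}$ packages.
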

Note that the only difference between this theorem and Corollary \ref{coro:transitiochainXj} is that the transversality refers to different spaces. In the corollary is within (the energy level) in 6 dimensional invariant subspace $\cV_i$ whereas in Proposition \ref{theorem:transitionchain} is  in the infinite dimensional phase space (always  in the sense of Definition \ref{def:transversalityflows}). See \cite{DelhamsGKP10} for a similar analysis in a finite dimensional setting.

\begin{proof}[Proof of Proposition \ref{theorem:transitionchain}]
To prove this proposition, let us consider $\gamma(t)$, one of the heteroclinic orbits connecting two of the tori given by Corollary \ref{coro:transitiochainXj}. To simplify the notation, in this proof we denote these tori by $\T_1$ and $\T_2$. They are characterized by $E_{\sigma_{i}}(\T_\ell)=h_\ell$, $\ell=1,2$, for some $h_1,h_2$, which are $\eps$-close, and they also satisfy $E_{\sigma_{i+1}}(\T_\ell)=h-h_\ell$.

Recall that we have defined 
\[
 W^{u,s}_{\eps,\cV_i}(\T_\ell)=W_{\eps}^{u,s}(\T_\ell)\cap\cV_i,\qquad \ell=1,2.
\]
Corollary \ref{coro:transitiochainXj} implies that  $W^{u}_{\eps,\cV_i}(\T_1)$ and  $W^{s}_{\eps,\cV_i}(\T_2)$ intersect transversally along $\gamma(t)$ within $\cV_i$ in the sense of Definition \ref{def:transversalityflows}. That is, for $t\in\R$,
\begin{equation}\label{def:TransvXj}
 T_{\gamma(t)}W^{u}_{\eps,\cV_i}(\T_1)\cap  T_{\gamma(t)} W^{s}_{\eps,\cV_i}(\T_2)=\langle \dot\gamma(t)\rangle\quad \text{and} \quad T_{\gamma(t)}W^{u}_{\eps,\cV_i}(\T_1)+  T_{\gamma(t)} W^{s}_{\eps,\cV_i}(\T_2)= \Ker dH|_{\cV_i}(\gamma(t))
\end{equation}
where $\langle \dot\gamma(t)\rangle$ is the one dimensional vector space generated by $\dot\gamma(t)$ and  $\Ker dH|_{\eps,\cV_i}(\gamma(t))$ is just the tangent space of  the energy level of $H$ restricted to $\cV_i$ at the point $\gamma(t)$ (recall that even if the Hamiltonian $H$ may be only formal, it becomes well defined when restricted to the finite dimensional subspace $\cV_i$ (but in any case its differential is well defined, even if it is not restricted).

Denoting
\begin{equation}\label{def:z}
z=(q_{\sigma_{i}},p_{\sigma_{i}},q_{\sigma_{i+1}},p_{\sigma_{i+1}}, q_{\sigma_{i+2}}, p_{\sigma_{i+2}}),
\end{equation}
the invariant manifolds $W^{u}_{\eps,\cV_i}(\T_1)$ and  $W^{s}_{\eps,\cV_i}(\T_2)$ can be parameterized as 
\begin{equation}\label{def:paramXjinvman}
\begin{split}
z^{u}(x_i,x_{i+1},t,h_1,\eps)&=z_0(x_i,x_{i+1},t,h_1)+\eps z_1^{u,s}(x_i,x_{i+1},t,h_1,\eps)\\
z^{s}(x_i,x_{i+1},t,h_2,\eps)&=z_0(x_i,x_{i+1},t,h_2)+\eps z_1^{u,s}(x_i,x_{i+1},t,h_2,\eps)
\end{split}
\end{equation}
where $z_0$ is the unperturbed homoclinic respectively of $\T_1$ and $\T_2$, that is,
\begin{equation}\label{def:paramXjinvman0}
z_0(x_i,x_{i+1},t,h_\ell)=\left(q_{h_\ell}(0,x_i),p_{h_\ell}(0,x_i),q_{h-h_\ell}(0,x_i),p_{h-h_\ell}(0,x_i),q_0(t),p_0(t)\right)
\end{equation}
(recall that $h_1$ and $h_2$ are $\eps$-close). Fix $T>0$. Then, for $\eps$ small enough, the parameterizations \eqref{def:paramXjinvman} are defined for 
\begin{equation}\label{def:domaininvman}
x_i,x_{i+1}\in\T\quad \text{and}\quad t\in (-\infty, T]\quad \text{ (unstable manifold) and}\quad t\in [-T, +\infty)\quad \text{ (stable manifold)}.
\end{equation}
Now we prove that the ``full'' invariant manifolds $W^{u}(\T_1)$ and  $W^{s}(\T_2)$ also intersect transversally along $\gamma(t)$ in the same sense as in \eqref{def:TransvXj} but in the whole phase space. To this end we ``enlarge'' the parameterizations \eqref{def:paramXjinvman} to parameterize (a portion of) $W^{u}(\T_1)$ and  $W^{s}(\T_2)$ instead of $W^{u}_{\eps,\cV_i}(\T_1)$ and  $W^{s}_{\eps,\cV_i}(\T_2)$.

To this end, we consider the Moser Normal Form coordinates \cite{Moser1956}
\[
 (q_k, p_k)=\Phi(\mathtt{u}_k, \mathtt{v}_k)
\]
for the integrable Hamiltonians $E_k$ with $k\neq \sigma_{i},\sigma_{i+1},\sigma_{i+2}$.  Then, 
\[
E_i\circ \Phi(\mathtt{u}_i,\mathtt{v}_i)= \cE_i(\tu_i \tv_i)=\tu_i \tv_i+\mathcal{O}_2\left(\tu_i \tv_i\right).
\]
We introduce the notation
\[
 \cI_i=\Z^m\setminus\{\sigma_i,\sigma_{i+1},\sigma_{i+2}\},\qquad \tu=\{\tu_k\}_{k\in\cI_i},\qquad \tv=\{\tv_k\}_{k\in\cI_i}.
\]
Then, the Hamiltonian \eqref{def:Hamiltonian} is transformed into 
\[
 \cH\left(z, \tu, \tv\right)=\sum_{k\in \cI_i}\cE_k(\tu_k \tv_k)+\sum_{k=\sigma_{i},\sigma_{i+1},\sigma_{i+2}}E_k(q_k,p_k)+\eps\cH_1 \left(z, \tu, \tv\right)
\]
where $\cH_1$ is the perturbation Hamiltonian  $H_1$ expressed in coordinates $(z, \tu, \tv)$, see \eqref{def:z} (To simplify the notation from now on in this section we reorder the variables to ``group'' the $\tu_k's$ and $\tv_k's$).

Recall that the tori $\T_1, \T_2$ are invariant both for the perturbed and unperturbed flows and are now characterized as
\[
\T_\ell: E_{\sigma_{i}}(q_{\sigma_{i}},p_{\sigma_{i}})=h_\ell, \quad  E_{{\sigma_{i+1}}}(q_{{\sigma_{i+1}}},p_{{\sigma_{i+1}}})=h-h_\ell,\quad q_{{\sigma_{i+2}}}=p_{{\sigma_{i+2}}}=\tu_k=\tv_k=0, \quad \,k\in\cI_i,\ell=1,2
\]
(and $h_1$ and $h_2$ are $\eps$-close). Analogously, the invariant subspaces $\cV_i$ are now defined as
\[
 \cV_i=\left\{\tu_k=0,\tv_k=0\quad\text{for}\quad k\in\cI_i\right\}.
\]
For the unperturbed Hamiltonian ($\eps=0$), the stable and unstable invariant manifolds of $\T_1$, $\T_2$  are parameterized by
\begin{equation}\label{def:Gamma_0}
 \begin{split}
\Gamma_0^u(x_i,x_{i+1}, t, \tu,h_1)&=\left(z_0(x_i,x_{i+1},t,h_1), \tu,0\right)\\
\Gamma_0^s(x_i,x_{i+1}, t, \tv,h_2)&=\left(z_0(x_i,x_{i+1},t,h_2), 0, \tv\right)
\end{split}
\end{equation}
where $z_0$ is the parameterization given in \eqref{def:paramXjinvman0}.

Note that the homoclinic manifold
\[
 \Gamma_0^u(x_i,x_{i+1}, t,0,h_1)=\Gamma_0^s(x_i,x_{i+1}, t, 0,h_2)=\left(z_0(x_i,x_{i+1},t,h_2), 0,0\right)
\]
is already transverse in the $(\tu, \tv)$ directions but it is not transverse in the $z=(q_i,p_i,q_{i+1},p_{i+1}, q_{i+2}, p_{i+2})$ directions.

Since the invariant manifolds $W_{\eps}^u(\T_1)$ and $W_{\eps}^s(\T_2)$ are regular with respect to $\eps$, one can consider parameterizations of the perturbed invariant manifolds close to \eqref{def:Gamma_0} (that is, parameterize the perturbed invariant manifolds as graphs with respect to the unperturbed ones).

Fix $\de>0$ and consider $T>0$ (see \eqref{def:domaininvman}) and recall the notation
\[
B_{\delta} (\ell^{\infty}_{S^c})=\left\{u:S^c\subset \mathbb{Z}^m\to\mathbb{R}:\|u\|_\infty\leq \de\right\}.
\]
where $S^c=\mathbb{Z}^m\setminus S$ and $S\subset \mathbb{Z}^m$ is a finite set (see \eqref{def:linftysc}). In this section, we take 
\[
 S=\{\sigma_{i},\sigma_{i+1},\sigma_{i+2}\}.
\]
Since Theorem \ref{thm:toriinvman} gives the existence of the invariant manifolds of the invariant tori and their regularity with respect to parameters, we can ensure that,  for $t\in (-\infty,T]$ and $\tu\in B_{\delta} (\ell^{\infty}_{S^c})$,
the unstable invariant manifold $W^u_{\e}(\T_1)$ has a parameterization of the form 
%
\[
 \Gamma_\eps^u(x_i,x_{i+1}, t, \tu)=\left(z^{u}(x_i,x_{i+1},t)+\eps F^u_z(x_i,x_{i+1},t, \tu), \tu+\eps  F^u_{\tu} (x_i,x_{i+1},t, \tu),\eps F^u_{\tv}(x_i,x_{i+1},t, \tu)\right)
\]
where $z^u$ is the parameterization introduced in \eqref{def:paramXjinvman} and  $F^u_z$, $F^u_{\tu}$, $F^u_{\tv}$  are some  $C^2$ functions\footnote{Theorem \ref{thm:toriinvman} proves the stronger statement that they are $C^2_\Gamma$. However, for this section it is enough to use $C^2$ regularity.}. These functions  depend on $h_1$ and $\eps$. We omit this dependence to avoid cluttering the notation.  Note that the fact that $\cV_i$ is invariant and $W_{\eps,\cV_j}^u$ is parameterized by \eqref{def:paramXjinvman} implies that 
\[
 F^u_\ast(x_i,x_{i+1},t,0)=0\qquad \text{for}\qquad \ast=z,u,v.
\]
Analogously, for $t\in [T,+\infty)$ and $v\in B_\infty(\rho)$, one has a paramaterization of $W_{\e}^s(\T_2)$ of the form 
\[
 \Gamma_\eps^s(x_i,x_{i+1}, t, \tv)=\left(z^{s}(x_i,x_{i+1},t)+\eps F^s_z(x_i,x_{i+1},t, \tv), \eps  F^s_{\tu} (x_i,x_{i+1},t, \tv),\tv+\eps F^s_{\tv}(x_i,x_{i+1},t, \tv )\right).
\]
for some $C^2$ functions $F^s_z$, $F^s_{\tu}$, $F^s_{\tv}$ which satisfy
\[
 F^s_\ast(x_i,x_{i+1},t,0)=0\qquad \text{for}\qquad \ast=z,u,v.
\]
The proof of Proposition \ref{theorem:transitionchain} is a consequence of the particular form of these parameterizations. Indeed, in the $\cV_i$ ``directions'', that is $z=(q_{\sigma_{i}},p_{\sigma_{i}},q_{\sigma_{i+1}},p_{\sigma_{i+1}}, q_{\sigma_{i+2}}, p_{\sigma_{i+2}})$, the $\tu$ and $\tv$ are small in $C^1$ norm since 
\[
 F^u_z(x_i,x_{i+1},t, \tu)=\mathcal{O}_{\ell^\infty}( \tu), \quad  F^s_z(x_i,x_{i+1},t, \tv)=\mathcal{O}_{\ell^\infty}( \tv).
\]
Therefore, the transversality obtained in Section \ref{sec:chain6dimspace} still holds.

In the $(q_k,p_k)$, $k\in \cI_i$, directions or, equivalently $(\tu,\tv)$ directions it is enough to check that, for any fixed $(x_i,x_{i+1}, t)$,  $\tu=\tv=0$ is a transverse zero of the function
\[
 \mathcal{G}(\tu,\tv)=\begin{pmatrix}
                      \tu+\eps  F^u_{\tu} (x_i,x_{i+1},t, \tu)-\eps  F^s_{\tu} (x_i,x_{i+1},t, \tv)\\\eps F^u_{\tv}(x_i,x_{i+1},t, \tu)-\tv-\eps F^s_{\tv}(x_j,x_{i+1},t, \tv )
                     \end{pmatrix}
\]
which measures the distance of the invariant manifolds on the plane $\mathtt{u}, \mathtt{v}$. Since 
\[
\partial_{(\mathtt{u}, \mathtt{v})}  \mathcal{G}|_{(\mathtt{u}, \mathtt{v})=(0, 0)}=\begin{pmatrix}
1+\e \partial_{\tu} F^u_{\tu}|_{\mathtt{u}= 0} & 0\\
0 & 1+\e \partial_{\tu} F^s_{\tv}|_{ \mathtt{v}=0} 
\end{pmatrix}
\]
and $F^u_{\tu}$,$F^s_{\tv}$ are $C^1$ functions, taking $\e>0$ small enough we easily deduce that $\tu=\tv=0$ is a transverse zero of $\mathcal{G}$.

The same argument can be carried out to prove the transversality of the invariant manifolds along heteroclinic orbits between the periodic orbit $P_i$ and the torus $\T_{i,0}$.

\end{proof}

\section{The Lambda lemma
}\label{sec:lambdalemma}
We devote this section to prove Theorem \ref{thm:Lambda}. We prove both a Lambda lemma for maps and for flows. First, in Section \ref{sec:normalform}, we prove a normal form which straightens the stable and unstable foliation (within the invariant manifolds). Then, in Section \ref{sec:proofLambdaLemma} we  proof a  Lambda Lemma for maps. Finally, in Section \ref{sec:lambdalemmaflow} we deduce the Lambda Lemma for flows from the one proven in Section \ref{sec:proofLambdaLemma} for maps.

\subsection{Normal Form}\label{sec:normalform}

In this section we use the notations \eqref{notation:linfS} used in Section \ref{sec:manifold}.
Let us consider the following hypothesis.
\begin{itemize}
\item[$(\mathbf{H})_{\mathrm{NF}}$] Assume that $f_3\in C^2_{\Gamma}(\mathcal{M}_{\delta}; \T^d)$, $f_3(0, 0, \theta, 0)=0$ and for $(w, \nu)\in \mathcal{M}_{\delta}\times (0, \mu)$ (recall \eqref{def:mapF})
\begin{align*}
 \| \partial_{\theta} f_3(w, \nu) \|_{\mathcal{L}_{\Gamma}(\R^n)} &\le K_{\theta} (\| x \|_{\ell^{\infty}}+\| y \|_{\ell^{\infty}}+|r|_d),
\end{align*}
where $\delta$ is introduced in Theorem \ref{thm:C2case} and $K_{\theta}$ is the constant introduced in \eqref{condition:lambdastrong}.
\end{itemize}
We consider the map $F_{\nu}$ in \eqref{def:F} and we neglect the dependence on $\nu$ if it makes no confusion. We define
\[
\omega_0:=\omega(0, 0, 0).
\]
\begin{theorem}\label{thm:normalform}
Let $F$ be a map of the form \eqref{def:F} that satisfies the assumptions of Theorem \ref{thm:C2case} and $(\mathbf{H})_{\mathrm{NF}}$. If $\delta$ and $\mu$ are small enough, then there exists a $C^2_{\Gamma}$ change of coordinates $\Phi$ which both maps $\mathcal{M}_{\delta}$ to $\mathcal{M}_{2\delta}$ and $\mathcal{M}_{j, \Gamma, \delta}$ to $\mathcal{M}_{j, \Gamma, 2\delta}$, which is $\mathcal{O}(\delta+L)$ close to the identity in the $C^1_{\Gamma}$ topology and such that:
\begin{equation}\label{def:form}
\tilde{F}:=\Phi^{-1} F \Phi=\big(\tilde{A}_-(\theta)\, x, \tilde{A}_+(\theta)\, y, \theta+{\omega}(x, y, r), \tilde{B}(\theta)\, r \big)+\tilde{f}(w)
\end{equation}
where $\omega$ is the function introduced in \eqref{def:F} and $\tilde{A}_{\pm}$, $\tilde{B}$, $\tilde{f}=(\tilde{f}_1, \tilde{f}_2, \tilde{f}_3, \tilde{f}_4)$ are $C^2_\Gamma$ and satisfy:
\begin{itemize}
\item[(i)] For $\theta\in \T^d$,
\[
\|\tilde{A}_{\pm}(\theta)-A_{\pm} (\theta)\|_{\mathcal{L}_{\Gamma}(\ell^{\infty})}=\mathcal{O}(\delta+L), \qquad  \|\tilde{B}(\theta)-B(\theta)\|_{\mathcal{L}_{\Gamma}(\R^d)}=\mathcal{O}(\delta+L).
\]
\item[(ii)] The restriction of $\Phi$ to $\T_0$ is the identity.
\item[(iii)] The stable and unstable invariant manifolds of $\T_0$ are respectively locally expressed by $\{(x, 0, \theta, 0)\}$ and $\{ (0, y, \theta, 0) \}$.
\item[(iv)] We have \begin{align*}
\tilde{f}_{1, 4}(0, y, \theta, 0)=0, \qquad \omega(0, y, 0)+\tilde{f}_{3}(0, y, \theta, 0)=\omega_0,\\
\tilde{f}_{2, 4}(x, 0, \theta, 0)=0, \qquad \omega(x, 0, 0)+\tilde{f}_{3}(x, 0, \theta, 0)=\omega_0.
\end{align*}
\item[(v)] The derivative of $\tilde{F}$ on $\T_0$ has the form
\[
D \tilde{F} (0, 0, \theta, 0)=\begin{pmatrix}
\tilde{A}_-(\theta) & 0 & 0 & 0\\
0 & \tilde{A}_+(\theta) & 0 & 0\\
0 & 0 & \id & P(\theta)\\
0 & 0 & 0 & \tilde{B}(\theta),
\end{pmatrix}
\]
for some function $P\in C^1_\Gamma(\T^d; \mathcal{L}_{\Gamma}(\R^d; \R^d))$.
\end{itemize}
\end{theorem}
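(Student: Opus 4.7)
The strategy is to straighten the stable and unstable manifolds of $\T_0$ by a near-identity change of coordinates and then perform an auxiliary conjugation that normalises the angular dynamics on the straightened manifolds.

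First, Theorem \ref{thm:C2case} supplies $C^2_\Gamma$ parameterisations of $W^s$ and $W^u$ as graphs $\gamma^s(x,\theta)=(\gamma^s_y(x,\theta),\gamma^s_r(x,\theta))$ and $\gamma^u(y,\theta)=(\gamma^u_x(y,\theta),\gamma^u_r(y,\theta))$, both of $C^1_\Gamma$ size $\mathcal{O}(\delta+L)$ and vanishing whenever their first argument is zero. I would then define
\[
\Phi_1(x,y,\theta,r):=\bigl(x+\gamma^u_x(y,\theta),\,y+\gamma^s_y(x,\theta),\,\theta,\,r+\gamma^s_r(x,\theta)+\gamma^u_r(y,\theta)\bigr).
\]
The image of $\{y=0,\,r=0\}$ under $\Phi_1$ is exactly the graph of $\gamma^s$, i.e.\ $W^s$, and similarly $\Phi_1(\{x=0,\,r=0\})=W^u$, while $\T_0$ is fixed pointwise. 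Since $D\Phi_1=\mathrm{Id}+\mathcal{O}_{\mathcal{L}_\Gamma}(\delta+L)$, Lemma \ref{lem:neumann} together with the inverse function theorem in Banach spaces supplies a $C^2_\Gamma$ inverse on a slightly shrunken domain, and Lemmas \ref{lem:fm1} and \ref{lemma:SigmajGammazero} guarantee that $\Phi_1$ restricts to a diffeomorphism of $\mathcal{M}_{j,\Gamma,\delta}$.

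Setting $F_1:=\Phi_1^{-1}\circ F\circ\Phi_1$ and writing it in the form \eqref{def:form} with linear parts $\tilde A_\pm(\theta),\tilde B(\theta)$ chosen to match the Jacobian of $F_1$ on $\T_0$ yields (i) and (ii) directly from the $\mathcal{O}(\delta+L)$ closeness of $\Phi_1$ to the identity. The invariance of $\{y=r=0\}$ and $\{x=r=0\}$ under $F_1$ forces the ``invariance halves'' of (iv): $\tilde f_2(x,0,\theta,0)=\tilde f_4(x,0,\theta,0)=0$ and $\tilde f_1(0,y,\theta,0)=\tilde f_4(0,y,\theta,0)=0$, which gives (iii) as well. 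Differentiating these identities at $\T_0$, combined with the invariance of the torus itself, annihilates all off-diagonal blocks of $DF_1(0,0,\theta,0)$ except the coupling between $\theta$ and $r$, which is renamed $P(\theta)$; this is the structure (v).

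To secure the remaining conditions in (iv), namely $\omega(x,0,0)+\tilde f_3(x,0,\theta,0)\equiv\omega_0$ and its $W^u$-counterpart, I would compose further with
\[
\Phi_2(x,y,\theta,r):=\bigl(x,\,y,\,\theta+\eta^s(x,\theta)+\eta^u(y,\theta),\,r\bigr),\qquad \eta^s(0,\theta)=\eta^u(0,\theta)=0.
\]
A direct computation shows that $\eta^s$ must solve the cohomological equation
\[
\eta^s\bigl(\tilde A_-(\theta)x+\tilde f_1(x,0,\theta,0),\,\theta+\omega_0\bigr)-\eta^s(x,\theta)=\omega_0-\omega(x,0,0)-\tilde f_3(x,0,\theta,0),
\]
whose right-hand side vanishes at $x=0$ and, by $(\mathbf{H})_{\mathrm{NF}}$, has $\theta$-variation bounded by $K_\theta\|x\|_{\ell^\infty}$. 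Since $\|\tilde A_-\|_{\mathcal{L}_\Gamma}\le\lambda^{-1}<1$, the associated transfer operator is a strict contraction on the Banach space of $C^1_\Gamma$ functions $\eta(x,\theta)$ with $\eta(0,\theta)=0$, endowed with the weighted norm $\|\eta\|:=\sup_{(x,\theta)}\|\eta(x,\theta)\|/\|x\|_{\ell^\infty}$, and a Banach fixed-point argument (followed by a fibre-contraction step in the spirit of Theorem \ref{thm:fibrecontraction} for the second derivatives) produces the required $C^2_\Gamma$ solution $\eta^s$. An analogous argument, using the contraction $\tilde A_+^{-1}$ obtained by reading $F_1$ backwards, yields $\eta^u$. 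Taking $\Phi:=\Phi_1\circ\Phi_2$ and rewriting $F\circ\Phi$ gives \eqref{def:form} with all properties (i)--(v).

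\textbf{Main obstacle.} The delicate step is the cohomological equation: keeping the decay structure under both the composition operator $\eta\mapsto\eta\circ(\tilde A_-x+\cdots,\,\theta+\omega_0)$ and the weighted fixed-point iteration requires the algebra property of $\mathcal{L}_\Gamma$ combined with the precise smallness encoded in $(\mathbf{H})_{\mathrm{NF}}$, which calibrates $K_\theta$ to the same regime \eqref{condition:lambdastrong} that drove the $C^2_\Gamma$ contraction estimates of Section \ref{subsec:C2case}. Verifying that $\eta^{s,u}$ are genuinely $C^2_\Gamma$ and that the composition $\Phi=\Phi_1\circ\Phi_2$ restricts to a diffeomorphism of $\mathcal{M}_{j,\Gamma,\delta}$ is the technical heart of the argument; everything else---the closeness bound (i), the invariance identities in (iii)--(iv), and the block structure (v)---is a routine consequence of the graph-transform output of Theorem \ref{thm:C2case}.
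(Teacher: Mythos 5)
Your overall strategy — straightening the invariant manifolds with a graph-shift and then normalising the angular dynamics on them via a cohomological equation — is the same as the paper's, and your single combined $\Phi_1$ that straightens $W^s$ and $W^u$ simultaneously is a legitimate tidying of the paper's sequential $\Phi_1,\Phi_2$ (it works because $\gamma^{s,u}(0,\theta)=0$). However, there is a genuine gap at item~(v), and a smaller error in the cohomological equation.

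The central gap is your final claim that after the straightening and angular normalisation, ``differentiating these identities at $\T_0$ $\ldots$ annihilates all off-diagonal blocks $\ldots$ except the coupling between $\theta$ and $r$.'' That is not true. Invariance of $\{y=0,r=0\}$ kills $\partial_x$ and $\partial_\theta$ of the $y$- and $r$-components at $\T_0$ (entries $(2,1),(2,3),(4,1),(4,3)$), and invariance of $\{x=0,r=0\}$ kills entries $(1,2),(1,3),(4,2),(4,3)$; the angular normalisation kills $(3,1)$ and $(3,2)$. Nothing in these identities says anything about the $\partial_r$-derivatives of the $x$-, $y$- or $\theta$-components: the Jacobian at $\T_0$ still carries nonzero entries $h_1(\theta)=\partial_r\tilde f_1$, $h_2(\theta)=\partial_r\tilde f_2$, $h_3(\theta)=\partial_r\tilde f_3$ in the fourth column. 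Eliminating $h_1,h_2$ requires a further change $\Phi_5(x,y,\theta,r)=(x+a_1(\theta)r,\,y+a_2(\theta)r,\,\theta,\,r)$, where $a_1,a_2$ solve the linear cohomological equations $\tilde A_\mp(\theta)a_j(\theta)+h_j(\theta)=a_j(\theta+\omega_0)\tilde B(\theta)$ ($j=1,2$); these are solved by a contraction in a space of $C^0$ (then, by fibre contraction, $C^2_\Gamma$) maps $\T^d\to\mathcal L_\Gamma(\R^d;\ell^\infty)$, crucially using $\lambda^{-1}\beta<1$ from $\mathbf{(H0)_{C^2}}$ and the Neumann series of Lemma~\ref{lem:neumann}. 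Without this step, (v) fails.

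A smaller but real issue: the cohomological equation you display does not match your chosen transformation. If $\Phi_2$ itself has the form $\theta\mapsto\theta+\eta^s(x,\theta)$, then the conjugacy identity on $W^s$ forces $\tilde A_-,\tilde f_1,\tilde f_3$ to be evaluated at the \emph{shifted} angle $\theta+\eta^s(x,\theta)$, making the equation implicitly nonlinear in $\eta^s$; if instead you put the additive form in $\Phi_2^{-1}$ (as the paper does for $\Phi_3^{-1}$), the second argument of the unknown must be $\theta+\omega(x,0,0)+\tilde f_3(x,0,\theta,0)$, not $\theta+\omega_0$. You also have the opposite sign on the right-hand side from what either convention gives. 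These are fixable, and the contraction argument you sketch is correct in spirit — the transfer operator's Lipschitz constant is controlled by $\lambda^{-1}(1+K_\theta)$ in the $\|\eta\|/\|x\|_{\ell^\infty}$ norm — but the equation as written would not produce the claimed normalisation.
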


\noindent\textbf{Proof of Theorem \ref{thm:normalform}}
We recall that, by Theorem \ref{thm:C2case}, we can express the invariant manifolds of $\T_0$, $W^s$ and $W^u$, locally as graphs of functions $\gamma^{s}=(\gamma_y^{s}, \gamma_r^{s})$, $\gamma^{u}=(\gamma_x^{u}, \gamma_r^{u})$ as
\[
(x, \gamma_y^s(x, \theta), \theta, \gamma_r^s(x, \theta)) \qquad (x, \theta)\in B_{\delta}(\ell^{\infty}) \times \T^d,
\]
\[
( \gamma_x^u(y, \theta), y, \theta, \gamma_r^u(y, \theta)) \qquad (y, \theta)\in B_{\delta}(\ell^{\infty}) \times \T^d
\]
for $\delta$ small enough. Recall the estimates in Theorem \ref{thm:C1case}
\begin{equation}\label{bound:C2norm}
\| \gamma^{s, u} \|_{C^1_{\Gamma}(B_{\delta}(\ell^{\infty}) \times \T^d;\ell^{\infty}\times \R^d)}\le \mathcal{O}(\delta+L).
\end{equation}
 The first step consists in straightening the invariant manifolds. We achieve this by performing two changes of variables. First we consider 
 \[
 \Phi_1(x, y, \theta, r)=\big(x+\gamma_x^u(y, \theta), y, \theta, r+\gamma_r^u(y, \theta) \big).
 \]
 It is easy to see that the inverse has the same form
 \[
  \Phi^{-1}_1(\tilde{x}, \tilde{y}, \tilde{\theta}, \tilde{r})=\big(\tilde{x}-\gamma_x^u(\tilde{y}, \tilde{\theta}), \tilde{y}, \tilde{\theta}, \tilde{r}-\gamma_r^u(\tilde{y}, \tilde{\theta}) \big).
 \]
 By \eqref{bound:C2norm}, the maps $\Phi_1^{\pm 1}$ are $\mathcal{O}(\delta+L)$-close to the identity in the $C^1_{\Gamma}$ topology. Moreover by the fact that $\gamma^{s, u}(0, \theta)=0$, these changes of coordinates are the identity when  restricted to $\T_0$.
Therefore $\Phi_1$ is well defined in a neighborhood of $\T_0$ and $\Phi_1^{-1} F \Phi_1$ has the form \eqref{def:F} with some function $\tilde{f}$ instead of $f$. 

We claim that, by the properties of $\gamma^{s, u}$ constructed in Theorem \ref{thm:C2case},  we have that $\tilde{f}$ satisfies the same assumptions of the function $f$ introduced in Section \ref{subsec:C2case} and $(\mathbf{H})_{\mathrm{NF}}$.

By direct computations we have that $\tilde{f}$ contains terms of the following form
\[
A_{-}(\theta)\gamma_x^u, \quad B(\theta)\gamma_r^u,  \quad \omega(x+\gamma_x^u, y, r+\gamma_r^u)-\omega(x, y, r)
\] and $f_j\,\circ\,\Phi_1^{-1}$ with $j=1, 2, 3, 4$. The fact that the derivatives of at most second order, and the Lipschitz constants of these derivatives, are bounded on the domains $\mathcal{M}_{\delta}$ and $\tB_{\delta}$ comes by composition. Hence we need to check two things: (i) the $C^1_{\Gamma}$-norm is of order $\mathcal{O}(\delta+L)$, (ii) the increment on the $\theta$-variables can be made arbitrarily small by considering smaller neighborhood of the torus $\T_0$ (namely we need estimates like \eqref{hyp:H3}). These properties hold for $\gamma^u$, then it is just a matter of applying the chain rule and the Faa De Bruno formula. We observe that assumption $(\mathbf{H})_{\mathrm{NF}}$ is needed when  we make derivatives in the angles of the term $f_3$.

It is easy to see that the stable invariant manifold reads in these variables (locally) as $\{x=0, r=0\}$. With abuse of notation, let $\gamma^s(x, \theta)$ be the parametrization of the stable manifold in these new variables. 
The second change of coordinates is 
 \[
 \Phi_2(x, y, \theta, r)=\big(x, y+\gamma_y^s(x, \theta), \theta, r+\gamma_r^s(x, \theta) \big)
 \]
 and its inverse is
 \[
 \Phi_2^{-1}(\tilde{x}, \tilde{y}, \tilde{\theta}, \tilde{r})=\big(\tilde{x}, \tilde{y}-\gamma_y^s(\tilde{x}, \tilde{\theta}), \tilde{\theta}, \tilde{r}-\gamma_r^s(\tilde{x}, \tilde{\theta}) \big).
 \] 
Reasoning as before we have that the unstable invariant manifold reads as $\{ y=0, r=0 \}$ and the conjugated map has the form \eqref{def:form}. We observe that $\Phi^{\pm 1}_2$ are the identity on the unstable manifold. We rename $\Phi_2^{-1} \Phi_1^{-1} F \Phi_1 \Phi_2$ by $F$.\\
  Now we find a change of variables such that $\tilde{f_3}\equiv 0$ on the stable manifold $W^s=\{ y=0, r=0 \}$. We look for a change of coordinates $\Phi_3$ whose inverse has the following form
\[
\Phi^{-1}_3(x, y, \theta, r)=(x, y, \theta+g(x, \theta), r)
\]
and conjugates $F$ restricted to $W^s$ to a map $\tilde{F}$ such that, restricted to $W^s$, in the $\theta$-component is just the rotation with frequency $\omega_0$. From the relation $\tilde{F}\circ \Phi^{-1}_{3_{|_{W^s}}}=\Phi_3^{-1}\circ F_{|_{W^s}}$ we find $g(x, \theta)$ by solving the fixed point equation
\begin{equation}\label{eq:fixedpoint}
\begin{aligned}
g(x, \theta)=\mathtt{T}(g(x, \theta)):=&\omega(x, 0, 0)-\omega_0+f_3(x, 0, \theta, 0)\\
&+g \big(A_-(\theta)x+f_1(x, 0, \theta, 0), \theta+\omega(x, 0, 0)+f_3(x, 0, \theta, 0)\big).
\end{aligned}
\end{equation}
We want a $C_{\Gamma}^2$-solution $g$ of \eqref{eq:fixedpoint} such that $\Phi_3^{-1}$ is invertible. We first prove this in the $C^1_{\Gamma}$ setting.
We introduce the space
\[
{\Xi}:=\left\{ g\in C_{\Gamma}^1( B_{\delta}(\ell^{\infty})\times \T^d; \R^d) : g(0, \theta)=0\,\,\mbox{and}\,\, \| \partial_x g \|_0, \| \partial_{\theta} g\|_1<\infty \right\}
\]
where
\[
\| \partial_x g \|_0:=\sup_{(x, \theta)\in B_{\delta}(\ell^{\infty})\times \T^d} \| \partial_x g(x, \theta) \|_{\mathcal{L}_{\Gamma}(\ell^{\infty}; \R^d)}, \qquad \| \partial_{\theta} g \|_1:=\sup_{\substack{(x, \theta)\in B_{\delta}(\ell^{\infty})\times \T^d,\\ x\neq 0}} \frac{\| \partial_{\theta} g(x, \theta) \|_{\mathcal{L}_{\Gamma}(\R^d)}}{\| x \|_{\ell^{\infty}}}.
\]
We equip this space with the norm
\[
\| g \|_{\Xi}:=\alpha_0 \| \partial_x g \|_0+\alpha_1 \| \partial_{\theta} g \|_1
\]
for $\alpha_0, \alpha_1$ to be opportunely chosen. Let $\kappa\in (0, 1)$ such that $\kappa>(\lambda^{-1}+L)(1+K_{\theta})^2$ (recall \eqref{assumption:constants}). We take $\delta$ and $\mu$ small enough and choose $\alpha_0, \alpha_1$ satisfying the condition 
\begin{equation}\label{cond:rapp}
\frac{2 K}{\kappa-(\lambda^{-1} + L)}<\frac{\alpha_0}{\alpha_1}<\left(\frac{1}{\delta} \right)\,\frac{\kappa-(\lambda^{-1}+L)(1+K_{\theta})}{2 K (\lambda^{-1}+L)}.
\end{equation}
 Now we prove that $\mathtt{T}$ maps $\Xi$ into itself. By the definition of $\omega_0$, Hypothesis $(\mathbf{H})_{\mathrm{NF}}$ and the fact that $f_1$ vanishes on the torus $\T_0$ implies that  $\mathtt{T} g(0, \theta)=0$. By $\mathbf{(H1)_{C^1}}$, $\mathbf{(H2)_{C^1}}$ and $\mathbf{(H3)_{C^1}}$ we have
\[
\| \partial_x \mathtt{T} g \|_0\le 2 K+\| \partial_x g\|_0 (\lambda^{-1}+L)+2 K \| \partial_{\theta} g \|_1 (\lambda^{-1}+L)\,\delta.
\]
Moreover, $(\mathbf{H})_{\mathrm{NF}}$ implies that
\[
\| \partial_{\theta} f_3(x, 0, \theta, 0) \|_{\mathcal{L}_{\Gamma}(\R^n)}\le \K_{\theta} \| x \|_{\ell^{\infty}}.
\]
By $\mathbf{(H2)_{C^1}}$ (in particular by \eqref{bound:dthetafk}) we have that
\[
\| \partial_{\theta} f_1(x, 0, \theta, 0) \|_{\mathcal{L}_{\Gamma}(\R^n; \ell^{\infty})}\le K \| x \|_{\ell^{\infty}}.
\]
By the explicit expression of $\partial_{\theta} \mathtt{T} g$ and the above inequalities
we deduce that 
\[
\| \partial_{\theta} T g \|_0\le \big[  K_{\theta}+2 K \| \partial_x g \|_0 +(\lambda^{-1} +L) (1+K_{\theta})\| \partial_{\theta} g \|_1  \big] \| x \|_{\ell^{\infty}}.
\] This proves that $\mathtt{T}\colon \Xi\to \Xi$. Now we see that by considering \eqref{cond:rapp} we have that $\mathtt{T}$ is a contraction. By the choice of $\kappa$ we have that
\begin{align*}
\| \mathtt{T} g_1-\mathtt{T} g_2 \|_{\Xi} &\le \Big( \alpha_0 (\lambda^{-1}+L)+2 \alpha_1 \K  \Big) \| \partial_x g_1-\partial_x g_2  \|_0\\
&+\Big( 2\,\alpha_0\, \delta\, \K (\lambda^{-1} + L) +\alpha_1 (1+K_{\theta}) (\lambda^{-1}+L) \Big)  \| \partial_{\theta} g_1-\partial_{\theta} g_2  \|_1\\
&\le \kappa \| g_1-g_2 \|_{\Xi}.
\end{align*}
{To prove that the fixed point is actually of class $C^2_{\Gamma}$ one can follow word by word the proof of Lemma $7.2$ in \cite{FontichM98}.}\\
To obtain that $\tilde{f}_3$ vanishes also on $W^u$ we repeat the same arguments as for $\Phi_3$. The desired change of coordinates has the form $\Phi_4(x, y, \theta, r)=(x, y, \theta+\tilde{g}(y, \theta), r)$ with $\tilde{g}(0, \theta)=0$. Note that $\Phi_4$ is the identity on $W^s$.\\
Thanks to the changes of coordinates $\Phi_1, \Phi_2, \Phi_3, \Phi_4$ we have that $\tilde{F}$ has the form \eqref{def:form} and the following derivatives vanish at $\T_0$
\[
\partial_{x} \tilde{F}_{j}, \quad j=2, 3, 4, \qquad \partial_{y} \tilde{F}_{j}, \quad j=1, 3, 4, \qquad \partial_{\theta} \tilde{F}_{j}, \quad j=1, 2, 4.
\]
Therefore we have obtained the following
\[
D \tilde{F}(0, 0, \theta, 0, \nu)=\begin{pmatrix}
\tilde{A}_-(\theta) & 0 & 0 & h_1(\theta)\\
0 & \tilde{A}_+(\theta) & 0 & h_2(\theta)\\
0 & 0 & \id & h_3(\theta)\\
0 & 0 & 0 & \tilde{B}(\theta)
\end{pmatrix}
\]
{for some {$C_{\Gamma}^1(\T^d)$} }
functions $h_j(\theta)$, $j=1, 2, 3$ and
 \begin{equation}\label{AtildeA}
\|\tilde{A}_{\pm}(\theta)-A_{\pm} (\theta)\|_{\mathcal{L}_{\Gamma}(\ell^{\infty})}=\mathcal{O}(\delta+L), \qquad  \|\tilde{B}(\theta)-B(\theta)\|_{\mathcal{L}_{\Gamma}(\R^d)}=\mathcal{O}(\delta+L).
\end{equation}
  
This comes from the assumptions $\mathbf{(H1)_{C^1}}$, $\mathbf{(H2)_{C^1}}$,  $\mathbf{(H3)_{C^1}}$, $(\mathbf{H})_{\mathrm{NF}}$ and the $C_{\Gamma}^1$-smallness of $\gamma^{s, u}$ \eqref{bound:C2norm}.

We want to prove that there exist $a_j(\theta)\in \mathcal{L}_{\Gamma}(\R^d; \ell^{\infty})$, $j=1, 2$ such that a change of coordinates of the form
\[
\Phi_5(x, y, \theta, r)=(x+a_1(\theta) r, y+a_2(\theta) r, \theta, r)
\]
is $C^2_{\Gamma}$ and it gives the differential of $G=\Phi_5^{-1} \tilde{F} \Phi_5$ on $\T_0$ in the desired form (see item (iv)). We observe that $\Phi_5$ is the identity on $\T_0$. Then $G(0, 0, \theta, 0)=\tilde{F}(0, 0, \theta, 0)$. We have to impose that
\[
D \Phi_5(\tilde{F}(0, 0, \theta, 0))\,DG (0, 0, \theta, 0)=D \tilde{F} (0, 0, \theta, 0)\,D \Phi_5(0, 0, \theta, 0).
\]
This is equivalent to solve the equations
\begin{align}
&\tilde{A}_-(\theta)\,a_1(\theta)+h_1(\theta)=a_1(\theta+\omega_0) \,\tilde{B}(\theta), \label{eq:1}\\ \label{eq:2}
&\tilde{A}_+(\theta)\,a_2(\theta)+h_2(\theta)=a_2(\theta+\omega_0) \,\tilde{B}(\theta).
\end{align}
We show how to find $a_2(\theta)$. 
We can invert $\tilde{A}_+(\theta)$ by using \eqref{AtildeA} and Lemma \ref{lem:neumann}. Then we write \eqref{eq:2} as a fixed point equation
\[
a_2(\theta)=\mathtt{B} (a_2(\theta)):=\tilde{A}^{-1}_+(\theta) \big( a_2(\theta+\omega_0) \,\tilde{B}(\theta)-h_2(\theta) \big).
\]
We look for a fixed point of $\mathtt{B}$ in 
\[
\Xi_{\rho}:=\{g\in  C^0(\T^d, \mathcal{L}_{\Gamma}(\R^d; \ell^{\infty})) , \| g\|_0\le \rho   \}
\]
 for some $\rho>0$, endowed with the norm 
\[
\| g \|_0:=\sup_{\theta\in\T^d} \| g (\theta)\|_{\mathcal{L}_{\Gamma}(\R^d; \ell^{\infty})}.
\]
First we prove that $\mathtt{B}\colon \Xi_{\rho}\to \Xi_{\rho}$ for 
\begin{equation}\label{ro}
\rho>\lambda^{-1} \| h_2 \|_0\,(1-\beta \lambda)^{-1}
\end{equation}
 (recall \eqref{condition:lambdastrong}). For $\delta$ and $\mu$ small enough,  applying \eqref{AtildeA} and Lemma \ref{lem:neumann} we have
\[
\| \mathtt{B} \,g \|_{0}\le (\lambda^{-1}+\mathcal{O}(\delta+L)) \Big( (\beta+\mathcal{O}(\delta+L)) \|g \|_{0}+\| h_2 \|_{0}\Big)\le  \rho.
\]
Proceeding analogously and applying \eqref{condition:lambdastrong} we see that $\mathtt{B}$ is a contraction
\[
\|  \mathtt{B} g_1-\mathtt{B} g_2  \|_{0}\le (\lambda^{-1} \beta+\mathcal{O}(\delta+L)) \| g_1-g_2 \|_{0}<\| g_1-g_2 \|_{0}.
\]
This provides the existence of a fixed point $a_2$ of $\mathtt{B}$ in $\Xi_{\rho}$.

\smallskip

 Now we prove that this fixed point is $C_{\Gamma}^1$. We look for $H$ such that $\partial_{\theta} [\mathtt{B}(a_2)]=H(a_2, \partial_{\theta} a_2)$. For that we differentiate formally $\mathtt{B}(g)$ and we substitute $\partial_{\theta} g$ with $\Psi$. We have
\[
H(g, \Psi):=\partial_{\theta} \tilde{A}^{-1}_+  \Big( \mathcal{T}_0\,g\,\tilde{B}-h_2 \Big)+\tilde{A}^{-1}_+ \Big(\mathcal{T}_0 \,\Psi\,\tilde{B} +\mathcal{T}_0\,g\,\partial_{\theta} \tilde{B}-\partial_{\theta} h_2 \Big)
\]
where $\mathcal{T}_0 f(\theta) :=f(\theta+\omega_0)$. We consider the ball
\[
D\Xi_{\kappa}=\{ \Psi\in C^0\big(\T^d, \mathcal{L}_{\Gamma}^2( \R^d; \ell^{\infty}) \big), \| \Psi \|_0\le \kappa   \}
\]
where $\kappa>0$ and the norm considered is
\[
\| \Psi\|_0:=\sup_{\theta\in \T^d} \| \Psi \|_{\mathcal{L}_{\Gamma}^2(\R^d; \ell^{\infty})}.
\]
\begin{lem}
Assume \eqref{condition:lambdastrong}. Take $\rho$ satisfying \eqref{ro} and $\kappa$ such that
\begin{equation}\label{cond:kappar}
\kappa>\frac{\rho\, \K\,(\beta+\lambda^{-1})}{1-\beta\lambda^{-1}}.
\end{equation}
Then if $g\in \Xi_{\rho}$ and $\Psi\in D\Xi_{\kappa}$ one has that $H(g, \Psi)\in D\Xi_{\kappa}$.
\end{lem}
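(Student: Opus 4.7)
The plan is to estimate $\|H(g,\Psi)\|_0$ term by term using the bounds already collected for the coefficients appearing in $H$, and then to check that the constraint on $\kappa$ in \eqref{cond:kappar} is precisely what is needed to close the estimate.

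First I would record the ``building block'' bounds. From \eqref{AtildeA} together with Lemma \ref{lem:neumann}, for $\delta,\mu$ small enough one has
\[
\|\tilde{A}_+^{-1}(\theta)\|_{\mathcal{L}_\Gamma(\ell^\infty)}\le \lambda^{-1}+\mathcal{O}(\delta+L),\qquad \|\tilde{B}(\theta)\|_{\mathcal{L}_\Gamma(\mathbb R^d)}\le \beta+\mathcal{O}(\delta+L),
\]
and the derivatives $\|\partial_\theta \tilde A_+^{-1}(\theta)\|$, $\|\partial_\theta \tilde B(\theta)\|$ are bounded by $\mathtt{K}+\mathcal{O}(\delta+L)$ (inherited from the bound on $\partial_\theta A_+^{\pm 1}$ and $\partial_\theta B$ in $\mathbf{(H1)_{C^1}}$, propagated through the changes of variables $\Phi_1,\dots,\Phi_5$ of the normal form theorem; note that all these changes are $\mathcal{O}(\delta+L)$-close to the identity in $C^1_\Gamma$). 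The functions $h_2,\partial_\theta h_2$ appearing in the $(y,r)$ block of $D\tilde F(0,0,\theta,0)$ are of size $\mathcal{O}(\delta+L)$, since they vanish for the unperturbed, decoupled map. Finally, the translation operator $\mathcal{T}_0 f(\theta):=f(\theta+\omega_0)$ is an isometry on every sup norm appearing here.

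Using these ingredients together with the Banach algebra property of $\mathcal{L}_\Gamma$, the triangle inequality applied to the explicit expression for $H(g,\Psi)$ gives
\[
\|H(g,\Psi)\|_0\le \mathtt{K}\,\bigl(\beta\,\|g\|_0+\|h_2\|_0\bigr)+\bigl(\lambda^{-1}+\mathcal{O}(\delta+L)\bigr)\bigl(\beta\,\|\Psi\|_0+\mathtt{K}\,\|g\|_0+\|\partial_\theta h_2\|_0\bigr).
\]
Using $\|g\|_0\le \rho$, $\|\Psi\|_0\le \kappa$ and absorbing the $\mathcal{O}(\delta+L)$ terms (which, by shrinking $\delta,\mu$, can be made arbitrarily small with respect to the principal coefficients and also with respect to the gap $1-\beta\lambda^{-1}>0$ coming from \eqref{condition:lambdastrong}), the inequality above reduces to
\[
\|H(g,\Psi)\|_0\le \beta\lambda^{-1}\,\kappa+\mathtt{K}\,\rho\,(\beta+\lambda^{-1})+o(1),
\]
where $o(1)\to 0$ as $\delta,\mu\to 0$.

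The final step is to invoke the hypothesis \eqref{cond:kappar}: rearranging it as $\mathtt{K}\,\rho\,(\beta+\lambda^{-1})<(1-\beta\lambda^{-1})\,\kappa$ and combining with the bound above yields $\|H(g,\Psi)\|_0\le\kappa$ for $\delta,\mu$ sufficiently small, so $H(g,\Psi)\in D\Xi_\kappa$. The only mildly delicate step is controlling $\partial_\theta\tilde A_+^{-1}$ and the implicit $\theta$-Lipschitz constants surviving the normal form reductions, but this follows from $\mathbf{(H1)_{C^1}}$ and the $C^1_\Gamma$ closeness to the identity of the changes of coordinates $\Phi_1,\dots,\Phi_5$; everything else is a routine contraction-style bookkeeping argument analogous to the one already used for $\mathtt{B}$ itself.
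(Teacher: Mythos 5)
Your proposal is correct and follows essentially the same route as the paper: bound $\|H(g,\Psi)\|_0$ term by term using $\|\partial_\theta\tilde A_+^{-1}\|\le\K$, $\|\tilde A_+^{-1}\|\le\lambda^{-1}$, $\|\tilde B\|\le\beta$, $\|\partial_\theta\tilde B\|\le\K$ (all modulo $\mathcal O(\delta+L)$), insert $\|g\|_0\le\rho$ and $\|\Psi\|_0\le\kappa$, absorb the small $h_2,\partial_\theta h_2$ contributions, and close with \eqref{cond:kappar}, which is exactly $\beta\lambda^{-1}\kappa+\rho\K(\beta+\lambda^{-1})<\kappa$. Your writeup is in fact a bit more explicit than the paper's (which cites only $\eqref{def:KomegaKf}$) about why the bounds for $\partial_\theta\tilde A_+^{-1}$, $\partial_\theta\tilde B$ survive the normal form conjugations, and it correctly records $\partial_\theta h_2$ in the last slot where the paper has a minor typo writing $h_2$.
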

\begin{proof}
By \eqref{def:KomegaKf} we have
\begin{align*}
\| H(g, \Psi) \|_{0} &\le \K ( \beta \| g \|_0+ \| h_2 \|_0)+\lambda^{-1} (\beta\, \| \Psi \|_0+\K  \| g \|_0+ \| h_2 \|_0)\\
&\le \rho \K ( \beta+ \lambda^{-1})+\kappa \beta \lambda^{-1}+(\delta+L) (\K+\lambda^{-1}).
\end{align*}
By considering $\delta$ and $\mu$ small enough the right hand side reduces to $\rho \K ( \beta+ \lambda^{-1})+\kappa \beta \lambda^{-1}$.
By \eqref{cond:kappar} we conclude that $\| H(g, \Psi) \|_{0}<\kappa$.
\end{proof}
\begin{lem}
If $\delta$ and $\mu$ are small enough, then $H(g, \cdot)\colon D \Xi_{\kappa}\to D \Xi_{\kappa}$ is a contraction uniformly in $g\in \Xi_{\rho}$.
\end{lem}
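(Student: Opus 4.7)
The proof of this lemma is essentially a one-line computation exploiting the fact that $H(g,\Psi)$ depends affinely on $\Psi$. My plan is the following.

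First, I would observe that in the explicit expression
\[
H(g,\Psi)=\partial_{\theta}\tilde{A}_+^{-1}\Big(\mathcal{T}_0\,g\,\tilde{B}-h_2\Big)+\tilde{A}_+^{-1}\Big(\mathcal{T}_0\,\Psi\,\tilde{B}+\mathcal{T}_0\,g\,\partial_{\theta}\tilde{B}-\partial_{\theta}h_2\Big),
\]
the only term depending on $\Psi$ is $\tilde{A}_+^{-1}\,\mathcal{T}_0\Psi\,\tilde{B}$, and moreover it does not depend on $g$. Hence for any $\Psi,\Psi'\in D\Xi_{\kappa}$ one has exactly
\[
H(g,\Psi)(\theta)-H(g,\Psi')(\theta)=\tilde{A}_+^{-1}(\theta)\,\mathcal{T}_0(\Psi-\Psi')(\theta)\,\tilde{B}(\theta),
\]
so the estimate will automatically be uniform in $g\in\Xi_{\rho}$.

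Second, I would bound this difference using the Banach algebra property of $\mathcal{L}_{\Gamma}$ together with the estimates produced so far. By \eqref{AtildeA} and Lemma \ref{lem:neumann} one has $\|\tilde{A}_+^{-1}(\theta)\|_{\mathcal{L}_{\Gamma}(\ell^{\infty})}\le\lambda^{-1}+\mathcal{O}(\delta+L)$, and analogously $\|\tilde{B}(\theta)\|_{\mathcal{L}_{\Gamma}(\R^d)}\le\beta+\mathcal{O}(\delta+L)$ (obtained from the same closeness estimate, using that the original $B(\theta)$ is $C^2_{\Gamma}$ on the compact torus and hence uniformly bounded). Since $\mathcal{T}_0$ is an isometry of the sup norm, the algebra property yields
\[
\|H(g,\Psi)-H(g,\Psi')\|_0\le \bigl(\lambda^{-1}\beta+\mathcal{O}(\delta+L)\bigr)\,\|\Psi-\Psi'\|_0 .
\]

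Third, I would invoke the spectral gap assumption. From \eqref{condition:lambdastrong} we have $\lambda^{-1}\beta<\lambda^{-1}\beta(1+K_\theta)^3<1$, so by choosing $\delta$ and $\mu$ small enough the prefactor above is strictly less than $1$, concluding that $H(g,\cdot)$ is a contraction on $D\Xi_{\kappa}$ uniformly in $g\in\Xi_{\rho}$. There is no real obstacle here; the only subtlety is to make sure that the upper bound on $\|\tilde{B}(\theta)\|_{\mathcal{L}_{\Gamma}}$ (which is not stated as an explicit hypothesis, only its inverse is) is extracted either from the $C^2_{\Gamma}$ regularity of $B$ on the compact torus $\T^d$ or from a preliminary remark, and that the same argument also shows that $H(g,\cdot)$ actually maps $D\Xi_{\kappa}$ into itself—both of which were essentially already taken care of in the previous lemma.
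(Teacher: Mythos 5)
Your proof is correct and follows essentially the same route as the paper: observe that the $\Psi$-dependence of $H$ is affine and $g$-independent, write $H(g,\Psi)-H(g,\Psi')=\tilde{A}_+^{-1}\,\mathcal{T}_0(\Psi-\Psi')\,\tilde{B}$, apply the Banach-algebra property of $\mathcal{L}_\Gamma$ to get the factor $\lambda^{-1}\beta+\mathcal{O}(\delta+L)$, and conclude from the gap condition. The only cosmetic difference is that the paper cites \eqref{assumption:constants} rather than \eqref{condition:lambdastrong} for $\lambda^{-1}\beta<1$; both imply it, so nothing is affected.
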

\begin{proof}
Since $H(g, \Psi_1)-H(g, \Psi_2)=\tilde{A}^{-1}_+(\theta) [\mathcal{T}_0 (\Psi_1-\Psi_2)] \tilde{B}$ we have
\[
\| H(g, \Psi_1)-H(g, \Psi_2) \|_0\le (\lambda^{-1} \beta+\mathcal{O}(\delta+L)) \| \Psi_1-\Psi_2\|_0.
\]
Then for $\delta$ and $\mu$ small enough, \eqref{assumption:constants} implies that $H(g, \cdot)$ is a contraction.
\end{proof}

It is immediate to see that the function $g\to H(g, \Psi)$ is continuous. Then we can apply the Fiber Contraction Theorem \ref{thm:fibrecontraction} and conclude that the fixed point is $C_{\Gamma}^1$. By using the assumptions of the $C_{\Gamma}^2$ case (see Section \ref{subsec:C2case}), we can prove in a similar way that the fixed point is $C_{\Gamma}^2$.

To obtain $a_1(\theta)$ in \eqref{eq:1}
one can reason in a similar way recalling the following fact: $\tilde{B}(\theta)$ is $O(\delta+L)$-close to $B(\theta)$ in $\mathcal{L}_{\Gamma}(\R^d)$-norm. Since $B(\theta)$ is invertible, $\tilde{B}(\theta)$ is invertible if $\delta$ and $\nu$ are taken small enough (see Lemma \ref{lem:neumann}).

\smallskip

It only remains to prove that all the changes of coordinates map $\mathcal{M}_{j,\Gamma, \delta}$ to $\mathcal{M}_{j, \Gamma,2\delta}$. This comes from the fact that all these transformations vanish at the torus $\{x=y=0, r=0 \}$ and Lemma \ref{lemma:SigmajGammazero}.

\subsection{A  Lambda Lemma for maps}\label{sec:proofLambdaLemma}

Let $\nu\in (0, \mu)$ for some $\mu>0$. Let us consider the complete metric space $\cM:=\ell^{\infty}\times \ell^{\infty}\times \T^d\times \R^d$ and the map
\[
F_{\nu}\colon \mathcal{M}_{\delta}:= B_{\delta} (\ell^{\infty})\times B_{\delta}(\ell^{\infty})\times \T^d\times B_{\delta}(\R^d)\subset \mathcal{M}\to \mathcal{M}
\]
given by
\begin{equation}\label{def:lambda}
\begin{aligned}
&F_{\nu}(w)=F_0(w)+f_{\nu}( w), \quad w:=(x, y, \theta, r),\\
 &F_0(w):=(A_-(\theta) x, A_+(\theta) y, \theta+\omega(x, y, r), B(\theta)\,r), \quad f_{\nu}( w):=\big(f_1(\nu; w), f_2(\nu; w), f_3(\nu; w), f_4(\nu; w) \big)\\
&f_1(\nu; \cdot), f_2(\nu; \cdot)\colon \mathcal{M}_{\delta}\to \ell^{\infty}, \quad f_3(\nu; \cdot)\colon\mathcal{M}_{\delta}\to \T^d, \quad f_4(\nu; \cdot) \colon \mathcal{M}_{\delta}\to \R^d.
\end{aligned}
\end{equation}
where 
\[
A_{\pm}(\theta)\in \mathcal{L}_{\Gamma}(\ell^{\infty}), \quad B(\theta)\in\mathcal{L}_{\Gamma}(\R^d).
\]
As for the normal form procedure performed in Section \ref{sec:normalform}, we assume that this map satisfies Hypotheses  $\mathbf{(H0)_{C^2}}$--$\mathbf{(H5)_{C^2}}$. 
We need an extra hypothesis: that the dynamics on the torus is a non-resonant rigid rotation. That is, we assume that the vector 
$\omega_0:=\omega(0, 0, 0)$ satisfies
\begin{equation}\label{def:nonresonantmaps}
\omega_0k+m\neq 0\qquad \text{for any}\qquad k\in\mathbb{Z}^d\setminus\{0\}, m\in\mathbb{Z}.
\end{equation}

Recall that we ultimately we want to prove a Lambda lemma for (formal) Hamiltonian vector fields. That is, for vector fields with formal first integrals. For this reason, we need a Lambda lemma which applies to maps $F_\nu$ with formal first integrals in the sense of Definition \ref{def:firstintegralmaps}.
Associated to the formal first integrals, we also introduce a related notion of transversality.

\begin{definition}\label{def:transversalitydegmaps}
Consider a map $F\colon \mathcal{M}_{\delta}\subset \mathcal{M}\to \mathcal{M}$ which has a formal first integral $G$ in the sense of Definition \ref{def:firstintegralmaps}.
 Fix $p\in\cM_\de$  and  consider two Banach submanifolds $\cN_1$ and $\cN_2$ of $\cM$ such that $p\in\cN_1,\cN_2$ and are invariant under $F$. Then, we say that $\cN_1,\cN_2$ intersect transversally at $p$ if 
\begin{enumerate}
\item They satisfy
\[
\dim\left( T_p\cN_1\cap T_p\cN_2\right) =1.
\]
\item The map
\[
 T: T_p\cN_1\times T_p\cN_2 \to T_p\cM,\qquad T(v_1,v_2)=v_1+v_2
\]
is a linear continuous map whose image is equal to $\Ker dG(p)$.
\end{enumerate}
\end{definition}

Let us give some explanation of Item 1 in this definition. The case of having formal first integrals will be applied ultimately to time $T$ maps of a Hamiltonian flows. Then, one has to take into account that the intersection of invariant manifolds contain the full trajectories and therefore it must have at least dimension 1 (see Definition \ref{def:transversalityflows}). Item 2 just means that the transversality has to be considered restricted to the leave of the foliation defined by $G$ (see Remark \ref{rmk:Frobenius}).

%


Next theorem provides a Lambda lemma for the invariant manifolds of invariant tori in various settings, both in $\cM_\de$ and $\cM_{j,\Gamma,\delta}$ and both for maps with and whithout formal first integrals.

\begin{theorem}\label{thm:LambdaLemmaMaps}
Consider a map $F_\nu$ of the form \eqref{def:lambda} and assume  $\mathbf{(H0)_{C^2}}$--$\mathbf{(H5)_{C^2}}$  and that the frequency $\omega_0:=\omega(0, 0, 0)$ is non-resonant as in \eqref{def:nonresonantmaps}. Then, the invariant torus $\T_0:=\{ x=y=0, r=0 \}$ possesses $C^2_{\Gamma}$ invariant manifolds 
$W^{s, u}\subset\mathcal{M}$ that satisfy the following.

Consider a $C^1_\Gamma$ submanifold $\Gamma\subset \mathcal{M}$ which intersects transversally the stable manifold $W^s$ at $q_0$ in the sense of Definition \ref{def:transversality}. Then
\begin{itemize}
 \item[$(i)$] The iterates of $\Gamma$ satisfy
 \[
W^u\subset \overline{\bigcup_{n\geq 0} F^n(\Gamma)}.
\]
where the closure is taken with respect to the  $\mathcal{M}$-topology.
\item[$(ii)$] Moreover there exists a submanifold $D$ of $\Gamma$ diffeomorphic to an open set of $\ell^{\infty}$ such that if $D_n$ is the connected component of $F^n(D)\cap \mathcal{M}_{\delta}$ which contains $F^n(q_0)$ then, for any $\e>0$, there exists $n_0$ such that $D_n$ is $\e$-close, in the $C^1_{\Gamma}(\mathcal{M}_{\delta})$ topology, to a subset of $W^u$ if $n>n_0$.
\end{itemize}
Moreover,
\begin{itemize}
\item[$(iii)$] Assumes that the map $F_\nu$ has a first integral $G$ in the sense of Definition \ref{def:firstintegralmaps}.
Assume furthermore that  $\Gamma$ intersects transversally $W^s$ in the sense of Definition \ref{def:transversalitydegmaps}. 
Then, the statements $(i)$ and $(ii)$ are also  satisfied.
\item[$(iv)$] If one considers as phase space $\mathcal{M}_{j,\Gamma}$ and a $C^1_\Gamma$ submanifold $\Gamma\subset\mathcal{M}_{j,\Gamma}$ the statements $(i)$, $(ii)$ and $(iii)$ are also  true with respect to the closure in the  $\mathcal{M}_{j,\Gamma}$-topology and the convergence in $C^1_\Gamma( \mathcal{M}_{j,\Gamma, \delta})$.
\end{itemize}
\end{theorem}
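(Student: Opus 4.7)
The plan is to follow the graph–transform strategy of Fontich–Martín \cite{FontichM98}, implemented in the functional setting of Section \ref{sec:functional}. First, I would apply Theorem \ref{thm:normalform} to conjugate $F_\nu$ to a map of the form \eqref{def:form} in which the local invariant manifolds are the coordinate subspaces $W^s=\{y=0,r=0\}$ and $W^u=\{x=0,r=0\}$, the nonlinearity $\tilde f$ vanishes on each of them, and the linearization at $\T_0$ is block–triangular as in item (v). Since the conjugation is $C^2_\Gamma$, maps $\mathcal{M}_{j,\Gamma,\delta}$ into itself, and is $\mathcal O(\delta+L)$–close to the identity in $C^1_\Gamma$, all statements (i)–(iv) transfer from the normalized map to the original one.

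Second, in the normal–form coordinates, the point $q_0\in W^s\cap\Gamma$ reads $q_0=(x_0,0,\theta_0,0)$. Transversality of $\Gamma$ to $W^s$ (Definition \ref{def:transversality}) forces $T_{q_0}\Gamma$ to be a topological complement of the $(x,\theta)$–directions in $T_{q_0}\mathcal M$. A Banach–space implicit function theorem then provides a neighborhood $U\subset\ell^\infty\times\R^d$ of the origin and $C^1_\Gamma$ functions $X_0,\Theta_0$ such that a piece $D\subset\Gamma$ containing $q_0$ is a graph
\[
D=\{(X_0(y,r),\,y,\,\Theta_0(y,r),\,r)\,:\,(y,r)\in U\},\qquad X_0(0,0)=x_0,\ \Theta_0(0,0)=\theta_0.
\]
The heart of the proof is then to show, by induction on $n$, that the connected component $D_n$ of $F^n(D)\cap\mathcal M_\delta$ through $F^n(q_0)$ admits a re–parametrization as a graph over the unstable–angle coordinates,
\[
D_n=\{(X_n(\tilde y,\tilde\theta),\,\tilde y,\,\tilde\theta,\,R_n(\tilde y,\tilde\theta))\,:\,(\tilde y,\tilde\theta)\in B_{\delta_n}(\ell^\infty)\times V_n\},
\]
with $\delta_n\uparrow\delta$, $V_n$ eventually covering $\T^d$ (thanks to the non–resonance of $\omega_0$), and $\|X_n\|_{C^1_\Gamma},\|R_n\|_{C^1_\Gamma}\to 0$.

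The induction step sets $(\tilde y_1,\tilde\theta_1)=(\tilde A_+(\theta)\tilde y+\tilde f_2,\theta+\omega+\tilde f_3)$ evaluated on the current graph, solves by a contraction mapping the resulting fixed–point equation for $(\tilde y,\tilde\theta)$ as a function of $(\tilde y_1,\tilde\theta_1)$ (possible thanks to $\|\tilde A_+^{-1}\|_{\mathcal{L}_\Gamma}\le\lambda^{-1}$ and the smallness of $\tilde f_{2,3}$), and then defines $X_{n+1},R_{n+1}$ through the remaining $x$– and $r$–components of $F$. The vanishing $\tilde f_{1,4}(0,y,\theta,0)=0$ from Theorem \ref{thm:normalform}(iv) and the Lipschitz bounds in the hypotheses yield recursions of the schematic form
\[
\|X_{n+1}\|_{C^1_\Gamma}\le\lambda^{-1}(1+K_\theta)\,\|X_n\|_{C^1_\Gamma}+C(\|X_n\|+\|R_n\|)^2,\quad \|R_{n+1}\|_{C^1_\Gamma}\le\beta(1+K_\theta)^2\|R_n\|_{C^1_\Gamma}+\cdots,
\]
and the contraction of the joint iteration is guaranteed precisely by the bound $\lambda^{-1}\beta(1+K_\theta)^3<1$ of $\mathbf{(H0)_{C^2}}$. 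The main obstacle, and the reason for the cube of $(1+K_\theta)$ in that bound, is the center direction $r$: $\tilde B$ need not contract, and each iteration costs extra factors $(1+K_\theta)$ from the angular twist $P(\theta)$ appearing in the $r$–to–$\theta$ block of item (v) and from the $\theta$–dependence of $\tilde A_\pm,\tilde B$. Upgrading the uniform $C^0$ decay of $X_n,R_n$ to $C^1_\Gamma$ convergence is done by running a Fiber Contraction Theorem exactly as in Section \ref{subsec:C2case}, with fiber variable $(DX_n,DR_n)$ and fiber contraction rate again controlled by $\mathbf{(H0)_{C^2}}$.

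For statement (iii), the argument is carried out inside the Frobenius leaf $\mathcal N=\{G=G(q_0)\}$ (see Remark \ref{rmk:Frobenius}), which is $F$–invariant and contains $W^s$, $W^u$ and $\Gamma$; condition (2) of Definition \ref{def:transversalitydegmaps} says exactly that within $\mathcal N$ the submanifolds $\Gamma$ and $W^s$ are transverse modulo the one–dimensional intersection (the direction that plays the role of the flow direction), so after quotienting out this direction the previous graph transform applies verbatim and gives the same accumulation inside $\mathcal N\subset\mathcal M$. For (iv), all the constructions above — the normal form, the implicit function theorem defining $D$, the contraction giving the inverse of the $(\tilde y,\tilde\theta)$ block, and the Fiber Contraction argument — are already formulated in the $C^1_\Gamma$/$C^2_\Gamma$ categories, so the same proof runs with $\ell^\infty$ replaced by $\Sigma_{j,\Gamma}$; that the iterates remain in $\mathcal M_{j,\Gamma,\delta}$ follows from Lemma \ref{lemma:SigmajGammazero} together with the invariance $F(\mathcal M_{j,\Gamma,\delta})\subset\mathcal M_{j,\Gamma}$ inherited from Theorem \ref{thm:normalform}.
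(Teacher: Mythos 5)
Your plan follows the paper's skeleton (straighten the manifolds via Theorem \ref{thm:normalform}, cut a disc out of $\Gamma$, iterate it while tracking $C^1_\Gamma$ norms, and use non-resonance of $\omega_0$ to sweep the torus), but the way you set up the disc and its iterated re-parametrizations creates genuine gaps. First, you take $D$ to be a graph over $(y,r)\in U\subset\ell^\infty\times\R^d$, while statement~$(ii)$ requires $D$ to be diffeomorphic to an open set of $\ell^\infty$; the paper (Lemmas \ref{lem:disc}, \ref{lem:disc:FI}) first parametrizes over $(y,r)$ but then deliberately freezes $r=r_0=0$ so that $D$ is an $\ell^\infty$-disc matching the dimension of the ``fast'' unstable variable $y$. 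This is not cosmetic: if you keep the $r$-parameter, your claim that $D_n$ can be re-parametrized as a graph over $(\tilde y,\tilde\theta)$ requires the map $(y,r)\mapsto(\tilde y_1,\tilde\theta_1)$ to be a local diffeomorphism, which in turn needs $\partial_r\tilde\theta_1\sim P(\theta_0)+\partial_r\Theta_0(0,0)$ to be invertible. That is a twist/nondegeneracy condition which appears nowhere in the hypotheses of the theorem; without it the $(y,r)$-disc can collapse in the $r$-direction and the graph-over-$(\tilde y,\tilde\theta)$ picture fails. The paper avoids the issue entirely by parametrizing over $y$ alone and re-parametrizing each iterate by $\mathcal R_{k+1}=(\partial_z\tilde y_{k+1}(0))^{-1}$ (and later by the nonlinear $\mathcal Q_{k+1}=\tilde y_{k+1}^{-1}$), which only needs invertibility of the $y$-block, guaranteed by $\|\tilde A_+^{-1}\|_{\mathcal L_\Gamma}\le\lambda^{-1}$ and the smallness of the perturbation.

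Second, your schematic recursion
\[
\|R_{n+1}\|_{C^1_\Gamma}\le\beta(1+K_\theta)^2\|R_n\|_{C^1_\Gamma}+\cdots
\]
is not contracting, since $\beta\geq 1$. The contraction in the $r$-derivative does not come from $\mathbf{(H0)_{C^2}}$ directly but from the \emph{re-parametrization}: after multiplying by $(\partial_z\tilde y_{k+1})^{-1}\sim\lambda_0^{-1}$ one obtains (cf.\ \eqref{rec2})
\[
\eta_{k+1}\le \frac{\kappa_k+(\beta+\kappa)\eta_k}{\lambda_0},
\]
so the relevant factor is $(\beta+\kappa)/\lambda_0<1$, which is where \eqref{bound:betalambda1}--\eqref{bound:acaso4} enter. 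The cube $(1+K_\theta)^3$ in $\mathbf{(H0)_{C^2}}$ is consumed by the $C^2_\Gamma$ invariant-manifold and normal-form theory, not by the iteration itself. Relatedly, the paper does not invoke the Fiber Contraction Theorem in this step: the $C^1_\Gamma$ norms $m_k,\eta_k$ are estimated directly as suprema over $B_{\zeta_k}(\ell^\infty)$ and shown inductively to satisfy $m_{k+1}<\varepsilon/2$, $\eta_{k+1}<\alpha\varepsilon/2$. Your treatment of $(iii)$ via the Frobenius leaf and of $(iv)$ via Lemma \ref{lemma:SigmajGammazero} is in the right spirit, but the leaf argument needs the concrete form of $dG$ on the torus (Lemma \ref{lem:goodfirstintegral}) to single out a coordinate $r_1$ with $\partial_{r_1}G\neq 0$ and replay Lemma \ref{lem:disc} with the appropriate projection; merely ``quotienting out'' the one-dimensional direction is not a well-defined operation in this infinite-dimensional setting without that preparation.
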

%
%

The assumptions in Item $(iii)$ are satisfied naturally if the map is the flow at time $T$ of a (formal) Hamiltonian vector field. 

We devote the rest of this section to prove this theorem. 
Recall that we consider constants $\lambda$, $\beta$, $K_{\theta}$ satisfying $\mathbf{(H0)_{C^2}}$. 
By applying the change of coordinates of Theorem \ref{thm:normalform} we can assume that $F_{\nu}$ satisfies the assumptions of Theorem \ref{thm:C2case} and
\begin{itemize}
\item[$\mathbf{(H1)_{\Lambda}}$] The frequency $\omega_0:=\omega(0, 0, 0)$ is non-resonant in the sense of \eqref{def:nonresonantmaps}
\item[$\mathbf{(H2)_{\Lambda}}$] For $j=1, 4$
\[
f_j(0, y, \theta, 0)=(0, 0, 0, 0), \qquad \omega(0, y, 0)+f_{3}(0, y, \theta, 0)=\omega_0.
\]
\item[$\mathbf{(H3)_{\Lambda}}$] For $j=2, 4$
\[
f_j(x, 0, \theta, 0)=(0, 0, 0, 0), \qquad \omega(x, 0, 0)+f_{3}(x, 0, \theta, 0)=\omega_0.
\]
\item[$\mathbf{(H4)_{\Lambda}}$] The derivative of $F_{\nu}$ on $\mathbb{T}_0:=\{ x=y=0, r=0 \}$ has the form
\[
\begin{pmatrix}
\tilde{A}_-(\theta) & 0 & 0 & 0\\
0 & \tilde{A}_+(\theta) & 0 & 0\\
0 & 0 & \id & P(\theta)\\
0 & 0 & 0 & \tilde{B}(\theta)
\end{pmatrix}
\]
where
\[
\|\tilde{A}_{\pm}(\theta)-A_{\pm} (\theta)\|_{\mathcal{L}_{\Gamma}(\ell^{\infty})}=\mathcal{O}(\delta+L), \qquad  \|\tilde{B}(\theta)-B(\theta)\|_{\mathcal{L}_{\Gamma}(\R^d)}=\mathcal{O}(\delta+L).
\]
and some function $P\in C^1_\Gamma(\T^d;\mathcal{L}_\Gamma (\R^d))$.
\end{itemize}
Now the local invariant manifolds of the torus $\T_0$ are
\[
W^s=\{ y=0, r=0 \}, \qquad   W^u=\{ x=0, r=0  \}.
\]
By hypotheses $\mathbf{(H2)_{\Lambda}}$-$\mathbf{(H3)_{\Lambda}}$ these invariant manifolds are $F_\nu$-invariant. Note also that $f_3$ restricted to $W^s\cup W^u$ vanishes. Then $F_{\nu}$ restricted to $\T_0$ is just the rotation of frequency $\omega_0$.\\
%

\begin{proposition}\label{thm:lambdalemma}
Consider a map $F_\nu$ of the form \eqref{def:form} and a $C_\Gamma^1$ submanifold $\Gamma\subset \cM_\de$. Assume that $\mathbf{(H1)_{\Lambda}}$--$\mathbf{(H4)_{\Lambda}}$ hold and that there exists  $q_0\in \Gamma\cap W^s$ where $\Gamma$ and $W^s$ intersect transversally in the sense of Definition \eqref{def:transversality}. 

Then, there exists $\zeta_0>0$ and  $Q\colon B_{\zeta_0}(\ell^{\infty})\to \mathcal{M}$, $Q(z)=(x(z), z, \theta(z), r(z))$, $Q(0)=q_0$ 
such that for all $p_0\in W^u$ and $\e>0$ there exists $j\in\mathbb{N}$ such that
\[
\{ F^j(Q(z)) : z\in B_{\zeta_0}(\ell^{\infty}) \} \cap \{ p\in \mathcal{M} : d(p, p_0)<\e \}\neq \emptyset.
\]
Moreover  there exists $j_0$ such that $D [F^j(Q(z))](\ell^{\infty})$ is $\e$-close to a subspace of $T W^u$ for $j>j_0$.

Finally,  assume that the map $F_\nu$ has a first integral in the sense of Definition \ref{def:firstintegralmaps} and that $\Gamma$ and $W^u$ intersect transversally at $q_0$ in the sense of Definition \ref{def:transversalitydegmaps}. 
Then, the same statements hold.
\end{proposition}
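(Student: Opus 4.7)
The plan is to adapt the finite-dimensional lambda lemma of \cite{FontichM98} to the $C^1_\Gamma$ Banach setting, exploiting the normal form of Theorem \ref{thm:normalform} and the graph-transform calculus developed in Section \ref{sec:manifold}. Once the normal form is in place, $W^s=\{y=0,\,r=0\}$ and $W^u=\{x=0,\,r=0\}$, so $q_0\in \Gamma\cap W^s$ has coordinates $(x_0,0,\theta_0,0)$, and transversality translates into the surjectivity of the projection $T_{q_0}\Gamma\to\ell^{\infty}$ onto the $y$-factor modulo $T_{q_0}W^s$. The first step is to produce the parameterization $Q$: using the $C^1_\Gamma$ regularity of $\Gamma$ and Lemma \ref{lem:neumann} to invert this projection as identity-plus-small in $\mathcal{L}_\Gamma$, a Banach implicit-function argument yields $\zeta_0>0$ and a $C^1_\Gamma$ submanifold $D=Q(B_{\zeta_0}(\ell^\infty))\subset\Gamma$ with $Q(0)=q_0$ of the required form $Q(z)=(x(z),z,\theta(z),r(z))$.

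The heart of the proof is an iterated graph-transform in $C^1_\Gamma$. Writing $F^n\circ Q=(X_n,Y_n,\Theta_n,R_n)$ and using $\mathbf{(H2)_{\Lambda}}$-$\mathbf{(H4)_{\Lambda}}$, which make the nonlinear terms vanish on $W^{s,u}$, the plan is to prove inductively that $Y_n\colon B_{\zeta_0}(\ell^\infty)\to\ell^\infty$ is a $C^1_\Gamma$ diffeomorphism onto a ball of radius at least $c\,\lambda^{n}\zeta_0$; its inversion in $\mathcal{L}_\Gamma$ is supplied by Lemma \ref{lem:neumann} applied to the perturbation of $\tilde A_+(\theta)$ surviving near $\T_0$. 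After reparameterizing by $w:=Y_n(z)$ and setting $\widetilde Q_n(w):=F^n(Q(Y_n^{-1}(w)))=(\widetilde X_n(w),w,\widetilde\Theta_n(w),\widetilde R_n(w))$, the recursive estimates aimed for are
\[
\|\widetilde X_n\|_{C^1_\Gamma}=O(\lambda^{-n}),\qquad \|\widetilde R_n\|_{C^1_\Gamma}=O(\mu^{n}),\qquad \widetilde\Theta_n(w)=\theta_0+n\omega_0+O(\lambda^{-n}),
\]
for some $\mu<1$. The bound on $\widetilde X_n$ comes from the contraction $\|\tilde A_-(\theta)\|_{\mathcal{L}_\Gamma}\le\lambda^{-1}$ together with the vanishing of $f_1$ on $W^u$; the bound on $\widetilde R_n$ uses the block-triangular structure of $DF$ at $\T_0$ from $\mathbf{(H4)_\Lambda}$, which forces $R_{n+1}=\tilde B(\Theta_n)R_n+\mathrm{small}$, combined with the smallness condition $\beta\lambda^{-1}(1+K_\theta)^3<1$ of $\mathbf{(H0)_{C^2}}$ ensuring that $r$-growth is strictly slower than $y$-expansion. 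Since $D\widetilde X_n,D\widetilde R_n\to 0$ in $\mathcal{L}_\Gamma$, the image of $D\widetilde Q_n(w)$ lies $\e$-close in $\mathcal{L}_\Gamma$ to a subspace of $T W^u$, which is the second assertion of the proposition upon undoing the reparameterization $w=Y_n(z)$.

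To reach a prescribed $p_0=(0,y_*,\theta_*,0)\in W^u$ within $\e$, combine these facts with the density of $\{\theta_0+n\omega_0\}_{n\ge 0}$ in $\T^d$ provided by the non-resonance $\mathbf{(H1)_\Lambda}$: for $n$ large with $|n\omega_0+\theta_0-\theta_*|_d<\e/2$ and the range of $Y_n$ containing $y_*$, the unique $w$ with the $y$-component of $\widetilde Q_n(w)$ equal to $y_*$ produces a point at distance $<\e$ from $p_0$ by the decay bounds. For the first-integral variant of the proposition, the level set $\Sigma:=G^{-1}(G(\T_0))$ is $F_\nu$-invariant and contains $W^{s,u}$ (the restriction $G|_{\T_0}$ is constant because $\omega_0$ is non-resonant so that every orbit on $\T_0$ is dense, and $G$ extends constantly to $W^{s,u}$ by propagation along orbits); by Remark \ref{rmk:Frobenius}, $\Sigma$ is a codimension-one $C^1_\Gamma$ Banach submanifold, and Definition \ref{def:transversalitydegmaps} says exactly that $\Gamma\cap\Sigma$ is transverse to $W^s$ inside $\Sigma$, so the argument above applies verbatim within $\Sigma$.

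The main obstacle is the uniform-in-$n$ propagation of $C^1_\Gamma$ regularity through the graph transform: inverting $Y_n$ in $\mathcal{L}_\Gamma$ at every step via Lemma \ref{lem:neumann}, using the composition lemmas of Section \ref{sec:multilineardecay} to keep $\widetilde X_n,\widetilde R_n,\widetilde\Theta_n$ in $C^1_\Gamma$ with uniform bounds, and iterating $R_{n+1}=\tilde B(\Theta_n)R_n+\mathrm{small}$ perturbatively without losing decay. All three steps rely on the same Banach-algebra and smallness bookkeeping as in Theorems \ref{thm:C1case} and \ref{thm:C2case}; once this is in place, the argument is a direct transcription of \cite{FontichM98} to the decay-norm setting.
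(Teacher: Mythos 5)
Your overall strategy matches the paper's: after the normal form of Theorem \ref{thm:normalform}, iterate the disc $Q$ via a graph transform, reparametrizing so that $y$ stays the identity and tracking the $C^1_\Gamma$ sizes of the remaining components, then use non-resonance of $\omega_0$ to reach any point of $W^u$. However, there is one structural issue and one step that is genuinely missing.

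\textbf{Conflation of two regimes.} Your claimed estimates $\|\widetilde X_n\|_{C^1_\Gamma}=O(\lambda^{-n})$ and $\|\widetilde R_n\|_{C^1_\Gamma}=O(\mu^{n})$, taken uniformly over the fixed ball $B_{\zeta_0}(\ell^\infty)$ in the reparametrized variable $w=Y_n(z)$, cannot both hold once the image $Y_n(B_{\zeta_0})$ starts filling a definite neighborhood of $W^u$: the coefficients $a_{ij}(Q_n(z))$ that drive these recursions are controlled by the distance of $Q_n(z)$ to $\T_0$, and this distance does \emph{not} go to zero over a disc of fixed radius along the unstable directions. The paper resolves this by splitting the iteration into two phases. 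In the first one the reparametrization is the \emph{linear} operator $(\partial_z\tilde y_{k+1}(0))^{-1}$, the tracking quantities $m_k,\eta_k$ are evaluated only at $z=0$, and the scalar recursion Lemma \ref{lem:analisi} (fed with $\kappa_k\to 0$ because $Q_k(0)\to\T_0$ along $W^s$) gives $m_k,\eta_k\to 0$; only after $m_{k_0},\eta_{k_0}<\e/2$ does the paper switch to the \emph{nonlinear} inverse $\mathcal Q_{k+1}=\tilde y_{k+1}^{-1}$, with domains $\zeta_k$ that grow geometrically but are capped at $1/\tau$, and where the goal is merely to \emph{preserve} the bounds $m_k<\e/2$, $\eta_k<\alpha\e/2$, not to improve them. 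Your single-phase sketch would need this decoupling to close; as written the inductive claim $O(\mu^n)$ over the growing image is false.

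\textbf{The first-integral case.} Remark \ref{rmk:Frobenius} gives a codimension-one foliation on the open set where $dG\neq 0$, but you still cannot apply the transversal case ``verbatim'': Definition \ref{def:transversalitydegmaps} stipulates $\dim(T_{q_0}\Gamma\cap T_{q_0}W^s)=1$, so the projection of $T_{q_0}\Gamma$ onto the $y$-factor has a one-dimensional kernel and is not injective. One must first choose a complement $\widetilde\Gamma_{q_0}$ of $T_{q_0}\Gamma\cap T_{q_0}W^s$ inside $T_{q_0}\Gamma$ and prove that the projection of $\widetilde\Gamma_{q_0}$ onto $\ell^\infty\times\R^{d-1}$ (the $(y,\tilde r)$-directions for some choice of $\tilde r$ dictated by $\partial_r G\neq 0$ near $\T_0$) is an isomorphism. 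This is what Lemma \ref{lem:disc:FI} and Lemma \ref{lem:goodfirstintegral} in the paper do, the latter showing that a suitable normalization of $G$ vanishes on $W^{s,u}$ using the minimality of the rotation (the statement you assert in passing). Your sketch does recognize this fact but does not extract from it the isomorphism needed to parametrize the disc $Q$.

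Apart from these two points the proposal is a correct blueprint, and the $C^1_\Gamma$-bookkeeping you invoke (Lemma \ref{lem:neumann}, the Banach algebra structure from Section \ref{sec:multilineardecay}) is the right machinery.
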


Statements $(i)$, $(ii)$ and $(iii)$ of Theorem \ref{thm:LambdaLemmaMaps} are a direct consequence of this proposition. We devote the rest of this section to prove Proposition \ref{thm:lambdalemma}.

First we need the following lemma which provides the function $Q_0(z)$. In Lemma \ref{lem:disc} we prove the existence of $Q_0$ assuming transversality in the sense of \eqref{def:transversality}. Then, in Lemma \ref{lem:disc:FI} we   construct it for maps with (formal) first integrals.

\begin{lem}\label{lem:disc}
Fix $q_0\in \Gamma\cap W^s$ and assume that $\Gamma$ and $W^s$ intersect transversally in the sense of Definition \ref{def:transversality}.
Then there exists $\sigma>0$ and a $C^1$ map $Q_0\colon B_{\sigma}(\ell^{\infty})\to \Gamma$ such that $Q_0(0)=q_0$, the image of $Q_0(z)$ can be written as
\[
\{ (x_0(z), z, \theta_0(z), r_0(z)) : z\in B_{\sigma}(\ell^{\infty}) \}.
\]
\end{lem}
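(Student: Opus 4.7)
The plan is to reduce the statement to the Banach space Inverse Function Theorem applied to the projection of a local chart of $\Gamma$ onto the $(y,r)$-coordinates, and then to restrict to the slice $\{r=0\}$.

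First I would fix a Banach space $E$, an open neighborhood $U$ of $0\in E$, and a $C^1$ immersion $\phi\colon U\to\mathcal{M}$ with $\phi(0)=q_0$ and $\phi(U)\subset\Gamma$, writing
\[
\phi(u)=(x(u),y(u),\theta(u),r(u)).
\]
Such a chart exists because $\Gamma$ is a $C^1_\Gamma$ submanifold through $q_0$. In the straightened coordinates provided by Theorem \ref{thm:normalform}, the local stable manifold of $\T_0$ is $W^s=\{y=0,\ r=0\}$, so the tangent space at $q_0$ is simply the Banach subspace
\[
T_{q_0}W^s=\ell^{\infty}\times\{0\}\times\R^d\times\{0\}\subset T_{q_0}\mathcal{M}.
\]

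Next I would exploit the transversality assumption. By Definition \ref{def:transversality} one has $T_{q_0}\mathcal{M}=T_{q_0}\Gamma\oplus T_{q_0}W^s$, and as discussed after that definition the direct sum is automatically topological once both summands are Banach subspaces (via the Open Mapping Theorem). Consequently, the continuous linear projection
\[
\pi_{y,r}\big|_{T_{q_0}\Gamma}\colon T_{q_0}\Gamma\longrightarrow \ell^{\infty}\times\R^d
\]
is a Banach space isomorphism: it is injective because its kernel is $T_{q_0}\Gamma\cap T_{q_0}W^s=\{0\}$, and surjective because any vector $(0,w_y,0,w_r)\in T_{q_0}\mathcal{M}$ decomposes along the direct sum and the $W^s$-component contributes nothing to $\pi_{y,r}$. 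Composing with the isomorphism $D\phi(0)\colon E\to T_{q_0}\Gamma$, the differential at $0$ of the $C^1$ map $u\mapsto (y(u),r(u))$ from $U$ to $\ell^{\infty}\times\R^d$ is a Banach space isomorphism.

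At this point I would invoke the Inverse Function Theorem for $C^1$ maps between Banach spaces, obtaining $\sigma>0$ and a $C^1$ local inverse $\psi\colon B_\sigma(\ell^{\infty})\times B_\sigma(\R^d)\to U$ with $y(\psi(z,\rho))=z$ and $r(\psi(z,\rho))=\rho$. Setting
\[
Q_0(z):=\phi(\psi(z,0)),\qquad z\in B_\sigma(\ell^{\infty}),
\]
yields a $C^1$ map $B_\sigma(\ell^{\infty})\to \Gamma$ with $Q_0(0)=q_0$, and its image has the required form with $x_0(z):=x(\psi(z,0))$, $\theta_0(z):=\theta(\psi(z,0))$, and $r_0(z):=0$. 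There is essentially no obstacle here: the only point worth verifying carefully is that the Banach IFT applies, which it does because the differential at the base point is a Banach space isomorphism between Banach spaces. No decay estimates and no input from the dynamics of $F$ enter this lemma, which is purely a local straightening at the transverse intersection point.
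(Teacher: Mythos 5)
Your proof is correct and takes essentially the same approach as the paper: project $\Gamma$ onto the $(y,r)$-coordinates, use the transversality hypothesis (via the direct-sum decomposition $T_{q_0}\mathcal{M}=T_{q_0}\Gamma\oplus T_{q_0}W^s$) to show that this projection restricted to $T_{q_0}\Gamma$ is a Banach space isomorphism, invoke the Inverse Function Theorem, and then restrict to the $r=0$ slice. The paper states this more tersely (it applies the Implicit Function Theorem directly to $\pi_{y,r}|_\Gamma$ without introducing an explicit chart), but the argument is the same.
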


\begin{proof}
Consider the components of $q_0=(x_0,y_0,\theta_0,r_0)$ and define the map  
\[
\pi_{y,r}:\Gamma\to \ell^\infty\times\R^d,\qquad  \pi_{y,r}(x,y,\theta,r)=(y,r).
\]
Since $\Gamma$ and $W^s$ intersect transversally  in the sense of Definition \ref{def:transversality} and $W^s=\{ y=0, r=0 \}$, the Implicit Function Theorem implies that $\pi_{y,r}$ is a local diffeomorphism in a small neighborhood $(y_0,r_0)$. Then, one has a local parameterization of $\Gamma$ as 
\[
 q(y,r)=(x(y,r), y, \theta (y,r), r),
\]
which satisfies  $q(y_0,r_0)=q_0$. Then, one can define the function $Q$ in the statement of the lemma as 
\[
 q(y)=(x(y,r_0), y, \theta (y,r_0), r_0).
\]
\end{proof}

Now we state an analogus lemma with deals with maps with formal first integrals.

\begin{lem}\label{lem:disc:FI}
Fix $q_0\in \Gamma\cap W^s$ and assume that $\Gamma$ and $W^s$ intersect transversally in the sense of Definition \ref{def:transversalitydegmaps} (with respect to a first integral $G$ in the sense of Definition \ref{def:firstintegralmaps}) at a point $q_0$.

Then there exists $\sigma>0$ and a $C^1$ map $Q\colon B_{\sigma}(\ell^{\infty})\to \Gamma$ such that $Q(0)=q_0$, the image of $Q(z)$ can be written as
\[
\{ (x_0(z), z, \theta_0(z), r_0(z)) : z\in B_{\sigma}(\ell^{\infty}) \}.
\]
\end{lem}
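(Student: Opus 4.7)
The plan is to adapt the proof of Lemma \ref{lem:disc} to the degenerate setting imposed by the presence of the formal first integral. In the non-degenerate case the splitting $T_{q_0}\cM = T_{q_0}\Gamma \oplus T_{q_0}W^s$ made $\pi_{y,r}|_\Gamma$ a local diffeomorphism onto $\ell^{\infty} \times \R^d$, and fixing $r=r_0$ produced the desired slice. Here the sum only spans $\Ker dG(q_0)$ (a codimension-one subspace of $T_{q_0}\cM$) and the intersection $T_{q_0}\Gamma \cap T_{q_0}W^s$ is one-dimensional, so $\pi_{y,r}|_\Gamma$ cannot be inverted. Instead, I would project $\Gamma$ onto the $y$-coordinate only and construct a $C^1$ section of that projection via a Banach-space implicit function theorem.

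The first step is to work out the relevant linear algebra at $q_0$. Because $T_{q_0}W^s = \{(v_x, 0, v_\theta, 0)\} \subset \ker \pi_y$, applying $\pi_y$ to the identity $T_{q_0}\Gamma + T_{q_0}W^s = \Ker dG(q_0)$ yields $\pi_y(T_{q_0}\Gamma) = \pi_y(\Ker dG(q_0))$. A direct verification shows that the right-hand side equals the whole of $\ell^{\infty}$ as soon as $dG(q_0)$ is not supported solely in the $y$-codirection; this is automatic in the application of the present paper, where $G=H$ is the formal Hamiltonian of \eqref{def:Hamiltonian} and a point of $W^s$ close to $\T_0$ has $dH(q_0)$ with non-vanishing $r$-component inherited from the frequencies at the activated sites. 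A parallel argument based on $(T_{q_0}\Gamma \cap \ker \pi_y) + T_{q_0}W^s = \Ker dG(q_0) \cap \ker \pi_y$ together with the one-dimensionality of $T_{q_0}\Gamma \cap T_{q_0}W^s$ identifies the kernel of $\pi_y|_{T_{q_0}\Gamma}$ with a $d$-dimensional subspace.

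Given those two structural facts, the construction of $Q$ is standard. Pick any $C^1$ parameterization $\phi \colon U \subset \ell^{\infty} \times \R^d \to \Gamma$ with $\phi(0)=q_0$. The derivative $D(\pi_y \circ \phi)(0) \colon \ell^{\infty} \times \R^d \to \ell^{\infty}$ is then a bounded linear surjection with $d$-dimensional kernel $K$; since $K$ is finite-dimensional it admits a closed complement $N$, and the Open Mapping Theorem upgrades the restriction $D(\pi_y\circ\phi)(0)|_N \colon N \to \ell^{\infty}$ to a topological isomorphism. The Banach implicit function theorem then produces $\sigma>0$ and a $C^1$ map $\psi \colon B_\sigma(\ell^{\infty}) \to N$ with $\psi(0)=0$ and $\pi_y(\phi(\psi(z),0)) = z$. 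Setting $Q(z) := \phi(\psi(z),0)$ furnishes the desired parameterization $(x_0(z), z, \theta_0(z), r_0(z))$ with $Q(0)=q_0$.

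The main technical obstacle is the sharp identification of the image and kernel of $\pi_y|_{T_{q_0}\Gamma}$, which is precisely where the weaker, first-integral transversality enters; once these are secured, the rest of the argument is essentially the same Banach-space calculus used in the proof of Lemma \ref{lem:disc}. A secondary subtlety is to guarantee that the surjectivity assertion $\pi_y(\Ker dG(q_0)) = \ell^{\infty}$ holds in the intended application; this could either be incorporated as an additional hypothesis on $dG$ (ruling out the pathological case where $dG(q_0)$ is concentrated in the $y$-direction), or, as above, verified directly from the explicit form of $H$ near the invariant tori $\T_0$.
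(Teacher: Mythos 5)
Your linear algebra is correct and leads to a valid construction: projecting $T_{q_0}\Gamma$ onto the $y$-coordinate alone (rather than $(y,\tilde r)$ as the paper does) is a clean shortcut, provided the two structural facts—surjectivity of $\pi_y|_{T_{q_0}\Gamma}$ onto $\ell^\infty$ and a $d$-dimensional kernel—actually hold. Your telescoping identities $\pi_y(T_{q_0}\Gamma)=\pi_y(\Ker dG(q_0))$ and $(T_{q_0}\Gamma\cap\ker\pi_y)+T_{q_0}W^s=\Ker dG(q_0)\cap\ker\pi_y$ are both correct, and so is the dimension count once the non-degeneracy of $dG$ is known.

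The gap is precisely there, and it is the heart of the lemma. Both the surjectivity and the kernel count fail if $dG(q_0)$ is annihilated by $\ker\pi_y$, i.e.\ supported only in the $y$-codirection, and you only dispose of this case informally (``automatic in the application'') or by proposing an extra hypothesis. The paper instead establishes this non-degeneracy as a consequence of the existing assumptions through a separate Lemma~\ref{lem:goodfirstintegral}: using the invariance relation \eqref{cond:FI:maps}, the block-triangular form of $DF$ on $\T_0$ from $\mathbf{(H4)_\Lambda}$, the minimality of the non-resonant rotation, and the exponential contraction on $W^s$ (resp.\ $W^u$), one shows that $\partial_x G$, $\partial_y G$, and $\partial_\theta G$ all vanish identically on $\T_0$, so $dG|_{\T_0}=(0,0,0,\partial_r G)$. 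After assuming $\partial_r G\neq 0$ (the case $\partial_r G=0$ reduces to Lemma~\ref{lem:disc}), continuity of $dG$ gives $\partial_{r_1}G(p)\neq 0$ for all $p\in\mathcal{M}_\delta$, which supplies exactly the non-degeneracy you need without any added hypothesis and without a model-dependent verification. Armed with that, the paper uses Remark~\ref{rmk:complement} to choose a complement $\widetilde\Gamma_{q_0}$ of $\mathcal{N}_{q_0}=T_{q_0}\Gamma\cap T_{q_0}W^s$ and shows $\pi_{y,\tilde r}|_{\widetilde\Gamma_{q_0}}$ is an isomorphism (with $\tilde r=(r_2,\dots,r_d)$), then proceeds as in Lemma~\ref{lem:disc}; your $\pi_y$-projection route is an equivalent packaging of that final step once the dynamical lemma is in hand. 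One more minor omission: you tacitly assume $\Gamma$ is modeled on $\ell^\infty\times\R^d$; this does follow from surjectivity plus the finite-dimensional kernel (which splits), but it should be stated.
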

To prove this lemma we rely on the following lemma which analyzes the first integral $G$.
\begin{lem}\label{lem:goodfirstintegral}
Consider a map $F_\nu$ of the form \eqref{def:form} which satisfies $\mathbf{(H1)_{\Lambda}}$--$\mathbf{(H4)_{\Lambda}}$ and assume that it posesses a first integral $G$ in the sense of Definition \ref{def:firstintegralmaps}. Assume furthermore that the dynamics on its invariant torus $\T_0:=\{ x=y=0, r=0 \}$ is a 
a non-resonant rotation of frequency $\omega_0:=\omega(0, 0, 0)$ (with respect to the definition \eqref{def:nonresonantmaps}). 

Then, $F_\nu$ has  a (possibly different) first integral $\widetilde G$ satisfying $dG=d\widetilde G$ which is bounded (and constant) on the torus. Moreover,  $\widetilde G$ is also bounded on the the invariant manifolds of the torus and takes the same value as in the torus.
\end{lem}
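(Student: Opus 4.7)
The plan is to construct $\widetilde G$ as a primitive of the closed continuous $1$-form $dG$ via path integration from a base point $q^*\in\T_0$, the core technical point being to show $dG$ is globally exact on (a neighborhood of) $\cM=\ell^\infty\times\ell^\infty\times\T^d\times\R^d$, which reduces to the vanishing of $dG$ along the tangent directions of $\T_0$.

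After applying the change of coordinates of Theorem \ref{thm:normalform}, I may assume $\T_0=\{x=y=r=0\}$, $F|_{\T_0}$ is the rigid rotation $\theta\mapsto\theta+\omega_0$, and $DF(p)|_{\T_0}$ has identity in the $\theta\theta$-block (item (v)). Evaluating the invariance identity \eqref{cond:FI:maps}, $DG(F(p))\,DF(p)=DG(p)$, on the angular unit vectors yields
\[
\partial_{\theta_j} G(0,0,\theta+\omega_0,0)=\partial_{\theta_j} G(0,0,\theta,0), \qquad j=1,\dots,d.
\]
Nonresonance \eqref{def:nonresonantmaps} makes this rotation minimal on $\T^d$, and together with continuity of $dG$ (Definition \ref{def:firstintegralmaps}) each function $\theta\mapsto\partial_{\theta_j}G(0,0,\theta,0)$ must be a constant $a_j\in\R$. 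To show $a_j=0$ I would decompose a full angular loop into $n$ small steps of size $2\pi/n$ in coordinate $j$, with intermediate points $z_0,z_1,\dots,z_n=z_0$ all on $\T_0$: each formal difference $M(z_{k-1},h_n)$ is finite and well-defined, the telescoping sum $\sum_{k=1}^n M(z_{k-1},h_n)$ equals $G(z_n)-G(z_0)=0$, and by Taylor expansion $M(z,(h,0))=a_jh+O(h^2)$, so the sum also equals $2\pi a_j+O(1/n)$. Letting $n\to\infty$ forces $a_j=0$, so $dG$ vanishes on every tangent direction of $\T_0$.

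Since $\cM$ (and $\cM_\delta$) is homotopy equivalent to $\T^d$, the first de Rham cohomology $H^1(\cM;\R)$ is generated by the angular loops $\gamma_j$ lying in $\T_0$, so the previous step gives $\int_{\gamma_j}dG=2\pi a_j=0$ for every $j$. Hence $dG$ is exact and I would define
\[
\widetilde G(z):=\int_{\gamma_z}dG
\]
along any piecewise $C^1$ path $\gamma_z$ from $q^*$ to $z$, producing a continuous, single-valued function normalized so that $\widetilde G(q^*)=0$. The listed properties then follow: $d\widetilde G=dG$ by construction; $F$-invariance, since $d(\widetilde G\circ F-\widetilde G)=F^*dG-dG=0$ makes the difference a constant which vanishes upon evaluation at $q^*$; constancy with value $0$ on $\T_0$ because $d\widetilde G$ annihilates $T\T_0$ and the torus is connected; and vanishing on $W^{s,u}$ because $F^{\pm n}(z)\to\T_0$ exponentially for $z\in W^{s,u}$ by Theorem \ref{thm:C2case}, whence $\widetilde G(z)=\lim_n\widetilde G(F^{\pm n}(z))=0$ by continuity and $F$-invariance. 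I expect the main obstacle to be forcing $a_j=0$, as this is the unique place where the potentially infinite-valued nature of $G$ must be reconciled with the cohomological and periodicity constraints via the telescoping argument above; the rest of the construction is essentially a standard de Rham-type argument adapted to the Banach manifold setting.
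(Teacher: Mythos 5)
Your proposal is correct, but it takes a noticeably different route from the paper's proof. The shared core is the observation that the invariance identity $DG(F(p))DF(p)=DG(p)$, combined with the block structure of $DF|_{\T_0}$ from $\mathbf{(H4)_{\Lambda}}$, forces $\partial_\theta G|_{\T_0}$ to be rotation-invariant, hence constant by minimality and continuity. Where you and the paper diverge is in the two subsequent steps. First, the paper asserts that minimality already gives $\partial_\theta G|_{\T_0}=0$; as you correctly note, minimality only yields constancy, and an additional appeal to the periodicity of $G$ in $\theta$ is required to kill the constant. Your telescoping sum of the finite differences $M(z_{k-1},h_n)$ around a coordinate loop is a clean way to make that periodicity step explicit while respecting the formal (possibly infinite-valued) nature of $G$ — this fills a gap that the paper leaves unspoken. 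Second, for the construction of $\widetilde G$ and the vanishing on $W^{s,u}$, the paper simply sets $\widetilde G=G-G(0,0,\theta^*,0)$ and then proves directly, again via the invariance identity and the triangular structure of $DF|_{W^s}$ combined with the exponential contraction, that $\partial_x G$ (and by symmetry $\partial_y G$) vanish on the local invariant manifolds; you instead build $\widetilde G$ as a path integral of the exact $1$-form $dG$, deduce $F$-invariance from $F^*dG=dG$ plus evaluation at the basepoint, and then use continuity together with $F^{\pm n}(z)\to\T_0$ to conclude $\widetilde G|_{W^{s,u}}=0$. Your construction is heavier on machinery (de Rham cohomology of $\cM\simeq\T^d$) but has a real advantage: the path integral is manifestly finite-valued everywhere, whereas the paper's direct difference $G(z)-G(0,0,\theta^*,0)$ is a priori an $\infty-\infty$ expression when $z$ and the base point differ in infinitely many coordinates, and so implicitly relies on the same consistency of formal differences that your telescoping argument spells out. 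Both approaches are valid; yours is more careful on the periodicity step and more structurally transparent, while the paper's is shorter and stays closer to elementary linear-algebra manipulations of the invariance identity.
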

\begin{proof}
Imposing the condition \eqref{cond:FI:maps} on the torus $\{x=y=r=0\}$ and relying on the particular form of $DF_\nu$ given by Hypothesis $\mathbf{(H4)_{\Lambda}}$ one obtains that 
\[
 \pa_\theta G(0,0,\theta+\omega,0)=\pa_\theta G(0,0,\theta,0)\qquad \text{ for all}\quad\theta\in\mathbb{T}_0.
\]
Since we are assuming that $\pa_\theta G$ is well defined and continuous, the minimality of the dynamics in the torus implies that  $\pa_\theta G(0,0,\theta,0)=0$.

Take any $\theta^* \in\mathbb{T}_0$ and define 
\[
 \widetilde G(x,y,\theta,r)=G(x,y,\theta,r)-G(0,0,\theta^*,0).
\]
Then, by the definition of  first integral (see Definition \ref{def:firstintegralmaps}), one can conclude that  $\widetilde G(0,0,\theta,0)=0$ for all $\theta\in\mathbb{T}_0$. 

Proceeding analgously and using that the dynamics on the stable manifold is an exponential contraction (analogously in the unstable manifold for the inverse map), one can also prove that 
\[
\begin{split}
 \pa_x G(x,0,\theta,0)&=0 \qquad \text{ for all }\quad \theta\in\mathbb{T}_0\quad \text{ and }\quad\|x\|_{\ell^\infty}\leq \delta\\
 \pa_y G(0,y,\theta,0)&=0 \qquad \text{ for all }\quad \theta\in\mathbb{T}_0\quad \text{ and }\quad\|y\|_{\ell^\infty}\leq \delta.
\end{split}
 \]
In conclusion,
\[
 \widetilde G|_{W^s}=\widetilde G|_{W^u}=0.
\]
\end{proof}

We use this lemma to prove Lemma \ref{lem:disc:FI}.
\begin{proof}[Proof of Lemma \ref{lem:disc:FI}]
From the proof of Lemma \ref{lem:goodfirstintegral}, one can conclude that for points in the torus $\theta\in \mathbb{T}_0$, one has that 
\[
 dG(0,0,\theta,0)=(0,0,0,\pa_rG(0,0,\theta,0)).
\]
We assume that 
\[
 \pa_rG(0,0,\theta,0)\neq 0
\]
(note that if this is true for one $\theta\in \mathbb{T}_0$, it is also true for any $\theta$). This implies that  $\Ker dG(p)$ defines a Banach subspace of codimension 1. The case $\pa_rG(0,0,\theta,0)=0$ can be handled as in Lemma \ref{lem:disc}.

We assume without loss of generality that $\pa_{r_1}G(0,0,\theta,0)\neq 0$. Note,  that since $dG$ is continuous by assumption (see Definition \ref{def:firstintegralmaps}),  taking $\de>0$ small enough, 
\[
 \pa_{r_1}G(p)\neq 0\qquad \text{for all}\qquad p\in\mathcal{M}_\delta.
\]
Therefore, if we denote $\tilde r=(r_2,\ldots, r_d)$, one has that for $p\in\mathcal{M}_\delta$,
\[
 \Pi:\Ker dG(p)\to \ell^{\infty}\times\ell^{\infty}\times\mathbb{R}^d\times\mathbb{R}^{d-1},\qquad \Pi (x,y,\theta,r)=(x,y,\theta,\tilde r)
\]
is an isomorphism. (Note that we are abusing notation and denoting by $\theta$ points in the tangent space of the torus).

Moreover, note that by Definition \ref{def:transversalitydegmaps}, the transverse intersection between $W^u$ and $\Gamma$ implies that its intersection has dimension one. Let us denote by $\mathcal{N}_{q_0}$ its tangent space at the point $q_0$. Then, by Remark \ref{rmk:complement}, we can find a complement to $\mathcal{N}_{q_0}$, that is a subspace $\widetilde{\Gamma}_{q_0}$ such that
\[
 T_{q_0}\Gamma=\mathcal{N}_{q_0}\oplus\widetilde{\Gamma}_{q_0}
\]
which implies that 
\[
T_{q_0}W^s\oplus \widetilde{\Gamma}_{q_0}=\Ker dG(q_0).
\]
Thus, since $W^s=\{y=0,r=0\}$, one has that \[
\pi_{y,\tilde r}:\widetilde\Gamma_{q_0}\to \ell^\infty\times\R^{d-1},\qquad  \pi_{y,\tilde r}(x,y,\theta,\tilde r)=(y,\tilde r).
\]
is an isomorphism. Then, one can proceed as in the proof of Lemma \ref{lem:disc}.

\end{proof}

\noindent{\textbf{Proof of Proposition \ref{thm:lambdalemma}}}. 
We can assume that $p_0$ and $q_0$ belong to $\mathcal{M}_{\delta}$, otherwise we can consider iterations of these points through $F^{-1}$ and $F$ respectively. In the normal form coordinates 
\[
p_0=(0, y_0, \varphi_0, 0), \qquad q_0=(x_0, 0, \theta_0, 0)
\]
with $\| x_0\|_{\ell^{\infty}}$, $\| y_0 \|_{\ell^{\infty}} \le \delta$ . First we study the form of the differential of the map $F$ on $\mathcal{M}_{\delta}$. We have
\begin{equation}\label{def:matrix2}
DF_{|_{\mathcal{M}_{\delta}}}=\begin{pmatrix}
\tilde{A}_-+a_{11} & a_{12} & a_{13} & a_{14}\\
a_{21} & \tilde{A}_++a_{22} & a_{23} & a_{24}\\
a_{31} & a_{32} & \id+a_{33} & a_{34}\\
a_{41} & a_{42} & a_{43} & \tilde{B}+a_{44}
\end{pmatrix}.
\end{equation}
Since $F\in C^2_{\Gamma}$ then the coefficients $a_{ij}$ are $C_{\Gamma}^1$ functions (in the corresponding spaces) and by $\mathbf{(H4)_{\Lambda}}$ we have 
\begin{equation}\label{aij}
a_{i j}(0, 0, \theta, 0)=0 \qquad \mbox{for}\,\,\,(i, j)\neq (3, 4).
\end{equation}
Let $\lambda_0$ be such that (recall Hypothesis $\mathbf{(H0)_{C^2}}$),
\begin{equation}\label{bound:betalambda1}
1\le \beta<\lambda_0<\lambda.
\end{equation}
Given $\eta_0>0$, let $\kappa>0$ be such that
\begin{align}
&\beta+3\kappa<\lambda_0<\lambda-\kappa(1+\eta_0),   \label{bound:acaso1}\\
&\lambda_0<\lambda-\frac{7\kappa}{4},\\   \label{bound:acaso3}
&0<\frac{\kappa+(\beta+\kappa)\eta_0}{\lambda_0}<\eta_0,\\ \label{bound:acaso4}
& \lambda^{-1}+2 \kappa<1.
\end{align}
By $\mathbf{(H2)_{\Lambda}}$ we have  that for $j=1, 3, 4$,
\[
a_{j 2}(0, y, \theta, 0)=0.
\]
Since $a_{j2}$ are $C^1_{\Gamma}$ functions there exists a constant $K_a>0$ such that
\begin{equation}\label{bound:aj2}
\| a_{j 2}(w) \|_{\mathcal{L}_{\Gamma}}\le K_a (\| x \|_{\ell^{\infty}}+|r|_d) \qquad \forall w\in \mathcal{M}_{\delta}, \qquad j=1, 3, 4.
\end{equation}
In the above inequality and from now on, when giving statements on the norms of the $a_{i j}$'s we abuse the notation and we do not specify the domain when writing $\mathcal{L}_{\Gamma}$.
By assumption, recall \eqref{def:KomegaKf}, we have $\| a_{3 4} \|_{\mathcal{L}_{\Gamma}}\le 2 \K$. We can assume that
\begin{equation}\label{bound:shrink}
2 \K<1, \qquad K_a\le 1.
\end{equation}
Indeed we could consider the scaled variables $(\alpha x, \alpha y, \theta, \alpha r)$ and work in the neighborhood $\mathcal{M}_{\delta/\alpha}$. Then $\|  a_{j 2} \|_{\mathcal{L}_{\Gamma}}\le \alpha K_a (\| x \|_{\ell^{\infty}}+|r|_d) $, $\| a_{34} \|_{\mathcal{L}_{\Gamma}} \le 2 \K \alpha $ and taking $\alpha$ small enough we can obtain \eqref{bound:shrink}.

By $\mathbf{(H4)_{\Lambda}}$ and \eqref{aij} we can consider $\delta$ small enough such that 
\begin{equation}\label{bound:kappa}
\sup_{w\in \mathcal{M}_{\delta}} \| a_{ij}(w) \|_{\mathcal{L}_{\Gamma}}\le \kappa \qquad \mbox{for}\,\,\,(i, j)\neq (3, 4).
\end{equation}
We introduce $\tau$ such that
\begin{equation}\label{bound:tau}
\tau>1, \quad \tau>\frac{2}{\lambda_0-(\beta+3\kappa)}>0, \quad \tau>\frac{4 \K}{\lambda-\lambda_0-\frac{7\kappa}{4}}, \quad \tau>\frac{1}{\delta}.
\end{equation}
Since we can consider $p_0$ arbitrarily close to $\T_0$ we take $p_0$ such that
\[
\| y_0 \|_{\ell^{\infty}}<\frac{1}{\tau}.
\]
Now we study the restriction of the differential of $F$ at $W^s\cap \mathcal{M}_{\delta}$. By $ \mathbf{(H3)_{\Lambda}}$ we have
\begin{equation}\label{def:matrix}
DF_{|_{W^s\cap \mathcal{M}_{\delta}}}=\begin{pmatrix}
\tilde{A}_-+a_{11} & a_{12} & a_{13} & a_{14}\\
0 & \tilde{A}_++a_{22} & 0 & a_{24}\\
0 & a_{32} & \id & a_{34}\\
0& a_{42} & 0 & \tilde{B}+a_{44}
\end{pmatrix}.
\end{equation}

Now we consider the graph defined by $Q_0(z)$ found either in Lemma \ref{lem:disc} or Lemma \ref{lem:goodfirstintegral} and we iterate it by $F$. We prove that after some iterations its tangent space at $z=0$ is close enough to the space spanned by $\partial/\partial y$. Then we continue iterating the graph until it is close enough to $W^u$. We recall that
\[
Q_0(0)=q_0, \quad \pa_z Q_0(z)=(\pa_z x_0(z), \id, \pa_z \theta_0(z),\pa_z  r_0(z))
\]
and we choose $\eta_0$ such that $\| \pa_z r_0(0)\|_{\mathcal{L}_{\Gamma}(\ell^{\infty}; \R^d)}\le \eta_0$.
We define the following sequence
\[
\tilde{Q}_{k+1}(z)=(\tilde{x}_{k+1}(z), \tilde{y}_{k+1}(z), \tilde{\theta}_{k+1}(z), \tilde{r}_{k+1}(z) ):=F ( Q_k(z)).
\]
We shall prove that $\pa_z \tilde{y}_{k+1}(0)$ is invertible at each step. Then we would be able to define
\[
Q_{k+1}(z)=(x_{k+1}(z), y_{k+1}(z), \theta_{k+1}(z), r_{k+1}(z)):=\tilde{Q}_{k+1} ( \mathcal{R}_{k+1}(z))
\]
where
\[
\mathcal{R}_{k+1}:=\big(\partial_{z} \tilde{y}_{k+1}(0)\big)^{-1}\in\mathcal{L}_{\Gamma}(\ell^{\infty}).
\]
The reparametrization $\mathcal{R}_{k+1}$ of the iterations of $q_0(z)$ implies that {$\partial_{z} y_{k+1}(0)=\id$}.

Since $Q_k(0)=\tilde{Q}_k(0)=F^k(q_0)=(x_k(0), 0, \theta_k(0), 0)\in W^s$ tends to the torus each $Q_k(z)$ is defined for small $z$. Let us define
\[
A^{(k)}_{\pm}:=\tilde{A}_{\pm}(\theta_k), \quad B^{(k)}:=\tilde{B}(\theta_k), \quad a_{ij}^{(k)}=a_{ij}(q_k(0)), \quad \kappa_k:=\max \left\{ \| a_{i j}^{(k)} \|_{\mathcal{L}_{\Gamma}}, \,\,j\neq 4 \right\}.
\]
By \eqref{bound:kappa} we have that $\kappa_k\le \kappa$. By the Hypothesis $\mathbf{(H4)_{\Lambda}}$ on the differential of $F_{\nu}$ on $\T_0$, the fact that $Q_k(0)$ tends to the torus and the fact that $F_{\nu}$ is $C^2_{\Gamma}$ we have that $\kappa_k\to 0$.
By \eqref{def:matrix} we have that

\begin{equation*}
\pa_z\tilde{Q}_{k+1}(0)=DF(Q_k(0)) [\pa_zQ_k(0)]=\begin{pmatrix}
(A^{(k)}_-+a_{11}^{(k)})\,\pa_zx_k(0)+a_{12}^{(k)}+a_{13}^{(k)} \pa_z\theta_k(0)+a_{1 4}^{(k)} \,\pa_zr_k(0)\\[2mm]
A^{(k)}_+ + a_{22}^{(k)}+a_{2 4}^{(k)}\,\pa_zr_k(0)\\[2mm]
a_{3 2}^{(k)}+\pa_z\theta_k(0)+a_{3 4}^{(k)} \,\pa_zr_k(0)\\[2mm]
a_{4 2}^{(k)}+(B^{(k)}+a_{44}^{(k)}) \,\pa_zr_k(0)
\end{pmatrix}.
\end{equation*}
This allows to define the recurrences
\begin{align*}
&\pa_z x_{k+1}(0)=\Big((A^{(k)}_-+a_{11}^{(k)})\,\pa_zx_k(0)+a_{12}^{(k)}+a_{13}^{(k)}\pa_z \theta_k(0)+a_{1 4}^{(k)} \,\pa_zr_k(0)\Big) \Big( A^{(k)}_++a_{22}^{(k)}+a_{2 4}^{(k)}\,\pa_zr_k(0) \Big)^{-1},\\
&\pa_zy_{k+1}(0)=\id,\\
&\pa_z\theta_{k+1}(0)= \Big( a_{3 2}^{(k)}+\pa_z\theta_k(0)+a_{3 4}^{(k)} \,\pa_zr_k(0) \Big)  \Big( A^{(k)}_++a_{22}^{(k)}+a_{2 4}^{(k)}\,\pa_zr_k(0) \Big)^{-1},\\
&\pa_zr_{k+1}(0)=\Big( a_{4 2}^{(k)}+(B^{(k)}+a_{44}^{(k)}) \,\pa_zr_k(0)\Big)  \Big( A^{(k)}_++a_{22}^{(k)}+a_{2 4}^{(k)}\,\pa_zr_k(0) \Big)^{-1}.
\end{align*}

We introduce
\[
m_k:=\max\left\{ \| \pa_zx_k(0)\|_{\mathcal{L}_{\Gamma}(\ell^{\infty})}, \| \pa_z\theta_k(0)\|_{\mathcal{L}_{\Gamma}(\ell^{\infty}; \R^d)}\right \}, \quad \eta_k:=\|\pa_z r_k(0) \|_{\mathcal{L}_{\Gamma}(\ell^{\infty}; \R^d)}.
\]
We claim that
\begin{align}
&m_{k+1}\le \frac{m_k+\kappa_k+\eta_k}{\lambda_0} \label{rec1}\\ \label{rec2}
&\eta_{k+1}\le \frac{\kappa_k+(\beta+\kappa)\eta_k}{\lambda_0}\\ \label{rec3}
&\eta_{k+1}\le \eta_0.
\end{align}
We prove these bounds by induction. Hence suppose that those bounds hold for $k$ and let us prove it for $k+1$.
First we see that 
\[
\pa_z\tilde{y}_{k+1}(0)=A^{(k)}_++a_{22}^{(k)}+a_{2 4}^{(k)}\,\pa_zr_k(0)
\]
is invertible. Using the inductive hypothesis \eqref{rec3} we have that $\| a_{22}^{(k)}+a_{2 4}^{(k)}\,\pa_zr_k(0) \|_{\mathcal{L}_{\Gamma}(\ell^{\infty})}\le \kappa_k+\kappa \eta_k\le \kappa(1+\eta_0)$.  By $\mathbf{(H1)_{C^1}}$ we have $\| \big(A_+^{(k)} \big)^{-1}\|_{\mathcal{L}_{\Gamma}(\ell^{\infty})}<\lambda^{-1}$.  Then
\[
\| \big(A_+^{(k)} \big)^{-1}\|_{\mathcal{L}_{\Gamma}(\ell^{\infty})} \,\,\| a_{22}^{(k)}+a_{2 4}^{(k)}\,\pa_z r_k(0) \|_{\mathcal{L}_{\Gamma}(\ell^{\infty})}<\frac{\kappa(1+\eta_0)}{\lambda}\stackrel{\eqref{bound:acaso1}, \eqref{bound:betalambda1}}{<} 1.
\]
This proves that $\pa_z\tilde{y}_{k+1}(0)$ is invertible by Neumann series (see Lemma \ref{lem:neumann}) and
\begin{equation}\label{bound:inv}
\| (\pa_z\tilde{y}_{k+1}(0))^{-1}\|_{\mathcal{L}_{\Gamma}(\ell^{\infty})}\le \frac{\| \big(A_+^{(k)}\big)^{-1}\|_{\mathcal{L}_{\Gamma}(\ell^{\infty})}}{1-\| \big(A_+^{(k)}\big)^{-1}\|_{\mathcal{L}_{\Gamma}(\ell^{\infty})} \| a^{(k)}_{22}+a_{ 2 4}^{(k)} \,\pa_zr_k(0) \|_{\mathcal{L}_{\Gamma}(\ell^{\infty})}}\le \lambda_0^{-1}.
\end{equation}
By \eqref{bound:inv} and \eqref{bound:acaso4} we get the estimate for $\| \pa_zx_{k+1}(0)\|_{\mathcal{L}_{\Gamma}(\ell^{\infty})}$.

By \eqref{bound:shrink}, the definition of $\kappa_k$ and $\eta_k$ and the bound \eqref{bound:inv} we obtain the estimate for $\|\pa_z\theta_{k+1}(0)\|_{\mathcal{L}_{\Gamma}(\ell^{\infty}; \R^d)}$.
By \eqref{bound:kappa} we get the bound for \eqref{rec2}. Since $\kappa_k\le \kappa$ for all $k$, we obtain \eqref{rec3} by \eqref{bound:acaso3}.
Now we need the following lemma.

\begin{lem}\label{lem:analisi}
Let $(a_k)_k, (b_k)_k, (\xi_k)_k$ be sequences of positive real numbers such that $a_k\to 0$ and $b_k\le b<1$ and $\xi_{k+1}\le a_k+b_k\,\xi_k$, $k\geq 1$. Then $\xi_k\to 0$.
\end{lem}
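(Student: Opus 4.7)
The plan is to prove this by a standard splitting argument that combines the geometric decay from $b_k \leq b < 1$ with the vanishing of $a_k$.

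First I would fix $\varepsilon > 0$ and use $a_k \to 0$ to choose $N \in \NN$ such that $a_k \leq \varepsilon(1-b)/2$ for all $k \geq N$. Next, since $b_k \leq b$, I would iterate the recurrence $\xi_{k+1}\leq a_k+b_k\xi_k$ starting from index $N$ to obtain, for every $k > N$,
\[
\xi_k \leq \Bigl(\prod_{j=N}^{k-1} b_j\Bigr)\xi_N + \sum_{j=N}^{k-1} \Bigl(\prod_{i=j+1}^{k-1} b_i\Bigr) a_j \leq b^{k-N}\xi_N + \sum_{j=N}^{k-1} b^{k-1-j} a_j.
\]
The second sum is controlled using the choice of $N$ and the geometric series bound:
\[
\sum_{j=N}^{k-1} b^{k-1-j} a_j \leq \frac{\varepsilon(1-b)}{2}\sum_{j=N}^{k-1} b^{k-1-j} \leq \frac{\varepsilon(1-b)}{2}\cdot\frac{1}{1-b} = \frac{\varepsilon}{2}.
\]

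Finally, since $b<1$, the first term $b^{k-N}\xi_N$ tends to $0$ as $k\to\infty$, so there exists $k_0 \geq N$ with $b^{k-N}\xi_N < \varepsilon/2$ for all $k\geq k_0$. Combining the two estimates gives $\xi_k < \varepsilon$ for all $k \geq k_0$, which shows $\xi_k \to 0$.

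There is no real obstacle in this argument; the only minor care needed is that the coefficients $b_k$ are not constant but only bounded by $b < 1$, which is absorbed by replacing each product $\prod b_i$ by the upper bound $b^{k-1-j}$ before applying the geometric series formula.
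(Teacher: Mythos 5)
Your proof is correct, and since the paper states Lemma \ref{lem:analisi} without proof (treating it as a standard calculus fact), there is no argument in the paper to compare against. Your splitting argument --- choose $N$ so that $a_k\le \varepsilon(1-b)/2$ for $k\ge N$, unroll the recurrence from index $N$, bound the products $\prod b_i$ by powers of $b$, sum the geometric series, and then let the transient term $b^{k-N}\xi_N$ die --- is exactly the routine proof one expects, and every step checks out.
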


We consider $b_k\rightsquigarrow \frac{\beta+\kappa}{\lambda_0}$ and $a_k\rightsquigarrow \frac{\kappa_k}{\lambda_0}$. By \eqref{bound:acaso3} we have $b_k\equiv b< 1$ and $a_k\to 0$ because $\kappa_k\to 0$. Then by Lemma \ref{lem:analisi} the sequence $\eta_k\to0$.

We consider $b_k\rightsquigarrow \lambda_0^{-1}$ and $a_k\rightsquigarrow \frac{\kappa_k+ \eta_k}{\lambda_0}$. By \eqref{bound:acaso3} we have $b_k\equiv b< 1$ and $a_k\to 0$. Then by Lemma \ref{lem:analisi} the sequence $m_k\to0$.

Therefore, there exists $k_0$ such that if $k\geq k_0$, then $m_k, \eta_k<\varepsilon/4$  and $\| x_k(0)\|_{\ell^{\infty}}<\varepsilon/(2\tau)$.
We consider the graph $Q_{k_0}(z)$ and we reparametrize it in such a way that $y_{k_0} (z)=\id$ . This is
possible, locally at $z = 0$, because $\pa_z y_{k_0} (0) = \id$. This does not change the bounds $m_{k_0}$ and $\eta_{k_0}$.
We denote again by $Q_0(z)$ the resulting graph $Q_{k_0} (z)$.
Now we perform the second step where we iterate this new $Q_0(z), z \in B_{\zeta_0}$ , with $\zeta_0>0$ chosen below.
We consider again a sequence of reparametrized iterations
\[
\tilde{Q}_{k+1}(z)=(\tilde{x}_{k+1}(z), \tilde{y}_{k+1}(z), \tilde{\theta}_{k+1}(z), \tilde{r}_{k+1}(z) ):=F ( Q_k(z)), 
\]
\[
Q_{k+1}(z)=(x_{k+1}(z), z, \theta_{k+1}(z), r_{k+1}(z)):=\tilde{Q}_{k+1} ( \mathcal{Q}_{k+1}(z))
\]
where
\begin{equation}\label{calQ}
\mathcal{Q}_{k+1}:=\tilde{y}_{k+1}^{-1}.
\end{equation}
We note that this time the reparametrization $\mathcal{R}_{k+1}$ is not linear. The sequence is defined for
\[
\| z \|_{\ell^{\infty}}<\zeta_{k+1}:=\min\{  \lambda_0 \zeta_k, 1/\tau \}.
\]
Let
\[
m_k:=\sup_{z\in B_{\zeta_k}(\ell^{\infty})}\{ \| \pa_zx_k(z) \|_{\mathcal{L}_{\Gamma}(\ell^{\infty})},  \| \pa_z\theta_k(z) \|_{\mathcal{L}_{\Gamma}(\ell^{\infty}; \R^d)}\} \quad \eta_k:= \sup_{z\in B_{\zeta_k}(\ell^{\infty})} \{ \| \pa_zr_k(z) \|_{\mathcal{L}_{\Gamma}(\ell^{\infty}; \R^d)}     \}.
\]
We consider the neighborhood of $W^u$ defined by
\[
V_{\varepsilon/\tau}=\{ (x, y, \theta, r)\in \mathcal{M}_{\delta} : \| x \|_{\ell^{\infty}}+|r|_d<\varepsilon/\tau \}.
\]
Since $z\mapsto Q_0(z)$ is $C^1_\Gamma$ we can take $\zeta_0$ such that $m_0<\varepsilon/2$, $\eta_0<(\alpha \e) /2$, with $\alpha>0$ to be determined, and $Q_0(z)\in V_{\varepsilon/\tau}$ for all $z\in B_{\zeta_0}(\ell^{\infty})$. We check inductively that
\begin{itemize}
\item $Q_{k+1}$ is well defined.
\item $Q_{k+1}(z)\in V_{\varepsilon/\tau}$ for all $z\in B_{\zeta_{k+1}}(\ell^{\infty})$.
\item $m_{k+1}<\varepsilon/2$, $\eta_{k+1}<(\alpha \e) /2$.
\end{itemize}
We need to prove that $\tilde{y}_{k+1}$ is invertible, $B_{\lambda_0 \zeta_k}(\ell^{\infty})\subset \tilde{y}_{k+1}(B_{\zeta_k}(\ell^{\infty}))$ and
\begin{equation*}
\| \pa_z (\tilde{y}^{-1}_{k+1}) (z) \|_{\ell^{\infty}}<\lambda_0^{-1} \qquad z\in B_{\lambda_0 \zeta_k}(\ell^{\infty}).
\end{equation*}
We write
\[
\tilde{y}_{k+1}(z)=h_1(z)+h_2(z), \quad h_1(z)=\tilde{A}_+(\theta_k(z))\,z, \quad h_2(z)=f_2(Q_k(z)).
\]
Note that $\tilde{A}_+\in C^1_{\Gamma}$ and, by Hypothesis $\mathbf{(H1)_{C^1}}$, the operator $\tilde{A}_+(\theta_k(0))$ is a linear invertible operator. Then,  using  that $\theta_k$ is $C^1_\Gamma$ and that $m_k\leq\eps/2$, it is easy to see that $h_1$ is invertible.
%
Moreover, there exists $K>0$ such that 
\[
\sup_{z\in B_{\zeta_k}(\ell^{\infty})} \| h_1 (z) \|_{\ell^{\infty}} \geq \zeta_k \left(\lambda- \K \frac{\e}{2} \right).
\]
Then for $\varepsilon$ small enough $B_{\lambda \zeta_k}(\ell^{\infty})\subset h_1(B_{\zeta_k}(\ell^{\infty}))$. Using the above bound we obtain
\[
\lip(h_1^{-1})>\frac{1}{\lambda-\K\,\frac{\e}{2}\, \zeta_k}.
\]
Regarding $h_2$, by \eqref{bound:kappa} (recall that $Q_k(z)\in V_{\varepsilon/\tau}\subset \mathcal{M}_{\delta}$ by inductive hypothesis) and $m_k<\varepsilon/2$ we have that
\[
\lip(h_2)\le \left\lVert\pa_z h_2(z) \right\rVert_{\mathcal{L}(\ell^{\infty})} \le \kappa \left(1+\frac{3 \varepsilon}{2} \right).
\]
{By Theorem $1.5$ and Proposition $1.7$ in \cite{Hirsch1969StableMF} we have that $\tilde{y}_{k+1}=h_1+h_2$ is invertible and its inverse $\mathcal{Q}_{k+1}$ is defined on $B_{\zeta^*_{k+1}}(\ell^{\infty})$ with
\[
\zeta^*_{k+1}:=\zeta_k \left(\lambda-\frac{\e \K}{2\tau} \right)\left( 1-\frac{\kappa(1+3\e/2)}{\lambda-\frac{\e \K}{2\tau} } \right).
\]}
By \eqref{bound:tau} we have $\zeta^*_{k+1}\geq \lambda_0 \zeta_k$ and
\[
\lip\big( (\tilde{y}_{k+1})^{-1}\big)\le \frac{1}{\lambda-\frac{\varepsilon \K}{2\tau}-\kappa (1+\frac{3\e}{2}) }\le \lambda_0^{-1}.
\]
Let us denote by 
\[
\big(\mathtt{x}_k(z), \mathtt{y}_k(z), \vartheta_k(z), \mathtt{r}_k(z)\big)^T:=\pa_zQ_k(\mathcal{Q}_{k+1}(z)) [\pa_z\mathcal{Q}_{k+1}(z)].
\]
Then
\[
\pa_zQ_{k+1}(z)=DF \big(Q_k (\mathcal{Q}_{k+1}(z)) \big) \begin{pmatrix}
\mathtt{x}_k(z)\\ \mathtt{y}_k(z)\\ \vartheta_k(z)\\ \mathtt{r}_k(z)
\end{pmatrix}.
\]
We proved that $\|\mathcal{Q}_{k+1}(z)\|_{\ell^{\infty}}\le \zeta_k$ for all $\| z \|_{\ell^{\infty}}\le \zeta_k$. Then $Q_k (\mathcal{Q}_{k+1}(z))\in V_{\e/\tau}$.
Recall \eqref{calQ}, we have
\[
\sup_{z\in B_{\zeta_k}(\ell^{\infty})} \max\{  \| \mathtt{x}_k(z) \|_{\ell^{\infty}}, |\vartheta_k(z)|_d  \}\le \frac{m_k}{\lambda_0}, \qquad  \sup_{z\in B_{\zeta_k}(\ell^{\infty})} | \mathtt{r}_k(z) |_{d}\le \frac{\eta_k}{\lambda_0}, \qquad \sup_{z\in B_{\zeta_k}(\ell^{\infty})}   \| \mathtt{y}_k(z) \|_{\ell^{\infty}}\le \frac{1}{\lambda_0}.
\]
 By using \eqref{def:matrix2}, \eqref{bound:aj2} , \eqref{bound:kappa} we have
\[
\begin{split}
\| \pa_zx_{k+1}(z) \|_{\mathcal{L}_\Gamma(\ell^{\infty})}&\le \frac{(\lambda^{-1}+2\kappa) m_k}{\lambda_0}+\frac{\varepsilon}{\tau \lambda_0}+\frac{ \kappa \eta_k}{\lambda_0},\\
\| \pa_z\theta_{k+1}(z) \|_{\mathcal{L}_\Gamma(\ell^{\infty}; \R^d)}&\le\frac{\e}{\tau\lambda_0}+\frac{(1+2\kappa)m_k}{\lambda_0}+\frac{\eta_k}{\lambda_0},\\
\|\pa_z r_{k+1}(z) \|_{\mathcal{L}_\Gamma(\ell^{\infty};\R^d)}&\le \frac{2\kappa m_k}{\lambda_0}+\frac{(\beta+\kappa) \eta_k}{\lambda_0}+\frac{\varepsilon}{\tau \lambda_0}.
\end{split}
\]
Therefore we have
\[
\begin{split}
m_{k+1}&\le \max\left\{     \frac{(\lambda^{-1}+2\kappa) m_k}{\lambda_0}+\frac{\varepsilon}{\tau \lambda_0}+\frac{ \kappa \eta_k}{\lambda_0},     \frac{\e}{\tau\lambda_0}+\frac{(1+2\kappa)m_k}{\lambda_0}+\frac{\eta_k}{\lambda_0}   \right\},\\
\eta_{k+1}&\le \frac{2\kappa m_k}{\lambda_0}+\frac{(\beta+\kappa) \eta_k}{\lambda_0}+\frac{\varepsilon}{\tau \lambda_0}.
\end{split}
\]
If we choose $\alpha$ such that
\begin{equation}\label{cond:alpha}
\begin{split}
  \frac{(\lambda^{-1}+2\kappa) }{2\lambda_0}+\frac{1}{\tau \lambda_0}+\frac{ \kappa \alpha}{2\lambda_0}< \frac{1}{2},\\
   \frac{1}{\tau\lambda_0}+\frac{(1+2\kappa)}{2\lambda_0}+\frac{\alpha}{2\lambda_0}< \frac{1}{2},\\
   \frac{2\kappa}{2\lambda_0}+\frac{(\beta+\kappa) \alpha}{2\lambda_0}+\frac{1}{2\tau \lambda_0}<\frac{\alpha}{2}
\end{split}
\end{equation}
and we consider \eqref{bound:acaso1}-\eqref{bound:acaso4} then $m_{k+1}<\varepsilon/2$ and $\eta_{k+1}<(\alpha \e)/2$.
\begin{remark}
The conditions \eqref{bound:acaso1}-\eqref{bound:acaso4} and \eqref{cond:alpha} are satisfied by taking $\tau$ large enough, $\kappa$ small enough and $\alpha$ appropriately.
\end{remark}
We are left to check that $\{Q_{k+1}(z), z\in B_{\zeta_{k+1}}\}$ is contained in $V_{\e/\tau}$. We have
\[
\| x_{k+1}(z)-x_{k+1}(0) \|\le \sup_{z\in B_{\zeta_{k+1}}} \| \pa_zx(z) \| \,\zeta_{k+1}\le m_{k+1}\,\zeta_{k+1}\le \frac{\e}{2}\,\zeta_{k+1}.
\]
Since, by assumption $\zeta_{k+1}\le \tau^{-1}$ and therefore
\[
\| x_{k+1}(0)\|<\e/(2\tau), 
\]
and thus $\| x_{k+1}(z) \|_{\ell^{\infty}}\le \e/\tau$. We reason in the same way for $r_{k+1}$ (it is in fact easier because $r_{k+1}(0)=0$).

Since $\lambda_0^k \zeta_k$ increases geometrically we can assume that there exists $k_1$ such that $\zeta_k=\tau^{-1}$ for all $k\geq k_1$.

We recall that $p_0=(0, y_0, \varphi_0, 0)$. As $q_k(0)$ tends to the torus and the frequency on the torus is irrational there exists $k_2\geq k_1$ such that $| \theta_{k_1}(0)-\varphi_0 |_d<\e/2$. Then taking $z=y_0$
\[
d( Q_{k_1}(y_0), p_0 )\le \max\left\{ \frac{\e}{\tau}, | \theta_{k_1}(y_0)-\theta_{k_1}(0)|_d+| \theta_{k_1}(0)-\varphi_0 |_d  \right\}
\]
\[
\le \max\left\{ \frac{\e}{\tau}, \frac{\e}{2 \tau}+\frac{\e}{2} \right\}<\e.
\]
{The statement about the $C^1_\Gamma$ closeness follows because $m_k<\e/2$.}

\medskip

Fixed $j$, all the statements hold true of one replaces $\mathcal{M}_{\delta}$ with $\mathcal{M}_{j, \Gamma, \delta}$ and $\ell^{\infty}$ with $\Sigma_{j, \Gamma}$.

\subsection{A Lambda Lemma for flows}\label{sec:lambdalemmaflow}

Theorem \ref{thm:LambdaLemmaMaps} provides a Lambda Lemma for quasiperiodic tori of infinite dimensional maps with decay properties. We use this theorem to deduce an analogous statement for quasiperiodic tori of infinite dimensional vector fields. This result will be applicable to the vector field defined by the Hamiltonians of the form \eqref{def:Hamiltonian} and will imply Theorem \ref{thm:Lambda}.

To this end, we consider a $C^1_\Gamma$ vector field $\XX_\nu$ defined in $\mathcal{M}_\de$ for $\nu\in (0,\mu)$ for some $\mu>0$. We assume that it is of the form  
\begin{equation}\label{def:vectorfield:lambda}
\XX_\nu=\XX_0+\cF_\nu\quad \text{with}\quad   \XX_0(w)=\begin{pmatrix} \cA_-(\theta) x\\\cA_+(\theta) y\\ \omega(x,y,r)\\ \cB(\theta)r\end{pmatrix}
\quad \text{and}\quad
\cF_{\nu}( w)=\begin{pmatrix}\cF_1(\nu; w)\\ \cF_2(\nu; w)\\ \cF_3(\nu; w)\\ \cF_4(\nu; w) \end{pmatrix}
\end{equation}
where
\[
\cA_{\pm}(\theta)\in \mathcal{L}_{\Gamma}(\ell^{\infty}_{S^c}), \quad \cB(\theta)\in\mathcal{L}_{\Gamma}(\R_S^d)
\]
and $\cF_{1}, \cF_2 (\nu; \cdot)\colon \mathcal{M}_{\delta}\to \ell^{\infty}_{S^c}$, $\cF_3(\nu; \cdot)\colon\mathcal{M}_{\delta}\to \T_S^d$, $\cF_4(\nu; \cdot) \colon \mathcal{M}_{\delta}\to \R_S^d$.

We assume that $\cX_\nu$ satisfies Hypotheses $\mathbf{(H1)_{f}}$--$\mathbf{(H5)_{f}}$. In particular $\T_0:=\{ x=0, y=0, r=0 \}$
is an invariant torus for the flow associated to the vector field map $F_{\nu}$ for all $\nu\in(0,\mu)$ and its dynamics is given by 
\[
 \dot \theta=\tilde\omega_0=\tilde\omega(0,0,0).
\]
By Theorem \ref{thm:invmanflows}, Hypotheses  $\mathbf{(H1)_{f}}$--$\mathbf{(H5)_{f}}$ also imply that the torus $\T_0$ has $C^2_{\Gamma}$ invariant manifolds.

We assume moreover that the dynamics in the torus is non-resonant. That is
\begin{equation}\label{def:nonresonantflows}
 \tilde\omega_0\cdot k\neq 0\qquad \text{ for all }k\in\mathbb{Z}^d\setminus\{0\}
\end{equation}
(compare with the non-resonance condition for maps in \eqref{def:nonresonantmaps}). Under this conditions we can prove a Lambda lemma for the flow associated to $\cX_\nu$.

\begin{theorem}\label{thm:LambdaFlows}
 Consider a $\C^1_\Gamma$ vector field $\cX_\nu$ of the form \eqref{def:vectorfield:lambda} and assume that it satisfies Hypotheses $\mathbf{(H1)_{f}}$--$\mathbf{(H5)_{f}}$ and that it has a formal first integral $G$ in the sense of Definition \ref{def:firstintegralflows}. Denote by $\Phi^t$ its associated flow. 
Assume also that the frequency $\tilde\omega_0:=\tilde\omega(0, 0, 0)$ is non-resonant in the sense of \eqref{def:nonresonantflows}. 

Then, the invariant torus $\T_0:=\{ x=y=0, r=0 \}$ possesses $C^2_{\Gamma}$ invariant manifolds 
$W^{s, u}\subset\mathcal{M}$ that satisfy the following.

Consider a $C^1_\Gamma$ submanifold $\Gamma\subset \mathcal{M}$ which intersects transversally the stable manifold $W^s$ at $q_0$ in the sense of Definition \ref{def:transversalityflows} (with respect to the formal first integral $G$). Then
\begin{itemize}
 \item The iterates of $\Gamma$ satisfy
 \[
W^u\subset \overline{\bigcup_{t\geq 0} \Phi^t(\Gamma)}.
\]
where the closure is taken with respect to the  $\mathcal{M}$-topology.
\item Moreover there exists a submanifold $D$ of $\Gamma$ diffeomorphic to an open set of $\ell^{\infty}$ such that if $D_t$ is the connected component of $\Phi^t(D)\cap \mathcal{M}_{\delta}$ which contains $\Phi^t(q_0)$ then, for any $\e>0$, there exists $t_0$ such that $D_t$ is $\e$-close, in the $C^1_{\Gamma}(\mathcal{M}_{\delta})$ topology, to a subset of $W^u$ if $t>t_0$.
\end{itemize}
Moreover, if one considers as phase space $\mathcal{M}_{j,\Gamma}$ and a $C^1_\Gamma$ submanifold $\Gamma\subset\mathcal{M}_{j,\Gamma}$ the same statements are true with respect to the closure in the  $\mathcal{M}_{j,\Gamma}$-topology and the convergence in $C^1_\Gamma( \mathcal{M}_{j,\Gamma, \delta})$.
\end{theorem}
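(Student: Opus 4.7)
The plan is to deduce the flow Lambda lemma from the map version, Theorem \ref{thm:LambdaLemmaMaps}, by studying the time-$T$ map $\Phi^T$ for a suitably chosen $T>0$. First I would repeat the rewriting of the flow used in the proof of Theorem \ref{thm:invmanflows}: split $\cX_\nu=\tilde\XX_0+\tilde\cF_\nu$, where $\tilde\XX_0$ has the constant-in-angle frequency $\tilde\omega_0$, and apply Duhamel to write
\[
\Phi^T(w)=F_0^T(w)+f_\nu^T(w)
\]
in exactly the form \eqref{def:F}. As in Section \ref{sec:invmanflows}, the hypotheses $\mathbf{(H1)_f}$--$\mathbf{(H5)_f}$ translate into $\mathbf{(H0)_{C^2}}$--$\mathbf{(H5)_{C^2}}$ for the map $F^T_\nu:=\Phi^T$, with effective constants $\lambda^{-1}\le e^{-\tilde\lambda T}$, $\beta\le e^{\tilde\beta T}$, $K_\theta\lesssim T\tilde K_\theta$, $K\lesssim T\tilde K$. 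Since \eqref{bound:betalambda:flow} yields $\tilde\lambda>\tilde\beta$, fixing $T\geq T^\ast$ large enough guarantees the strong contraction condition \eqref{condition:lambdastrong}. The additional hypothesis $\mathbf{(H)_{\mathrm{NF}}}$ needed to run the normal form of Section \ref{sec:normalform} is automatic, because $\tilde\cF_3$ vanishes on $\T_0$ and satisfies the required Lipschitz bound by $\mathbf{(H2)_f}$--$\mathbf{(H3)_f}$.

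Next I would choose $T\geq T^\ast$ such that the induced map frequency $\omega_0^{\mathrm{map}}=T\tilde\omega_0$ satisfies the Diophantine-type non-resonance \eqref{def:nonresonantmaps}, namely $T\tilde\omega_0\cdot k+m\neq 0$ for all $(k,m)\in(\mathbb{Z}^d\setminus\{0\})\times\mathbb{Z}$. Under the flow non-resonance \eqref{def:nonresonantflows}, the forbidden set $\{T: T\tilde\omega_0\cdot k\in\mathbb{Z}\text{ for some }k\neq 0\}$ is a countable union of isolated points, so such a $T$ can be chosen in any interval $[T^\ast,T^\ast+1]$. Now I would verify that the formal first integral $G$ of $\cX_\nu$ is a formal first integral of $\Phi^T$ in the sense of Definition \ref{def:firstintegralmaps}: from $dG(z)\cX_\nu(z)=0$ we obtain $\frac{d}{dt}G(\Phi^t(z))=0$ along orbits where $G$ is defined, hence $G\circ\Phi^T=G$, which differentiated gives precisely \eqref{cond:FI:maps}. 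In particular $\Ker dG$ defines the same codimension-one foliation for $\Phi^T$ and for the flow.

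The third step is to transfer the transversality. Assume $\Gamma\pitchfork W^s$ at $q_0$ in the flow sense of Definition \ref{def:transversalityflows}, so that $T_{q_0}\Gamma\cap T_{q_0}W^s=\langle\cX_\nu(q_0)\rangle$ and $T_{q_0}\Gamma+T_{q_0}W^s=\Ker dG(q_0)$. Since both $\Gamma$ and $W^s$ are invariant (note that in practice one enlarges the $\Gamma$ of the shadowing argument to a flow-saturated neighborhood $\bigcup_{|t|<\eta}\Phi^t(\Gamma)$ when needed, so that flow invariance holds in a neighborhood of $q_0$), and $\langle\cX_\nu(q_0)\rangle$ is a one-dimensional subspace of both tangent spaces, the pair $(\Gamma,W^s)$ satisfies Definition \ref{def:transversalitydegmaps} with respect to the map $\Phi^T$ and the first integral $G$. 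I can now invoke Theorem \ref{thm:LambdaLemmaMaps}(iii) to conclude
\[
W^u\subset\overline{\bigcup_{n\geq 0}\Phi^{nT}(\Gamma)},
\]
and to obtain the disk $D\subset\Gamma$ whose iterates $D_n=(\Phi^T)^n(D)$ approach a subset of $W^u$ in the $C^1_\Gamma$ topology.

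Finally, the passage from iterates of $\Phi^T$ to the full flow is essentially free: for any $t\geq 0$ write $t=nT+s$ with $s\in[0,T)$, and note that $\Phi^t(\Gamma)=\Phi^s(\Phi^{nT}(\Gamma))$, so $W^u\subset\overline{\bigcup_{t\geq 0}\Phi^t(\Gamma)}$ follows from the inclusion already established. For the $C^1_\Gamma$ convergence of $D_t$, the family $\{\Phi^s\}_{s\in[0,T]}$ is uniformly bounded in $C^1_\Gamma$ on $\cM_\de$ by Lemma \ref{lem:welldefflow}, and $W^u$ is flow-invariant, so $\Phi^s(D_n)$ remains $\cO(\eps)$-close in $C^1_\Gamma$ to $\Phi^s(W^u)=W^u$, uniformly in $s$. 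The same argument applies verbatim in the $\cM_{j,\Gamma}$ topology, using the $\Sigma_{j,\Gamma}$ statements of Theorems \ref{thm:invmanflows} and \ref{thm:LambdaLemmaMaps}(iv). The delicate step, and the one I would write out most carefully, is the verification that $\Phi^T$ inherits all the quantitative decay estimates $\mathbf{(H0)_{C^2}}$--$\mathbf{(H5)_{C^2}}$ with constants compatible with \eqref{condition:lambdastrong}; everything else is a translation of the map statement through the time-$T$ flow.
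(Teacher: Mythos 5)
Your proposal follows essentially the same route as the paper's own proof: reduce to the time-$T$ map $\Phi^T$, verify that it inherits hypotheses $\mathbf{(H0)_{C^2}}$--$\mathbf{(H5)_{C^2}}$ (via the Duhamel decomposition from the proof of Theorem \ref{thm:invmanflows}) and that the flow's formal first integral becomes a formal first integral for $\Phi^T$, choose $T$ so that $T\tilde\omega_0$ satisfies the map non-resonance condition \eqref{def:nonresonantmaps}, invoke Theorem \ref{thm:LambdaLemmaMaps}(iii), and pass from $\bigcup_n\Phi^{nT}(\Gamma)$ to $\bigcup_{t\geq 0}\Phi^t(\Gamma)$. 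The additional details you supply — explicitly deriving \eqref{cond:FI:maps} from $dG\cdot\cX_\nu=0$, observing that Definition \ref{def:transversalityflows} gives Definition \ref{def:transversalitydegmaps} for the time-$T$ map because the one-dimensional intersection $\langle\cX_\nu(q_0)\rangle$ has dimension one, and handling the $C^1_\Gamma$ closeness for $t=nT+s$, $s\in[0,T)$, via uniform bounds on $\Phi^s$ and flow invariance of $W^u$ — are correct and slightly more explicit than the paper's terse treatment, but they do not constitute a different proof.
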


\begin{proof}[Proof of Theorem \ref{thm:Lambda}]
We deduce  Theorem \ref{thm:Lambda} from the Lambda Lemma for maps stated in Theorem \ref{thm:LambdaLemmaMaps}.

Consider the map $\Phi^T$ for some $T>0$ to be chosen later. As explained in Section \ref{sec:invmanflows}, this map satisfies the Hypotheses $\mathbf{(H1)_{C^2}}$--$\mathbf{(H5)_{C^2}}$. Moreover, the formal integral $G$ of the vector field $\cX_\nu$ is also a formal integral of the map $\Phi^T$ in the sense of Definition \ref{def:firstintegralmaps}. Then, to apply Theorem \ref{thm:LambdaLemmaMaps} it only remains to check  that the dynamics of $\Phi^T$ restricted to the torus is non-resonant in the sense of \eqref{def:nonresonantmaps}.

Indeed, the frequency at the torus for the map $\Phi^T$ is 
\[
 \omega_0=T\tilde\omega_0
\]
where $\tilde\omega_0$ satisfies \eqref{def:nonresonantflows}. It is well known that for  any non-resonant vector $\tilde\omega_0$ in the sense of \eqref{def:nonresonantflows} one can choose $T>0$ so that $(\tilde\omega_0, T^{-1})$ satisfies also \eqref{def:nonresonantflows} as a $(d+1)$-dimensional vector. Equivalently, for this choice of $T$, $ \omega_0=T\tilde\omega_0$ satisfies \eqref{def:nonresonantmaps}. Then, one can apply Theorem \ref{thm:LambdaLemmaMaps} which implies
\[
W^u\subset \overline{\bigcup_{n\geq 0} \Phi^{nT}(\Gamma)}\subset \overline{\bigcup_{t\geq 0} \Phi^t(\Gamma)}.
\]
Finally Theorem \ref{thm:LambdaLemmaMaps} and the $C^2_\Gamma$ regularity of  $\Phi^{T}$ implies the $C_\Gamma^1$ convergence.
\end{proof}

 \appendix

\section{Proof of Lemma \ref{lem:ball2}}
\label{sec:appendix}

The following lemmata can be proved with straightforward computations applying the assumptions of Theorem \ref{thm:C2case} and Lemma \ref{lemh}.

\begin{lem}\label{lem:A1}
Recall the definition \ref{def:Sigma2cMv}.
For $\gamma\in \Xi^2_{c, M, \mathtt{v}}$, $\Upsilon\in D \Xi^2_{\mathtt{v}, \widetilde{\mathtt{v}}}$ and $(x, \theta)\in B_{\delta}(\ell^{\infty})\times \T^d$ we have for $j=0, 1$ 
\begin{align}
\lip_x (\Upsilon_j\circ h) &\le \wkappa_j \lambda^{-1}+2 K \wrho_j\, (1+c)+\cO(L),   \label{lipxPsi}\\   \label{lipthetaPsi}
\lip_{\theta} (\Upsilon_j\circ h) &\le \wrho_j (1+K_{\theta})+\cO(\delta),\\
\lip_x (\Upsilon_2\circ h) &\le \wkappa_2 \lambda^{-1}+\cO(\delta),\\
\lip_{\theta} (\Upsilon_2\circ h)(x)  &\le \Big(\widehat{M} \lambda^{-1} (1+K_{\theta})+\wkappa_2 K(2+c)
+\mathcal{O}(L) \Big) \| x\|_{\ell^{\infty}}.
\end{align}
\end{lem}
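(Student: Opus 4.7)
The plan is to derive all four bounds via the standard telescopic decomposition for Lipschitz constants of compositions, combined with the bounds of Lemma \ref{lemh} on $h_1,h_2$ and the defining properties of the set $D\Xi^2_{\mathtt{v},\widetilde{\mathtt{v}}}$. For any map of two variables $\Upsilon(u,\vartheta)$ and $z,z'\in\dom$ one writes
\[
\Upsilon(h_1(z),h_2(z))-\Upsilon(h_1(z'),h_2(z'))=\bigl[\Upsilon(h_1(z),h_2(z))-\Upsilon(h_1(z'),h_2(z))\bigr]+\bigl[\Upsilon(h_1(z'),h_2(z))-\Upsilon(h_1(z'),h_2(z'))\bigr],
\]
so that
\[
\lip_{\mathtt{s}}(\Upsilon\circ h)\le \lip_u(\Upsilon)\,\lip_{\mathtt{s}}(h_1)+\sup\!\lip_\vartheta(\Upsilon)\cdot\lip_{\mathtt{s}}(h_2),\qquad \mathtt{s}=x,\theta,
\]
where the second $\sup$ is over the first argument of $\Upsilon$, evaluated where needed. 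All that remains is to insert the relevant bounds.

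For $j=0,1$ I would use $\lip_x \Upsilon_j\le \wkappa_j$, $\lip_\theta \Upsilon_j\le \wrho_j$ (from the definition of $D\Xi^2_{\mathtt{v},\widetilde{\mathtt{v}}}$) together with \eqref{bound:lipxh1}--\eqref{bound:lipxh2} to obtain
\[
\lip_x(\Upsilon_j\circ h)\le \wkappa_j\bigl(\lambda^{-1}+L(1+c)\bigr)+\wrho_j\cdot 2K(1+c)=\wkappa_j\lambda^{-1}+2K\wrho_j(1+c)+\mathcal{O}(L),
\]
which is \eqref{lipxPsi}. For the $\theta$-Lipschitz constant I would use \eqref{bound:lipthetah1}--\eqref{bound:lipthetah2} and the fact that $\|x\|_{\ell^\infty}\le\delta$, obtaining
\[
\lip_\theta(\Upsilon_j\circ h)\le \wkappa_j\bigl(K(2+c)+LM\bigr)\delta+\wrho_j\bigl(1+K_\theta+2KM\delta\bigr)=\wrho_j(1+K_\theta)+\mathcal{O}(\delta),
\]
proving \eqref{lipthetaPsi}.

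For $\Upsilon_2$ the key subtlety is that $\sup\lip_\theta \Upsilon_2$ is not uniform: by definition of $D\Xi^2_{\mathtt{v},\widetilde{\mathtt{v}}}$ one only has $\lip_\theta \Upsilon_2(u)\le \widehat{M}\,\|u\|_{\ell^\infty}$. Thus, when evaluating at the first argument $h_1(z')$, I would use the bound $\|h_1(x,\theta)\|_{\ell^\infty}\le(\lambda^{-1}+L(1+c))\|x\|_{\ell^\infty}\le \mathcal{O}(\delta)$ from \eqref{h1Lx}. Combined with $\lip_x \Upsilon_2\le \wkappa_2$, this yields
\[
\lip_x(\Upsilon_2\circ h)\le \wkappa_2\bigl(\lambda^{-1}+L(1+c)\bigr)+\widehat{M}\delta\cdot 2K(1+c)=\wkappa_2\lambda^{-1}+\mathcal{O}(\delta),
\]
giving the third bound. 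For the last inequality, the scaling $\|h_1(x,\theta')\|_{\ell^\infty}\le (\lambda^{-1}+L(1+c))\|x\|_{\ell^\infty}$ is what produces the prefactor $\|x\|_{\ell^\infty}$ on the right-hand side:
\[
\lip_\theta(\Upsilon_2\circ h)(x)\le \Bigl[\wkappa_2\bigl(K(2+c)+LM\bigr)+\widehat{M}\bigl(\lambda^{-1}+L(1+c)\bigr)\bigl(1+K_\theta+2KM\|x\|_{\ell^\infty}\bigr)\Bigr]\|x\|_{\ell^\infty},
\]
which, collecting leading terms and absorbing $\mathcal{O}(\delta)$ contributions into $\mathcal{O}(L)$ by choosing $\delta,\mu$ small enough, gives the stated bound.

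There is no real obstacle: the proof is bookkeeping, and the only point that requires some care is keeping track of the $\|x\|_{\ell^\infty}$ weight in the $\Upsilon_2$ estimates, which must be preserved rather than absorbed into a constant because the norm on $E_{\theta\theta}$ is $\|\cdot\|_1$ and the subsequent contraction argument in Lemma \ref{lem:contraction2} relies on this scaling.
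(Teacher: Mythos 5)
Your proof is correct and follows exactly the route the paper indicates (the paper dismisses Lemmata \ref{lem:A1}--\ref{lem:A3} as "straightforward computations applying the assumptions of Theorem \ref{thm:C2case} and Lemma \ref{lemh}"): a telescopic split of $\Upsilon_j\circ h$, the Lipschitz bounds on $h_1,h_2$ from Lemma \ref{lemh}, and the defining constants of $D\Xi^2_{\mathtt{v},\widetilde{\mathtt{v}}}$, with the one genuine point of care being that $\lip_\theta\Upsilon_2$ is only weighted-Lipschitz, so the $\|h_1\|_{\ell^\infty}\lesssim\|x\|_{\ell^\infty}$ bound is what preserves the $\|x\|_{\ell^\infty}$ prefactor needed for the $\|\cdot\|_1$-norm. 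The only minor gloss is the bookkeeping of $\mathcal{O}(\delta)$ versus $\mathcal{O}(L)$ in the remainders, but the paper itself uses these interchangeably as "small for $\delta,\mu$ small," so this is consistent with the source.
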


\begin{lem}\label{lem:A2}
For $\delta$ and $\mu$ small enough the functions $h_1$, $h_2$ in \eqref{h} satisfy
\begin{multicols}{2}
\noindent
\begin{align*}
 \lip_x \partial_x^2 h_1 &\le \tC+L \wkappa_0,\\
 \lip_x \partial_x^2 h_2 &\le \tC+2 K \wkappa_0,\\
 \lip_x \partial_{\theta x}^2 h_1 &\le \mathtt{C}+\K (1+c) \,\wrho_1,\\
 \lip_x \partial_{\theta x}^2 h_2 &\le  \mathtt{C}+K\wkappa_1,\\
 \lip_x \partial_{\theta}^2 h_1 &\le  \mathtt{C}+L \wkappa_2,\\
 \lip_x \partial_{\theta}^2 h_2 &\le \mathtt{C}+\mathtt{C} \wkappa_2,
\end{align*}
\columnbreak
\noindent\begin{align*}
 \lip_{\theta} \partial_x^2 h_1 & \le \tC+L\,\wrho_0 \\
 \lip_{\theta} \partial_x^2 h_2 &\le \mathtt{C} +2 K  \,\wrho_0,\\
 \lip_{\theta} \partial_{\theta x}^2 h_1 &\le \mathtt{C}+L \wrho_1\\
 \lip_{\theta} \partial_{\theta x}^2 h_2 &\le  \mathtt{C}+2 K \wrho_1.\\
 \lip_{\theta} \partial_{\theta}^2 h_1 &\le 2 K\, \lip_{\theta} \Upsilon_2,\\
 \lip_{\theta} \partial_{\theta}^2 h_2 &\le 2 K\, \lip_{\theta} \Upsilon_2.
\end{align*}
\end{multicols}
\end{lem}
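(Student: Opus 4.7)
The plan is to compute the expressions for $\partial^2_{ss'} h_1$ and $\partial^2_{ss'} h_2$ explicitly via the chain rule, then estimate each resulting term using the abundant Lipschitz hypotheses available on $f_j$, $\omega$, $A_\pm$, $B$ and the definition of the invariant sets $\Xi^2_{c, M, \mathtt{v}}$ and $D\Xi^2_{\mathtt{v}, \widetilde{\mathtt{v}}}$.

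First I would differentiate
\[
h_1(x, \theta) = A_-(\theta)\, x + f_1\big(x, \theta, \gamma(x,\theta)\big), \qquad h_2(x, \theta) = \theta + \omega\big(x, \gamma(x,\theta)\big) + f_3\big(x, \theta, \gamma(x,\theta)\big)
\]
twice. Each resulting formula is a linear combination of three types of terms: (a) derivatives of $A_-(\theta)\, x$ or the identity in $\theta$; (b) partial derivatives of $f_1$, $f_3$, $\omega$ evaluated at $(z, \gamma(z))$, multiplied by polynomial expressions in $D\gamma$ (using the chain rule and Faà di Bruno); (c) expressions of the form $\partial_v f_j \cdot \Upsilon_\ell$ or $\partial_v \omega \cdot \Upsilon_\ell$ (with $\Upsilon = D^2 \gamma$) coming from hitting the innermost $\gamma$ with the second derivative.

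Next I would take the Lipschitz constant in $x$ or $\theta$ of each term. For terms of type (a) and (b), the Lipschitz bounds on second derivatives of $f_j, \omega, A_\pm$ from $\mathbf{(H1)_{C^2}}$--$\mathbf{(H3)_{C^2}}$, combined with the bounds $\| D\gamma \|_0 \le c$, $\lip_x D\gamma \le \max\{\kappa_0, \kappa_1\}$ and $\lip_\theta D\gamma \le \max\{\rho_0, \widetilde M \|x\|_{\ell^\infty}\}$ from $\Xi^2_{c, M, \mathtt{v}}$, yield a uniform bound absorbed into the universal constant $\tC$ (depending on $c, M, \mathtt{v}, \K, \beta$). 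For the type (c) terms, the factor $\partial_v f_j$ or $\partial_v \omega$ contributes a coefficient ($L$ or $K$), and the Lipschitz constant passed onto $\Upsilon_\ell$ gives precisely the explicit factors $\wkappa_0, \wkappa_1, \wkappa_2$, $\wrho_0, \wrho_1$, $\widehat M$ or $\lip_\theta \Upsilon_2$ appearing in the statement. For instance, in $\lip_x \partial_x^2 h_1$ the leading contribution is $\| \partial_v f_1 \|\cdot \lip_x \Upsilon_0 \le L\, \wkappa_0$; in $\lip_x \partial_x^2 h_2$ it is $\| \partial_v \omega + \partial_v f_3 \|\cdot \lip_x \Upsilon_0 \le 2K\,\wkappa_0$, since $\partial_v \omega$ and $\partial_v f_3$ are each bounded by $\K$.

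Finally, for the $\lip_\theta \partial_\theta^2 h_{1,2}$ bounds, which feature only the factor $2K \lip_\theta \Upsilon_2$ with no explicit $\tC$, I would observe that all the type (a) and (b) contributions carry a prefactor of order $\mathcal{O}(L + \delta)$ (since either $\partial_\theta f_1$, $\partial_\theta f_3$ are small by $\mathbf{(H2)_{C^1}}$ when $\|x\|, |r| = 0$ on the orbit, or the inner function $h$ lies on the torus via Lemma \ref{lemh}), and hence can be absorbed provided $\delta$ and $\mu$ are small; what remains is the leading $\partial_v f_1 \cdot \partial^2_\theta \gamma$ or $(\partial_v \omega + \partial_v f_3) \cdot \partial^2_\theta \gamma$ whose $\theta$-Lipschitz constant is dominated by $2K \lip_\theta \Upsilon_2$.

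The main obstacle is purely bookkeeping: the twelve inequalities all follow from the same scheme, but carefully tracking which terms have explicit ``large'' coefficients (matching the form of the stated bounds) versus which can be absorbed into $\tC$ or into the $\mathcal{O}(L + \delta)$ small terms requires systematic application of the chain rule and  the separation of the ``$\Upsilon$-leading'' contribution in each case. No new idea beyond Lemmas \ref{lemh}, \ref{rem:boundspartialzgammaz}, \ref{rem:lippartial}, \ref{rmk:hderivatives} and the hypotheses of Theorem \ref{thm:C2case} is required.
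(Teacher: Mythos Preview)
Your approach is correct and matches the paper's: the paper simply states that Lemmas~\ref{lem:A1}--\ref{lem:A3} ``can be proved with straightforward computations applying the assumptions of Theorem~\ref{thm:C2case} and Lemma~\ref{lemh},'' and what you outline is exactly that computation---differentiating $h_1, h_2$ twice, separating the $\Upsilon$-free terms (absorbed into $\tC$) from the $\partial_v f_j \cdot \Upsilon_\ell$ or $\partial_v \omega \cdot \Upsilon_\ell$ terms that produce the explicit $\wkappa_\ell$, $\wrho_\ell$, $\lip_\theta \Upsilon_2$ factors. Your identification of the leading coefficients ($L$ from $\partial_v f_1$, $2K$ from $\partial_v \omega + \partial_v f_3$) is exactly right, and you have correctly noted that the last two inequalities require the remaining terms to be $\mathcal{O}(L+\delta)$ (or more precisely $\mathcal{O}(\|x\|_{\ell^\infty})$, which is how they are used in step~(vi) of Lemma~\ref{lem:ball2}).
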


\begin{lem}\label{lem:A3}
We have, for all $z=(x, \theta)\in \dom$,
\begin{align*}
&\lip_x (\partial_x\gamma)(h(z)), \lip_x (\partial_\theta\gamma)(h(z)), \lip_{\theta} (\partial_x\gamma)(h(z))\le \tC,\\
&\lip_{\theta} (\partial_\theta\gamma)(h(z))\le \tC \| x \|_{\ell^{\infty}}.
\end{align*}
\end{lem}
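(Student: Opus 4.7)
The plan is to prove Lemma \ref{lem:A3} by a direct application of the chain rule for Lipschitz constants, combining the bounds on the Lipschitz constants of the derivatives of $\gamma$ (which come from the very definition \eqref{def:Sigma2cMv} of the set $\Xi^2_{c, M, \mathtt{v}}$) with the Lipschitz estimates of the components $h_1, h_2$ of $h$ proved in Lemma \ref{lemh}. None of the four bounds should be hard: they all reduce to estimating the Lipschitz constant of a two-variable composition $(\partial_s \gamma)(h_1(z), h_2(z))$ with $s\in\{x,\theta\}$.

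First I would recall that for $\gamma\in\Xi^2_{c,M,\mathtt{v}}$, the definition yields directly
\[
\lip_x \partial_x \gamma \le \kappa_0, \quad \lip_x \partial_\theta \gamma \le \kappa_1, \quad \lip_\theta \partial_x \gamma\le \rho_0, \quad \lip_\theta (\partial_\theta\gamma)(x)\le \widetilde{M}\,\|x\|_{\ell^\infty},
\]
while Lemma \ref{lemh} provides, for $\delta$ and $\mu$ small,
\[
\lip_x h_1 \le \lambda^{-1}+L(1+c),\quad \lip_x h_2 \le 2\K(1+c),\quad \lip_\theta h_2(x) \le 1+K_\theta+2\K M\|x\|_{\ell^\infty},
\]
and the key ``small'' bound
\[
\lip_\theta h_1(x) \le \bigl(\K(2+c)+L M\bigr)\|x\|_{\ell^\infty}.
\]

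Next I would split each Lipschitz constant as follows. For $\lip_x((\partial_x\gamma)\circ h)$, writing $z'=(x',\theta)$ with $x\neq x'$ and telescoping in the two arguments of $\partial_x\gamma$,
\[
\|(\partial_x\gamma)(h(z))-(\partial_x\gamma)(h(z'))\|\le (\lip_x\partial_x\gamma)\,\|h_1(z)-h_1(z')\|_{\ell^\infty}+(\lip_\theta \partial_x\gamma)\,|h_2(z)-h_2(z')|_d,
\]
which is bounded by $\bigl(\kappa_0(\lambda^{-1}+L(1+c))+2\rho_0\K(1+c)\bigr)\,\|x-x'\|_{\ell^\infty}\le \tC\|x-x'\|_{\ell^\infty}$. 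The same telescoping gives $\lip_x((\partial_\theta\gamma)\circ h)\le \kappa_1(\lambda^{-1}+L(1+c))+2\K\rho_0(1+c)\le\tC$ (here, however, one should note that $\lip_x\partial_\theta\gamma\le\kappa_1$ is the relevant constant, with no $\|x\|_{\ell^\infty}$ factor), and $\lip_\theta((\partial_x\gamma)\circ h)\le\rho_0(\K(2+c)+LM)\delta+\rho_0(1+K_\theta+2\K M\delta)\le\tC$, both bounded absolutely by a constant of the form $\tC=\tC(\beta,c,M,\mathtt{v},\K)$.

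The only bound that requires a tiny bit of care is the last one, because the relevant constant $\lip_\theta\partial_\theta\gamma$ itself carries a factor $\|\cdot\|_{\ell^\infty}$. Applying the same telescoping to $(\partial_\theta\gamma)\circ h$ in the variable $\theta$,
\[
\|(\partial_\theta\gamma)(h(z))-(\partial_\theta\gamma)(h(z'))\|\le (\lip_x \partial_\theta\gamma)\,\|h_1(z)-h_1(z')\|_{\ell^\infty} + (\lip_\theta\partial_\theta\gamma)(h_1(z))\,|h_2(z)-h_2(z')|_d.
\]
The first term is bounded by $\kappa_1(\K(2+c)+LM)\|x\|_{\ell^\infty}\,|\theta-\theta'|_d$. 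For the second, one uses that $\|h_1(z)\|_{\ell^\infty}\le(\lambda^{-1}+L(1+c))\|x\|_{\ell^\infty}$ (by \eqref{h1Lx}), so $(\lip_\theta\partial_\theta\gamma)(h_1(z))\le\widetilde{M}(\lambda^{-1}+L(1+c))\|x\|_{\ell^\infty}$, and combining with $\lip_\theta h_2\le 1+K_\theta+2\K M\|x\|_{\ell^\infty}$ gives the estimate $\tC\|x\|_{\ell^\infty}$ as desired. Since no step involves more than the chain rule and the already-established Lipschitz estimates of $h_1$ and $h_2$, there is no genuine obstacle — the only subtlety is remembering that the linear-in-$\|x\|_{\ell^\infty}$ decay in $\lip_\theta h_1$ and $\lip_\theta\partial_\theta\gamma$ is exactly what produces the factor $\|x\|_{\ell^\infty}$ in the last line, rather than a constant.
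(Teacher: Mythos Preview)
Your approach is correct and is exactly what the paper intends: it states that Lemmas~\ref{lem:A1}--\ref{lem:A3} ``can be proved with straightforward computations applying the assumptions of Theorem~\ref{thm:C2case} and Lemma~\ref{lemh}'', and your telescoping argument combining the Lipschitz bounds from the definition of $\Xi^2_{c,M,\mathtt{v}}$ with the estimates \eqref{h1Lx}--\eqref{bound:lipthetah2} is precisely that computation. Two harmless slips: in the bound for $\lip_x((\partial_\theta\gamma)\circ h)$ the second summand should involve $\widetilde{M}\|h_1\|_{\ell^\infty}\le\widetilde{M}(\lambda^{-1}+L(1+c))\delta$ rather than $\rho_0$ (you are estimating $\lip_\theta\partial_\theta\gamma$, not $\lip_\theta\partial_x\gamma$), and in $\lip_\theta((\partial_x\gamma)\circ h)$ the first coefficient should be $\kappa_0$, not $\rho_0$; neither affects the conclusion since all such constants are absorbed into $\tC$.
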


We use these three lemmas to prove Lemma \ref{lem:ball2}.
\begin{proof}[Proof of Lemma \ref{lem:ball2}]
The proof is divided in several steps. We use repeatedly, without mentioning, Lemmas \ref{lem:A1}, \ref{lem:A2} and \ref{lem:A3}.

(i) We prove that $\lip_x \cH_0\le \wkappa_0$.
We have by Lemma \ref{rmk:hderivatives}, \eqref{lipxPsi} taking $\delta$ and $\mu$ small enough,
\begin{align*}
\,\lip_x \cH_0\le & \beta\lambda^{-3} \wkappa_0+\mathtt{C} (1+\wkappa_1+\wkappa_2+\wrho_0+\wrho_1)
\end{align*}
for some constant $\mathtt{C}>0$. By \eqref{condition:lambdastrong} we have $\beta \lambda^{-3}<1$ and by condition \eqref{cond:kzero} we can conclude. 

(ii) We prove $\lip_x \cH_1\le \wkappa_1$. By \eqref{lipxPsi} and Lemma \ref{rmk:hderivatives}, for all $z\in\dom$,
\[
\lip_x (\Upsilon_1 (h(z))) \|\partial_x h_1(z)\|_{\mathcal{L}_{\Gamma}(\ell^{\infty})}  \| \partial_{\theta} h_2(z)\|_{\mathcal{L}_{\Gamma}(\R^d)}\le \lambda^{-2} (1+K_{\theta}) \wkappa_1+\tC \wrho_1+\cO(\delta).
\]
Therefore by taking $\delta$ and $\mu$ small enough,
\begin{align*}
\lip_x \cH_1\le \lambda^{-2} (1+K_{\theta}) \wkappa_1+\tC (1+\wkappa_2+ \wrho_1)
\end{align*}
for some constant $\mathtt{C}>0$.
By \eqref{condition:lambdastrong} we have $\beta\lambda^{-2} (1+K_{\theta})<1$, then by \eqref{cond:kUno} we get the claim .

(iii) We prove $\lip_x \cH_2\le \wkappa_2$. Since for all $z\in \dom$
\[
\lip_x (\Upsilon_2(h(z)))\,\|\partial_{\theta} h_2(z)\|_{\mathcal{L}_{\Gamma}(\R^d)}^2\le \wkappa_2 \lambda^{-1} (1+K_{\theta})^2+\cO(\delta),
\]
then by taking $\delta$ and $\mu$ small enough
\[
\lip_x \cH_2\le \tC+ \wkappa_2 \beta \lambda^{-1} (1+K_{\theta})^2
\]
for some constant $\mathtt{C}>0$.
By \eqref{condition:lambdastrong} we have $\beta\lambda^{-1} (1+K_{\theta})^2<1$, then by \eqref{cond:kUno} we get the claim .
 
(iv) We prove that $\lip_{\theta} \cH_0\le \wrho_0$. By \eqref{lipthetaPsi}, for all $z\in \dom$,
\[
\lip_{\theta} \Upsilon_0(h(z))\,\|\partial_x h_1(z)\|_{\mathcal{L}_{\Gamma}(\ell^{\infty})}^2\le \wrho_0 (1+K_{\theta}) \lambda^{-2} +\mathcal{O}(\delta).
\]
Then by taking $\delta$ and $\nu$ small enough
\[
\lip_{\theta} \cH_0\le \mathtt{C}(1+\wrho_1+\wkappa_2)+\wrho_0 \beta (1+K_{\theta}) \lambda^{-2}
\]
for some constant $\mathtt{C}>0$.
By \eqref{condition:lambdastrong} we have $\beta\lambda^{-2} (1+K_{\theta})<1$, then by \eqref{cond:rozero} we get the claim .\\

(v) We prove that $\lip_{\theta} \cH_1\le \wrho_1$. By \eqref{lipthetaPsi}, for all $z\in \dom$,
\[
\lip_{\theta} \Upsilon_1(h(z))\,\|\partial_x h_1(z)\|_{\mathcal{L}_{\Gamma}(\ell^{\infty})}\,\|\partial_{\theta} h_2(z)\|_{\mathcal{L}_{\Gamma}(\R^d)}\le \wrho_1 (1+K_{\theta})^2 \lambda^{-1} +\mathcal{O}(\delta).
\]
Then by taking $\delta$ and $\nu$ small enough
\[
\lip_{\theta} \cH_1\le \mathtt{C}+\wrho_1 \beta (1+K_{\theta})^2 \lambda^{-1}
\]
for some constants $\mathtt{C}>0$.
By \eqref{condition:lambdastrong} we have $\beta\lambda^{-1} (1+K_{\theta})^2<1$, then by \eqref{cond:rozero} we get the claim.

\smallskip

(vi) We are left to prove that $\| \cH_2(x, \theta)-\cH_2(x, \theta') \|_{\mathcal{L}^2_{\theta \theta}}\le \widehat{M} \| x \|_{\ell^{\infty}}\,|\theta-\theta'|_d$. We observe that, by $\mathbf{(H2)_{C^1}}$, Lemma \ref{rmk:hderivatives} and the estimates \eqref{bound:lipthetah1}, \eqref{bound:lipthetah2} we have, for all $z\in\dom$,
{
\[
\lip_{\theta} \Big( \Upsilon_2(h(z)) \Big) \,\|\partial_{\theta} h_2(z)\|_{\mathcal{L}_{\Gamma}(\R^d)}^2\le \Big( (1+K_{\theta})^3 \lambda^{-1} \widehat{M}+(1+K_{\theta})^2 K (2+c) \wkappa_2+\cO(L) \Big) \delta.
\]
}
Then $\| \cH_2(x, \theta)-\cH_2(x, \theta') \|_{\mathcal{L}^2_{\theta\theta}}\le  \Big(\tC(1+\wrho_1+\wkappa_2)+\beta \lambda^{-1} (1+K_{\theta})^3\, \widehat{M} \Big)  \| x \|_{\ell^{\infty}}\,|\theta-\theta'|_d$. We conclude by using \eqref{condition:lambdastrong}.
\end{proof}

\bibliography{biblio2}

\bibliographystyle{plain}

\end{document}